\documentclass[11pt,american,english]{article}
\usepackage[T1]{fontenc}
\usepackage[latin9]{inputenc}
\usepackage[a4paper]{geometry}
\geometry{verbose,tmargin=3cm,bmargin=1.5cm,lmargin=1.5cm,rmargin=1.5cm}
\pagestyle{plain}
\usepackage{babel}
\usepackage{amsmath}
\usepackage{amsthm}
\usepackage{amssymb}
\usepackage{setspace}
\usepackage{commath}
\usepackage{comment}
\usepackage{mathrsfs}
\usepackage{color,xcolor}
\usepackage[unicode=true,pdfusetitle,
 bookmarks=true,bookmarksnumbered=true,bookmarksopen=true,bookmarksopenlevel=3,
 breaklinks=false,pdfborder={0 0 1},backref=false,colorlinks=false]
 {hyperref}

\makeatletter
\theoremstyle{plain}
\newtheorem{thm}{\protect\theoremname}
  \theoremstyle{plain}
  \newtheorem{lem}[thm]{\protect\lemmaname}
  \theoremstyle{remark}
  \newtheorem{rem}[thm]{\protect\remarkname}
  \theoremstyle{plain}
  \newtheorem{prop}[thm]{\protect\propositionname}
  \theoremstyle{plain}
  \newtheorem{defn}[thm]{Definition}
  \theoremstyle{plain}
  \newtheorem{example}[thm]{Example}

\usepackage{multirow}

\usepackage{arydshln}

\usepackage{lmodern}
\newcommand{\1}{\mbox{1\hspace{-1mm}I}}
\numberwithin{equation}{section}
\numberwithin{thm}{section}

\newcommand{\s}[1]{{\mathcal #1}}
\newcommand{\sr}[1]{{\mathscr #1}}
\newcommand{\bb}[1]{{\mathbb #1}}
\newcommand{\ip}[2]{\left\langle #1,#2 \right\rangle}
\newcommand{\ipt}[1]{\left\langle #1 \right.}

\newcommand{\ipbc}[1]{\left. , #1 \right\rangle}

\newcommand{\h}{\hspace}

\usepackage{calc}
\setlength{\jot}{0mm+3pt}

\makeatother

  \addto\captionsamerican{\renewcommand{\lemmaname}{Lemma}}
  \addto\captionsamerican{\renewcommand{\propositionname}{Proposition}}
  \addto\captionsamerican{\renewcommand{\remarkname}{Remark}}
  \addto\captionsamerican{\renewcommand{\theoremname}{Theorem}}
  \addto\captionsenglish{\renewcommand{\lemmaname}{Lemma}}
  \addto\captionsenglish{\renewcommand{\propositionname}{Proposition}}
  \addto\captionsenglish{\renewcommand{\remarkname}{Remark}}
  \addto\captionsenglish{\renewcommand{\theoremname}{Theorem}}
  \providecommand{\lemmaname}{Lemma}
  \providecommand{\propositionname}{Proposition}
  \providecommand{\remarkname}{Remark}
\providecommand{\theoremname}{Theorem}

\begin{document}
\selectlanguage{american}%
\global\long\def\1{\mbox{1\hspace{-1mm}I}}
\selectlanguage{english}%

\title{CONTROL ON HILBERT SPACES AND APPLICATION TO  SOME MEAN FIELD TYPE CONTROL
 PROBLEMS}

\author{Alain Bensoussan\thanks{Alain Bensoussan is also with the School of Data Sciences, City University Hong Kong.
Research supported by the National Science Foundation under grants
 DMS-1905459 and DMS-2204795.}\\
International Center for Decision and Risk Analysis\\
Jindal School of Management, University of Texas at Dallas\\
\\
 Philip  Jameson Graber\thanks{Research supported by the National Science Foundation, grants DMS-1612880 and
DMS-1905459.}\\
Department of Mathematics, Baylor University\\
\\
Sheung Chi Phillip Yam\thanks{Phillip Yam acknowledges the financial supports of HKGRF-14300717 with project title, ``New kinds of forward-backwards stochastic systems with applications.'' and HKGRF-14301321 with project title, ``General Theory for Infinite Dimensional Stochastic Control: Mean Field and Some Classical Problems.''.}\\
Department of Statistics, The Chinese University of Hong Kong
}
\maketitle
\begin{abstract}
We propose a new approach to studying classical solutions of the second order Bellman equation and Master equation for mean field type control problems, using a novel form of the ``lifting'' idea introduced by P.-L.~Lions.
Rather than studying the usual system of Hamilton-Jacobi/Fokker-Planck PDEs using analytic techniques, we instead study a stochastic control problem on a specially constructed Hilbert space, which is reminiscent of a tangent space on the Wasserstein space in optimal transport.
On this Hilbert space we can use classical control theory techniques, despite the fact that it is infinite dimensional.
A consequence of our construction is that the mean field type control problem appears as a special case.
Thus we preserve the advantages
of the lifting procedure, while removing some of the difficulties. 	
Our approach extends previous work by two of the coauthors, which dealt with a deterministic control problem for which the Hilbert space could
be generic \cite{ABY}. 
\end{abstract}

\section{INTRODUCTION }

Mean Field Game/Control theory has made remarkable progress in recent years,
thanks to important contributions, particularly the recent books of
R.~Carmona and F.~Delarue \cite{RCD} and P.~Cardaliaguet, F.~Delarue, J.-M Lasry, and P.-L Lions \cite{CDLL}. Many additional concepts, techniques
and results can be found in the papers of A.~Cosso and H.~Pham \cite{CPH},
H.~Pham and X.~Wei \cite{PHW}, M.F.~Djete, D.~Possamai, and X.~Tan \cite{DPT},
R.~Buckdahn, J.~Li, S.~Peng, and C.~Rainer \cite{BLPR}, R.~Carmona and F.
Delarue \cite{CDF}, C.~Mou and J.~Zhang \cite{MOZ}, and Gangbo and M\'esz\'aros \cite{GM}. All these results
contribute to the rigorous treatment of the Bellman and Master equations of Mean
Field Games and Mean Field Type Control Theory. In this article, we contribute
to this objective with a different vision, extending the theory developed
in the paper of two of the authors, A.~Bensoussan, S.~C.~P.~Yam \cite{ABY}, inspired by the philosophy of the paper of W.~Gangbo and A.~\'{S}wi\k{e}ch \cite{GAS} which handled the first order master equation, while we consider the second order one in this article with analysis based the FBSDE approach. We also refer the reader to \cite{ABY} for the deterministic problem which comes much closer to \cite{GAS}. In \cite{ABY} we have considered an abstract
control problem for a dynamical system whose state space is a Hilbert space.
It was a purely deterministic control problem. The fact that the state
space is infinite dimensional does not prevent the methodology of
control theory to be applicable. We used simple dynamics (since
a major objective was to compare with the approach of W. Gangbo and
A. \'{S}wi\k{e}ch \cite{GAS}) when the Hilbert space is the space
of $L^2$ random variables. Our approach is to implement the interesting idea of ``lifting,'' first introduced by
P.-L.~Lions \cite{PL2} for the purpose of studying derivatives on the space of measures.
This lifting turns out to be quite powerful in obtaining the Bellman equation
and the master equation of mean field games. Conversely, the classical
approach is to use the Wasserstein metric space of probability measures,
since in mean field games and mean field type control problems, the key aspect
is that the payoff functional involves the probability of states. However, a dynamical
system whose state space is not a vector space leads to challenging
difficulties.

Using the Hilbert space of square integrable random
variables simplifies considerably the mathematical technicalities and makes the problem more transparent. The
difficulty is to keep track of the original problem. Is the new control
problem in a Hilbert space of square-integrable random variables equivalent
to the original one? For instance, one has to check whether the dependence
of the value function with respect to the random variable is only
through its probability measure. This, of course, is not needed,
when one solves the problem directly on the Wasserstein space. 

Another difficulty arises when
there are several sources of randomness, one from the state, and
another from a Wiener process of disturbances. So we cannot simply generalize the ``deterministic
case'' dealt with in our previous paper \cite{ABY}. That is, we cannot just
consider a stochastic control problem for a system whose state space
is an arbitrary Hilbert space. There is an interaction between the
randomness of the elements of the Hilbert space and the additional
randomness from the stochastic dynamics.

We have nevertheless tried to implement
the lifting approach, in a previous version of this work, posted
on ArXiv \cite{BGY}, in order to keep most of the advantages of the
deterministic theory, though the Hilbert space cannot be arbitrary. We
have greatly benefited of some quite useful technical results obtained
by R.~Carmona and F.~Delarue \cite{RCD}. We keep the Wasserstein space
of probability measures, but we do not use the concept of Wasserstein
gradient, which seems difficult to extend to the second derivative. We
use the concept of functional derivative, which extends to the second
order. Using the lifting concept of P.-L.~Lions, it is possible to
work with the Hilbert space of square integrable random variables.
The concept of gradient in the Hilbert space is the G\^ateaux derivative.
At this stage an ``almost'' complete equivalence is possible. The
situation does not carry over so nicely to second order derivatives.
So we have a second order G\^ateaux derivative in the Hilbert space
and a second order functional derivative, but not a full equivalence.
On the other hand, when both exist, we have formulas to transform
one concept into the other. In the ArXiv paper \cite{BGY}, we called these formulas
\textit{rules of correspondence}. The advantage of the Hilbert space approach
is that equations can be written in a more synthetic way. From these
equations and the rules of correspondence, the equations with functional
derivatives can be written, but their rigorous study requires a direct
approach and specific assumptions. This is not fully satisfactory,
because our use of the lifting approach was meant precisely to circumvent the direct study of these equations. 

In the present work, we proceed differently. We develop a framework that 
allows us to work in a Hilbert space and use the concept of G\^ateaux derivatives in this Hilbert space. Then we consider a stochastic
control problem in this Hilbert space containing some relevant $L^2$ random variables. We next find that the optimal state and co-state satisfy a system of forward-backward stochastic differential equations in the Hilbert space with the optimal control satisfying the first order condition. By studying the Jacobian flow of the solution to this FBSDE, we can verify  some regularities of the value function, which help us to write the Bellman equation. Finally, the solution to master equation can be established by taking the linear functional derivatives of the Bellman equation and choosing the initial random variable of the FBSDE to be the identity map. Although the estimates in this article are in similar spirit to those in \cite{GAS}, we note that our approach is different from that in \cite{GAS} which comes much closer to \cite{ABY}. In \cite{GAS}, the analysis focuses purely on analytical techniques inherited from the master equation, but not from the FBSDE which is our main emphasis here. We do not claim originality in the result, but in the method. For
instance, in the book by P.~Cardialaguet et al.~\cite{CDLL}, a fully
analytic approach is taken, consisting of starting with the system
of Hamilton-Jacobi-Bellman and Fokker-Planck equations, and the lifting
concept is not used. Our method is new, even with respect to the lifting
concept, since we consider a different Hilbert space, which could be interpreted as a tangent space attached to the initial probability measure $m$; 
indeed, in this approach, we essentially only ``lift'' the random variables generated solely by the Wiener process while we treat the independent initial randomness separately as an isolated functional object.
The advantage
of the new formulation is that, in this way, we can derive all the results using only optimal control theory on a Hilbert space; the original mean field
control problem appears as a particular case. This work aims to provide a new approach based on the Hilbert space argument to solving the mean field type control problems, and resolving  the corresponding Bellman and master equations. We intend to develop the methodology for more general problems, including those with common noise and generic payoff functions, in future work. We refer the readers to the very recent article \cite{BTY} written by us, which deals with the mean field type control problem with generic payoff functions by applying the approach of this current article.

The rest of this article is organized as follows.
In Section \ref{sec:FORMALISM}, we introduce the Wasserstein space of measures and derivatives of functionals defined on this space.
It is here that we introduce a new idea for ``lifting'' functionals to define them on a Hilbert space, which is the foundation for our arguments.
In Section \ref{sec:MEAN-FIELD-TYPE} we introduce an optimal control problem on the Hilbert space introduced in Section \ref{sec:FORMALISM}, with a detailed discussion of the stochastic dynamics.
In Section \ref{sec:value fxn}, we prove several important properties of the value function, which sets up our study of the Bellman equation in Section \ref{sec:bellman}.
Finally, we give an existence result for the Master Equation in Section \ref{sec:master}. Proofs of technical results from Sections \ref{sec:MEAN-FIELD-TYPE}, \ref{sec:value fxn}, and \ref{sec:bellman} are set aside in Appendices.

\section{\label{sec:FORMALISM}FORMALISM }

\subsection{WASSERSTEIN SPACE}

We denote by $\s{P}_2(\bb{R}^n)$ the Wasserstein space of Borel probability measures $m$ on $\bb{R}^n$ such that $\int_{\bb{R}^n} \abs{x}^2 \dif m(x) < \infty$, which is endowed with the metric
\begin{equation} \label{eq:W2 def}
W_2(\mu,\nu) = \sqrt{\inf \cbr{\int \abs{x-y}^2 \dif \pi(x,y) : \pi \in \Pi(\mu,\nu)}},
\end{equation}
where $\Pi(\mu,\nu)$ is the space of all Borel probability measures on $\bb{R}^n \times \bb{R}^n$ whose first and second marginals are $\mu$ and $\nu$, respectively.
we shall often use an alternative definition of $W_2$, given as follows.
Consider an atomless
probability space $(\Omega,\mathcal{A},\bb{P}),$ and on it the space $L^{2}(\Omega,\mathcal{A},\bb{P};\bb{R}^{n})$
of square integrable random variables with values in $\bb{R}^n.$
For $X \in L^{2}(\Omega,\mathcal{A},\bb{P};\bb{R}^{n})$ we denote by $\s{L}_X$ the law of $X$, given by $\s{L}_X(A) = \bb{P}(X \in A)$.
To
any $m$ in $\mathcal{P}_{2}(\bb{R}^n),$ one can find a random variable
$X_{m}$ in $L^{2}(\Omega,\mathcal{A},\bb{P};\bb{R}^{n})$ such that $\mathcal{L}_{X_{m}}=m$. 
We then have 
\begin{equation}
W_{2}^{2}(m,m')=\inf_{\mathcal{L}_{X_{m}} = m, \ 
\mathcal{L}_{X_{m'}} = m'}\bb{E}[|X_{m}-X_{m'}|^{2}]\label{eq:2-100}
\end{equation}
The infimum is attained, so we can find $\hat{X}_{m}$ and $\hat{X}_{m'}$
in $L^{2}(\Omega,\mathcal{A},\bb{P};\bb{R}^{n})$ such that 
\begin{equation}
W_{2}^{2}(m,m')=\bb{E}[|\hat{X}_{m}-\hat{X}_{m'}|^{2}]\label{eq:101}
\end{equation}
A sequence $\{m_{k}\}$ converges to $m$ in $\mathcal{P}_{2}(\bb{R}^n)$ if
and only if it converges in the sense of weak convergence and 
\begin{equation}
\int_{\bb{R}^{n}}|x|^{2}\dif m_{k}(x)\rightarrow\int_{\bb{R}^{n}}|x|^{2}\dif m(x)\label{eq:1010}
\end{equation}
It is also important to indicate the following compactness result
of the space $\mathcal{P}_{2}(\bb{R}^n)$: 

\begin{equation}
\text{A family \ensuremath{\{m_{k}\}} is relatively compact in \ensuremath{\mathcal{P}_{2}(\bb{R}^n)} if \ensuremath{\sup_{k}\bb{E}[|X_{m_{k}}|^{\beta}]<+\infty}}, \text{for some } \beta>2. \label{eq:2-102}
\end{equation}
We refer to Carmona-Delarue \cite{RCD} for details. 

\subsection{FUNCTIONALS AND THEIR DERIVATIVES}

Consider a functional $F(m)$ on $\mathcal{P}_{2}(\bb{R}^{n}).$ Continuity
is clearly defined by the metric. For the concept of derivative in
$\mathcal{P}_{2}(\bb{R}^{n}),$ we use the concept of functional derivative.
\begin{defn} \label{def:functional derivative}
	We say $F$ is \emph{continuously differentiable}  provided there exists a continuous function $\dod{F}{m}:\mathcal{P}_2(\mathbb{R}^n) \times \bb{R}^n \to \bb{R}$ such that, for some $c:\s{P}_2(\bb{R}^{n}) \to \intco{0,\infty}$ that is bounded on bounded subsets, we have
\begin{equation}
\label{eq:derivative moment}
\abs{\dod{F}{m}(m,x)} \leq c(m)\del{1+\abs{x}^2}
\end{equation}
and
\begin{equation}\label{eq:functional derivative}
\lim_{\epsilon \to 0} \frac{F(m+\epsilon(m'-m)) - F(m)}{\epsilon} = \int \od{F}{m}(m,x)\dif \h{1.5pt} (m'-m)(x)
\end{equation}
for any $m' \in \mathcal{P}_2(\mathbb{R}^n)$.
Since $\dod{F}{m}$ is unique only up to a constant, we require the normalization condition
\begin{equation}
\label{eq:normalization}
\int \od{F}{m}(m,x)\dif m(x) = 0,
\end{equation}
which in particular ensures the functional derivative of a constant is $0.$ 
\end{defn}
Thanks to \eqref{eq:derivative moment},
$x \mapsto \dod{F}{m}(m,x)$ is in $L_{m}^{2}(\bb{R}^{n}) := \{\varphi(\cdot)|\int_{\bb{R}^{n}}|\varphi(x)|^{2}\dif m(x)<\infty\},$
and by a slight abuse of notation we shall denote $\dod{F}{m}(m)(x) := \dod{F}{m}(m,x)$.
Note that the definition
\eqref{eq:functional derivative} implies 
\begin{equation}
\dod{}{\theta}F(m+\theta(m'-m))=\int_{\bb{R}^{n}}\dod{F}{m}(m+\theta(m'-m))(x)\dif \h{1.5pt} (m'-m)(x)\label{eq:2-210}
\end{equation}
 and 
\begin{equation}
F(m')-F(m)=\int_{0}^{1}\int_{\bb{R}^n}\dod{F}{m}(m+\theta(m'-m))(x)\dif \h{1.5pt} (m'-m)(x)\dif \theta\label{eq:2-211}
\end{equation}
We prefer the notation $\dod{F}{m}(m)(x)$ to $\dfrac{\delta F}{\delta m}(m)(x)$
used in R.Carmona-F. Delarue \cite{RCD}, because there is no risk
of confusion and it works pretty much like an ordinary G\^{a}teaux derivative.
We can proceed with the second order functional derivative.
\begin{defn}
	\label{def:2nd functional derivative}
We say $F$ is \emph{twice continuously differentiable} provided there exists a continuous function $\dod[2]{F}{m}:\mathcal{P}_2(\mathbb{R}^n) \times \bb{R}^n \times \bb{R}^n \to \bb{R}$ such that, for some $c:\s{P}_2(\bb{R}^{n}) \to \intco{0,\infty}$ that is bounded on bounded subsets,
\begin{equation}
\label{eq:2nd derivative moment}
\abs{\dod[2]{F}{m}(m,x,\tilde{x})} \leq c(m)\del{1+\abs{x}^2 + \abs{\tilde{x}}^2}
\end{equation}
and
\begin{multline} \label{eq:2nd functional derivative}
\lim_{\epsilon \to 0} \frac{1}{\epsilon}\int \del{\dod{F}{m}(m+\epsilon( m'-m),x) - \dod{F}{m}(m,x)}\dif \h{1.5pt} (m'-m)(x)\\ = \iint \dod[2]{F}{m}(m,x,\tilde{x})\dif \h{1.5pt} (m'-m)(x)\dif \h{1.5pt} ( m'-m)(\tilde{x})
\end{multline}
for any $m', m \in \mathcal{P}_2(\mathbb{R}^n)$.
To ensure $\dod[2]{F}{m}(m,x,\tilde{x})$ is uniquely defined, we shall use the normalization convention
\begin{equation}
\int \dod[2]{F}{m}(m,x,\tilde{x})\dif m(\tilde{x}) = 0 \ \forall x, \quad 
\int \dod[2]{F}{m}(m,x,\tilde{x})\dif m(x) = 0 \ \forall \tilde{x}.
\end{equation}
\end{defn}
Again, we shall write $\dod[2]{F}{m}(m,x,\tilde{x}) = \dod[2]{F}{m}(m)(x,\tilde{x})$, where we note that $\dod[2]{F}{m}(m) \in L^2_{m \times m}(\bb{R}^n \times \bb{R}^n ;\bb{R})$.
We have also 
\begin{equation}
\od[2]{}{\theta}F(m+\theta(m'-m))=\int_{\bb{R}^n}\int_{\bb{R}^n}\od[2]{F}{m}(m+\theta(m'-m))(x,\tilde{x})\dif \h{1.5pt} (m'-m)(x)\dif \h{1.5pt} (m'-m)(\tilde{x})\label{eq:2-213}
\end{equation}
and 
\begin{multline}\label{eq:2-214}
F(m')-F(m)=\int_{\bb{R}^n}\dod{F}{m}(m)(x)\dif \h{1.5pt} (m'-m)(x)\\
+\int_{0}^{1}\int_{0}^{1} \int_{\bb{R}^n}\int_{\bb{R}^n} \theta\dod[2]{F}{m}(m+\lambda\theta(m'-m))(x,\tilde{x})\dif \h{1.5pt} (m'-m)(x)\dif \h{1.5pt} (m'-m)(\tilde{x})\dif \lambda \dif \theta.
\end{multline}
If $F$ is twice continuously differentiable, then standard arguments show that $\dod[2]{F}{m}(m)$ is symmetric, i.e.
$$\dod[2]{F}{m}(m)(x,\tilde{x}) = \dod[2]{F}{m}(m)(\tilde{x},x).$$

\begin{rem}
In general, $\dfrac{d^2 F}{dm^2}(m)(x,\widetilde{x})$ is defined up to a constant in form of $c_1(m,x) + c_2(m,\widetilde{x})$ due to its definition. However, if we take the normalizations $\displaystyle\int_{\mathbb{R}^n} \dfrac{d^2 F}{dm^2}(m)(x,\widetilde{x})dm(x)=0$ for all $\widetilde{x} \in \mathbb{R}^n$ and $\displaystyle\int_{\mathbb{R}^n} \dfrac{d^2 F}{dm^2}(m)(x,\widetilde{x})dm(\widetilde{x})=0$ for all $x \in \mathbb{R}^n$, we shall have the symmetry result $ \dfrac{d^2 F}{dm^2}(m)(x,\widetilde{x})= \dfrac{d^2 F}{dm^2}(m)(\widetilde{x},x)$. We refer to \cite{BGY} for the symmetry result in this situation. Therefore, without these normalizations, we shall have Lemma 2.2.4 in the book \cite{CDLL} instead of the symmetry result.
\end{rem}
We conclude this subsection with a remark on the notation of derivatives with respect to variables in $\bb{R}^n$.
If $\bb{R}^n \ni x \mapsto \dod{F}{m}(m)(x)$ is differentiable, we denote its derivative by $D\dod{F}{m}(m)(x)$.
Further if $\bb{R}^n \times \bb{R}^n \ni (x_1,x_2) \mapsto \dod[2]{F}{m}(m)(x_1,x_2)$ is differentiable, we denote by $D_1  \dod[2]{F}{m}(m)(x_1,x_2)$ and $D_2  \dod[2]{F}{m} (m)(x_1,x_2)$ its derivatives with respect to $x_1$ and $x_2$, respectively.
Higher-order derivatives for $\dod{F}{m}(m)(\cdot)$ will simply denoted $D^k$ for $k =1,2,\ldots$, while for $\dod[2]{F}{m}(m)(\cdot)$ we shall use the partial derivatives $D_1^k$ and $D_2^\ell$.

\subsection{\label{subsec:HILBERT-SPACE}HILBERT SPACE}

Let $(\Omega,\s{A},\bb{P})$ be an atomless probability space.
For $m \in \mathcal{P}_2(\mathbb{R}^n)$, let $\s{H}_m := L^2(\Omega,\s{A},\bb{P};L_m^2(\bb{R}^n;\bb{R}^n))$ where $L_m^2(\bb{R}^n;\bb{R}^n)$ is simply the set of all measurable vector fields $\Phi$ such that $\int \abs{\Phi(x)}^2 \dif m(x) < \infty$.
On $\s{H}_m$ we define the inner product
\begin{equation}
\ip{X}{Y}_{\s{H}_m} = \bb{E} \int X(x) \cdot Y(x) \dif m(x) = \int_\Omega \int_{\bb{R}^n} X(\omega,x)\cdot Y(\omega,x)\dif m(x) \dif \bb{P}(\omega).
\end{equation}
The corresponding norm is given by $\enVert{X}_{\s{H}_m} = \sqrt{\ip{X}{X}_{\s{H}_m}}$.
When it is sufficiently clear which inner product we mean, we shall often drop the subscript $\s{H}_m$.

There is a natural isometric isomorphism between $\s{H}_m$ and $L^2(\Omega \times \bb{R}^n, \s{A} \otimes \s{B},\bb{P} \times m;\bb{R}^n)$, where $\s{B}$ is the Borel $\sigma$-algebra on $\bb{R}^n$.
Note that the product measure $\bb{P} \times m$ is a probability measure on $\Omega \times \bb{R}^n$.
If we consider the law of $X \in \s{H}_m$ viewed as an $L^2$ random variable on this product space, we arrive at the following definition:
\begin{defn} \label{def:X otimes m}
Let $m \in \mathcal{P}_2(\mathbb{R}^n)$, $X \in \s{H}_m$.
	We define $X  \otimes  m \in \mathcal{P}_2(\mathbb{R}^n)$ to be $X \otimes m = X \sharp (\mathbb{P} \times m)$ where $\sharp$ is the push-forward operator. That is, for all continuous functions $\phi:\bb{R}^n \to \bb{R}$ such that $x \mapsto \frac{\abs{\phi(x)}}{1+\abs{x}^2}$ is bounded, we have
	\begin{equation} \label{eq:X otimes m}
	\int_{\bb{R}^n} \phi(x)\dif \h{1.5pt} (X  \otimes  m)(x) = \bb{E} \intcc{\int_{\bb{R}^n} \phi\del{X(x)}\dif m(x)} = \int_\Omega \int_{\bb{R}^n} \phi\del{X(\omega,x)}\dif m(x)\dif \bb{P}(\omega).
	\end{equation}
\end{defn}
\begin{rem}
By definition, $X  \otimes  m$ is the law of $X$ when it is viewed as a random variable on the probability space $(\Omega \times \mathbb{R}^n,\mathcal{A} \otimes \mathcal{B}, \mathbb{P} \times m)$. Thus we may view $X$ as a ``lifting'' of the measure $X  \otimes  m$ to a space of random variables. However, notice that, contrary to the usual ``lifting'', this interpretation is completely independent of the measure $m$ itself; that is, $X$ is a lifting of the measure $X  \otimes  m$, provided $X \in \mathcal{H}_m$, for
any $m \in \mathcal{P}_2(\mathbb{R}^n)$ whatsoever. This is because the probability space on which $X$ is taken to be a random variable
depends on $m$. In particular, if $X \in \cap_{\mu \in \mathcal{P}_2(\mathbb{R}^n)} \mathcal{H}_\mu$, it follows that $X$ and $m$ in the expression $X  \otimes  m$ are
independent, which explains the notation of  $X  \otimes  m$ as a tensor product.
\end{rem}
Let us notice that if $X$ is deterministic, i.e.~not $\omega$-dependent, then $X  \otimes  m (E) := m(X^{-1}(E))$ is the push-forward of $m$ through $X$.
In fact, the following lemma helps show how much $ \otimes $ behaves like the push-forward operator:
\begin{lem}
	Let $X\in \s{H}_m$,  and a collection of $Z_1,\, Z_2,\ldots,Z_i,\ldots\in L^2(\Omega,\s{A},\bb{P};\bb{R}^n)$ such that $X$ and all these $Z_i$'s are independent random elements to each other. Also let $f:\mathbb{R}^n\times \mathbb{R}^n\times \cdot\cdot\cdot \times \mathbb{R}^n\times \cdot\cdot\cdot \rightarrow\mathbb{R}^n$ be a bounded measurable function such that $Y:(\omega,y)\mapsto f(y,Z_1(\omega),...,Z_i(\omega),...)\in\s{H}_{X  \otimes  m}$. Now consider $(Y\circ X)(\omega,x)=Y(y,\omega)\big|_{y=X(\omega,x)}=f(X(\omega,x),Z_1(\omega),...,Z_i(\omega),...)$ which clearly belongs to  $\s{H}_m$ and then $(Y \circ X)  \otimes  m = Y  \otimes  (X  \otimes  m)$.
\end{lem}

\begin{proof}
	Let $\phi:\bb{R}^n \to \bb{R}$ be a continuous functions such that $x \mapsto \frac{\abs{\phi(x)}}{1+\abs{x}^2}$ is bounded and $\s{Z}(\omega):=(Z_1(\omega),...,Z_i(\omega),...)$, then we have
	\begin{align}\nonumber
	\int_{\mathbb{R}^n} \phi(z) \dif\del{(Y \circ X)  \otimes  m(z)}
	=& \mathbb{E}\left[\int_{\mathbb{R}^n} \phi\del{Y\left( X(\omega,x),\s{Z}(\omega)\right)} \dif m(x)\right]\\\nonumber
	=&\mathbb{E}\left[ \int_{\mathbb{R}^n} \phi\del{Y\del{y,\s{Z}(\omega)}}\dif \h{1pt}(X  \otimes  m)(y)\right]\\
	=& \int_{\mathbb{R}^n} \phi(z) \dif\del{Y  \otimes  (X  \otimes  m)(z)}.
	\end{align}
\end{proof}
Using properties of the push-forward, we exhibit two useful examples:
\begin{example} \label{ex:two push-forwards}
	(i) If $X(x) = x$ is the identity map, then $X  \otimes  m = m$.
	
	(ii) If $X(x) = a$ is a constant map, then $X  \otimes  m = \delta_a$, the Dirac delta mass concentrated at $a$.
\end{example}

As before, let $F:\mathcal{P}_2(\mathbb{R}^n) \to \bb{R}$.
For every $m \in \mathcal{P}_2(\mathbb{R}^n)$, the map $X \mapsto F(X  \otimes  m)$ is a functional on $\s{H}_m$.
By an abuse of notation, we shall now think of $F(X  \otimes  m)$ as a function of two variables, taking care to remember that $X$ is always attached to $m$ in the sense that $X \in \s{H}_m$.

\begin{lem}
	The map $X \mapsto X  \otimes  m$ is 1-Lipschitz from $\s{H}_m$ to $\mathcal{P}_2(\mathbb{R}^n)$.
	Thus if $F:\mathcal{P}_2(\mathbb{R}^n) \to \bb{R}$ is continuous, then for a fixed $m \in \mathcal{P}_2(\mathbb{R}^n)$, the map $\s{H}_m \to \bb{R}$ given by $X \mapsto F(X  \otimes  m)$ is also continuous.
\end{lem}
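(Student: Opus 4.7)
The plan is to use the original coupling-based definition \eqref{eq:W2 def} of $W_2$ and build a candidate coupling directly from $X$ and $Y$ viewed as random variables on the enlarged probability space $(\Omega \times \bb{R}^n, \s{A} \otimes \s{B}, \bb{P} \times m)$. Namely, given $X, Y \in \s{H}_m$, I would consider the pair $(X,Y):\Omega \times \bb{R}^n \to \bb{R}^n \times \bb{R}^n$ and let $\pi$ be its law under $\bb{P} \times m$. By Definition \ref{def:X otimes m}, the first marginal of $\pi$ is $X \otimes m$ and the second is $Y \otimes m$, so $\pi \in \Pi(X \otimes m, Y \otimes m)$.

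Once this coupling is in place, the main inequality is an immediate application of the definition:
\begin{equation}
W_2^2(X \otimes m, Y \otimes m) \leq \int_{\bb{R}^n \times \bb{R}^n} \abs{x-y}^2 \dif \pi(x,y) = \int_\Omega \int_{\bb{R}^n} \abs{X(\omega,x) - Y(\omega,x)}^2 \dif m(x) \dif \bb{P}(\omega) = \enVert{X-Y}_{\s{H}_m}^2,
\end{equation}
which is exactly the 1-Lipschitz property. Taking square roots yields $W_2(X \otimes m, Y \otimes m) \leq \enVert{X-Y}_{\s{H}_m}$.

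For the second assertion, the 1-Lipschitz property immediately gives that $X \mapsto X \otimes m$ is continuous from $\s{H}_m$ to $\s{P}_2$; composing with the continuous map $F:\s{P}_2 \to \bb{R}$ gives continuity of $X \mapsto F(X \otimes m)$.

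The only conceptual point that needs care is the identification of the marginals of $\pi$: one must verify, using the definition of $X \otimes m$ via duality against bounded continuous (or bounded Borel) test functions, that indeed the first marginal of the pushforward of $\bb{P} \times m$ under $(X,Y)$ is $X \otimes m$, and similarly for $Y$. This is a direct application of Fubini's theorem and essentially the content of Definition \ref{def:X otimes m}, so I do not foresee any real obstacle — the proof is short and largely formal.
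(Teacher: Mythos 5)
Your proposal is correct and follows essentially the same route as the paper: the paper also constructs the coupling $\pi$ as the joint law of $(X,Y)$ on $(\Omega\times\bb{R}^n,\bb{P}\times m)$, verifies its marginals are $X\otimes m$ and $Y\otimes m$, and bounds $W_2^2$ by $\enVert{X-Y}_{\s{H}_m}^2$. No gaps.
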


\begin{proof}
	Let $X,Y \in \s{H}_m$.
	Let $\pi \in \Pi(X  \otimes  m,Y  \otimes  m)$ be given by
	\begin{equation}
	\int \phi(x,y)\dif \pi(x,y) = \bb{E}\int \phi\del{X(x),Y(x)}\dif m(x).
	\end{equation}
	Since the right-hand side is a bounded, non-negative linear functional on the space of continuous functions, by the Riesz representation theorem this defines a unique measure on $\bb{R}^n \times \bb{R}^n$.
	Moreover, the first marginal of $\pi$ is indeed $X  \otimes  m$ because
	\begin{equation}
	\int \phi(x)\dif \pi(x,y) = \bb{E}\int \phi\del{X(x)}\dif m(x)
	= \int \phi(x)\dif \h{1.5pt} (X  \otimes  m)(x)
	\end{equation}
	and likewise the second marginal of $\pi$ is $Y  \otimes  m$ because
	\begin{equation}
	\int \phi(y)\dif \pi(x,y) = \bb{E}\int \phi\del{Y(x)}\dif m(x)
	= \int \phi(x)\dif \h{1.5pt} (Y  \otimes  m)(x).
	\end{equation}
	It follows that
	\begin{equation}
	W_2^2(X  \otimes  m,Y  \otimes  m) \leq \int \abs{x-y}^2\dif \pi(x,y) = \bb{E}\int \abs{X(x)-Y(x)}^2\dif m(x) = \enVert{X-Y}_{\s{H}_m}^2.
	\end{equation}
\end{proof}

\begin{defn} \label{def:partial X}
	Let $F:\mathcal{P}_2(\mathbb{R}^n) \to \bb{R}$, and consider its extension $F(X  \otimes  m)$.
	For any $m \in \mathcal{P}_2(\mathbb{R}^n)$, we define the ``partial derivative'' of $F$ with respect to $X \in \mathcal{H}_m$ as the unique element $D_X F(X  \otimes  m)$ of $\s{H}_m$, if it exists, such that
	\begin{equation}
	\lim_{\epsilon \to 0} \frac{F\del{(X+\epsilon Y)  \otimes  m} - F(X  \otimes  m)}{\epsilon} = \ip{D_X F(X  \otimes  m)}{Y} \ \forall Y \in \s{H}_m.
	\end{equation}
\end{defn}
Note that the ``partial derivative'' in Definition \ref{def:partial X} is the usual G\^ateaux derivative in a Hilbert space.
It is not a true partial derivative in the sense that it is not independent of the variable $m$; in this sense it is more akin to a tangent vector, where $\s{H}_m$ is likened to a tangent space. 
The following proposition characterizes $D_X F$ in a more elementary way. 
\begin{prop} \label{prop:D_XF}
	Let $F:\mathcal{P}_2(\mathbb{R}^n) \to \bb{R}$ be continuously differentiable and assume $x \mapsto \od{F}{m}(m,x)$ is continuously differentiable in $\bb{R}^n$.
	Assume that its derivative $D \od{F}{m}(m)(x)$ is continuous in both $m$ and $x$ with
	\begin{equation} \label{eq:derivative growth}
	\abs{D \dod{F}{m}(m)(x)} \leq c(m)\del{1+\abs{x}}
	\end{equation}
	for some constant $c(m)$ depending only on $m$.
	Then
	\begin{equation} \label{eq:partial X derivative identity}
	D_XF(X  \otimes  m) = D \od{F}{m}(X  \otimes  m)(X(\cdot)).
	\end{equation}
\end{prop}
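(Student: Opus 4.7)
The plan is to compute the G\^ateaux derivative in Definition \ref{def:partial X} by combining the fundamental-theorem-of-calculus identity \eqref{eq:2-211} for $F$ along the segment from $m_0 := X \otimes m$ to $m_\epsilon := (X+\epsilon Y)\otimes m$ with a first-order Taylor expansion of $y \mapsto \dod{F}{m}(\cdot)(y)$ along the segment from $X(\omega,x)$ to $X(\omega,x)+\epsilon Y(\omega,x)$. Concretely, I would first write
\begin{equation}
F(m_\epsilon)-F(m_0)=\int_0^1\int_{\bb{R}^n}\dod{F}{m}\del{m_0+\theta(m_\epsilon-m_0)}(y)\dif(m_\epsilon-m_0)(y)\dif\theta,
\end{equation}
then use Definition \ref{def:X otimes m} to rewrite the inner signed integral as $\bb{E}\int\intcc{\varphi_\theta(X(x)+\epsilon Y(x))-\varphi_\theta(X(x))}\dif m(x)$ with $\varphi_\theta:=\dod{F}{m}(m_0+\theta(m_\epsilon-m_0))(\cdot)$, and finally invoke the $C^1$ hypothesis on $\varphi_\theta$ to replace the bracketed difference by $\epsilon\int_0^1 D\varphi_\theta(X(x)+s\epsilon Y(x))\cdot Y(x)\dif s$, producing the $\epsilon$ factor that cancels against the one in the difference quotient.

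After dividing by $\epsilon$, it remains to pass $\epsilon\to 0$ inside the resulting triple integral over $(\theta,s,\omega,x)$, whose integrand is
\begin{equation}
D\dod{F}{m}\del{m_0+\theta(m_\epsilon-m_0)}\del{X(x)+s\epsilon Y(x)}\cdot Y(x).
\end{equation}
For this I would appeal to dominated convergence. The 1-Lipschitz lemma preceding Definition \ref{def:partial X} gives $W_2(m_\epsilon,m_0)\le\epsilon\enVert{Y}_{\s{H}_m}$, so the measures $m_0+\theta(m_\epsilon-m_0)$ for $(\theta,\epsilon)\in[0,1]\times(0,1]$ lie in a $W_2$-bounded set; since $c$ in \eqref{eq:derivative growth} is bounded on bounded sets, the growth bound yields the envelope
\begin{equation}
\abs{D\dod{F}{m}\del{m_0+\theta(m_\epsilon-m_0)}\del{X(x)+s\epsilon Y(x)}\cdot Y(x)}\le C\del{1+\abs{X(x)}+\abs{Y(x)}}\abs{Y(x)},
\end{equation}
which is $\bb{P}\times m$-integrable by Cauchy--Schwarz since $X,Y\in\s{H}_m$. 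Joint continuity of $D\dod{F}{m}$ in $(m,y)$ then delivers the pointwise limit $D\dod{F}{m}(m_0)(X(x))\cdot Y(x)$, so passing to the limit yields
\begin{equation}
\lim_{\epsilon\to 0}\frac{F(m_\epsilon)-F(m_0)}{\epsilon} = \bb{E}\int D\dod{F}{m}(X\otimes m)(X(x))\cdot Y(x)\dif m(x) = \ip{D\dod{F}{m}(X\otimes m)(X(\cdot))}{Y}_{\s{H}_m},
\end{equation}
which by Definition \ref{def:partial X} gives \eqref{eq:partial X derivative identity}; membership of $D\dod{F}{m}(X\otimes m)(X(\cdot))$ in $\s{H}_m$ follows from the same growth bound \eqref{eq:derivative growth} combined with $X \in \s{H}_m$.

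The main obstacle is really just the production of a uniform-in-$(\theta,s,\epsilon)$ integrable envelope for dominated convergence, which hinges on two tightly coupled facts: the Lipschitz continuity of $X\mapsto X\otimes m$ (keeping the measures $m_0+\theta(m_\epsilon-m_0)$ in a $W_2$-bounded family) and the boundedness of the coefficient $c$ on $W_2$-bounded subsets of $\s{P}_2$. Neither is individually subtle, but their combined use is precisely what makes this Hilbert-space derivative compatible with the underlying functional derivative; absent either, the $c(m_\theta)$ prefactor could blow up as $\epsilon\to 0$ and the limit identification would fail.
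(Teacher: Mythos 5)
Your proposal is correct and follows essentially the same route as the paper's proof: the fundamental-theorem-of-calculus identity \eqref{eq:2-211} applied between $X\otimes m$ and $(X+\epsilon Y)\otimes m$, rewriting the signed-measure integral via Definition \ref{def:X otimes m}, and passing to the limit using the continuity and growth of $D\dod{F}{m}$. The only difference is that you spell out the Taylor expansion and the dominated-convergence envelope explicitly, whereas the paper compresses this into ``using the continuity of $D\dod{F}{m}$.''
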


Before we move on to its proof, it is worth clarifying a few concepts as follows. First, one can compare this result with Proposition 2.2.3 in \cite{CDLL}; just take $X(x)=x$. Second, we want to make a subtle point about the smoothness of $F(X \otimes m)$.
Suppose $F:\mathcal{P}_2(\mathbb{R}^n) \to \bb{R}$ is continuously differentiable.
Nevertheless, $m\in\mathcal{P}_2(\mathbb{R}^n)\mapsto X  \otimes  m\in\mathcal{P}_2(\mathbb{R}^n)$ is in general not continuous in the Wasserstein space, even if $X\in\cap_{m\in\mathcal{P}_2(\mathbb{R}^n)}\s{H}_m$.
Thus one cannot regard $m\in\mathcal{P}_2(\mathbb{R}^n)\mapsto F(X  \otimes  m)\in\bb{R}$ as a smooth composite function; it may not even be continuous! With these remarks in mind, the new idea in our present article is that one can consider $F(X  \otimes  m)$ as a function $F(X,m)$ that has two separate variables $X$ and $m$, although we do not require $X$ and $m$ to be completely independent. We consider separately its partial derivatives with respect to $X$ and $m$, which are denoted by $D_XF(X  \otimes  m)$ and $\pd{F}{m}(X  \otimes  m)(x)$, respectively. In Definition \ref{def:partial X}, we consider $F(X  \otimes  m)$ as a function $F^m:X\in \s{H}_m\mapsto F(X  \otimes  m)\in \bb{R}$ induced by a fixed $m\in\mathcal{P}_2(\mathbb{R}^n)$, and we take G\^ateaux derivative with respect to $X$ in $\s{H}_m$. Our Proposition \ref{prop:D_XF} shows that, when $F$ is sufficiently smooth, this G\^ateaux derivative in $\s{H}_m$ is indeed the $L$-derivative of $F:\mathcal{P}_2(\mathbb{R}^n) \to \bb{R}$ composed with $X\mapsto X  \otimes  m$.
To see this, we need not refer to the ``lifted'' function $\tilde F(X) = F(\s{L}_X)$, as is the usual approach.
Instead, the measure $X \otimes m$ encodes both measure dependence and, in a partial derivative sense, Hilbert space dependence. In the later Definition \ref{def:partial functional derivative}, we also consider $F(X  \otimes  m)$ as a function $F^X:m\in \mathcal{P}_2(\mathbb{R}^n)\mapsto F(X  \otimes  m)\in \bb{R}$ induced by a fixed $X\in\cap_{m \in \mathcal{P}_2(\mathbb{R}^n)} \s{H}_m$, and we take linear functional derivative as defined in Definition \ref{def:functional derivative} with respect to $m$ in $\mathcal{P}_2(\mathbb{R}^n)$.
Thus with a single formalism, $F(X \otimes m)$, the $L$-derivative and linear functional derivative both appear naturally as ``partial derivatives''.

\begin{proof}[Proof of Proposition \ref{prop:D_XF}]
	Note that, by \eqref{eq:derivative growth}, $D \od{F}{m}(X  \otimes  m,X(\cdot)) \in \s{H}_m$ for any $X \in \s{H}_m$.
	Let $Y \in \s{H}_m$ be arbitrary.
	For $\epsilon \neq 0$, let $\mu = (X+\epsilon Y)  \otimes  m, \nu = X  \otimes  m$, and for $t \in [0,1]$ set $\nu_t = \nu + t(\mu-\nu)$.
	Then we have
	\begin{equation}
	\begin{aligned}
	\frac{1}{\epsilon}\del{F\del{(X+\epsilon Y)  \otimes  m} - F(X  \otimes  m)}
	&= \frac{1}{\epsilon}\int_0^1 \int_{\bb{R}^n}\dod{F}{m}(\nu_t,x)\dif \h{1.5pt} (\mu-\nu)(x)\dif t\\
	&= \frac{1}{\epsilon}\bb{E}\int_0^1 \int_{\bb{R}^n}\del{\dod{F}{m}\del{\nu_t,X(x)+\epsilon Y(x)}-\dod{F}{m}\del{\nu_t,X(x)}}\dif m(x)\dif t\\
	&\to \bb{E} \int_{\bb{R}^n}D\dod{F}{m}\del{X  \otimes  m,X(x)} \cdot Y(x)\dif m(x)\\
	&= \ip{D\dod{F}{m}\del{X  \otimes  m,X(\cdot)}}{Y}_{\s{H}_m}
	\end{aligned}
	\end{equation}
	using the continuity of $D \od{F}{m}$.
\end{proof}
A special case of \eqref{eq:partial X derivative identity} is when $X$ is the identity, i.e.~$X(x) = x$.
In this case $X  \otimes  m = m$ (Example \ref{ex:two push-forwards}), and thus \eqref{eq:partial X derivative identity} implies
\begin{equation}
D_X F(m) = D \od{F}{m}(m)(X(\cdot)).
\end{equation}
This is precisely the $L$-derivative, cf.~\cite{RCD,CDLL}.

We now want to consider $F(X  \otimes  m)$ as $m$ varies but $X$ is fixed.
To do this we should restrict $X$ so that $X \in \cap_{m \in \mathcal{P}_2(\mathbb{R}^n)} \s{H}_m$.
The following observation is useful.
\begin{lem} \label{lem:intersect Hm}
	Let $X:\Omega \times \bb{R}^n \to \bb{R}^n$ be a $(\s{A} \otimes \s{B},\s{B})$ measurable vector field (where $\s{B}$ is the Borel $\sigma$-algebra on $\bb{R}^n$) such that
	\begin{equation} \label{eq:X square growth}
	\bb{E}\abs{X(x)}^2 \leq c(X)\del{1+\abs{x}^2} \ \forall x \in \bb{R}^n,
	\end{equation}
	where $c(X)$ is a constant depending only on $X$.
	Then $X \in \cap_{m \in \mathcal{P}_2(\mathbb{R}^n)} \s{H}_m$.
\end{lem}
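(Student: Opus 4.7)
The plan is to verify the defining condition of $\s{H}_m$ directly via Tonelli's theorem and the growth hypothesis. Fix $m \in \s{P}_2(\bb{R}^n)$. Since $X$ is $(\s{A} \otimes \s{B}, \s{B})$-measurable, the function $(\omega,x) \mapsto |X(\omega,x)|^2$ is non-negative and $(\s{A} \otimes \s{B}, \s{B}(\bb{R}))$-measurable. Tonelli's theorem then applies to the product measure $\bb{P} \times m$, allowing me to compute
\begin{equation*}
\enVert{X}_{\s{H}_m}^2 = \bb{E}\int_{\bb{R}^n} |X(x)|^2 \dif m(x) = \int_{\bb{R}^n} \bb{E}|X(x)|^2 \dif m(x)
\end{equation*}
without any integrability concern at this stage, since the integrand is non-negative.

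Next I apply the growth bound \eqref{eq:X square growth} pointwise in $x$ to the inner expectation, obtaining
\begin{equation*}
\int_{\bb{R}^n} \bb{E}|X(x)|^2 \dif m(x) \leq c(X)\int_{\bb{R}^n} \del{1+|x|^2}\dif m(x) = c(X)\del{1 + \int_{\bb{R}^n}|x|^2 \dif m(x)}.
\end{equation*}
Since $m \in \s{P}_2(\bb{R}^n)$ by assumption, the second moment $\int |x|^2 \dif m(x)$ is finite, so the right-hand side is finite. This shows $\enVert{X}_{\s{H}_m} < \infty$, and hence $X \in \s{H}_m$. As $m$ was arbitrary, $X \in \cap_{m \in \s{P}_2} \s{H}_m$.

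There is no real obstacle in this proof; the only thing to be careful about is that the joint measurability of $X$ is precisely what legitimizes the use of Tonelli and guarantees that $x \mapsto |X(\omega,x)|^2$ is $m$-measurable for $\bb{P}$-a.e.~$\omega$, and that $\omega \mapsto \int|X(\omega,x)|^2 \dif m(x)$ is $\s{A}$-measurable. The growth condition then does all the remaining work by being compatible with the defining second-moment condition of $\s{P}_2$.
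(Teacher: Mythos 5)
Your proof is correct and follows essentially the same route as the paper: interchange the order of integration (Tonelli), apply the pointwise growth bound, and invoke finiteness of the second moment of $m$. The extra remarks on joint measurability simply make explicit what the paper leaves implicit.
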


\begin{proof}
	Observe that if $m \in \mathcal{P}_2(\mathbb{R}^n)$ then
	\begin{equation}
	\enVert{X}_{\s{H}_m}^2 = \bb{E}\int \abs{X(x)}^2 \dif m(x) = \int \bb{E}\abs{X(x)}^2 \dif m(x)
	\leq \int c(X)\del{1+\abs{x}^2} \dif m(x) < \infty,
	\end{equation}
	and thus $X \in \s{H}_m$ for arbitrary $m \in \mathcal{P}_2(\mathbb{R}^n)$.
\end{proof}

\begin{defn} \label{def:partial functional derivative}
	Let $F:\mathcal{P}_2(\mathbb{R}^n) \to \bb{R}$ and let $X \in \cap_{m \in \mathcal{P}_2(\mathbb{R}^n)} \s{H}_m$.
	We define the partial derivative of $F(X  \otimes  m)$ with respect to $m$, denoted $\pd{F}{m}(X  \otimes  m)(x)$, to be the derivative of $m \mapsto F(X  \otimes  m)$ in the sense of Definition \ref{def:functional derivative}.
\end{defn}

\begin{prop} \label{prop:partial F m}
	Let $F:\mathcal{P}_2(\mathbb{R}^n) \to \bb{R}$ be continuously differentiable and let $X \in \cap_{m \in \mathcal{P}_2(\mathbb{R}^n)} \s{H}_m$.
	Then
	\begin{equation} \label{eq:partial functional der id}
	\pd{F}{m}(X  \otimes  m)(x) = \bb{E}\od{F}{m}(X  \otimes  m)(X(x)).
	\end{equation}
\end{prop}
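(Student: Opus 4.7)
The plan is to verify \eqref{eq:partial functional der id} directly from Definition \ref{def:functional derivative} applied to $G(m) := F(X \otimes m)$, using the line-integral representation \eqref{eq:2-211} together with the defining duality of $X \otimes m$.

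The first observation is that $m \mapsto X \otimes m$ is \emph{affine}: since the right-hand side of \eqref{eq:X otimes m} is linear in $m$ for each admissible test function $\phi$, writing $m_\epsilon := m + \epsilon(m'-m)$, $\mu := X \otimes m$, $\mu' := X \otimes m'$, and $\mu_{\theta\epsilon} := \mu + \theta\epsilon(\mu'-\mu)$, one has $X \otimes m_\epsilon = \mu + \epsilon(\mu' - \mu)$. Applying \eqref{eq:2-211} and dividing by $\epsilon$,
\[
\frac{F(X \otimes m_\epsilon) - F(X \otimes m)}{\epsilon} = \int_0^1 \int_{\bb{R}^n} \dod{F}{m}(\mu_{\theta\epsilon})(y)\dif(\mu'-\mu)(y) \dif\theta.
\]
The inner integrand $y \mapsto \dod{F}{m}(\mu_{\theta\epsilon})(y)$ has quadratic growth by \eqref{eq:derivative moment}, so \eqref{eq:X otimes m} applies to each of $\mu'$ and $\mu$ and rewrites the inner integral as
\[
\bb{E} \int_{\bb{R}^n} \dod{F}{m}(\mu_{\theta\epsilon})(X(x))\dif(m'-m)(x).
\]

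The main obstacle is now the passage to the limit $\epsilon \to 0$ inside this double integral. I would appeal to dominated convergence: along $\theta\epsilon \in [0,1]$ the measure $\mu_{\theta\epsilon}$ stays in a bounded subset of $\s{P}_2$, so $c(\mu_{\theta\epsilon})$ is uniformly bounded by some $C$; combined with \eqref{eq:derivative moment} and the bound $\bb{E}|X(x)|^2 \leq c(X)(1+|x|^2)$ from Lemma \ref{lem:intersect Hm}, the integrand is majorized, uniformly in $(\theta,\epsilon)$, by $C\bigl(1 + \bb{E}|X(x)|^2\bigr) \leq C'(1+|x|^2)$, which lies in $L^1(\Omega \times \bb{R}^n, \bb{P} \times (m+m'))$. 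The joint continuity of $\dod{F}{m}$ and continuity of the map $\epsilon \mapsto \mu_{\theta\epsilon}$ in $\s{P}_2$ (a consequence of the 1-Lipschitz property already established) then yield
\[
\lim_{\epsilon \to 0} \frac{F(X\otimes m_\epsilon) - F(X\otimes m)}{\epsilon} = \int_{\bb{R}^n} \bb{E} \dod{F}{m}(X\otimes m)(X(x)) \dif(m'-m)(x),
\]
Fubini being used to interchange $\bb{E}$ with $\int \dif(m'-m)$.

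It remains to check that the candidate $(m,x) \mapsto \bb{E}\dod{F}{m}(X \otimes m)(X(x))$ satisfies the remaining requirements of Definition \ref{def:functional derivative}. The quadratic growth bound follows from combining \eqref{eq:derivative moment} with the growth of $\bb{E}|X(x)|^2$; the normalization \eqref{eq:normalization} follows from Fubini and the normalization of $\dod{F}{m}$, since
\[
\int \bb{E}\dod{F}{m}(X\otimes m)(X(x)) \dif m(x) = \int \dod{F}{m}(X\otimes m)(y) \dif (X\otimes m)(y) = 0;
\]
and joint continuity in $(m,x)$ follows by another dominated-convergence argument from the continuity of $\dod{F}{m}$ and of $m \mapsto X\otimes m$. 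With these ingredients the proof reduces to a chain-rule computation; the only genuine work is the uniform domination needed to pass to the limit.
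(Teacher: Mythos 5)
Your proof is correct and follows essentially the same route as the paper's: both exploit the affinity of $m \mapsto X \otimes m$ to reduce the difference quotient via \eqref{eq:2-211} to an integral of $\dod{F}{m}(\nu_t, \cdot)$ against $X \otimes m' - X \otimes m$, rewrite it through the duality \eqref{eq:X otimes m} as $\bb{E}\int \dod{F}{m}(\nu_t, X(x))\dif(m'-m)(x)$, and pass to the limit by continuity of $\dod{F}{m}$. The only difference is that you spell out the uniform domination and the verification that the candidate derivative satisfies the growth, normalization, and continuity requirements of Definition \ref{def:functional derivative}, which the paper leaves implicit.
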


\begin{proof}
	For $\epsilon \neq 0$ let $\mu = X  \otimes  \del{m + \epsilon(m'-m)}, \nu = X  \otimes  m$, and for $t \in [0,1]$ set $\nu_t = \nu + t(\mu-\nu)$.
	We have, as $\epsilon \to 0$,
	\begin{equation}
	\begin{aligned}
	\frac{1}{\epsilon}\del{F\del{X  \otimes  \del{m + \epsilon(m'-m)}} - F\del{X  \otimes  m}}
	&= \frac{1}{\epsilon}\int_0^1 \int_{{\mathbb R}^n} \od{F}{m}(\nu_t,x)\dif \h{1.5pt} (\mu-\nu)(x)\\
	&= \bb{E}\int_0^1 \int_{{\mathbb R}^n} \dod{F}{m}\del{\nu_t,X(x)}\dif \h{1.5pt} (m'-m)(x)\\
	&\to \bb{E} \int_{{\mathbb R}^n} \dod{F}{m}\del{X  \otimes  m,X(x)}\dif \h{1.5pt} (m'-m)(x),
	\end{aligned}
	\end{equation}
	using the continuity of $\od{F}{m}$.
	The claim follows.
\end{proof}
We conclude this subsection with a remark on notation.
For a functional $F:\mathcal{P}_2(\mathbb{R}^n) \to \bb{R}$, whenever the symbols $D_X F$ and $\pd{F}{m}$ are used, they should be thought of as partial derivatives of $F(X  \otimes  m)$, which can be evaluated at any elements $(X,m)$ such that $m \in \mathcal{P}_2(\mathbb{R}^n)$ and $X \in \cap_{\mu \in \mathcal{P}_2(\mathbb{R}^n)} \s{H}_\mu$.
In particular, $\pd{F}{m}$ should not be confused with $\od{F}{m}$; the relation between the two is clarified by Proposition \ref{prop:partial F m}.
Moreover, standard usage of partial derivative notation applies to compositions of functions.
Thus if $X \mapsto Y_X$ is a map $\s{H}_m \to \s{H}_m$, then the symbol $D_X F(Y_X  \otimes  m)$ should be interpreted as $D_X F$ evaluated at the point $Y_X  \otimes  m$, rather than as the derivative of the composite function $X \mapsto F(Y_X  \otimes  m)$.
When it is necessary to differentiate a composite function, we shall clearly state that we are doing so.

\subsection{MORE ELABORATE DERIVATIVES, CHAIN RULE}
\label{sec:elaborate}

Consider a random vector field $X = X(m,x)$,
with the equivalent of \eqref{eq:X square growth}
\begin{equation} \label{eq:X(m) square growth}
\bb{E}\intcc{|X(m,x)|^{2}}\leq c(X)(1+|x|^{2})
\end{equation}
where $c(X)$ is a constant, not depending on $m,x,$ but only on $X$.
We define the functional derivative of $X$ with respect to $m$ in a way analogous to Definition \ref{def:functional derivative}.
Namely, we say $X$ is continuously differentiable with respect to $m$ if there exists a
random field $\pd{X}{m}:\:\mathcal{P}_{2}(\bb{R}^n)\times \bb{R}^n\times \bb{R}^n$$\rightarrow L^{2}(\Omega,\mathcal{A},\bb{P};\bb{R}^n)$, which is
continuous in all variables, such that 
\begin{equation}
\bb{E}\intcc{\abs{\dpd{X}{m}(m,x,\tilde{x})}^{2}}\leq c(X,m)(1+|x|^{2}+|\tilde{x}|^{2})\label{eq:2-222}
\end{equation}
and
\begin{equation}
\enVert{\dfrac{X(m+\theta(m'-m),\cdot)-X(m,\cdot)}{\theta}-\int_{\bb{R}^n}\dpd{X}{m}(m,\cdot,\tilde{x})\dif \h{1.5pt} (m'-m)(\tilde{x})}_{L^2(\Omega,\s{A},\bb{P};\bb{R}^n)}\rightarrow 0,\:\text{as }\theta\rightarrow0\label{eq:2-223}
\end{equation}
Following our usual convention, we can write $\pd{X}{m}(m,x,\tilde{x}) = \pd{X}{m}(m,x)(\tilde{x})$, where $\pd{X}{m}(m,x)$ is viewed as an element of
$L^2_m\del{L^2(\Omega,\s{A},\bb{P};\bb{R}^n)}$.

We can envisage the functional derivative of $m\mapsto F(X(m,.) \otimes  m).$
The calculation works as for ordinary derivatives, the tensor product acting
as an ordinary product.
 When both sides of the equality exist, we can then write: 
\begin{equation}
\od{}{m}\del{F(X(m,.) \otimes  m)}(x)=\dfrac{\partial F}{\partial m}(X(m,\cdot) \otimes  m))(x)+\ip{D_{X}F(X(m,\cdot) \otimes  m)}{\dpd{X}{m}(m,.)(x)}_{\s{H}_m}\label{eq:2-224}
\end{equation}
which we can make explicit as follows
\begin{multline}
\dod{}{m}\del{F(X(m,.) \otimes  m)}(x)=\bb{E}\dod{F}{m}(X(m,.) \otimes  m)(X(m,x))\\
+\bb{E}\int_{\bb{R}^n}D\dod{F}{m}(X(m,.) \otimes  m)(X(m,\xi))\cdot \dpd{X}{m}(m,\xi)(x)\dif m(\xi)
\end{multline}
We have, by \eqref{eq:derivative moment}, \eqref{eq:derivative growth}, \eqref{eq:X(m) square growth} and \eqref{eq:2-222},
\begin{equation}
\abs{\dod{}{m}F(X(m,.) \otimes  m)(x)}\leq c(X,m)(1+|x|^{2}).\label{eq:2-226}
\end{equation}

\subsection{SECOND ORDER G\^ATEAUX DERIVATIVE IN THE HILBERT SPACE}

The functional $F(X \otimes  m)$ has a second order G\^{a}teaux derivative
in $\mathcal{H}_{m}$, denoted $D_{X}^{2}F(X \otimes  m)\in\mathcal{L}(\mathcal{H}_{m};\mathcal{H}_{m})$,
if 
\begin{equation}
\dfrac{\ip{D_{X}F((X+\epsilon Y) \otimes  m)-D_{X}F(X \otimes  m)}{Y}_{\s{H}_m}}{\epsilon}\rightarrow \ip{D_{X}^{2}F(X \otimes  m)(Y)}{Y}_{\s{H}_m}\:\text{as}\; \epsilon\rightarrow0,\forall Y\in\mathcal{H}_{m},\label{eq:2-227}
\end{equation}
and we can define $D_{X}^{2}F(X \otimes  m)(Z)$ using the parallelogram law:
\begin{multline}
\ip{D_{X}^{2}F(X \otimes  m)(Z)}{W}_{\s{H}_m}
\\=\cfrac{1}{4}\left(\ip{D_{X}^{2}F(X \otimes  m)(Z+W)}{Z+W}_{\s{H}_m}-\ip{D_{X}^{2}F(X \otimes  m)(Z-W)}{Z-W}_{\s{H}_m}\right).\label{eq:2-1}
\end{multline}
Note that $D_{X}^{2}F(X \otimes  m)$ is self-adjoint. It is convenient
to define the symmetric bilinear form on $\mathcal{H}_{m}$, which, by a slight abuse of notation, we may denote
\begin{equation}
D_{X}^{2}F(X \otimes  m)(Z,W)=\ip{D_{X}^{2}F(X \otimes  m)(Z)}{W}.\label{eq:2-2}
\end{equation}
Observe that 
\begin{equation}
\od{}{\theta}\ip{D_{X}F((X+\theta Z) \otimes  m)}{W}=\ip{D_{X}^{2}F((X+\theta Z) \otimes  m)(Z)}{W}.\label{eq:2-228}
\end{equation}
Since 
\[
\od{}{\theta}F((X+\theta Y) \otimes  m)= \ip{D_{X}F((X+\theta Y) \otimes  m)}{Y}
\]
we have also 
\begin{equation}
\od[2]{}{\theta}F((X+\theta Y) \otimes  m)=\ip{D_{X}^{2}F((X+\theta Y) \otimes  m)(Y)}{Y}\label{eq:2-229}
\end{equation}
Hence 
\begin{equation}
F((X+Y) \otimes  m))=F(X \otimes  m)+\ip{D_{X}F(X)}{Y}+\int_{0}^{1}\int_{0}^{1}\theta\ip{D_{X}^{2}F((X+\theta\lambda Y) \otimes  m)(Y)}{Y}\dif \theta \dif \lambda\label{eq:2-230}
\end{equation}
From Proposition \ref{prop:partial F m} we have $\ip{D_{X}F(X \otimes  m)}{W}= \bb{E}\int_{\bb{R}^n}D\od{F}{m}(X \otimes  m)(X(x))\cdot W(x)\dif m(x)$.
It follows that
\begin{multline*}
\dfrac{\ip{D_{X}F((X+\epsilon Z) \otimes  m)}{W}-\ip{D_{X}F(X \otimes  m)}{W}}{\epsilon}
\\
= \dfrac{\bb{E}\int_{\bb{R}^n}\left(D\od{F}{m}((X+\epsilon Z) \otimes  m)(X(x)+\epsilon Z(x))-D\od{F}{m}(X \otimes  m)(X(x))\right)\cdot W(x)\dif m(x)}{\epsilon},
\end{multline*}
so, assuming the existence and continuity of $\dod[2]{F}{m}(m)(x,\tilde{x})$
and its derivatives $D_1D_2\dod[2]{F}{m}(m)(x,\tilde{x}),$ as well as the existence and continuity of $D^{2}\dod{F}{m}(m)(x)$, we deduce that the above limit is 
\begin{multline}
\ip{D_{X}^{2}F(X \otimes  m)(Z)}{W}=\bb{E}\int_{\bb{R}^n}D^{2}\dod{F}{m}(X \otimes  m)(X(x))Z(x)\cdot W(x)\dif m(x)\\ \label{eq:2-231}
+\bb{E}\tilde{\bb{E}}\int_{\bb{R}^n}\int_{\bb{R}^n}D_1D_2\dod[2]{F}{m}(X \otimes  m)(\tilde{X}(\tilde{x}),X(x))\tilde{Z(}\tilde{x})\cdot W(x)\dif m(\tilde{x})\dif m(x),
\end{multline}
 in which $\tilde{X}(\tilde{x}),\tilde{Z(}\tilde{x})$ are independent
copies of $X(x),Z(x).$ 
Consequently, we can write
\begin{equation}
D_{X}^{2}F(X \otimes  m)(Z)(x)=D^{2}\od{F}{m}(X \otimes  m)(X(x))Z(x)+\tilde{\bb{E}}\int_{\bb{R}^n}D_1D_2\od[2]{F}{m}(X \otimes  m)(\tilde{X}(\tilde{x}),X(x))\tilde{Z(}\tilde{x})\dif m(\tilde{x})\label{eq:2-232}
\end{equation}
in which the expectation $\tilde{\bb{E}}$ is independent of $X(x).$
The above equation shows the link between the second order G\^ateaux derivative $D^2_X$ and the partial order Lions derivatives $D$ (in our context, while they use $\partial_x$ in \cite{CDLL,RCD}), $D\frac{d}{dm}$ (in our context, or $\partial_m$ in \cite{CDLL,RCD} being expressed in Wasserstein space of probability measures).
If we take $X(x)=x,$ we obtain (recall Example \ref{ex:two push-forwards})
\begin{equation}
D_{X}^{2}F(m)(Z)(x)=D^{2}\dod{F}{m}(m)(x)Z(x)+\int_{\bb{R}^n}D_2D_1\od[2]{F}{m}(m)(\tilde{x},x)\tilde{\bb{E}}\tilde{Z(}\tilde{x})\dif m(\tilde{x})\label{eq:2-233}
\end{equation}
It follows immediately that if $Z$ is independent of $X$ and $\bb{E}[Z]=0,$ then 
\begin{equation}
D_{X}^{2}F(X \otimes  m)(Z)(x)=D^{2}\od{F}{m}(X \otimes  m)(X(x))Z(x)\label{eq:2-234}
\end{equation}
In order to get $D_{X}^{2}F(X \otimes  m)\in \mathcal{L}(\mathcal{H}_{m};\mathcal{H}_{m})$, it will suffice
to assume 
\begin{equation}
\abs{D^2\od{F}{m}(m)(x)} \leq c(m),\: \abs{D_2D_1\od[2]{F}{m}(m)(\tilde{x},x)}\leq c(m)\label{eq:2-235}
\end{equation}
where $|\cdot|$ in (\ref{eq:2-235}) is the matrix norm. 

\section{\label{sec:MEAN-FIELD-TYPE} CONTROL PROBLEM WITH STATE VARIABLE
IN $\mathcal{H}_{m}$}

For this entire section we shall fix a measure $m \in \s{P}_2(\bb{R}^n)$. We shall define an optimal control problem on the Hilbert space $\s{H}_m$ attached to $m$.
Then we shall discuss its solution.
In the following section, we shall study properties of the value function.

\subsection{PRELIMINARIES} \label{sec:ctrl prelim}

Let $(\Omega,\s{A},\bb{P})$ be a probability space sufficiently large to contain a standard Wiener process in $\bb{R}^n$, denoted $w(t)$, with filtration $\s{W}_t = \{\s{W}_t^s\}_{s \geq t}$ where $\s{W}_t^s = \sigma\del{(w(\tau)-w(t)): t \leq \tau \leq s}$.
We also assume $(\Omega,\s{A},\bb{P})$ is rich enough to support random variables that are independent of the entire Wiener process.
For example, we could take $(\Omega,\s{A},\bb{P}) = (\Omega_0 \times \Omega_1,\s{A}_0 \otimes \s{A}_1,\bb{P}_0 \times \bb{P}_1)$ where $\s{W}_t^s \subset \s{A}_0$ and $(\Omega_1,\s{A}_1,\bb{P}_1)$ is itself a sufficiently rich probability space.

For a given $t \geq 0$, we denote by $\s{H}_{m,t}$ the space of all $X = X_t \in \s{H}_m$ such that $X$ is independent of $\s{W}_t$ (say, $\sigma(X) \subset \s{A}_1$).
For $X \in \s{H}_{m,t}$ we define $\sigma$-algebras $\s{W}_{Xt}^s = \sigma(X) \vee \s{W}_t^s$, and the filtration generated by these will be denoted $\s{W}_{Xt}$.

For the remainder of this section we shall fix $t \geq 0$ and $X \in \s{H}_{m,t}$.
For the control problem stated below in Section \ref{sec:ctrl statement}, the space of controls will be $L^2_{\s{W}_{Xt}}(t,T;\s{H}_m)$, the set of all processes in $L^2(t,T;\s{H}_m)$ that are adapted to $\s{W}_{Xt}$.
Because $X$ is independent of $\s{W}_t$, we have an important observation:
\begin{lem} \label{lem:isometry}
	Let $X \in \s{H}_{m,t}$.
	Then there exists a natural linear isometry between $L^2_{\s{W}_{Xt}}(t,T;\s{H}_m)$ and $L^2_{\s{W}_{t}}(t,T;\s{H}_{X  \otimes  m})$, obtained by inserting the random variable $X$ in place of the argument $x$ in the vector field $v$.
\end{lem}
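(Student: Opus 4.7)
The plan is to define the natural map $\Phi: L^2_{\s{W}_t}(t,T;\s{H}_{X \otimes m}) \to L^2_{\s{W}_{Xt}}(t,T;\s{H}_m)$ by
\begin{equation*}
(\Phi v)(s, \omega, x) := v(s, \omega, X(\omega, x)),
\end{equation*}
i.e.~the ``insertion'' of $X$ in place of the argument in the vector field, and to verify that it is a linear isometry. Linearity is immediate. For membership in the target space, the composition $(\omega, x) \mapsto v(s, \omega, X(\omega, x))$ is $(\s{W}_t^s \vee \sigma(X)) \otimes \s{B} = \s{W}_{Xt}^s \otimes \s{B}$-measurable, so $\Phi v(s)$ is an $\s{W}_{Xt}^s$-measurable element of $\s{H}_m$, giving the required adaptedness.

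The heart of the proof is the norm identity. Using the product decomposition from Section \ref{sec:ctrl prelim}, write $\Omega = \Omega_0 \times \Omega_1$ with $\s{W}_t^s \subset \s{A}_0$ and $\sigma(X) \subset \s{A}_1$, so that $v(s, \omega, y)$ depends on $\omega$ only through $\omega_0$ while $X(\omega, x)$ depends only through $\omega_1$. Fubini's theorem then yields
\begin{equation*}
\enVert{\Phi v(s)}^2_{\s{H}_m} = \int_{\Omega_0}\int_{\Omega_1}\int_{\bb{R}^n} \abs{v(s,\omega_0,X(\omega_1,x))}^2 \dif m(x) \dif \bb{P}_1(\omega_1)\dif \bb{P}_0(\omega_0).
\end{equation*}
For each fixed $\omega_0$, I apply Definition \ref{def:X otimes m} to the deterministic test function $\phi(y) = \abs{v(s, \omega_0, y)}^2$: since $X$ is $\s{A}_1$-measurable, the inner double integral equals $\int \phi(y)\dif(X \otimes m)(y)$. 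Averaging over $\omega_0$ and integrating in $s$ yields $\enVert{\Phi v}_{L^2(t,T;\s{H}_m)} = \enVert{v}_{L^2(t,T;\s{H}_{X \otimes m})}$, the desired isometry.

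I expect the main obstacle to lie in making measurability rigorous at the level of Hilbert-space-valued random elements: one must justify that $v(s,\cdot,\cdot)$ is genuinely jointly $\s{A}_0 \otimes \s{B}$-measurable (and likewise that $X$ is $\s{A}_1 \otimes \s{B}$-measurable) before Fubini applies, and that the insertion produces a bona fide element of $\s{H}_m$ rather than merely an almost-everywhere defined object. If the statement is meant as an isometric isomorphism (bijection) rather than only an embedding --- which is all that is needed for a change-of-variables in the control problem treated later --- then the surjectivity claim requires a supplementary Doob--Dynkin factorization argument exploiting the separability of $L_m^2(\bb{R}^n;\bb{R}^n)$, in order to write any $\s{W}_{Xt}^s$-measurable element of $\s{H}_m$ as a measurable function of $X$ composed with an $\s{W}_t$-adapted kernel.
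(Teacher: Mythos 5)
Your construction of the insertion map $\Phi$ and the Fubini computation establishing $\enVert{\Phi v}_{L^2(t,T;\s{H}_m)} = \enVert{v}_{L^2(t,T;\s{H}_{X\otimes m})}$ are correct and coincide with the norm identity \eqref{eq:isometry} in the paper's proof. However, you have proved only that $\Phi$ is a linear isometric \emph{embedding}, and you have the burden of proof reversed relative to what the lemma is actually for. The paper's proof starts from an arbitrary $v \in L^2_{\s{W}_{Xt}}(t,T;\s{H}_m)$ and produces the factorization $v(\omega,s,x) = \tilde v(\omega_0,s,X(\omega_1,x))$ with $\tilde v$ adapted to $\s{W}_t$ --- precisely the Doob--Dynkin step you name but defer. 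That step is the substantive content of the lemma: throughout Section \ref{sec:MEAN-FIELD-TYPE} one begins with a control $v_{Xt} \in L^2_{\s{W}_{Xt}}(t,T;\s{H}_m)$ (e.g.\ the optimal control characterized by \eqref{eq:DvJ}) and must pass to its representative $v_{\xi t} = \tilde v \in L^2_{\s{W}_t}(t,T;\s{H}_{X\otimes m})$ in order to write the finite-dimensional system \eqref{eq:ordinary SDE} and the characteristics \eqref{eq:3-9}--\eqref{eq:3-10}. So your parenthetical claim that the embedding alone ``is all that is needed for a change-of-variables in the control problem treated later'' is not correct; it is the surjectivity (equivalently, the existence and uniqueness of $\tilde v$ given $v$) that gets used.

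To close the gap you would need to carry out the factorization: since $(\omega_0,\omega_1,x)\mapsto v(\omega_0,\omega_1,s,x)$ is $\s{W}_t^s\otimes\sigma(X)\otimes\s{B}$-measurable and the dependence on $(\omega_1,x)$ is through $\sigma(X)$, there is a $\s{W}_t^s\otimes\s{B}$-measurable $\tilde v(\omega_0,s,\cdot)$ with $v(\omega,s,x)=\tilde v(\omega_0,s,X(\omega_1,x))$; membership of $\tilde v$ in $L^2_{\s{W}_t}(t,T;\s{H}_{X\otimes m})$ then follows from the same Fubini identity read in the other direction, and uniqueness of $\tilde v$ (as an $L^2_{X\otimes m}$ class) follows by testing against arbitrary $u\in L^2_{\s{W}_t}(t,T;\s{H}_{X\otimes m})$, as in the paper. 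Your remark about separability of $L^2_m(\bb{R}^n;\bb{R}^n)$ is the right technical ingredient for making the vector-valued Doob--Dynkin argument rigorous, but as written it remains a sketch of the half of the lemma that actually carries the weight.
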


The proof can be found in Appendix \ref{ap:A}. Now for a given $v \in L^2_{\s{W}_{Xt}}(t,T;\s{H}_m)$, which we also denote $v_{Xt}$ to emphasis the measurability constraint, we consider the SDE
\begin{equation} \label{eq:SDE}
X(s) = X + \int_t^s v(\tau)\dif \tau + \eta(w(s)-w(t)),
\end{equation}
where $\eta$ is a fixed, deterministic $n \times n$ matrix, which is symmetric and positive definite.
Equation \eqref{eq:SDE} defines a process $X(\cdot) \in L^2_{\s{W}_{Xt}}(t,T;\s{H}_m)$, which we shall denote $X_{Xt}(s) = X_{Xt}(s;v_{Xt}(\cdot))$.
Indeed,
\begin{equation} \label{eq:EX(s)^2 bound}
\begin{aligned}
\bb{E}\enVert{X(s)}_{\s{H}_m}^2 &\leq 3\enVert{X}_{\s{H}_m}^2 + 3(s-t)\bb{E}\int_t^s \enVert{v(\tau)}_{\s{H}_m}^2 \dif \tau
+ 3\abs{\eta}^2\bb{E}\abs{w(s)-w(t)}^2\\
&\leq 3\enVert{X}_{\s{H}_m}^2 + 3(s-t)\enVert{v}_{L^2_{\s{W}_{Xt}}(t,T;\s{H}_m)}^2 + 3\abs{\eta}^2(s-t),
\end{aligned}
\end{equation}
where $\abs{\eta}$ denotes the matrix norm of $\eta$.
\begin{rem} \label{rem:X(s) in H_m,s}
	Let $X(\cdot)$ be the solution of \eqref{eq:SDE}.
	It is critical to observe that $X(s) \in \s{H}_{m,\tau}$ whenever $\tau \geq s \geq t$, i.e.~$X(s)$ is independent of $\s{W}_{\tau}$.
	To see this, notice that $X(s)$ is $\s{W}_{Xt}^s$-measurable, where $\s{W}_{Xt}^s = \sigma(X) \vee \s{W}_t^s$ by definition.
	As $\s{W}_t^s$ is independent of $\s{W}_\tau$ by independent increments and $\sigma(X)$ is independent of $\s{W}_t$ by assumption, we conclude that $X(s)$ is indeed independent of $\s{W}_{\tau}$.
\end{rem}

We can also interpret \eqref{eq:SDE} as a finite-dimensional SDE.
Let $\tilde v \in L^2_{\s{W}_t}(t,;\s{H}_{X  \otimes  m})$ be the representative of $v$ given by Lemma \ref{lem:isometry}.
For $m$-a.e.~$x$, consider
\begin{equation}
\label{eq:ordinary SDE}
x_t(s) = x + \int_t^s \tilde v(\tau,x)\dif \tau + \eta(w(s)-w(t)).
\end{equation}
This defines a unique solution $x(\cdot) \in L^2_{\s{W}_t}(t,T;\bb{R}^n)$, which we denote $x(s;x,\tilde v(\cdot,x))$.
By viewing each term in \eqref{eq:SDE} as an element in $L^2_m(\bb{R}^n;\bb{R}^n)$ and evaluating at $x$, we have the relation
\begin{equation}
X_{Xt}\left(s;v_{Xt}(\cdot)\right)(x) = x\left(s;X(x),\tilde v\left(\cdot,X(x)\right)\right),
\end{equation}
for $m$-a.e.~$x$.
More precisely, using the decomposition $\Omega = \Omega_0 \times \Omega_1$ as above, we can write for $\omega = (\omega_0,\omega_1)$
\begin{equation}
X_{Xt}\left(\omega,s;v_{Xt}(\cdot)\right)(x) = x\left(\omega_0,s;X(\omega_1,x),\tilde v\left(\cdot,X(\omega_1,x)\right)\right)
\end{equation}
for $m$-a.e.~$x$.
\begin{lem} \label{lem:law of X_Xt}
	The law of $X_{Xt}(s;v_{Xt}(\cdot))$, considered as a random variable on the product space $\Omega \times m$, is $x(s;\cdot,\tilde v(\cdot,\cdot))  \otimes  (X  \otimes  m)$.
\end{lem}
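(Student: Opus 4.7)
By Definition \ref{def:X otimes m}, the law of $X_{Xt}(s;v_{Xt}(\cdot))$ on the product space $(\Omega \times \bb{R}^n,\bb{P}\times m)$ is precisely $X_{Xt}(s;v_{Xt}(\cdot)) \otimes m$, so the lemma reduces to the identity
\begin{equation*}
X_{Xt}(s;v_{Xt}(\cdot)) \otimes m \;=\; x(s;\cdot,\tilde v(\cdot,\cdot)) \otimes (X \otimes m).
\end{equation*}
This has exactly the shape of the composition/push-forward formula $(\Psi \circ X)\otimes m = \Psi \otimes (X\otimes m)$ established earlier in the Hilbert-space subsection, so the idea is to recast the SDE \eqref{eq:SDE} as a literal composition and then invoke that formula.

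To do so, set $\Psi(\omega_0,y) := x(\omega_0,s;y,\tilde v(\cdot,y))$ on $\Omega_0 \times \bb{R}^n$, using the product decomposition $(\Omega,\s{A},\bb{P})=(\Omega_0\times\Omega_1,\s{A}_0\otimes\s{A}_1,\bb{P}_0\times\bb{P}_1)$ from Section \ref{sec:ctrl prelim}. The representation
\begin{equation*}
X_{Xt}(\omega,s;v_{Xt}(\cdot))(x) = x\del{\omega_0,s;\,X(\omega_1,x),\,\tilde v(\cdot,X(\omega_1,x))}
\end{equation*}
noted just before the lemma then reads $X_{Xt}(s;v_{Xt}(\cdot)) = \Psi \circ X$. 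I would then verify $\Psi \in \s{H}_{X\otimes m}$ by an $L^2$-bound analogous to \eqref{eq:EX(s)^2 bound} applied pointwise in $y$ to the SDE \eqref{eq:ordinary SDE} and then integrated against $X \otimes m$; by Lemma \ref{lem:isometry}, the driving drift satisfies $\int \int_t^T \abs{\tilde v(\tau,y)}^2 \dif\tau \,\dif(X\otimes m)(y) = \enVert{v}_{L^2_{\s{W}_{Xt}}(t,T;\s{H}_m)}^2 < \infty$, which is exactly what is required.

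The conclusion is then a direct Fubini computation: for bounded Borel $\phi$,
\begin{align*}
\int \phi\,\dif\del{X_{Xt}(s)\otimes m}
&= \int_{\Omega_0}\!\int_{\Omega_1}\!\int_{\bb{R}^n}\phi\del{\Psi(\omega_0,X(\omega_1,x))}\,\dif m(x)\,\dif\bb{P}_1(\omega_1)\,\dif\bb{P}_0(\omega_0)\\
&= \int_{\Omega_0}\!\int_{\bb{R}^n}\phi\del{\Psi(\omega_0,y)}\,\dif(X\otimes m)(y)\,\dif\bb{P}_0(\omega_0)\\
&= \int\phi\,\dif\del{\Psi\otimes(X\otimes m)},
\end{align*}
where the middle step applies the definition of $X \otimes m$ to the bounded Borel integrand $y\mapsto \bb{E}_{\Omega_0}[\phi(\Psi(\omega_0,y))]$. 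One small subtlety is that the earlier composition lemma is stated with both factors in $\s{H}_m$, whereas here the outer factor $\Psi$ lives in $\s{H}_{X\otimes m}$; however, inspecting its proof, the only ingredients used are the defining duality of $\otimes$ and Fubini, both of which carry over verbatim in this slightly more general setting. The main technical hurdle I expect is therefore the bookkeeping around joint Borel measurability and $L^2$-integrability of the flow $\Psi$ on $\Omega_0 \times \bb{R}^n$, i.e.\ joint measurability of the SDE \eqref{eq:ordinary SDE} in its initial point and drift, rather than anything about the probabilistic content of the statement.
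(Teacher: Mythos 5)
Your proposal is correct and follows essentially the same route as the paper: both reduce the claim to the identity $X_{Xt}(s)\otimes m = x(s;\cdot,\tilde v(\cdot,\cdot))\otimes(X\otimes m)$, check the $L^2$ membership of the flow $x(s;\cdot,\tilde v(\cdot,\cdot))$ in $L^2_{X\otimes m}$ (the paper gets this from the finiteness of the left-hand side via \eqref{eq:EX(s)^2 bound}, you get it from the isometry of Lemma \ref{lem:isometry} applied to the drift), and then conclude by the same Fubini/change-of-variables computation transferring the $\dif m(x)\,\dif\bb{P}_1(\omega_1)$ integral to $\dif(X\otimes m)$. Your initial framing via the composition lemma is only a gloss — as you yourself note, that lemma has both factors in $\s{H}_m$, so the direct computation you (and the paper) perform is what actually carries the proof.
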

The proof can be found in Appendix \ref{ap:A}. As a result of Lemma \ref{lem:law of X_Xt}, we write
\begin{equation} \label{eq:X_Xt otimes m}
X_{Xt}(s;v_{Xt}(\cdot))  \otimes  m := x(s;\cdot,\tilde v(\cdot,\cdot))  \otimes  (X  \otimes  m).
\end{equation}

To conclude this subsection, we introduce the following conventions.
\begin{itemize}
	\item The symbol $v_{\xi t}(\cdot)$ (or possibly $v_{xt}(\cdot)$), with a lower-case letter as its first subscript, will actually refer to the vector field $\tilde v \in L^2_{\s{W}_t}(t,T;\s{H}_{X  \otimes  m})$.
	Meanwhile $v$ itself will always be denoted by inserting the argument $X$, i.e.~$v_{Xt}(\cdot)$.
	This accords with Lemma \ref{lem:isometry}.
	Note that $v_{Xt}(\cdot)$ here refers to an element of $L^2_{\s{W}_{Xt}}(t,T;\s{H}_{m})$, and we may write $v_{Xt}(\cdot)$ to emphasize this point.
	\item In a similar spirit, the symbol $X_{\xi t}(\cdot)$ (or any other lower-case letter in place of $\xi$) will actually refer to $x(\cdot;\xi,\tilde v(\cdot,\xi))$, i.e.~an element of $L^2_{\s{W}_t}(t,T;\bb{R}^n)$.
	If we plug in the random variable $X$, we recover $X_{Xt}(\cdot)$, the trajectory in $L^2_{\s{W}_{Xt}}(t,T;\s{H}_m)$ driven by the control $v_{Xt}(\cdot)$.
	Thus \eqref{eq:X_Xt otimes m} becomes
	\begin{equation} \label{eq:X_Xt otimes m v2}
	X_{Xt}(s)  \otimes  m \del{= X_{Xt}(s;v_{Xt}(\cdot))  \otimes  m} = X_{\cdot t}(s;v_{\cdot}(\cdot))  \otimes  (X_{\cdot t}  \otimes  m).
	\end{equation}
\end{itemize}
Although we risk some confusion in using these conventions, which are technically an abuse of notation, we nevertheless believe that their use in the following arguments are sufficiently clear.
They are also evocative, in that $v_{Xt}$ and $v_{\xi t}$ are, by Lemma \ref{lem:isometry}, not essentially distinct objects, but merely the same object expressed in different spaces (the same remark applies to $X_{Xt}$ and $X_{\xi t}$).

\subsection{CONTROL PROBLEM} \label{sec:ctrl statement}

Recall that $t \geq 0$ and $X \in \s{H}_{m,t}$ are fixed.
Consider a state process $X_{Xt}(s)=X_{Xt}(s;v_{Xt}(\cdot))$ associated to a
control $v_{Xt}(\cdot)$.
Define the cost functional $J_{X \otimes  m,t}:L^2_{\s{W}_t}(t,T;\s{H}_{X \otimes  m}) \to \bb{R}$ by
\begin{multline}
J_{X \otimes  m,t}(v_{\cdot t}(\cdot))=\dfrac{\lambda}{2}\int_{t}^{T}\int_{\bb{R}^n}\bb{E}|v_{\xi t}(s)|^{2}\dif \h{1.5pt} (X \otimes  m)(\xi)\dif s\\
+\int_{t}^{T}F(X_{\cdot t}(s;v_{\cdot t}(\cdot )) \otimes (X_{\cdot t} \otimes  m))\dif s+F_{T}(X_{\cdot t}(T;v_{\cdot t}(\cdot )) \otimes (X_{\cdot t} \otimes  m)),\label{eq:J_Xotimesm}
\end{multline}
where $\lambda>0$. An equivalent and more condensed version of $J_{X \otimes  m,t}$ is the functional $J_{Xt}:L_{\mathcal{W}_{Xt}}^{2}(t,T;\mathcal{H}_{m}) \to \bb{R}$ given by
\begin{equation}
J_{Xt}(v_{Xt}(\cdot ))=\dfrac{\lambda}{2}\int_{t}^{T}||v_{Xt}(s)||^{2}\dif s+\int_{t}^{T}F(X_{Xt}(s;v_{Xt}(\cdot )) \otimes  m)\dif s+F_{T}(X_{Xt}(T;v_{Xt}(\cdot )) \otimes  m)\label{eq:J_X}
\end{equation}
We shall make precise assumptions to guarantee the strict convexity
of the functional $J_{Xt}(v_{Xt}(\cdot )),$ its coeciveness, hence existence
and uniqueness of an optimal minimum, for which we shall write the
necessary and sufficient optimality conditions.

\begin{rem}
For the control problem with a dynamic depending on the law of the state process, that is
\begin{equation*} 
X_{Xt}(s;v(\cdot))= X(s) = X + \int_t^s G\big(v(\tau),X(\tau),\mathbb{L}_{X(\tau)}\big)\dif \tau + \eta(w(s)-w(t)),
\end{equation*}
given that $G$ has at most linear growth in $x$ and $v$, the variational techniques cannot be used directly due to the presence of the measure argument. To this end, we have to use the FBSDE approach. This will be illustrated in our upcoming paper, where we shall demonstrate how the alternative approach developed in the present article plays a crucial role in the mean field games setting. 
\end{rem}

\subsection{ASSUMPTIONS ON COST FUNCTIONAL} 
\label{sec:asms cost}
We describe assumptions on the functionals $m \mapsto F(m)$
and $m \mapsto F_{T}(m)$ on $\mathcal{P}_2(\mathbb{R}^n).$ Throughout these assumptions, $c,c_T,c',$ and $c_T'$ are fixed positive constants. We first assume a growth bound:
\begin{equation}
|F(  m)|\leq c\left(1+\int_{\mathbb{R}^n} |x|^2dm(x)\right),\:|F_{T}(m)|\leq c_{T}\left(1+\int_{\mathbb{R}^n} |x|^2dm(x)\right).\label{eq:3-1, no lift}
\end{equation}
For any $x,\widetilde{x} \in \mathbb{R}^n$ and $m \in \mathcal{P}_2(\mathbb{R}^2)$, we also assume the functionals have derivatives satisfying 
\begin{gather}
\left|D \dod{F}{m}(m)(x)\right|\leq \dfrac{c}{\sqrt{2}}(1+|x|)
\h{1pt},\h{10pt}
\left|D \dod{F_T}{m}(m)(x)\right|\leq \dfrac{c_T}{\sqrt{2}}(1+|x|); \label{eq:3-2, no lift}\\
\abs{D^{2}\dod{F}{m}(m)(x)}\leq \dfrac{c}{2},\: \abs{D_2D_1\dod[2]{F}{m}(m)(\tilde{x},x)}\leq \dfrac{c}{2}, \abs{D^{2}\dod{F_T}{m}(m)(x)}\leq \dfrac{c_T}{2},\: \abs{D_2D_1\dod[2]{F_T}{m}(m)(\tilde{x},x)}\leq \dfrac{c_T}{2}\label{eq:3-3, no lift}
\end{gather}
as well as the following monotonicity conditions:
\begin{multline}
\bb{E}\int_{\bb{R}^n}D^{2}\dod{F}{m}(X \otimes  m)(X(x))Y(x)\cdot Y(x)\dif m(x)\\
+\bb{E}\tilde{\bb{E}}\int_{\bb{R}^n}\int_{\bb{R}^n}D_2D_1\dod[2]{F}{m}(X \otimes  m)(\tilde{X}(\tilde{x}),X(x))\tilde{Y}(\tilde{x})\cdot Y(x)\dif m(\tilde{x})\dif m(x)\geq
-c'\bb{E}\int_{\bb{R}^n}|Y(x)|^{2}\dif m(x), \label{eq:3-33}
\end{multline}
\begin{multline}
\bb{E}\int_{\bb{R}^n}D^{2}\dod{F_T}{m}(X \otimes  m)(X(x))Y(x)\cdot Y(x)\dif m(x)\\
+\bb{E}\tilde{\bb{E}}\int_{\bb{R}^n}\int_{\bb{R}^n}D_2D_1\dod[2]{F_T}{m}(X \otimes  m)(\tilde{X}(\tilde{x}),X(x))\tilde{Y}(\tilde{x})\cdot Y(x)\dif m(\tilde{x})\dif m(x)\geq-c'_{T}\bb{E}\int_{\bb{R}^n}|Y(x)|^{2}\dif m(x).\label{eq:5-34}
\end{multline}

\begin{rem}
Suppose $c' = c_T' = 0$. The monotonicity conditions \eqref{eq:3-33} and \eqref{eq:5-34} are equivalent to the \emph{displacement monotonicity} of $\od{F}{m}$ and $\od{F_T}{m}$, as defined in \cite{GMMZ}.
If, in addition, the respective first terms on the left hand side of \eqref{eq:3-33} and \eqref{eq:5-34} are dropped, then the resulting inequality conditions are equivalent to Lasry-Lions monotonicity assumption.
Cf.~(2.5) in \cite{CDLL}. Here $c'$ and $c'_T$ can be positive, which means that we may need to restrict the time horizon in order to ensure the uniqueness of optimal trajectories; below we quantify the precise relationship between $T,c'$, and $c'_T$ that is needed.
See Remark \ref{rem6-5} below for a representative example that satisfies our assumptions.
\end{rem}

\begin{rem}
We provide the corresponding properties on the functionals $X \mapsto F (X \otimes m) $ and $X \mapsto F_T (X \otimes m)$ on $\mathcal{H}_m$ based on assumptions \eqref{eq:3-1, no lift}-\eqref{eq:5-34}. Note that the assumptions below are necessary but
not sufficient for assumptions \eqref{eq:3-1, no lift}-\eqref{eq:5-34} to hold.
We first have a growth bound:
\begin{equation}
|F(X \otimes  m)|\leq c(1+\enVert{X}^{2}),\:|F_{T}(X \otimes  m)|\leq c_{T}(1+\enVert{X}^{2}).\label{eq:3-1}
\end{equation}
Their G\^ateaux derivatives, which are also Fr\'{e}chet derivatives, satisfying 
\begin{gather}
||D_{X}F(X \otimes  m)||\leq c(1+\enVert{X}),\:||D_{X}F_{T}(X \otimes  m)||\leq c_{T}(1+\enVert{X})\label{eq:3-2}\\
||D_{X}F(X_{1} \otimes  m)-D_{X}F(X_{2} \otimes  m)||\leq c||X_{1}-X_{2}||,\;||D_{X}F_{T}(X_{1} \otimes  m)-D_{X}F_{T}(X_{2} \otimes  m)||\leq c_T||X_{1}-X_{2}||\label{eq:3-3}
\end{gather}
as well as the following monotonicity conditions:
\begin{gather}
\ip{D_{X}F(X_{1} \otimes  m)-D_{X}F(X_{2} \otimes  m)}{X_{1}-X_{2}}\geq-c'||X_{1}-X_{2}||^{2}\label{eq:3-4}\\
\ip{D_{X}F_T(X_{1} \otimes  m)-D_{X}F_T(X_{2} \otimes  m)}{X_{1}-X_{2}}\geq-c'_{T}||X_{1}-X_{2}||^{2}.\label{eq:3-5}
\end{gather}
If the second order derivatives $D_{X}^{2}F(X \otimes  m)(Z)$ and $D_{X}^{2}F_T(X \otimes  m)(Z)$ exist, we have
\begin{gather}
||D_{X}^{2}F(X \otimes  m)(Z)||\leq c||Z||,\;||D_{X}^{2}F_{T}(X \otimes  m)(Z)||\leq c_{T}||Z||,\;\forall X,Z\in\mathcal{H}_{m}\label{eq:3-30}\\
D_{X}^{2}F(X \otimes  m)(Y,Y)\geq-c'||Y||^{2},\:D_{X}^{2}F_{T}(X \otimes  m)(Y,Y)\geq-c'_{T}||Y||^{2}\label{eq:3-31}
\end{gather}
\end{rem}

Assumptions \eqref{eq:3-1}-\eqref{eq:3-5} are in the same spirit as those in Sections 2.1 and 7.2 of our former work \cite{ABY}. It is worth noting that the mean-field type control problem is generally related to yet far different from any generic mean field games settings; also see \cite{BFY,BFY-0,BFY-1,RCD,RCDFLA} on the comparison of these two theories. Therefore, one cannot directly compare the assumptions adopted in the mean-field type control problem with those purely set for the mean field games. In the literature on the mean field games, such as \cite{CDLL,RCD,GAS,GMMZ}, most of them studied the topic through the analysis in the Wasserrstein space without a comprehensive use of the lifting procedure, and these are fundamentally different from our method developed in this article. For instance, in the most recent article \cite{GMMZ}, the authors established the global well-posedness of master equations of some general mean field games with non-separable Hamiltonians based on some \emph{a priori} uniform $W_2$-Lipschitz estimates which were obtained by analyzing the propagation property of the displacement monotonicity, and this later condition is a generalization of the Lasry-Lions monotonicity assumption (2.5) on page 36 in \cite{CDLL}. Nevertheless, by the time of more thorough discussion on the master equations for mean field type problems in Section \ref{sec:master}, we shall compare our proposed assumptions with those commonly used for mean field games, more explanation on the connections between these two sets of assumptions will be indicated, e.g. the mentioned Lasry-Lions monotonicity assumption is actually related to \eqref{convexF}-\eqref{convexFT} after taking a function derivative with respect to measure. On the other hand, within the framework of mean field type control problem, our assumptions \eqref{eq:3-1}-\eqref{eq:3-2} are essentially consistent with those assumptions (H2) in \cite{PHW}, in which the authors derived the dynamic programming principle with the corresponding Hamilton-Jacobi-Bellman equations, and they also proved the viscosity nature and uniqueness of the solution. However, our assumptions \eqref{eq:3-3}-\eqref{eq:3-5}, whose counterpart could not be found in \cite{PHW}, are only used for enhancing the regularity of the solution, that is the value function, of the master equation; further elaboration and derivations on this claim are put in Section \ref{sec:value fxn}. Hence, our overall obtained results and the new approach developed here are totally novel in the discipline of mean-field type control theory. 

Considering the map $v_{Xt}(\cdot )\rightarrow J_{Xt}(v_{Xt}(\cdot))$ as a
functional on the Hilbert space $L_{\mathcal{W}_{Xt}}^{2}(t,T;\mathcal{H}_{m})$, we get the 
\begin{lem}
\label{lem3-1}Under the assumptions (\ref{eq:3-1}),(\ref{eq:3-2})
the functional $J_{Xt}(v_{Xt}(\cdot))$ has a G\^ateaux derivative, given
by 
\begin{multline}
D_{v}J_{Xt}(v_{Xt}(\cdot))(s)=\lambda v_{Xt}(s)\\
+\bb{E}\left[\left.\int_{s}^{T}D_{X}F(X_{Xt}(\tau;v_{Xt}(\cdot)) \otimes  m)\dif \tau
+D_{X}F_T(X_{Xt}(T;v_{Xt}(\cdot)) \otimes  m)\right|\mathcal{W}_{Xt}^{s}\right].\label{eq:DvJ}
\end{multline}
\end{lem}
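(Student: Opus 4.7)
The plan is to compute the Gâteaux derivative directly from the definition, exploiting the fact that the state dynamics \eqref{eq:SDE} are affine in the control. Given a perturbation direction $u \in L^2_{\s{W}_{Xt}}(t,T;\s{H}_m)$, the perturbed trajectory is explicit: $X_{Xt}(s;v+\epsilon u) = X_{Xt}(s;v) + \epsilon Y(s)$, where $Y(s) := \int_t^s u(\tau)\dif \tau$ is $\s{W}_{Xt}^s$-adapted and satisfies $\enVert{Y(s)}_{\s{H}_m} \leq \sqrt{s-t}\, \enVert{u}_{L^2_{\s{W}_{Xt}}(t,T;\s{H}_m)}$.

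First, the quadratic running cost differentiates trivially to yield the contribution $\lambda \int_t^T \ip{v_{Xt}(s)}{u(s)}\dif s$. Next, for each fixed $s$, I write
\[
\frac{F((X_{Xt}(s)+\epsilon Y(s)) \otimes m) - F(X_{Xt}(s) \otimes m)}{\epsilon} = \int_0^1 \ip{D_X F((X_{Xt}(s)+\theta\epsilon Y(s)) \otimes m)}{Y(s)} \dif \theta,
\]
and observe, via the Lipschitz continuity \eqref{eq:3-3} of $D_X F$, that the integrand converges as $\epsilon \to 0$ to $\ip{D_X F(X_{Xt}(s) \otimes m)}{Y(s)}$, with an $L^1(t,T)$ majorant uniform in $\epsilon$ produced from the linear growth bound \eqref{eq:3-2}, the estimate \eqref{eq:EX(s)^2 bound}, and the bound on $Y(s)$ above. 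Dominated convergence then moves the limit inside $\int_t^T$, and the analogous one-shot argument at $s = T$ handles the $F_T$ term.

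Collecting the three contributions, the raw derivative is
\[
\lambda \int_t^T \ip{v_{Xt}(s)}{u(s)}\dif s + \int_t^T \ip{D_X F(X_{Xt}(s) \otimes m)}{\int_t^s u(\tau)\dif \tau}\dif s + \ip{D_X F_T(X_{Xt}(T) \otimes m)}{\int_t^T u(\tau)\dif \tau}.
\]
Fubini's theorem converts the middle double integral into $\int_t^T \ip{\int_s^T D_X F(X_{Xt}(\tau) \otimes m)\dif \tau}{u(s)}\dif s$, and the terminal term collects into $\int_t^T \ip{D_X F_T(X_{Xt}(T) \otimes m)}{u(s)}\dif s$. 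The bracketed quantity acting against $u(s)$ need not be $\s{W}_{Xt}^s$-adapted, but because $u(s)$ is $\s{W}_{Xt}^s$-measurable and the $\s{H}_m$ inner product is an expectation, the tower property lets me replace the bracket by its conditional expectation given $\s{W}_{Xt}^s$ without changing the pairing. This yields the formula \eqref{eq:DvJ} and simultaneously certifies that $D_v J_{Xt}$ lies in $L^2_{\s{W}_{Xt}}(t,T;\s{H}_m)$. The main obstacle is the bookkeeping: verifying the dominated-convergence majorant uniformly in $\epsilon$ and justifying Fubini, both of which follow cleanly from the standing assumptions \eqref{eq:3-1}--\eqref{eq:3-3}.
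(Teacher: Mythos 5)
Your proposal is correct and follows essentially the same route as the paper's proof: exploit the affine dependence of the state on the control, expand the $F$ and $F_T$ increments via the integral form of the mean value theorem, pass to the limit using \eqref{eq:3-2}--\eqref{eq:3-3}, rearrange by Fubini, and project onto $\mathcal{W}_{Xt}^{s}$ using the adaptedness of the perturbation. The only difference is that you spell out the dominated-convergence majorant that the paper leaves implicit.
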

The proof can be found in Appendix \ref{ap:A}.
\begin{rem}
\label{rem3-1}We can replace in $(\ref{eq:DvJ})$ the conditional
expectation with respect to $\mathcal{W}_{Xt}^{s}=\sigma(X)\vee\mathcal{W}_{t}^{s}$
by the conditional expectation with respect to $\mathcal{B}\vee\mathcal{W}_{t}^{s},$ for some $\mathcal{B}$ independent of $\mathcal{W}_{s}$ such that
$\sigma(X)\subset\mathcal{B}$. 
Indeed, the random variable $\int_{s}^{T}D_{X}F(X_{Xt}(\tau;v_{Xt}(\cdot)) \otimes  m)\dif \tau+D_{X}F(X_{Xt}(T;v_{Xt}(\cdot)) \otimes  m)$
is $\sigma(X)\vee\mathcal{W}_{t}^{T}$ measurable, and $\sigma(X)\vee\mathcal{W}_{t}^{T}=\sigma(X)\vee\mathcal{W}_{t}^{s}\vee\mathcal{W}_{s}^{T}$.
But $\sigma(X)\vee\mathcal{W}_{t}^{s}\subset\mathcal{B}\vee\mathcal{W}_{t}^{s}$, which is independent of $\mathcal{W}_{s}^{T}.$ It follows that
the conditional expextation with respect to $\mathcal{B}\vee\mathcal{W}_{t}^{s}$
is the same as the conditional expectation with respect to $\sigma(X)\vee\mathcal{W}_{t}^{s}.$
This remark will be very useful for comparison purposes. 
\end{rem}

\subsection{CONVEXITY OF THE OBJECTIVE FUNCTIONAL}

The following result gives conditions that imply the existence of a unique solution to the optimal control problem.
From now on we shall assume that these conditions hold.
\begin{prop}
\label{prop:convexity}Assume \eqref{eq:3-1},\eqref{eq:3-2},\eqref{eq:3-4}, \eqref{eq:3-5}
and 
\begin{equation}
\lambda-T\left(c'_{T}+\dfrac{c'T}{2}\right)>0,\label{eq:3-6}
\end{equation}
where $\lambda$, $c'$ and $c'_T$ are given in \eqref{eq:J_Xotimesm}, \eqref{eq:3-4} and \eqref{eq:3-5} respectively. Then the functional $J_{Xt}(v_{Xt}(\cdot))$ is strictly convex in the control $v_{Xt}(\cdot)$. It is also
coercive, i.e.~$J_{Xt}(v_{Xt}(\cdot))\rightarrow+\infty,$ as $\int_{t}^{T}||v_{Xt}(s)||^{2}\dif s\rightarrow+\infty.$
Consequently, there exists one and only one minimizer of $J_{Xt}(v_{Xt}(\cdot)).$
\end{prop}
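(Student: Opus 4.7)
My plan is to derive a quantitative monotonicity estimate for $D_v J_{Xt}$ and then extract strict convexity, coercivity, and existence in the standard way.

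The first step is to compute, for two controls $v_1,v_2\in L^2_{\s{W}_{Xt}}(t,T;\s{H}_m)$ with associated trajectories $X_1(\cdot),X_2(\cdot)$,
the quantity
$\int_t^T \ip{D_v J_{Xt}(v_1)(s)-D_v J_{Xt}(v_2)(s)}{v_1(s)-v_2(s)}_{\s{H}_m}\dif s.$
Using \eqref{eq:DvJ} and the crucial observation that $v_1(s)-v_2(s)$ is $\s{W}_{Xt}^s$-measurable, the tower property moves the conditional expectations out; I then apply Fubini to the double time integral
$\int_t^T\!\!\int_s^T \ip{D_X F(X_1(\tau)\otimes m)-D_X F(X_2(\tau)\otimes m)}{v_1(s)-v_2(s)}\,\dif\tau\dif s,$
exchanging it to $\int_t^T\int_t^\tau(\cdots)\dif s\dif\tau$. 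The key algebraic point — which is what makes the whole argument work — is that since the SDE \eqref{eq:SDE} has additive noise with deterministic $\sigma$, the Brownian term cancels in the difference, giving the pure integral identity
$X_1(\tau)-X_2(\tau)=\int_t^\tau (v_1(s)-v_2(s))\dif s,\qquad X_1(T)-X_2(T)=\int_t^T (v_1(s)-v_2(s))\dif s.$
Substituting these into the Fubini-transformed expression converts the inner-product-against-$(v_1-v_2)$ terms into inner products against $X_1(\tau)-X_2(\tau)$ and $X_1(T)-X_2(T)$.

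The monotonicity assumptions \eqref{eq:3-4} and \eqref{eq:3-5} then yield
\begin{multline*}
\ip{D_v J_{Xt}(v_1)-D_v J_{Xt}(v_2)}{v_1-v_2}_{L^2(t,T;\s{H}_m)}
\geq \lambda\,\|v_1-v_2\|^2 \\-c'\int_t^T\|X_1(\tau)-X_2(\tau)\|^2\dif\tau - c'_T\|X_1(T)-X_2(T)\|^2.
\end{multline*}
Cauchy--Schwarz applied to $X_1(\tau)-X_2(\tau)=\int_t^\tau(v_1-v_2)\dif s$ gives
$\|X_1(\tau)-X_2(\tau)\|^2\leq (\tau-t)\,\|v_1-v_2\|^2_{L^2(t,T;\s{H}_m)},$
so the two error terms are bounded by $c'(T-t)^2/2$ and $c'_T(T-t)$ times $\|v_1-v_2\|^2$, and the whole expression is at least $\alpha\,\|v_1-v_2\|^2$ with
$\alpha := \lambda - T\bigl(c'_T+c'T/2\bigr)>0$
by \eqref{eq:3-6}. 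This is $\alpha$-strong monotonicity of $D_v J_{Xt}$, equivalent to $\alpha$-strong (hence strict) convexity of $J_{Xt}$ on the Hilbert space $L^2_{\s{W}_{Xt}}(t,T;\s{H}_m)$.

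Coercivity follows by integrating along the segment $\theta\mapsto \theta v_{Xt}$: strong convexity gives
$J_{Xt}(v_{Xt})\geq J_{Xt}(0)+\ip{D_v J_{Xt}(0)}{v_{Xt}}+\tfrac{\alpha}{2}\|v_{Xt}\|^2,$
and since the right side is a coercive quadratic in $\|v_{Xt}\|$, we obtain $J_{Xt}(v_{Xt}(\cdot))\to+\infty$ as $\int_t^T\|v_{Xt}(s)\|^2\dif s\to\infty$. Existence and uniqueness of the minimizer then come from the direct method: $J_{Xt}$ is continuous (by the growth bound \eqref{eq:3-1} and continuous dependence of the trajectory on the control), strictly convex and thus weakly lower semicontinuous, and coercive on a Hilbert space, so the infimum is attained at a unique point. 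The main technical obstacle is the careful bookkeeping in the first step — correctly exploiting the $\s{W}_{Xt}^s$-measurability of $v_1(s)-v_2(s)$ to discard the conditional expectations inside an $\s{H}_m$-inner product, and legitimately swapping the order of the $(s,\tau)$ integration — but once the Brownian cancellation $X_1-X_2=\int_t^{\cdot}(v_1-v_2)\dif s$ is in hand the rest is a routine strong-convexity argument.
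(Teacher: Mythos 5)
Your proposal is correct and follows essentially the same route as the paper's proof in Appendix A: the same strong monotonicity estimate $\int_t^T\ip{D_vJ_{Xt}(v^1)(s)-D_vJ_{Xt}(v^2)(s)}{v^1(s)-v^2(s)}\dif s\geq\bigl(\lambda-T(c_T'+c'T/2)\bigr)\int_t^T\|v^1(s)-v^2(s)\|^2\dif s$, obtained by using the $\s{W}_{Xt}^s$-measurability of $v^1-v^2$ to discard the conditional expectations, converting to inner products against $X^1-X^2=\int_t^{\cdot}(v^1-v^2)\dif s$ (the paper does this by differentiating rather than Fubini, which is the same computation), and applying \eqref{eq:3-4}--\eqref{eq:3-5} with the Cauchy--Schwarz bounds. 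The coercivity argument via integration along the segment $\theta\mapsto\theta v$ and the conclusion by the direct method also match the paper.
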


The proof can be found in Appendix \ref{ap:A}. 
\begin{rem}
The continuity, in the control $v_{Xt}(\cdot)$, of the functional $J_{Xt}(v_{Xt}(\cdot))$ can be also derived from the continuity of $F$, $F_T$ and that of $X_{Xt}(s;v_{Xt}(\cdot ))$ in $v_{Xt}(\cdot)$.
\end{rem}

\begin{rem}
The discussion on the global-in-time existence of solutions over intervals of arbitrary length for more generic convex payoff functions is contained in another article \cite{BTY}. In both articles, we use variational techniques to obtain the global-in-time existence result of classical solutions over intervals of arbitrary length. Even for the dynamics with drift function like $A(X)+v$, where $X$ is the state and $v$ is the control, the variational techniques still work. However, for more general dynamics (for example, with drift function involving the control, the state and the law of the state non-separably), variational techniques cannot be used to obtain the existence due to the presence of the measure argument. To this end, we have to use the Hilbert-space-valued forward backward stochastic differential equations to establish the global existence of the classical solution.
\end{rem}

\subsection{NECESSARY AND SUFFICIENT CONDITION FOR OPTIMALITY}

Here and from now on, assumptions of Section \ref{sec:asms cost} and Proposition \ref{prop:convexity} are in force.
According to Proposition \ref{prop:convexity}, there exists one and only
one optimal control $\hat{v}_{Xt}(s)$. It must satisfy the necessary
and sufficient condition $D_{v}J_{Xt}(\hat{v}_{Xt}(\cdot))(s)=0$, which further implies the existence and uniqueness of the solution pair $\del{Y_{Xt}(s),Z_{Xt}(s)}$ to \eqref{eq:3-7}-\eqref{eq:3-8}. Calling
$Y_{Xt}$$(s)$ the corresponding optimal state and $Z_{Xt}(s)=-\lambda\hat{v}_{Xt}(s),$
the pair $\del{Y_{Xt}(s),Z_{Xt}(s)}$ is the unique solution of the
system 
\begin{align}
Y_{Xt}(s) &=X-\dfrac{1}{\lambda}\int_{t}^{s}Z_{Xt}(\tau)\dif \tau+\eta(w(s)-w(t)),\label{eq:3-7}\\
Z_{Xt}(s) &=\bb{E}\left[\left.\int_{s}^{T}D_{X}F(Y_{Xt}(\tau) \otimes  m)\dif \tau+D_{X}F_{T}(Y_{Xt}(T) \otimes  m)\right|\mathcal{W}_{Xt}^{s}\right].\label{eq:3-8}
\end{align}
Moreover, since $L_{\mathcal{W}_{Xt}}^{2}(t,T;\mathcal{H}_{m})$ is
isometric to $L_{\mathcal{W}_{t}}^{2}(t,T;\mathcal{H}_{X \otimes  m})$ by Lemma \ref{lem:isometry}, using the convention outline in Section \ref{sec:ctrl prelim}
there exists $Y_{\xi t}(s),$$Z_{\xi t}(s)$ belonging to $L_{\mathcal{W}_{t}}^{2}(t,T;\mathcal{H}_{X \otimes  m})$
such that $Y_{Xt}(s)=Y_{\xi t}(s)|_{\xi=X},\:Z_{Xt}(s)=Z_{\xi t}(s)|_{\xi=X}.$
The pair of random fields $\del{Y_{\cdot t}(s),Z_{\cdot t}(s)}$ is the solution of 
\begin{align}
Y_{\xi t}(s) &=\xi-\dfrac{1}{\lambda}\int_{t}^{s}Z_{\xi t}(\tau)\dif \tau+\eta(w(s)-w(t)),\label{eq:3-9}\\
Z_{\xi t}(s) &=\bb{E}\left[\left.\int_{s}^{T}D \dod{F}{m}(Y_{\cdot t}(\tau) \otimes (X \otimes  m))(Y_{\xi t}(\tau))\dif \tau+D \dod{F_T}{m}(Y_{\cdot t}(T) \otimes (X \otimes  m))(Y_{\xi t}(T))\right|\mathcal{W}_{t}^{s}\right]\cdot \label{eq:3-10}
\end{align}
Indeed, by Proposition \ref{prop:D_XF} we have $D \dod{F}{m}(Y_{\cdot t}(\tau) \otimes (X \otimes  m))(Y_{\xi t}(\tau))|_{\xi=X}=D_{X}F(Y_{Xt}(\tau) \otimes  m),$ and similarly for $F_T$.
We notice that $Y_{\xi t}(s)$ and $Z_{\xi t}(s)$ depend on $m$ only
through $X \otimes  m,$ so we can write them $Y_{\xi,X \otimes  m,t}(s)$ and $Z_{\xi,X \otimes  m,t}(s)$, respectively.
\begin{rem}\label{rem:wellposed fwd bckwd}
	The well-posedness of forward-backward systems such as \eqref{eq:3-7}-\eqref{eq:3-8} or \eqref{eq:3-9}-\eqref{eq:3-10} follows from the estimates assumed on $F$ and $F_T$ using standard arguments, because they are necessary conditions for a strictly convex minimization problem.
\end{rem}

We can express the value function as
\begin{equation}
V(X,t) := J_{Xt}(\hat{v}_{Xt}(\cdot ))=\dfrac{1}{2\lambda}\int_{t}^{T}||Z_{Xt}(s)||^{2}\dif s+\int_{t}^{T}F(Y_{Xt}(s) \otimes  m)\dif s+F_{T}(Y_{Xt}(T) \otimes  m),\label{eq:V(X,t) def}
\end{equation}
which depends only on the probability measure $X \otimes  m$
and $t.$
Equivalently, by a slight abuse of notation, the value function can be written as follows:
\begin{multline}
V(X \otimes  m,t):= J_{X \otimes  m,t}(\hat v_{\cdot t}(\cdot)) = \dfrac{1}{2\lambda}\int_{t}^{T}\bb{E}\int_{\bb{R}^n}|Z_{\xi,X \otimes  m,t}(s)|^{2}\dif \h{1.5pt} (X \otimes  m)(\xi)\dif s\\
+\int_{t}^{T}F(Y_{\cdot ,X \otimes  m,t}(s) \otimes (X \otimes  m))\dif s+F_{T}(Y_{\cdot ,X \otimes  m,t}(T) \otimes (X \otimes  m)).\label{eq:V(Xotimes m,t) def}
\end{multline}
Cf.~Section \ref{sec:ctrl statement}.

\subsection{DYNAMIC OPTIMALITY PRINCIPLE}

For a fixed $h > 0$, consider $Y_{Xt}(t+h),$ which is an element of $\mathcal{H}_{m,t+h}$ (i.e.~independent
of $\mathcal{W}_{t+h}$--see Section \ref{sec:ctrl prelim}) that is also $\mathcal{W}_{Xt}^{t+h}$ measurable.
We can then consider a control problem starting at $t+h$ instead
of $t$ with initial value $Y_{Xt}(t+h)$, or simply $Y(t+h)$ if there is no danger of ambiguity. 
To compute the optimal trajectory, we find the unique solution $\del{Y_{Y(t+h),t+h}(s),Z_{Y(t+h),t+h}(s)}, s \in [t+h,T]$ to a forward-backward system, similar to (\ref{eq:3-7})-
(\ref{eq:3-8}) (see Remark \ref{rem:wellposed fwd bckwd}):
\begin{align}
Y_{Y(t+h),t+h}(s) &=Y(t+h)-\dfrac{1}{\lambda}\int_{t+h}^{s}Z_{Y(t+h),t+h}(\tau)\dif \tau+\eta(w(s)-w(t+h))\label{eq:3-13}\\
Z_{Y(t+h),t+h}(s) &=\bb{E}\left[\left.\int_{s}^{T}D_{X}F(Y_{Y(t+h),t+h}(\tau) \otimes  m)\dif \tau+D_{X}F(Y_{Y(t+h),t+h}(T) \otimes  m)\right|\mathcal{W}_{Y(t+h),t+h}^{s}\right]
\end{align}
where $\mathcal{W}_{Y(t+h),t+h}^{s}=\sigma(Y(t+h))\vee\mathcal{W}_{t+h}^{s}$. 
Using Remark \ref{rem3-1}, we can replace $\mathcal{W}_{Y(t+h),t+h}^{s}$
with $\mathcal{W}_{Xt}^{s}$ in the conditional expectation, and we see that $\del{Y_{Xt}(s),Z_{Xt}(s)}$
is still a solution.
By the uniqueness of solution we deduce
\begin{equation}
Y_{Y(t+h),t+h}(s)=Y_{Xt}(s)\; \text{and} \; Z_{Y(t+h),t+h}(s)=Z_{Xt}(s)\: \forall s>t+h.\label{eq:3-14}
\end{equation}
Consequently, we also have
\begin{equation}
Y_{\cdot \,Y(t+h) \otimes  m,t+h}(s) \otimes (Y(t+h) \otimes  m)=Y_{\cdot \,X \otimes  m,t}(s) \otimes (X \otimes  m)\label{eq:3-15}
\end{equation}
Therefore, from \eqref{eq:V(Xotimes m,t) def}, we get 
\begin{multline*}
V(X \otimes  m,t)=\dfrac{1}{2\lambda}\int_{t}^{t+h}||Z_{Xt}(s)||^{2}\dif s+\int_{t}^{t+h}F(Y_{Xt}(s) \otimes  m)\dif s\\
+\dfrac{1}{2\lambda}\int_{t+h}^{T}\bb{E}\int_{\bb{R}^n}|Z_{\xi\,Y(t+h) \otimes  m\,t+h}(s)|^{2}\dif \del{Y(t+h) \otimes  m}(\xi)+\int_{t+h}^{T}F(Y_{\cdot \,Y(t+h) \otimes  m,t+h}(s) \otimes (Y(t+h) \otimes  m))\dif s\\
+F_{T}(Y_{\cdot \,Y(t+h) \otimes  m,t+h}(s) \otimes (Y(t+h) \otimes  m)),
\end{multline*}
and by substituting $(Y(t+h),t+h)$ for $(X,t)$ in \eqref{eq:V(Xotimes m,t) def} and applying \eqref{eq:3-15}, we finally deduce
\begin{equation}
V(X \otimes  m,t)=\dfrac{1}{2\lambda}\int_{t}^{t+h}||Z_{Xt}(s)||^{2}\dif s+\int_{t}^{t+h}F(Y_{Xt}(s) \otimes  m)\dif s+V(Y_{Xt}(t+h) \otimes  m,t+h),\label{eq:3-16}
\end{equation}
which is the dynamic optimality principle. 

\section{PROPERTIES OF THE VALUE FUNCTION}
\label{sec:value fxn}

In this section we systematically study the regularity of $V(X \otimes  m,t)$ defined in \eqref{eq:V(Xotimes m,t) def}, beginning with pointwise estimates, then proceeding to derivatives with respect to $X$ and $m$, and finishing with continuity in time.
Recall that $V(X \otimes  m,t)$ is well-defined for any $m \in \mathcal{P}_2(\mathbb{R}^n), t \geq 0$, and $X \in \s{H}_{m,t}$, where $\s{H}_{m,t}$ is the closed subspace of $\mathcal{H}_{m}$
of random fields $X_{\cdot t}$ independent of $\mathcal{W}_{t}$ (see Section \ref{sec:ctrl prelim}).

\subsection{BOUNDS}

We begin with the following estimates, which express the growth rate of the value function and optimal trajectory with respect to $\enVert{X}$.
\begin{prop}
\label{prop4-1}
Assume (\ref{eq:3-1}), (\ref{eq:3-2}), (\ref{eq:3-3}), (\ref{eq:3-4}), and (\ref{eq:3-6}).
Let $m \in \s{P}_2(\bb{R}^n), t \geq 0,$ and $X \in \s{H}_{m,t}$, and let $\del{Y_{Xt}, Z_{Xt}}$ be the solution of \eqref{eq:3-7}-\eqref{eq:3-8}.
Then we have 
\begin{gather}
||Y_{Xt}(s)||,\:||Z_{Xt}(s)||\leq C_{T}(1+\enVert{X}), \; \forall s\in(t,T),\label{eq:4-1}\\
|V(X \otimes  m,t)|\leq C_{T}(1+\enVert{X}^{2}) \; \forall t \geq 0,\label{eq:4-2}
\end{gather}
where $C_{T}$ is a constant depending only on the data and $T$, independent of $X,m,s$, and $t$.
\end{prop}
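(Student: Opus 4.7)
The plan is to sandwich the value function between an explicit upper estimate, obtained by testing with the zero control, and a coercive lower bound obtained from the monotonicity assumptions. Once $\int_t^T\enVert{\hat v_{Xt}(s)}^2\dif s$ is controlled by $1+\enVert{X}^2$, the two SDE-type relations \eqref{eq:3-7}--\eqref{eq:3-8} yield the desired pointwise bounds on $Y_{Xt}$ and $Z_{Xt}$, and the value function bound follows from its expression \eqref{eq:V(X,t) def}.

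First I would test with $v\equiv 0$, whose trajectory is $Y^0(s)=X+\sigma(w(s)-w(t))$ with $\enVert{Y^0(s)}^2\le 2\enVert{X}^2+2|\sigma|^2(s-t)$. Plugging this into \eqref{eq:J_X} and using the quadratic growth in \eqref{eq:3-1} gives
\begin{equation*}
V(X\otimes m,t)\le J_{Xt}(0)\le C_T\bigl(1+\enVert{X}^2\bigr).
\end{equation*}
For the coercive lower bound, I would integrate the monotonicity condition \eqref{eq:3-4} along the segment from $X_2$ to $X_1$ to obtain the semiconvexity inequality
\begin{equation*}
F(X_1\otimes m)\ge F(X_2\otimes m)+\ip{D_XF(X_2\otimes m)}{X_1-X_2}-\tfrac{c'}{2}\enVert{X_1-X_2}^2,
\end{equation*}
and analogously for $F_T$ using \eqref{eq:3-5}. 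Applying this with $X_2=Y^0(s)$ and $X_1=Y_{Xt}(s)=Y^0(s)+\int_t^s v(\tau)\dif\tau$, then using $\enVert{X_1-X_2}^2\le (s-t)\int_t^s\enVert{v}^2\dif\tau$, the growth bounds \eqref{eq:3-1}--\eqref{eq:3-2}, and Cauchy--Schwarz (absorbing the cross terms into $\epsilon\int_t^T\enVert{v}^2\dif s+C_\epsilon(1+\enVert{X}^2)$), I would arrive at
\begin{equation*}
J_{Xt}(v)\ge\Bigl(\tfrac{\lambda}{2}-\tfrac{c'T^2}{4}-\tfrac{c'_T T}{2}-\epsilon\Bigr)\int_t^T\enVert{v(s)}^2\dif s - C_{T,\epsilon}\bigl(1+\enVert{X}^2\bigr).
\end{equation*}
Assumption \eqref{eq:3-6} makes the leading coefficient strictly positive for $\epsilon$ small. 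This coercivity step is the main obstacle and is essentially the same calculation that underlies Proposition \ref{prop:convexity}; tracking the constant carefully is what forces the appearance of \eqref{eq:3-6} in the hypotheses.

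Combining the two estimates at the optimum $v=\hat v_{Xt}$ gives $\int_t^T\enVert{\hat v_{Xt}(s)}^2\dif s\le C_T(1+\enVert{X}^2)$, and since $Z_{Xt}(s)=-\lambda\hat v_{Xt}(s)$, the same bound controls $\int_t^T\enVert{Z_{Xt}(s)}^2\dif s$. Feeding this into \eqref{eq:3-7} and using Cauchy--Schwarz gives
\begin{equation*}
\enVert{Y_{Xt}(s)}^2\le 3\enVert{X}^2+\tfrac{3(s-t)}{\lambda^2}\int_t^s\enVert{Z_{Xt}(\tau)}^2\dif\tau+3|\sigma|^2(s-t)\le C_T\bigl(1+\enVert{X}^2\bigr)
\end{equation*}
pointwise in $s\in[t,T]$. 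Next, I would return to the backward identity \eqref{eq:3-8} and apply Jensen's inequality to the conditional expectation, getting
\begin{equation*}
\enVert{Z_{Xt}(s)}^2\le 2(T-s)\int_s^T\enVert{D_XF(Y_{Xt}(\tau)\otimes m)}^2\dif\tau+2\enVert{D_XF_T(Y_{Xt}(T)\otimes m)}^2,
\end{equation*}
and using the linear growth \eqref{eq:3-2} together with the bound just derived for $\enVert{Y_{Xt}}$, I would conclude $\enVert{Z_{Xt}(s)}\le C_T(1+\enVert{X})$, completing \eqref{eq:4-1}.

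Finally, \eqref{eq:4-2} follows by substituting the pointwise bounds of \eqref{eq:4-1} together with \eqref{eq:3-1} into the representation \eqref{eq:V(X,t) def}, which yields the two-sided estimate $|V(X\otimes m,t)|\le C_T(1+\enVert{X}^2)$.
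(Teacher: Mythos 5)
Your proof is correct, but it reaches the key estimate $\int_t^T\enVert{Z_{Xt}(s)}^2\dif s\leq C_T(1+\enVert{X}^2)$ by a genuinely different route than the paper. The paper works entirely with the forward-backward system: it differentiates $s\mapsto\ip{Y(s)}{Z(s)}$ to obtain the duality identity
\[
\ip{X}{Z(t)}=\dfrac{1}{\lambda}\int_t^T\enVert{Z(s)}^2\dif s+\int_t^T\ip{D_XF(Y(s)\otimes m)}{Y(s)}\dif s+\ip{D_XF_T(Y(T)\otimes m)}{Y(T)},
\]
compares it with the expression for $\ip{X}{Z(t)}$ read off from \eqref{eq:3-8}, and then subtracts the reference values $D_XF(\delta)$, $D_XF_T(\delta)$ so that the monotonicity conditions \eqref{eq:3-4}--\eqref{eq:3-5} and \eqref{eq:3-6} close the estimate. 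You instead exploit the variational characterization: $V\leq J_{Xt}(0)$ gives the upper bound, and a quantitative coercivity bound $J_{Xt}(v)\geq c_0\int_t^T\enVert{v}^2\dif s-C_T(1+\enVert{X}^2)$, obtained by integrating the monotonicity condition into a semiconvexity inequality for $F$ and $F_T$ along trajectories, gives the lower bound; evaluating both at $\hat v_{Xt}=-\lambda^{-1}Z_{Xt}$ yields the same $L^2$ control on $Z$. Your constant $\tfrac{\lambda}{2}-\tfrac{c'T^2}{4}-\tfrac{c'_TT}{2}-\epsilon$ is indeed positive under \eqref{eq:3-6}, and the remainder of your argument (pointwise bound on $Y$ from the forward equation, pointwise bound on $Z$ from conditional Jensen plus \eqref{eq:3-2}, then \eqref{eq:4-2} from \eqref{eq:V(X,t) def} and \eqref{eq:3-1}) coincides with the paper's endgame. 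Your route is more elementary and makes transparent why \eqref{eq:3-6} is the right smallness condition; the paper's identity-based route has the advantage that it applies directly to solutions of the FBSDE without invoking the optimization, and the same identity is reused for differences $Z^1-Z^2$ in the proof of Proposition \ref{prop4-2}, where no zero-control comparison is available.
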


The proof can be found in Appendix \ref{ap:B}.

\subsection{REGULARITY OF $V(X \otimes  m,t)$ WITH RESPECT TO $X$}

\begin{prop}
\label{prop4-2} Assume (\ref{eq:3-1}), (\ref{eq:3-2}), (\ref{eq:3-3}), (\ref{eq:3-4}), and (\ref{eq:3-6}). 
Let $m \in \s{P}_2(\bb{R}^n)$ and $t \geq 0$.
Then the functional $\s{H}_{m,t} \ni X\mapsto V(X \otimes  m,t)$ is G\^ateaux differentiable
and 
\begin{equation}
D_{X}V(X \otimes  m,t)=Z_{Xt}(t).\label{eq:4-3}
\end{equation}
We also have the Lipschitz property: for all $X^{1},X^{2}\in\mathcal{H}_{m,t},$ we have
\begin{equation}
||D_{X}V(X^{1} \otimes  m,t)-D_{X}V(X^{2} \otimes  m,t)||\leq C_{T}||X^{1}-X^{2}||\label{eq:4-4}
\end{equation}
where $C_T$ is a constant depending only on the data and $T$, independent of $X^1,X^2,m$, and $t$.
\end{prop}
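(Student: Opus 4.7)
The plan is to derive \eqref{eq:4-3} by a two-sided envelope argument based on swapping the optimal controls of the two problems, and then to deduce \eqref{eq:4-4} from Lipschitz stability of the forward-backward system \eqref{eq:3-7}-\eqref{eq:3-8} in the initial condition. Fix $X,\tilde X\in\s{H}_{m,t}$ and small $\epsilon>0$. The perturbation $X+\epsilon\tilde X$ is again in $\s{H}_{m,t}$, but $\sigma(X+\epsilon\tilde X)$ is neither contained in nor contains $\sigma(X)$. I would first invoke Remark \ref{rem3-1} with $\s{B}\supset\sigma(X)\vee\sigma(\tilde X)$, chosen independent of $\s{W}_t$, to reformulate both the base and the perturbed control problems over the common Hilbert space $L^2_{\s{B}\vee\s{W}_t}(t,T;\s{H}_m)$, without changing their values or their unique optimizers $\hat v_{Xt}$ and $\hat v_{X+\epsilon\tilde X,t}$. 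This makes each optimal control admissible for the other problem, a fact used crucially in what follows.

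For the upper bound, insert $\hat v_{Xt}$ into $J_{X+\epsilon\tilde X,t}$. By linearity of \eqref{eq:3-7}, the corresponding state is $Y_{Xt}(s)+\epsilon\tilde X$, and suboptimality yields
\begin{align*}
V((X+\epsilon\tilde X)\otimes m,t) - V(X\otimes m,t) &\le \int_t^T\del{F((Y_{Xt}(s)+\epsilon\tilde X)\otimes m) - F(Y_{Xt}(s)\otimes m)}\dif s \\
&\quad + F_T((Y_{Xt}(T)+\epsilon\tilde X)\otimes m) - F_T(Y_{Xt}(T)\otimes m).
\end{align*}
Dividing by $\epsilon$, using the mean value form of Gâteaux differentiability together with \eqref{eq:3-2} and dominated convergence, one obtains in the limit $\epsilon\to 0^+$
\begin{equation*}
\limsup_{\epsilon\to 0^+}\frac{V((X+\epsilon\tilde X)\otimes m,t) - V(X\otimes m,t)}{\epsilon} \le \int_t^T \ip{D_X F(Y_{Xt}(s)\otimes m)}{\tilde X}\dif s + \ip{D_X F_T(Y_{Xt}(T)\otimes m)}{\tilde X}.
\end{equation*}
The matching lower bound is obtained symmetrically by plugging $\hat v_{X+\epsilon\tilde X,t}$ into $J_{Xt}$, whose associated trajectory is $Y_{X+\epsilon\tilde X,t}(s)-\epsilon\tilde X$; pointwise convergence of the integrands then requires only $\enVert{Y_{X+\epsilon\tilde X,t}(s)-Y_{Xt}(s)}_{\s{H}_m}\to 0$ uniformly in $s$, which is provided by the FBSDE stability estimate discussed below. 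To identify the common limit with $\ip{Z_{Xt}(t)}{\tilde X}$, evaluate \eqref{eq:3-8} at $s=t$ and apply Remark \ref{rem3-1} with the same $\s{B}$:
\begin{equation*}
Z_{Xt}(t) = \bb{E}\intcc{\int_t^T D_X F(Y_{Xt}(\tau)\otimes m)\dif \tau + D_X F_T(Y_{Xt}(T)\otimes m) \bigg|\, \s{B}}.
\end{equation*}
Since $\tilde X$ is $\s{B}$-measurable, the pull-through property applied fiber-wise in $x$ and then integrated against $m$ transfers the conditional expectation onto the full quantity, yielding exactly $\ip{Z_{Xt}(t)}{\tilde X}$. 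Linearity in $\tilde X$ is then manifest, and \eqref{eq:4-3} follows.

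For \eqref{eq:4-4}, it suffices to prove the FBSDE stability bound $\enVert{Z_{X^1,t}(t)-Z_{X^2,t}(t)}_{\s{H}_m}\le C_T\enVert{X^1-X^2}_{\s{H}_m}$ for solutions of \eqref{eq:3-7}-\eqref{eq:3-8} with distinct initial data. I would subtract the two systems, integrate $\ip{Y^1(s)-Y^2(s)}{Z^1(s)-Z^2(s)}$ from $t$ to $T$, and exploit the Lipschitz bounds \eqref{eq:3-3} and monotonicity conditions \eqref{eq:3-4}-\eqref{eq:3-5} together with the smallness condition \eqref{eq:3-6} to absorb cross terms and close a Grönwall-type estimate on $\sup_s(\enVert{Y^1(s)-Y^2(s)}^2+\enVert{Z^1(s)-Z^2(s)}^2)$. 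Evaluating at $s=t$ gives \eqref{eq:4-4}. This FBSDE stability step is the main obstacle; it is shared by both the lower-bound portion of the envelope argument and the Lipschitz conclusion, and the careful use of \eqref{eq:3-6} is essential to prevent the estimate from blowing up over $[t,T]$.
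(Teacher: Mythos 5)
Your proposal is correct and follows essentially the same route as the paper: a two-sided suboptimality argument obtained by swapping the optimal controls over a common enlarged filtration (Remark \ref{rem3-1}), identification of the derivative with $Z_{Xt}(t)$ through the backward equation, and a stability estimate for the forward--backward system driven by the duality identity, the monotonicity conditions \eqref{eq:3-4}--\eqref{eq:3-5}, and the smallness condition \eqref{eq:3-6}. The only cosmetic difference is that the paper works directly with two points $X^1,X^2$ and derives the quadratic remainder bound \eqref{eq:ApB19} (so its upper bound needs no limiting argument), whereas you compute the directional difference quotient at $X+\epsilon\tilde X$, which is why your lower bound additionally invokes the FBSDE stability---an estimate you need for \eqref{eq:4-4} in any case.
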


The proof can be found in Appendix \ref{ap:B}.

\subsection{FUNCTIONAL DERIVATIVE OF $V(m,t)$}

If we take $X_{xt}=x,$ recalling that $X \otimes  m=m,$ we get $D_{X}V(m,t)=Z_{xt}(t)=Z_{xmt}(t)$, where the pair $\del{Y_{xmt}(s),Z_{xmt}(s)}$ is the unique solution of
\begin{align}
Y_{xmt}(s) &=x-\dfrac{1}{\lambda}\int_{t}^{s}Z_{xmt}(\tau)\dif \tau+\eta(w(s)-w(t)),\label{eq:Yxmt}\\
Z_{xmt}(s) &=\bb{E}\left[\left.\int_{s}^{T} D \dod{F}{m}(Y_{\cdot mt}(\tau) \otimes  m)(Y_{xmt}(\tau))\dif \tau+D \dod{F_T}{m}(Y_{\cdot mt}(T) \otimes  m)(Y_{xmt}(T))\right|\mathcal{W}_{t}^{s}\right],\label{eq:Zxmt}
\end{align}
cf.~(\ref{eq:3-9})-(\ref{eq:3-10}) and Remark \ref{rem:wellposed fwd bckwd}.
Equations \eqref{eq:Yxmt} and \eqref{eq:Zxmt} form the system of optimality conditions for the following control problem, obtained by appropriately specifying the dynamics \eqref{eq:SDE} and objective functional \eqref{eq:J_Xotimesm}:
\begin{align}
X_{xt}(s) &=x+\int_{t}^{s}v_{xt}(\tau)\dif \tau+\eta(w(s)-w(t)),\quad\label{eq:4-7}\\
J_{mt}(v_{.t}(\cdot)) &=\dfrac{\lambda}{2}\int_{t}^{T}||v_{.t}(s)||_{\mathcal{H}_{m}}^{2}\dif s+\int_{t}^{T}F(X_{.t}(s;v_{.t}(\cdot)) \otimes  m)\dif s+F_{T}(X_{.t}(T;v_{.t}(\cdot)) \otimes  m)\label{eq:4-8}
\end{align}
with $v_{.t}(\cdot)$ $\in L_{\mathcal{W}_{t}}^{2}(t,T;\mathcal{H}_{m})$. 
From (\ref{eq:V(Xotimes m,t) def}) it follows that
\begin{align}\nonumber
V(m,t)=&\inf_{v_{\cdot t}(\cdot )}J_{m,t}(v_{\cdot t}(\cdot ))\\
=&\dfrac{1}{2\lambda}\int_{t}^{T}\bb{E}\int_{\bb{R}^n}|Z_{xmt}(s)|^{2}\dif m(x) \dif s +\int_{t}^{T}F(Y_{\cdot mt}(s) \otimes  m)\dif s+F_{T}(Y_{\cdot mt}(T) \otimes  m).\label{eq:4-9}
\end{align}
Applying Proposition \ref{prop4-1} (under the assumptions stated there), we have
\begin{equation}
\bb{E}|Y_{xmt}(s)|^{2}\leq C_{T}(1+|x|^{2}),\:\bb{E}|Z_{xmt}(s)|^{2}\leq C_{T}(1+|x|^{2}).\label{eq:4-90}
\end{equation}
Equation \eqref{eq:4-90} implies that the optimal control is an element of the set
\begin{equation}
\label{eq:restricted controls}
\sr{V} := \cbr{v_{\cdot t}(\cdot) : x \mapsto \dfrac{v_{xt}(\cdot )}{1+|x|^{2}}\in L^{\infty}(\bb{R}^n;L_{\mathcal{W}_{t}}^{2}(t,T;\bb{R}^n))},
\end{equation}
and therefore the value function $V(m,t)$ remains unchanged if we restrict the domain of $J_{m,t}$ to $\sr{V}$.
A crucial fact is that
\begin{equation}
\sr{V} \subset \bigcap_{\mu \in \mathcal{P}_2(\mathbb{R}^n)} L^2_{\s{W}_t}(t,T;\s{H}_\mu),
\end{equation}
which is proved in the same way as Lemma \ref{lem:intersect Hm}.
Thus, for a given $v_{\cdot t}(\cdot) \in \sr{V}$, $J_{m,t}(v_{\cdot t}(\cdot))$ is defined for \emph{all} $m \in \mathcal{P}_2(\mathbb{R}^n)$.
This will be used to prove the following:
\begin{prop}
\label{prop4-3}
Assume (\ref{eq:3-1}), (\ref{eq:3-2}), (\ref{eq:3-3}), (\ref{eq:3-4}), and (\ref{eq:3-6}). 
Then the value function $V(m,t)$ has a functional derivative $\dod{}{m}V(m,t)(x),$
given by 
\begin{multline}
\dod{}{m}V(m,t)(x)\\=\dfrac{1}{2\lambda}\int_{t}^{T}\bb{E}|Z_{xmt}(s)|^{2}\dif s+\int_{t}^{T}\bb{E}\dod{F}{m}(Y_{\cdot mt}(s) \otimes  m)(Y_{xmt}(s))\dif s+\bb{E}\dod{F_T}{m}(Y_{\cdot mt}(T) \otimes  m)(Y_{xmt}(T)).\label{eq:4-10}
\end{multline}
Moreover, for $m\in\mathcal{P}_2(\mathbb{R}^n)$ and $t\in[0,T]$, $\dod{}{m}V(m,t)(x)$ is continuously differentiable in $x$ and
\begin{equation}
D\dod{}{m}V(m,t)(x)=Z_{xmt}(t) \;\text{and}\; D_{X}V(X \otimes  m,t)=D\dod{}{m}V(X \otimes  m,t)(X).\label{eq:4-11}
\end{equation}
\end{prop}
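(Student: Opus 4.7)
The approach is a classical envelope-theorem argument, made feasible by the observation \eqref{eq:restricted controls} that the optimal control $\hat v$ associated with $m$ lies in $\sr{V}$, which is admissible for every $\mu \in \s{P}_2$. I would fix $m, m' \in \s{P}_2$ and set $m_\epsilon = m + \epsilon(m'-m)$ for small $\epsilon > 0$; denoting by $\hat v^0$ and $\hat v^\epsilon$ the optimal controls for $J_{m,t}$ and $J_{m_\epsilon,t}$ respectively (both in $\sr{V}$ by \eqref{eq:4-90}), sub-optimality yields the two-sided bound
\begin{equation*}
J_{m_\epsilon, t}(\hat v^\epsilon) - J_{m, t}(\hat v^\epsilon) \;\leq\; V(m_\epsilon, t) - V(m, t) \;\leq\; J_{m_\epsilon, t}(\hat v^0) - J_{m, t}(\hat v^0).
\end{equation*}

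The next step is to differentiate $m \mapsto J_{m,t}(v)$ for a fixed $v \in \sr{V}$. Because the finite-dimensional trajectory $s \mapsto X_{xt}(s; v_{xt}(\cdot))$ in \eqref{eq:4-7} is independent of $m$, the measure $m$ enters \eqref{eq:4-8} only explicitly, in three places: the kinetic term contributes $\tfrac{\lambda}{2}\int_t^T \bb{E}|v_{xt}(s)|^2\dif s$ by Definition \ref{def:functional derivative}, while the running and terminal cost terms contribute via Proposition \ref{prop:partial F m}. Evaluating at $v = \hat v^0$ and using $\hat v^0_{xt}(s) = -Z_{xmt}(s)/\lambda$ and $X_{xt}(s; \hat v^0) = Y_{xmt}(s)$, the upper bound above, divided by $\epsilon$, converges as $\epsilon \to 0^+$ to $\int \Phi(m,t,x)\dif(m'-m)(x)$, where $\Phi(m,t,x)$ is precisely the right-hand side of \eqref{eq:4-10}.

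The main technical obstacle will be showing the lower bound converges to the same limit. I would establish a stability result $\hat v^\epsilon \to \hat v^0$, in $L^2_{\s{W}_t}$ on a common ambient space of restricted controls, using the uniform bounds on $(\hat v^\epsilon, Y^\epsilon, Z^\epsilon)$ furnished by Proposition \ref{prop4-1}, the strict convexity of Proposition \ref{prop:convexity}, and standard FBSDE stability estimates for \eqref{eq:3-7}-\eqref{eq:3-8} built on the Lipschitz assumption \eqref{eq:3-3}. Uniqueness of the minimiser rules out spurious subsequential limits, so that substituting $v = \hat v^\epsilon$ in the formula from the previous paragraph and passing to the limit forces the lower bound to match the upper bound; this establishes \eqref{eq:4-10}.

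For \eqref{eq:4-11}, the right-hand side of \eqref{eq:4-10} is $C^1$ in $x$ with linear growth (inherited from \eqref{eq:3-2}-\eqref{eq:3-3} together with the estimates \eqref{eq:4-90} on $Y_{xmt}$ and $Z_{xmt}$), so Proposition \ref{prop:D_XF} applies to the functional $V(\cdot,t)$ and yields
\begin{equation*}
D_X V(X \otimes m, t) \;=\; D\dod{}{m}V(X \otimes m, t)(X(\cdot)),
\end{equation*}
which is the second identity in \eqref{eq:4-11}. Combined with Proposition \ref{prop4-2}, this gives $D\dod{}{m}V(X \otimes m, t)(X) = Z_{Xt}(t)$, and specialising to $X(x) = x$, so that $X \otimes m = m$ and $Z_{Xt}(t) = Z_{xmt}(t)$, recovers the first identity $D\dod{}{m}V(m,t)(x) = Z_{xmt}(t)$.
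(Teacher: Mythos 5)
Your treatment of \eqref{eq:4-10} follows essentially the paper's route: the enabling observation that the optimal controls lie in $\sr{V}$, so that $J_{\mu,t}$ is defined for every $\mu\in\s{P}_2$, is the same, and your two-sided sub-optimality bound together with the stability $\hat v^\epsilon\to\hat v^0$ is a legitimate (indeed more explicit) way to justify what the paper dismisses as ``a simple application of the Envelope theorem.'' That part is fine.

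The gap is in \eqref{eq:4-11}. To invoke Proposition \ref{prop:D_XF} you must first know that $x\mapsto \od{V}{m}(m,t)(x)$ is continuously differentiable, with derivative continuous in $(m,x)$ and of linear growth. You assert that the right-hand side of \eqref{eq:4-10} is $C^1$ in $x$, ``inherited from \eqref{eq:3-2}--\eqref{eq:3-3} together with \eqref{eq:4-90},'' but those hypotheses give only $L^2$ bounds on $(Y_{xmt},Z_{xmt})$ and, via FBSDE stability, Lipschitz dependence on $x$ --- not differentiability. Differentiating the right-hand side of \eqref{eq:4-10} in $x$ directly would require differentiating the forward-backward system \eqref{eq:Yxmt}--\eqref{eq:Zxmt} in $x$, which needs second-order derivatives of $F$ and $F_T$; these are not among the assumptions of Proposition \ref{prop4-3} and are only introduced later, in Section \ref{sec:further regularity}. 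The paper circumvents exactly this difficulty with a second envelope argument: it identifies $\od{V}{m}(m,t)(x)$ with the value $\inf_{v}K_{xt}(v(\cdot))$ of the auxiliary problem \eqref{eq:4-13}--\eqref{eq:4-14}, in which $x$ enters only as an explicit initial condition, so the envelope theorem in $x$ yields
\[
D\od{}{m}V(m,t)(x)=\int_{t}^{T}\bb{E}\,D\od{F}{m}(Y_{\cdot mt}(s)\otimes m)(Y_{xmt}(s))\dif s+\bb{E}\,D\od{F_T}{m}(Y_{\cdot mt}(T)\otimes m)(Y_{xmt}(T))=Z_{xmt}(t)
\]
using only first derivatives of $F,F_T$. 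Without this step (or an equivalent one) your appeal to Proposition \ref{prop:D_XF} is unsupported. Once it is supplied, your ordering --- obtaining the second identity of \eqref{eq:4-11} from Propositions \ref{prop:D_XF} and \ref{prop4-2} and then specialising to $X(x)=x$ --- is consistent with the paper.
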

The proof can be found in Appendix \ref{ap:B}. We can also extend the result (\ref{eq:4-11}) as follows 
\begin{prop}
\label{prop4-4} 
Assume (\ref{eq:3-1}), (\ref{eq:3-2}), (\ref{eq:3-3}), (\ref{eq:3-4}), and (\ref{eq:3-6}). 
Then for any $x \in \bb{R}^n$, we have
\begin{equation}
Z_{xmt}(s)=D\dod{V}{m}(Y_{\cdot mt}(s) \otimes  m,s)(Y_{xmt}(s)) \quad \forall s \in [t,T].\label{eq:4-17}
\end{equation}
\end{prop}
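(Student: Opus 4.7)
The plan is to apply Proposition \ref{prop4-3} at the later time $s$ with measure $\mu_s := Y_{\cdot mt}(s) \otimes m$, and then invoke a uniqueness/dynamic-programming argument in the spirit of Section 3.6 to identify the restarted adjoint at time $s$ with $Z_{xmt}(s)$. Specifically, $\mu_s \in \s{P}_2(\bb{R}^n)$ is deterministic by Definition \ref{def:X otimes m}; applying Proposition \ref{prop4-3} at $(\mu_s,s)$ yields $D\dod{V}{m}(\mu_s,s)(y) = Z_{y\mu_s s}(s)$ for every $y \in \bb{R}^n$, where $(Y_{y\mu_s s}(\tau), Z_{y\mu_s s}(\tau))_{\tau \in [s,T]}$ is the unique solution of the FBSDE \eqref{eq:Yxmt}-\eqref{eq:Zxmt} with $(x,m,t) \leftarrow (y,\mu_s,s)$. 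Since $\mathcal{W}_s^s$ is trivial, $y \mapsto Z_{y\mu_s s}(s)$ is a deterministic measurable function on $\bb{R}^n$ that may be composed with the random variable $Y_{xmt}(s)$.

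The target identity \eqref{eq:4-17} thus reduces to showing $Z_{y\mu_s s}(s)|_{y = Y_{xmt}(s)} = Z_{xmt}(s)$, which I would derive from the stronger statement that $Y_{y\mu_s s}(\tau)|_{y = Y_{xmt}(s)} = Y_{xmt}(\tau)$ and $Z_{y\mu_s s}(\tau)|_{y = Y_{xmt}(s)} = Z_{xmt}(\tau)$ for all $\tau \in [s,T]$. To establish this I would mimic the uniqueness argument of Section 3.6. The forward equation comes out immediately from the flow identity: starting from $Y_{xmt}(s)$ at time $s$ under the Brownian increment $w(\tau) - w(s)$, equation \eqref{eq:3-7} restricted to $[s,T]$ coincides with the forward half of the restarted FBSDE under the substitution. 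For the backward equation, I invoke Remark \ref{rem3-1} to replace the conditioning $\mathcal{W}_s^\tau$ by the enlargement $\sigma(Y_{xmt}(s)) \vee \mathcal{W}_s^\tau$, which is legitimate because $Y_{xmt}(s)$ is $\mathcal{W}_t^s$-measurable and therefore independent of $\mathcal{W}_s$. One also has to check that the driving measure flows coincide, $Y_{\cdot \mu_s s}(\tau)\otimes \mu_s = Y_{\cdot mt}(\tau)\otimes m$ for $\tau \ge s$: since $Y_{xmt}(s)$ has marginal law $\mu_s$ as $x$ varies under $m$, the parametrized restarted flow reproduces the original marginal flow by uniqueness.

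The main obstacle is the rigorous execution of the substitution step at the backward level, since $Z_{y\mu_s s}(\tau)$ is defined via a conditional expectation indexed by the deterministic parameter $y$, and one must argue that this expectation commutes with the substitution $y \mapsto Y_{xmt}(s)$. The filtration enlargement from Remark \ref{rem3-1}, combined with the independence of $Y_{xmt}(s)$ from $\mathcal{W}_s$, is exactly what makes the commutation valid; once this subtlety is dealt with, the verification reduces to a standard uniqueness argument for strictly convex FBSDE's as in Remark \ref{rem:wellposed fwd bckwd}. Evaluating the resulting identity at $\tau = s$ and combining with the application of Proposition \ref{prop4-3} at $(\mu_s,s)$ then yields \eqref{eq:4-17}.
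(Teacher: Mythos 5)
Your proposal follows essentially the same route as the paper's proof: apply Proposition \ref{prop4-3} at the restarted data $(\mu_s,s)$ with $\mu_s = Y_{\cdot mt}(s)\otimes m$, substitute $\xi = Y_{xmt}(s)$ into the restarted FBSDE, verify via the measure-flow identity and the filtration enlargement of Remark \ref{rem3-1} that the substituted pair coincides with $(Y_{xmt}(\tau),Z_{xmt}(\tau))$ on $[s,T]$, and conclude by uniqueness. The two technical points you flag (commuting the conditional expectation with the substitution, and matching the marginal flows) are exactly the ones the paper addresses, so the argument is sound.
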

The proof can be found in Appendix \ref{ap:B}. By \eqref{eq:partial X derivative identity} we can also write (\ref{eq:4-17}) as 
\begin{equation}
Z_{\cdot mt}(s)=D_{X}V(Y_{\cdot mt}(s) \otimes  m,s).\label{eq:4-24}
\end{equation}
In fact, by a similar argument, we can derive the identity
\begin{equation}
Z_{Xt}(s)=D_{X}V(Y_{Xt}(s) \otimes  m,s) \quad \forall s \in [t,T], \ \forall X \in \s{H}_m. \label{eq:4-25}
\end{equation}
The proof is quite similar to that of Proposition \ref{prop4-4}.
Indeed, one considers the solution $\del{Y_{Xt}(s),Z_{Xt}(s)}$ of
the system (\ref{eq:3-7})-(\ref{eq:3-8}) and then repeats the same reasoning as above; we omit the details.
It follows that $Y_{Xt}(s)$ can be expressed as the solution of a
stochastic differential equation in $\mathcal{H}_{m},$ namely
\begin{equation}
Y_{Xt}(s)=X-\dfrac{1}{\lambda}\int_{t}^{s}D_{X}V(Y_{Xt}(\tau) \otimes  m,\tau)\dif \tau+\eta(w(s)-w(t)).
\label{eq:4-250}
\end{equation}

\subsection{REGULARITY IN TIME}

\begin{prop}
\label{prop4-5}
Assume (\ref{eq:3-1}), (\ref{eq:3-2}), (\ref{eq:3-3}), (\ref{eq:3-4}), and (\ref{eq:3-6}). 
Let $m \in \s{P}_2(\bb{R}^n)$ and $t \geq 0$.
Then for any $X=X_{xt} \in \s{H}_{m,t}$ and $h > 0$ we have
\begin{gather}
|V(X \otimes  m,t+h)-V(X \otimes  m,t)|\leq C_{T}h(1+\enVert{X}^{2}),\label{eq:4-26}\\
\enVert{D_{X}V(X \otimes  m,t+h)-D_{X}V(X \otimes  m,t)}\leq C_{T}(h^{\frac{1}{2}} + h)(1 + \enVert{X}),\label{eq:4-27}
\end{gather}
where $C_{T}$ is a constant depending only on the data and $T$, independent of $X,m,t$, and $h$.
\end{prop}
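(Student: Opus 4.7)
The plan is to combine the dynamic programming principle \eqref{eq:3-16}, the feedback identity $Z_{Xt}(s) = D_X V(Y_{Xt}(s)\otimes m, s)$ from \eqref{eq:4-25}, the Lipschitz bound on $D_X V$ from Proposition \ref{prop4-2}, and the moment estimates of Proposition \ref{prop4-1}.

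For \eqref{eq:4-26}, I would subtract $V(X\otimes m, t+h)$ from both sides of \eqref{eq:3-16} to obtain
\begin{multline*}
V(X\otimes m, t) - V(X\otimes m, t+h) = \tfrac{1}{2\lambda}\int_t^{t+h}\|Z_{Xt}(s)\|^2\, ds + \int_t^{t+h}F(Y_{Xt}(s)\otimes m)\,ds \\
+ \big[V(Y_{Xt}(t+h)\otimes m, t+h) - V(X\otimes m, t+h)\big].
\end{multline*}
The two integrals are of order $h(1+\|X\|^2)$ by Proposition \ref{prop4-1} and \eqref{eq:3-1}. Since $\|Y_{Xt}(t+h)-X\|\sim\sqrt{h}$ from the Brownian increment, a naive Lipschitz bound on the bracketed difference via Proposition \ref{prop4-2} would yield only $O(\sqrt{h})$, so instead I would use a first-order Taylor expansion,
\begin{equation*}
V(Y_{Xt}(t+h)\otimes m, t+h) - V(X\otimes m, t+h) = \ip{D_X V(X\otimes m, t+h)}{B} + R,
\end{equation*}
with $B:= Y_{Xt}(t+h) - X = -\tfrac{1}{\lambda}\int_t^{t+h}Z_{Xt}(s)\,ds + \sigma(w(t+h)-w(t))$ and Lipschitz remainder $|R|\le \tfrac{1}{2} C_T\|B\|^2 = O(h(1+\|X\|^2))$. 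The crucial observation is that the Brownian contribution to the linear term vanishes: by Proposition \ref{prop:D_XF}, $D_X V(X\otimes m, t+h)$ depends on $\omega$ only through $X$, and so is independent of $w(t+h)-w(t)$ by the hypothesis $X\in\s{H}_{m,t}$; therefore $\ip{D_X V(X\otimes m, t+h)}{\sigma(w(t+h)-w(t))}_{\s{H}_m}$ factors as $\bigl(\sigma\,\bb{E}[w(t+h)-w(t)]\bigr)^{\top}\cdot\bb{E}\int D_X V\,dm = 0$. The drift part contributes only $O(h(1+\|X\|)^2)$ via Cauchy--Schwarz and \eqref{eq:4-1}.

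For \eqref{eq:4-27}, I would use \eqref{eq:4-25} to identify $D_X V(X\otimes m, t+h)=Z_{X,t+h}(t+h)$, $D_X V(X\otimes m, t)=Z_{Xt}(t)$ and $D_X V(Y_{Xt}(t+h)\otimes m, t+h)=Z_{Xt}(t+h)$. The backward equation \eqref{eq:3-8} and the tower property yield $Z_{Xt}(t) = \bb{E}\big[Z_{Xt}(t+h) + \int_t^{t+h}D_XF(Y_{Xt}(\tau)\otimes m)\,d\tau\mid \sigma(X)\big]$, and since $D_X V(X\otimes m, t+h)$ is itself $\sigma(X)$-measurable the target difference becomes
\begin{multline*}
D_X V(X\otimes m, t+h) - D_X V(X\otimes m, t) = \bb{E}\big[D_X V(X\otimes m, t+h) - D_X V(Y_{Xt}(t+h)\otimes m, t+h)\mid\sigma(X)\big] \\
- \bb{E}\Big[\int_t^{t+h}D_XF(Y_{Xt}(\tau)\otimes m)\,d\tau\,\Big|\,\sigma(X)\Big].
\end{multline*}
The $L^2$-contraction property of conditional expectation combined with Proposition \ref{prop4-2} on the first term and the growth bound \eqref{eq:3-2} on the second gives $C_T\|X-Y_{Xt}(t+h)\|+C_Th(1+\|X\|)$; splitting $\|X-Y_{Xt}(t+h)\|^2$ into its drift and Brownian contributions delivers the advertised $O\bigl((\sqrt{h}+h)(1+\|X\|)\bigr)$.

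The main obstacle is obtaining \eqref{eq:4-26} at rate $h$ rather than the naive $\sqrt{h}$. This hinges on the cancellation of the Brownian part of the linear Taylor term, which in turn depends on the $L$-derivative structure of $D_X V$ (Proposition \ref{prop:D_XF}) together with the independence built into the requirement $X\in\s{H}_{m,t}$. Once that cancellation is identified, everything else reduces to routine moment estimates and the previously established Lipschitz bounds.
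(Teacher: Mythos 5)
Your proposal is correct and follows essentially the same route as the paper's proof: for \eqref{eq:4-26} it uses the dynamic programming identity \eqref{eq:3-16} plus a first-order Taylor expansion whose linear term loses its Brownian contribution by the independence of $\sigma(X)$ and $\mathcal{W}_t$, and for \eqref{eq:4-27} it uses the backward equation \eqref{eq:3-8}, the tower/contraction property of conditional expectation, the flow identity $Z_{Xt}(t+h)=Z_{Y_{Xt}(t+h),t+h}(t+h)$, and the Lipschitz estimates of Proposition \ref{prop4-2}. Both key cancellations and rates match the paper's argument.
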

The proof can be found in Appendix \ref{ap:B}.

\section{BELLMAN EQUATION}
\label{sec:bellman}

\subsection{FURTHER REGULARITY ASSUMPTIONS} \label{sec:further regularity}

In order to show that the value function defined in \eqref{eq:V(Xotimes m,t) def} is a classical solution to the Bellman equation (see Equation \eqref{eq:Bellman} below), we shall need additional regularity assumptions on the data.
we shall assume that the functionals $X\mapsto F(X \otimes  m)$ and $X\mapsto F_{T}(X \otimes  m)$
have second order G\^ateaux derivatives in $\mathcal{H}_{m}$ (which take values in $\mathcal{L}(\mathcal{H}_{m};\mathcal{H}_{m})$, denoted
$D_{X}^{2}F(X \otimes  m)$ and $D_{X}^{2}F_{T}(X \otimes  m)$, respectively. From formula
(\ref{eq:2-232}) we have
\begin{equation}
D_{X}^{2}F(X \otimes  m)(Z)=\Phi(X)Z+\Psi_{XZ}(X)\label{eq:5-1}
\end{equation}
where $\Phi:\bb{R}^n\rightarrow\mathcal{L}(\bb{R}^n;\bb{R}^n)$
and $\Psi_{XZ}:\bb{R}^n\rightarrow \bb{R}^n$ are measurable functions.
The 
map $\s{H}_{m} \ni Z\rightarrow\Psi_{XZ}(x)$
is linear for each $X \in \s{H}_m$ and $x \in \bb{R}^n$.

We also assume the following estimates:
\begin{gather}
||D_{X}^{2}F(X \otimes  m)||\leq c,\:||D_{X}^{2}F_{T}(X \otimes  m)||\leq c_{T},\label{eq:5-2}\\
\ip{D_{X}^{2}F(X \otimes  m)(Z)}{Z} + c'||Z||^{2}\geq 0,\: \ip{D_{X}^{2}F_T(X \otimes  m)(Z)}{Z} + c'_{T}||Z||^{2}\geq 0, \; \forall Z\in\mathcal{H}_{m},\label{eq:5-3}\\
\begin{aligned}
\bb{E}\int_{\bb{R}^n}|D_{X}^{2}F(X \otimes  m)(Z)|\dif m(x)&\leq cE\int_{\bb{R}^n}|Z(x)|\dif m(x),\:\forall Z\in L^{1}(\Omega,\mathcal{A},P;L_{m}^{1}(\bb{R}^n;\bb{R}^n))\\
\bb{E}\int_{\bb{R}^n}|D_{X}^{2}F_{T}(X \otimes  m)(Z)|\dif m(x)&\leq c_{T}\bb{E}\int_{\bb{R}^n}|Z(x)|\dif m(x),\:\forall Z\in L^{1}(\Omega,\mathcal{A},P;L_{m}^{1}(\bb{R}^n;\bb{R}^n))
\end{aligned}\label{eq:5-50}
\end{gather}
We also assume the following continuity: 
\begin{equation}
\begin{aligned}
X\mapsto D_{X}^{2}F(X \otimes  m)(Z)\:\text{is continuous from}\:\mathcal{H}_{m}\:\text{to }\mathcal{H}_{m},\text{for each} \:Z \in \s{H}_m,\\
X\mapsto D_{X}^{2}F_{T}(X \otimes  m)(Z)\:\text{is continuous from}\:\mathcal{H}_{m}\:\text{to }\mathcal{H}_{m},\text{for each} \:Z \in \s{H}_m,
\end{aligned}\label{eq:5-4}
\end{equation}
and
\begin{multline}
\forall\epsilon,M > 0, \ \forall X,Z \in \s{H}_m\:\text{s.t.}\:\enVert{Z}\leq M,\exists\delta_{X}(\epsilon,M) > 0\;\text{s.t.} \ \enVert{X_{k}-X}\leq\delta_{X}(\epsilon,M)\\
	\Longrightarrow
\begin{cases}
\bb{E}\int_{\bb{R}^n}|D_{X}^{2}F(X_{k} \otimes  m)(Z)-D_{X}^{2}F(X \otimes  m)(Z)|\dif m(x)\leq\epsilon,\\
\bb{E}\int_{\bb{R}^n}|D_{X}^{2}F_{T}(X_{k} \otimes  m)(Z)-D_{X}^{2}F_{T}(X \otimes  m)(Z)|\dif m(x)\leq\epsilon.
\end{cases}\label{eq:5-5}
\end{multline}

\begin{rem}
Note that the assumption of the existence of second-order Fr\'{e}chet derivative $D_X^2F(X \otimes  m)$ is a stronger assumption than $F\in C^{2,1}_b(\mathcal{P}_2(\mathbb{R}^n))$ since there is a counterexample in \cite{BLPR} showing that $F\in C^{2,1}_b(\mathcal{P}_2(\mathbb{R}^n))$ cannot warrant the existence of second-order Fr\'{e}chet derivative of the lifted version of $F$ in $\mathcal{H}_{m}$. However, our assumptions are not severe since many common models such as the linear-quadratic one satisfy our assumption which can be checked directly.
\end{rem}
We have seen in (\ref{eq:3-3, no lift}),(\ref{eq:3-33}),(\ref{eq:5-34})
how to achieve (\ref{eq:5-2}),(\ref{eq:5-3}). We have seen in (\ref{eq:2-235})
that to get (\ref{eq:5-2}) it suffices to assume 
\begin{equation}
\begin{aligned}
&\abs{D^{2}\dod{F}{m}(m)(x)}\leq c,\:\abs{D_1D_2\dod[2]{F}{m}(m)(x,\tilde{x})}\leq c,\\
&\abs{D^2\dfrac{dF_{T}}{dm}(m)(x)} \leq c_{T},\:\abs{D_2D_1\dfrac{d^{2}F_{T}}{dm^{2}}(m)(x,\tilde{x})} \leq c_{T},\forall x,\tilde{x}\in \bb{R}^n,
\end{aligned}\label{eq:5-6}
\end{equation}
where $|\cdot|$ denotes the matrix norm. We may
fulfil (\ref{eq:5-3}) by assuming that 
\begin{equation}
\begin{aligned}
 D^{2}\dod{F}{m}(m)(x)\xi \cdot \xi
+ D_2D_1\dod[2]{F}{m}(m)(x,\tilde{x})\xi \cdot \tilde{\xi} \geq-c'|\xi|(|\xi|+|\tilde{\xi}|),\\ 
D^{2}\dod{F_T}{m}(m)(x)\xi \cdot \xi
+ D_2D_1\dod[2]{F_T}{m}(m)(x,\tilde{x})\xi \cdot \tilde{\xi} \geq-c'_T|\xi|(|\xi|+|\tilde{\xi}|),\forall x,\tilde{x},\xi,\tilde{\xi}.
\end{aligned}\label{eq:5-7}
\end{equation}
The bounds (\ref{eq:5-6}) also
suffice to imply (\ref{eq:5-50}).
Finally, to get the continuity properties in \eqref{eq:5-4} and (\ref{eq:5-5}), it is sufficient to assume 
\begin{multline}
(m,x)\mapsto D^{2}\dod{F}{m}(m)(x),\:(m,x,\tilde{x})\mapsto D_2D_1\dod[2]{F}{m}(m)(x,\tilde{x})\:\text{are continuous from}\:\mathcal{P}_{2}(\bb{R}^n)\times \bb{R}^n\\ 
\text{and}\:\mathcal{P}_{2}(\bb{R}^n)\times \bb{R}^n\times \bb{R}^n\rightarrow\mathcal{L}(\bb{R}^n;\bb{R}^n), \; \text{respectively,}
\label{eq:5-8}
\end{multline}
and likewise for $F_{T}.$
To see this, we use the identity \eqref{eq:2-232} and apply standard arguments.
Let us provide the details for the most difficult step, leaving the rest of the argument to the reader.
we shall show that if
$X_{k}\rightarrow X$ in $\mathcal{H}_{m}$ and $||Z||\leq M,$ $\forall\epsilon,$
$\exists\delta_{X}(\epsilon,M)$ such that $||X_{k}-X||\leq\delta_{X}(\epsilon,M)$
implies 
\begin{multline}
I_{k} :=\\
\bb{E}\int_{\bb{R}^n}\left|\tilde{\bb{E}}\int_{\bb{R}^n}\bigg(D_2D_1\dod[2]{F}{m}(X_{k} \otimes  m)(X_{k}(x),\tilde{X}_{k}(\tilde{x}))-D_2D_1\dod[2]{F}{m}(X \otimes  m)(X(x),\tilde{X}(\tilde{x}))\tilde{Z}(\tilde{x})\bigg)\dif m(\tilde{x})\right|\dif m(x)\\
\leq\epsilon.\label{eq:5-9}
\end{multline}
By the Cauchy-Scwhartz inequality, we have $I_k \leq C_M J_k$ where
\[
J_k := \sqrt{\bb{E}\tilde{\bb{E}}\int_{\bb{R}^n}\int_{\bb{R}^n}\abs{D_2D_1\dod[2]{F}{m}(X_{k} \otimes  m)(X_{k}(x),\tilde{X}_{k}(\tilde{x}))-D_2D_1\dod[2]{F}{m}(X \otimes  m)(X(x),\tilde{X}(\tilde{x}))}^{2}\dif m(x)\dif m(\tilde{x})}.
\]
We claim $J_k \to 0$ as $k \to \infty$.
Indeed, we can assume, by extracting a subsequence,
that $X_{k}(x)\rightarrow X(x)$ a.s.~a.e. Then by the continuity
assumption \eqref{eq:5-8} and the uniform bound \eqref{eq:5-6}, $J_{k}$ tends to $0.$ This is true
for any converging subsequence. Hence the full sequence $J_{k}\to 0.$
Thus for any $\eta > 0,$ there exists $\beta_{X}(\eta) > 0$ such that $\enVert{X_{k}-X}\leq \beta_{X}(\eta)\:\Rightarrow J_{k}\leq\eta.$
Setting
$\delta_{X}(\epsilon,M)=\beta_{X}\del{\dfrac{\epsilon}{C_{M}}}$ the result
(\ref{eq:5-9}) is obtained. 

\subsection{EXISTENCE OF SECOND DERIVATIVE OF THE VALUE FUNCTION }

The following crucial result provides the existence and continuity of $D_X^2 V$.
\begin{prop}
\label{prop5-1}
Assume (\ref{eq:3-1}), (\ref{eq:3-2}), (\ref{eq:3-3}), (\ref{eq:3-4}), (\ref{eq:3-6}), 
 (\ref{eq:5-2}), (\ref{eq:5-3}), (\ref{eq:5-4}), and (\ref{eq:5-5}).

(i) The value function $V(X \otimes  m,t)$, defined in \eqref{eq:V(Xotimes m,t) def}, has a second order G\^ateaux derivative with respect to $X \in \s{H}_{m,t}$, which we denote 
$D_{X}^{2}V(X \otimes  m,t)\in\mathcal{L}(\mathcal{H}_m,t;\mathcal{H}_m,t)$.
It satisfies 
\begin{equation}
||D_{X}^{2}V(X \otimes  m,t)(\mathcal{X})||\leq C_{T}||\mathcal{X}||\label{eq:5-1000}
\end{equation}
where $C_{T}$ is a constant not depending on $X$ or $t.$

(ii) We have the following continuity property.
Let $t_{k}\downarrow t$,
and let $X_{k},\mathcal{X}_{k} \in \s{H}_{m,t_k}$ 
converge in $\mathcal{H}_{m}$ to $X,\mathcal{X}$ respectively.
Then 
\begin{equation}
D_{X}^{2}V(X_{k} \otimes  m,t_{k})(\mathcal{X}_{k})\rightarrow D_{X}^{2}V(X \otimes  m,t)(\mathcal{X})\;\text{in}\;\mathcal{H}_{m}.\label{eq:5-10}
\end{equation}
The limits $X,\mathcal{X}$ are independent of $\mathcal{W}_{t}$, i.e.~$X,\s{X} \in \s{H}_{m,t}$.

(iii) We also have a formula for the second order G\^ateaux derivative.
Let $\del{Y_{Xt}(s),Z_{Xt}(s)}$ be the solution of the system (\ref{eq:3-7}),
(\ref{eq:3-8}). We define $\del{\mathcal{Y}_{X\mathcal{X}t}(s), \mathcal{Z}_{X\mathcal{X}t}(s)}$ to be the
unique solution of the system 
\begin{equation}
\begin{aligned}
\mathcal{Y}_{X\mathcal{X}t}(s) &=\mathcal{X}-\dfrac{1}{\lambda}\int_{t}^{s}\mathcal{Z}_{X\mathcal{X}t}(\tau)\dif \tau\\
\mathcal{Z}_{X\mathcal{X}t}(s) &=\bb{E}\left[\left.\int_{s}^{T}D_{X}^{2}F(Y_{Xt}(\tau) \otimes  m)(\mathcal{Y}_{X\mathcal{X}t}(\tau))\dif \tau+D_{X}^{2}F_{T}(Y_{Xt}(T) \otimes  m)(\mathcal{Y}_{X\mathcal{X}t}(T))\right|\mathcal{W}_{X\mathcal{X}t}^{s}\right]
\end{aligned}
\label{eq:5-11}
\end{equation}
where $\mathcal{W}_{X\mathcal{X}t}^{s}=\sigma(X,\mathcal{X})\vee\mathcal{W}_{t}^{s}.$
Then 
\begin{equation}
D_{X}^{2}V(X \otimes  m,t)(\mathcal{X})=\mathcal{Z}_{X\mathcal{X}t}(t)\label{eq:5-12}
\end{equation}
\end{prop}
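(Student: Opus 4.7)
The approach is to view the linear forward-backward system \eqref{eq:5-11} as the formal linearization of the optimality system \eqref{eq:3-7}-\eqref{eq:3-8} with respect to the initial condition $X$. Since Proposition \ref{prop4-2} identifies $D_X V(X\otimes m,t) = Z_{Xt}(t)$, proving part (iii) reduces to differentiating the map $X \mapsto Z_{Xt}(t)$ in the direction $\s{X}$, which in turn amounts to showing that the difference quotients
\[
\frac{Y_{(X+\epsilon\s{X})t}(s) - Y_{Xt}(s)}{\epsilon}, \qquad \frac{Z_{(X+\epsilon\s{X})t}(s) - Z_{Xt}(s)}{\epsilon}
\]
converge in $L^2_{\s{W}_{X\s{X}t}}(t,T;\s{H}_m)$ to $\s{Y}_{X\s{X}t}$ and $\s{Z}_{X\s{X}t}$ respectively as $\epsilon\to 0$. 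Once \eqref{eq:5-12} is established, the bound \eqref{eq:5-1000} is an immediate consequence of an a priori estimate for \eqref{eq:5-11}.

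I would first establish that \eqref{eq:5-11} admits a unique solution $(\s{Y}_{X\s{X}t},\s{Z}_{X\s{X}t})$ in the appropriate $L^2_{\s{W}_{X\s{X}t}}$ spaces, satisfying the linear bound $\|\s{Y}_{X\s{X}t}(s)\| + \|\s{Z}_{X\s{X}t}(s)\| \leq C_T \|\s{X}\|$ uniformly in $s \in [t,T]$. The system \eqref{eq:5-11} is the optimality condition for the strictly convex quadratic functional
\[
\tfrac{\lambda}{2}\int_t^T \|v(s)\|^2 \dif s + \tfrac{1}{2}\int_t^T \ip{D_X^2 F(Y_{Xt}(s)\otimes m)(\s{Y}(s))}{\s{Y}(s)} \dif s + \tfrac{1}{2}\ip{D_X^2 F_T(Y_{Xt}(T)\otimes m)(\s{Y}(T))}{\s{Y}(T)}
\]
over $v \in L^2_{\s{W}_{X\s{X}t}}(t,T;\s{H}_m)$ with $\s{Y}(s) = \s{X} - \tfrac{1}{\lambda}\int_t^s v(\tau)\dif \tau$; strict convexity and coercivity follow from \eqref{eq:5-3} together with \eqref{eq:3-6} exactly as in Proposition \ref{prop:convexity}, and the bound \eqref{eq:5-2} yields the $C_T\|\s{X}\|$ estimate via a Gronwall iteration on the forward-backward coupling.

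For the derivative identity, I would subtract \eqref{eq:3-7}-\eqref{eq:3-8} at initial conditions $X+\epsilon\s{X}$ and $X$, divide by $\epsilon$, and compare with \eqref{eq:5-11}. The backward difference produces
\[
\frac{1}{\epsilon}\del{D_X F(Y_{(X+\epsilon\s{X})t}(s)\otimes m) - D_X F(Y_{Xt}(s)\otimes m)} = \int_0^1 D_X^2 F(Y^\theta(s)\otimes m)\del{\Delta^\epsilon Y(s)}\dif\theta,
\]
where $Y^\theta(s) = Y_{Xt}(s) + \theta(Y_{(X+\epsilon\s{X})t}(s) - Y_{Xt}(s))$ and $\Delta^\epsilon Y(s) = (Y_{(X+\epsilon\s{X})t}(s) - Y_{Xt}(s))/\epsilon$. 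A routine stability estimate based on the Lipschitz bound \eqref{eq:3-3} first gives $Y^\theta(s) \to Y_{Xt}(s)$ in $\s{H}_m$ as $\epsilon \to 0$, uniformly in $\theta$ and $s$. The continuity hypotheses \eqref{eq:5-4}-\eqref{eq:5-5} then allow one to replace the integral coefficient by $D_X^2 F(Y_{Xt}(s)\otimes m)$ in the limit, and a Gronwall argument (whose contractivity is supplied by \eqref{eq:3-6}) identifies the limit of $(\Delta^\epsilon Y, \Delta^\epsilon Z)$ with $(\s{Y}_{X\s{X}t},\s{Z}_{X\s{X}t})$. The main technical obstacle is that the forward and backward errors are coupled through the nonlinear term and must be controlled simultaneously, mirroring the well-posedness argument for the original system.

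Finally, the continuity property (ii) follows from a stability estimate for \eqref{eq:5-11}. Given $(X_k,\s{X}_k,t_k) \to (X,\s{X},t)$ with $t_k \downarrow t$ and $X_k,\s{X}_k \in \s{H}_{m,t_k} \subset \s{H}_{m,t}$, Proposition \ref{prop4-5} together with the Lipschitz bound \eqref{eq:4-4} yields convergence of $Y_{X_k t_k}(s) \to Y_{Xt}(s)$ in $\s{H}_m$, and \eqref{eq:5-5} then transfers this to convergence of the coefficients $D_X^2 F(Y_{X_k t_k}(s)\otimes m)$ and $D_X^2 F_T(Y_{X_k t_k}(T)\otimes m)$ when acting on bounded sets. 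A Gronwall-type bound applied to the difference of the two linear systems gives \eqref{eq:5-10}, and the limits $X,\s{X}$ lie in $\s{H}_{m,t}$ because independence from $\s{W}_t$ is preserved by $L^2$ convergence.
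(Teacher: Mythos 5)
Your overall architecture matches the paper's: interpret \eqref{eq:5-11} as the optimality system of a linear--quadratic problem to get well-posedness, differentiate the optimality system \eqref{eq:3-7}--\eqref{eq:3-8} in $X$ to get \eqref{eq:5-12}, and prove (ii) by stability. But the central analytic step --- identifying the limit of the difference quotients $(\Delta^\epsilon Y,\Delta^\epsilon Z)$ with $(\s{Y}_{X\s{X}t},\s{Z}_{X\s{X}t})$ --- cannot be carried out by the Gronwall contraction you invoke, and this is a genuine gap. After your Taylor expansion, the error to be controlled is
\[
\int_0^1\del{D_X^2F\del{(Y_{Xt}(\tau)+\theta\epsilon\,\Delta^\epsilon Y(\tau))\otimes m} - D_X^2F(Y_{Xt}(\tau)\otimes m)}\del{\Delta^\epsilon Y(\tau)}\dif\theta .
\]
The only continuity of $D_X^2F$ in its base point that holds uniformly over directions $\Delta^\epsilon Y$ ranging in a bounded set is \eqref{eq:5-5}, which gives smallness in $L^1(\Omega\times\bb{R}^n)$ only; \eqref{eq:5-4} is continuity for a \emph{fixed} direction $Z$ and does not apply because $\Delta^\epsilon Y$ varies with $\epsilon$. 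A Gronwall estimate for $\enVert{\Delta^\epsilon Z-\s{Z}_{X\s{X}t}}_{\s{H}_m}$ requires this error to be small in $\s{H}_m$, i.e.\ in $L^2$, which the hypotheses do not provide. This is precisely why the paper argues differently: it first extracts a weak limit of the uniformly bounded difference quotients, uses the $L^1$-smallness together with $L^2$-boundedness to show the replacement error tends to zero only \emph{weakly} in $L^2$, identifies the weak limit with the unique solution of \eqref{eq:5-11}, and then upgrades to strong convergence through the energy identities \eqref{eq:weak to strong0}--\eqref{eq:weak to strong1} combined with the one-sided bound \eqref{eq:5-3} and the smallness condition \eqref{eq:3-6}. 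The same objection applies to your treatment of (ii), where you again rely on \eqref{eq:5-5} followed by Gronwall; there the paper must in addition extend the processes from $[t_k,T]$ to $[t,T]$ and construct a common enlarged filtration $\widetilde{\s{W}}_{X\s{X}t}^{s}$ before the weak/strong argument can even be formulated, a point your sketch does not address.

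A secondary issue: obtaining the a priori bound $\enVert{\s{Y}_{X\s{X}t}(s)}+\enVert{\s{Z}_{X\s{X}t}(s)}\leq C_T\enVert{\s{X}}$ ``via a Gronwall iteration'' on the forward--backward coupling would require $\lambda$ large relative to the constants $c,c_T$ of \eqref{eq:5-2}, i.e.\ something like \eqref{eq:5-120}, which is not among the hypotheses of the proposition. The paper instead derives \eqref{eq:8-4} from the variational (convexity) structure of the linear--quadratic problem, using \eqref{eq:5-3} and \eqref{eq:3-6} in the manner of the proof of Proposition \ref{prop4-1}; you should do the same.
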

The proof can be found in Appendix \ref{ap:C}.

\subsection{BELLMAN EQUATION } 

The Bellman equation for the optimal control problem stated in Section \ref{sec:ctrl statement} is given by 
\begin{equation}\label{eq:Bellman}
\begin{aligned}
&\dfrac{\partial V}{\partial t}(X \otimes  m,t)+\dfrac{1}{2}\ip{D_{X}^{2}V(X \otimes  m,t)(\eta N)}{\eta N}-\dfrac{1}{2\lambda}||D_{X}V(X \otimes  m,t)||^{2}+F(X \otimes  m) =0,\\
&V(X \otimes  m,T)=F_{T}(X \otimes  m).
\end{aligned}
\end{equation}
Let us now define what we mean by solutions to \eqref{eq:Bellman}.
\begin{defn}\label{def5-3}
	Let $V:\s{P}_2(\bb{R}^n) \times [0,T] \to \bb{R}$ be any function satisfying the following regularity properties:	
	\begin{itemize}
		\item $V$ is continuous and satisfies the estimates \eqref{eq:4-2} and \eqref{eq:4-26};
		\item $D_X V(X  \otimes  m,t)$ exists, and  for each $m\in\mathcal{P}_2(\mathbb{R}^n)$, it is separately (i.e. marginally only but not jointly) continuous in $X$ and in $t$ in the sense of \eqref{eq:4-4} and \eqref{eq:4-27};
		\item $D_X^2 V(X  \otimes  m,t)$ exists, and for each $m\in\mathcal{P}_2(\mathbb{R}^n)$, it is sequentially continuous in the order pair $(X,t)$ in the sense of (\ref{eq:5-10}), and $D_X^2 V(X  \otimes  m,t)$ also satisfies the property \eqref{eq:5-1000};
		\item the following continuity property is satisfied:
if $t_{k}\downarrow t$, and $X_{k},\mathcal{X}_{k} \in \s{H}_{\color{black}m,t_k}$ so that $X_{k}\rightarrow X$
in $\mathcal{H}_{m}$ and $\displaystyle\sup_k\|\mathcal{X}_{k}\|_{\mathcal{H}_{m}}<\infty$,
then
\begin{align}
\bb{E}\int_{\bb{R}^n}\abs{D_{X}^{2}V(X_{k} \otimes  m,t_{k})(\mathcal{X}_{k}) - D_{X}^{2}V(X \otimes  m,t_{k})(\mathcal{X}_{k})}\dif m(x)\;\rightarrow0;
\end{align}
\item {\color{black}for each $m\in \s{P}_2(\bb{R}^n)$ and each $X\in \mathcal{H}_{m}$}, $t \mapsto V(X  \otimes  m,t)$ is right-differentiable.
	\end{itemize}
We say that $V$ is a \emph{classical solution} to the Bellman equation \eqref{eq:Bellman} provided that,
for any $t \geq 0, m \in \s{P}_2(\bb{R}^n),$ and $X \in \mathcal{H}_{m,t}$, and for any  standard Gaussian variable $N$ in $\bb{R}^n$ that is independent
of the filtration $\mathcal{W}_{0}$ and of $X$, Equation \eqref{eq:Bellman} holds, where $\dfrac{\partial V}{\partial t}(X \otimes  m,t)$
stands for the right-hand derivative. 
\end{defn}
{\color{black} \begin{rem}
A standard Gaussian variable $N$ valued in $\bb{R}^n$, which is independent of both the filtration $\mathcal{W}_{0}$ and $X$ always exists, for instance, simply take $N=\dfrac{w(t+h)-w(t)}{\sqrt{h}}\sim\mathcal{N}[0,1]$, see also the proof of Lemma \ref{lem8-1}.
\end{rem}}
\begin{thm}
\label{theo5-1} 
Assume (\ref{eq:3-1}), (\ref{eq:3-2}), (\ref{eq:3-3}), (\ref{eq:3-4}), (\ref{eq:3-6}), 
(\ref{eq:5-2}), (\ref{eq:5-3}), (\ref{eq:5-4}), (\ref{eq:5-5}), 
and (\ref{eq:5-50}). We also assume 
\begin{equation}
\lambda-T\left(c_{T}+c\dfrac{T}{2}\right)>0\label{eq:5-120}
\end{equation}
where $c,c_{T}$ are the constants appearing in (\ref{eq:5-50}). Let $V$ be the value function defined in \eqref{eq:V(Xotimes m,t) def}.
Then $V$ is the unique classical solution to the Bellman equation \eqref{eq:Bellman}.
\end{thm}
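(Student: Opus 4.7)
The plan is to separate the proof into two parts: showing that the value function $V$ defined in \eqref{eq:V(Xotimes m,t) def} satisfies both the regularity listed in the definition and the PDE \eqref{eq:Bellman}, and then establishing uniqueness via a verification-type argument. Most of the required regularity has already been proved: the growth bound \eqref{eq:4-2} and time estimates \eqref{eq:4-26}--\eqref{eq:4-27} come from Propositions \ref{prop4-1} and \ref{prop4-5}; the first derivative and its Lipschitz property \eqref{eq:4-4} from Proposition \ref{prop4-2}; and the second derivative together with its bound \eqref{eq:5-1000} and continuity \eqref{eq:5-10} from Proposition \ref{prop5-1}. The terminal condition $V(X \otimes m, T) = F_T(X \otimes m)$ is immediate from \eqref{eq:V(Xotimes m,t) def}.

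For the PDE, I would start from the dynamic optimality principle \eqref{eq:3-16} in the rearranged form
\begin{equation*}
V(Y_{Xt}(t+h)\otimes m,t+h) - V(X\otimes m,t) = -\frac{1}{2\lambda}\int_t^{t+h}\enVert{Z_{Xt}(s)}^2\dif s - \int_t^{t+h} F(Y_{Xt}(s)\otimes m)\dif s,
\end{equation*}
divide by $h$ and let $h \downarrow 0$. Continuity of the integrands at $s=t$ together with Proposition \ref{prop4-2} (which gives $Z_{Xt}(t) = D_XV(X\otimes m,t)$) yields the right-hand side $-\tfrac{1}{2\lambda}\enVert{D_XV}^2 - F(X\otimes m)$. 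For the left-hand side, I would split off the pure time-increment $\bigl[V(X\otimes m,t+h)-V(X\otimes m,t)\bigr]/h$, which converges to $\partial_t V$, and expand the remaining term by second-order Taylor in the Hilbert space: with $\xi_h = Y_{Xt}(t+h) - X = -\tfrac{1}{\lambda}\int_t^{t+h}Z_{Xt}\dif\tau + \sigma(w(t+h)-w(t))$, the first-order term contributes $-\tfrac{1}{\lambda}\enVert{D_XV}^2$ from the drift and vanishes from the Brownian part because $D_XV(X\otimes m,t+h)$ is $\sigma(X)$-measurable and $X \in \s{H}_{m,t}$ is independent of $w(t+h)-w(t)$; the second-order term is dominated by the Brownian contribution and, using $w(t+h)-w(t) \overset{d}{=}\sqrt{h}N$ together with the continuity \eqref{eq:5-10}, tends to $\tfrac{1}{2}\ip{D_X^2 V(X\otimes m,t)(\sigma N)}{\sigma N}$. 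Combining these identifications yields \eqref{eq:Bellman}.

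For uniqueness, let $U$ be any classical solution. The plan is a verification argument: fix an arbitrary admissible control $v_{Xt}$ with trajectory $X^v(s)$ and apply an It\^o-type chain rule in $\s{H}_m$ to $s \mapsto U(X^v(s)\otimes m,s)$, producing
\begin{equation*}
U(X^v(T)\otimes m,T) - U(X\otimes m,t) = \int_t^T \left[\frac{\partial U}{\partial s} + \ip{D_XU}{v(s)} + \frac{1}{2}\ip{D_X^2U(\sigma N)}{\sigma N}\right]\dif s.
\end{equation*}
Substituting the Bellman equation for the sum $\partial_s U + \tfrac{1}{2}\ip{D_X^2U(\sigma N)}{\sigma N} = \tfrac{1}{2\lambda}\enVert{D_XU}^2 - F$, using the terminal condition $U(\cdot,T) = F_T$, and subtracting from $J_{Xt}(v)$ produces the square-completion identity
\begin{equation*}
J_{Xt}(v) - U(X\otimes m,t) = \int_t^T \frac{\lambda}{2}\enVert{v + \tfrac{1}{\lambda}D_XU(X^v(s)\otimes m,s)}^2 \dif s \geq 0,
\end{equation*}
with equality precisely when $v = -\tfrac{1}{\lambda}D_XU$. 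Hence $U(X\otimes m,t) = \inf_v J_{Xt}(v) = V(X\otimes m,t)$, proving uniqueness.

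The main obstacle will be the rigorous justification of the It\^o-type chain rule used in the verification step, since $U$ is only right-differentiable in $t$ and the Itô-like quadratic correction is driven by finite-dimensional noise embedded in the infinite-dimensional Hilbert space $\s{H}_m$. One would most likely proceed by working on small intervals $[s,s+h]$, applying the same second-order Taylor/DPP-style expansion used in the existence part, and then summing and passing to the limit using continuity properties \eqref{eq:5-1000}--\eqref{eq:5-10} together with condition \eqref{eq:8-170}. A secondary technicality will be controlling the remainder in the second-order Taylor expansion in the existence proof using only the G\^ateaux bound \eqref{eq:5-1000} rather than a Fr\'echet-type bound; here the integrand form $\int_0^1(1-\theta)\ip{D_X^2V((X+\theta\xi_h)\otimes m,t+h)(\xi_h)}{\xi_h}\dif\theta$ together with continuity in $X$ at fixed direction should suffice.
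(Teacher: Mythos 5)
Your proposal follows essentially the same route as the paper: the regularity is collected from Propositions \ref{prop4-1}--\ref{prop5-1}, the equation is obtained from the dynamic programming principle \eqref{eq:3-16} combined with a second-order Taylor expansion in $\s{H}_m$ (with the Brownian increment replaced by $\sigma N$ via law-invariance, and the quadratic remainder controlled by the $L^1$-continuity \eqref{eq:8-170} plus a truncation, exactly as in the paper's Lemmas \ref{lem8-1}, \ref{lem8-2}, and \ref{lem:V(X(t+h),t+h)expansion}), and uniqueness is a verification/square-completion argument identical in substance to the paper's use of \eqref{eq:verification}. You correctly identify the genuine technical obstacles (the non-convergence of the direction $\sigma(w(t+h)-w(t))/\sqrt{h}$ in $\s{H}_m$, hence the need for \eqref{eq:8-170}, and the merely right-sided time differentiability), so the plan is sound and matches the paper's proof.
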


The proof is given in Appendix \ref{ap:C}. 

\subsection{CASE $X_{xt}=x$}

By inserting $X_{xt}=x$ into the Bellman equation \eqref{eq:Bellman}, we can reduce it to a PDE on the space of measures.
Recall that $X \otimes  m=m$ (Example \ref{ex:two push-forwards}).
By (\ref{eq:4-11}) we have
$D_{X}V(m,t)=D\dod{}{m}V(m,t)(x).$ Therefore 
\begin{equation}
||D_{X}V(X \otimes  m,t)||^{2}=\int_{\bb{R}^n}\abs{D\dod{}{m}V(m,t)(x)}^{2}\dif m(x).\label{eq:5-14}
\end{equation}
We need next to interpret $\ip{D_{X}^{2}V(m,t)(\eta N)}{\eta N}.$
Consider the system
\begin{equation}
\begin{aligned}
\mathcal{Y}(s) &=\mathcal{\eta}N-\dfrac{1}{\lambda}\int_{t}^{s}\mathcal{Z}(\tau)\dif \tau,\\
\mathcal{Z}(s) &=\bb{E}\left[\left.\int_{s}^{T}D_{X}^{2}F(Y_{\cdot \,t}(\tau) \otimes  m)(\mathcal{Y}(\tau))\dif \tau+D_{X}^{2}F_{T}(Y_{\cdot \,t}(T) \otimes  m)(\mathcal{Y}(T))\right|\mathcal{W}_{Nt}^{s}\right],
\end{aligned}
\label{eq:5-15}
\end{equation}
which we derive by taking $\s{X} = \eta N$ in (\ref{eq:5-11}).
Applying Proposition \ref{prop5-1}, we deduce
\begin{equation}
\mathcal{Z}(t)=D_{X}^{2}V(m,t)(\eta N)\label{eq:5-16}
\end{equation}
Recall the solution $\del{Y_{xmt}(s),\:Z_{xmt}(s)}$ of system (\ref{eq:Yxmt})-(\ref{eq:Zxmt}). 
From the regularity of $F$ and $F_{T}$ we can
differentiate in $x.$ Define 
\begin{equation}
\mathcal{Y}_{xmt}(s)= D Y_{xmt}(s),\;\mathcal{Z}_{xmt}(s)=D Z_{xmt}(s).\label{eq:5-17}
\end{equation}
Then from (\ref{eq:Yxmt})-(\ref{eq:Zxmt}) it follows 
\begin{align}
\mathcal{Y}_{xmt}(s) &=I-\dfrac{1}{\lambda}\int_{t}^{s}\mathcal{Z}_{xmt}(\tau)\dif \tau \label{eq:calYxmt}\\
\mathcal{Z}_{xmt}(s) &=\bb{E}\left[\left.\int_{s}^{T}D^{2}\dod{F}{m}(Y_{\cdot mt}(\tau) \otimes  m)(Y_{xmt}(\tau))\mathcal{Y}_{xmt}(\tau)\dif \tau+D^{2}\dod{F_{T}}{m}(Y_{\cdot mt}(T) \otimes  m)(Y_{xmt}(T))\mathcal{Y}_{xmt}(T)\right|\mathcal{W}_{t}^{s}\right]. \label{eq:calZxmt}
\end{align}

We claim that 
\begin{equation}
\mathcal{Y}(s)=\mathcal{Y}_{xmt}(s)\eta N,\;\mathcal{Z}(s)=\mathcal{Z}_{xmt}(s)\eta N\label{eq:5-19}
\end{equation}
is the solution of (\ref{eq:5-15}). It suffices to show that
\begin{equation}
D_{X}^{2}F(Y_{\cdot \,t}(\tau) \otimes  m)(\mathcal{Y}(\tau))=D^{2}\dod{}{m}F(Y_{\cdot mt}(\tau) \otimes  m)(Y_{xmt}(\tau))\mathcal{Y}_{xmt}(\tau)\label{eq:5-20}
\end{equation}
Now since $\widetilde{N}$ is independent of $\widetilde{Y}_{xmt}(\tau)$
and $\widetilde{\mathcal{Y}}_{xmt}(s)$ and has mean $0$, we have
\begin{equation}
\widetilde{\bb{E}}\int_{\bb{R}^n}{\color{black}D_1D_2\dod[2]{F}{m}}(Y_{\cdot mt}(\tau) \otimes  m)(Y_{xmt}(\tau),\widetilde{Y}_{xmt}(\tau))\widetilde{\mathcal{Y}}_{xmt}(s)\eta\widetilde{N}{\color{black}dm(x)}=0,\label{eq:5-21}
\end{equation}
and \eqref{eq:5-20} follows from \eqref{eq:5-21} plugged into formula (\ref{eq:2-232}).
An analogous statement holds for $F_{T}$, and so the claim follows.

Combining (\ref{eq:5-16}) and \eqref{eq:5-19} we deduce
\begin{equation}
D_{X}^{2}V(m,t)(\eta N)=\mathcal{Z}_{xmt}(t)\eta N
=D^{2}\dod{}{m}V(m,t)(x)\eta N\label{eq:5-22}
\end{equation}
and thus 
\begin{equation}\label{eq:D_X^2 V sigma N sigma N}
\begin{aligned}
\ip{D_{X}^{2}V(m,t)(\eta N)}{\eta N} &=\bb{E} \int_{{\mathbb R}^n}D^{2}\dod{}{m}V(m,t)(x)\eta N \cdot (\eta N) \dif m(x)\\ &=\int_{\bb{R}^n}\text{\text{tr }}\del{\eta{\color{black}\eta^*}D^{2}\dod{}{m}V(m,t)(x)}\dif m(x).
\end{aligned}
\end{equation}
We now plug \eqref{eq:5-14} and \eqref{eq:D_X^2 V sigma N sigma N} into the Bellman equation (\ref{eq:Bellman}) to obtain a PDE for $V(m,t)$.
Introducing the second order differential operator 
\begin{equation}
A_{x}\varphi(x)=-\dfrac{1}{2}\text{\text{tr }}\del{\eta{\color{black}\eta^*}D^{2}\varphi(x)}\label{eq:5-23}
\end{equation}
we obtain 
\begin{equation}\label{eq:Bellman reduced}
\begin{aligned}
&-\dfrac{\partial V}{\partial t}(m,t)+\int_{\bb{R}^n}A_{x}\dod{}{m}V(m,t)(x)\dif m(x)+\dfrac{1}{2\lambda}\int_{\bb{R}^n}\abs{D\dod{}{m}V(m,t)(x)}^{2}\dif m(x)=F(m),\\
&V(m,T)=F_{T}(m).
\end{aligned}
\end{equation}

\section{MASTER EQUATION}
\label{sec:master}

\subsection{THE EQUATION}

The Master Equation is given by
\begin{equation}\label{eq:master equation}
\begin{aligned}\begin{cases}
&-\dfrac{\partial U}{\partial t}(x,m,t)+A_{x}U(x,m,t)+\int_{\bb{R}^n}A_{\xi}\dod{}{m}U(\xi,m,t)(x)\dif m(\xi)
+\dfrac{1}{2\lambda}|D U(x,m,t)|^{2}\\
&\quad +\dfrac{1}{\lambda}\int_{\bb{R}^n}D_{\xi}U(\xi,m,t)\cdot D_{\xi}\dod{}{m}U(\xi,m,t)(x)\dif m(\xi)=\dod{}{m}F(m)(x),\\
&U(x,m,T)=\dod{}{m}F_{T}(m)(x).
\end{cases}\end{aligned}
\end{equation}
Equation \eqref{eq:master equation} can be derived from \eqref{eq:Bellman reduced} by taking a functional derivative.
Indeed, let us define $U$ to be the functional derivative of $V$, i.e.
\begin{equation}
U(x,m,t)=\dod{}{m}V(m,t)(x).\label{eq:U def}
\end{equation}
Formally, one differentiates Equation \eqref{eq:Bellman reduced} with respect to $m$ to see that \eqref{eq:master equation} is satisfied.
we shall justify this calculation below in Section \ref{sec:existence}.

The goal of this section is to establish that the Master Equation \eqref{eq:master equation} has a solution, which is given by \eqref{eq:U def}.
We use some additional regularity on the data, following the definition below.
\begin{defn}
	Let $c > 0$ be a fixed constant.
	We say that a functional $F:\s{P}_2(\bb{R}^n) \to \bb{R}$ is of class $\s{S}_c$ provided that $D\dod{}{m}F$, $D^{2}\dod{}{m}F$, $D_1\dod[2]{}{m}F$, $D_2D_1\dod[2]{}{m}F$, $D^{3}\dod{}{m}F$, $D_1^{2}\dod[2]{}{m}F,$ and $D_1^{2}D_2\dod[2]{}{m}F$ all exist, are continuous, and satisfy the following estimates for all $m \in \s{P}_2(\bb{R}^n)$ and all $x,\tilde x \in \bb{R}^n$:
	\begin{equation}\label{eq:6-15}
	\begin{aligned}
	\abs{D\dod{}{m}F(m)(x)}\leq c(1+|x|), \quad
	\abs{D^2\dod{}{m}F(m)(x)}\leq c,
	\quad 
	\abs{D^{3}\dod{}{m}F(m)(x)}\leq c,\\
	\abs{D_1\dod[2]{}{m}F(m)(x,\widetilde{x})}\leq c(1+|\widetilde{x}|), \quad
	\abs{D_2D_1\dod[2]{}{m}F(m)(x,\widetilde{x})}\leq c, \\
	\abs{D_1^{2}\dod[2]{}{m}F(m)(x,\widetilde{x})}\leq c(1+|\widetilde{x}|), \quad
	\abs{D_1^{2}D_2\dod[2]{}{m}F(m)(x,\widetilde{x})}\leq c.
	\end{aligned}
	\end{equation}
\end{defn}

we shall now state our main result.
\begin{thm} \label{thm:master equation}
	Assume (\ref{eq:3-1}), (\ref{eq:3-2}), (\ref{eq:3-3}), (\ref{eq:3-4}), (\ref{eq:3-6}), 
	(\ref{eq:5-2}), (\ref{eq:5-3}), (\ref{eq:5-4}), (\ref{eq:5-5}), 
	and (\ref{eq:5-50}).
	Assume also that $F$ is of class $\s{S}_c$ and $F_T$ is of class $\s{S}_{c_T}$ for some constants $c,c_T > 0$.
	Then there exists a $\lambda_T$ large enough, depending on $c,c_T,$ and $T$, such that if $\lambda \geq \lambda_T$, the following assertion holds.
	
	Let $V$ be the value function defined in \eqref{eq:V(Xotimes m,t) def}, and let $U$ be given by \eqref{eq:U def}.
	Then $U$ is a solution of Equation \eqref{eq:master equation} in a pointwise sense.
	The derivative $\dpd{U}{t}$ is a right-hand derivative, while all the other derivatives appearing in the equation exist.  
\end{thm}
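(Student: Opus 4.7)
The strategy is to derive the Master Equation \eqref{eq:master equation} by taking the functional derivative in $m$ of the reduced Bellman equation \eqref{eq:Bellman reduced} and evaluating at a point $x$. Theorem \ref{theo5-1} guarantees that $V$ solves \eqref{eq:Bellman reduced} classically, and by definition $U(x,m,t) = \dod{}{m}V(m,t)(x)$, so the whole argument reduces to (a) establishing enough regularity of $U$ for \eqref{eq:master equation} to make sense pointwise, and (b) justifying that $\dod{}{m}$ can be exchanged with each operation appearing in \eqref{eq:Bellman reduced}.

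To address the regularity, I would start from the representation \eqref{eq:4-10} and differentiate the FBSDE \eqref{eq:Yxmt}--\eqref{eq:Zxmt} twice in $x$ (obtaining \eqref{eq:calYxmt}--\eqref{eq:calZxmt} and a further linearization for $D^{2}Y_{xmt}, D^{2}Z_{xmt}$) and once in $m$ at a test point $y$, yielding a linear nonlocal FBSDE for $\dod{}{m}Y_{xmt}(\cdot)(y)$ and $\dod{}{m}Z_{xmt}(\cdot)(y)$. Under the $\s{S}_c, \s{S}_{c_T}$ hypotheses the coefficients of these linearized systems are bounded and continuous in all their arguments, and the systems are uniquely solvable by a standard contraction argument provided $\lambda$ exceeds some threshold $\lambda_T$ depending only on $c, c_T$, and $T$. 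Reading off the resulting processes yields $DU$, $D^{2}U$, $\dod{U}{m}$, and $D \dod{U}{m}$ together with their continuity in $(x,m,t)$ and the growth bounds needed below; the right-hand time derivative $\dpd{U}{t}$ is extracted by applying \eqref{eq:4-10} on the interval $[t+h,T]$ via the flow identity \eqref{eq:3-15}, differencing, and passing to the limit $h \downarrow 0$.

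With these regularity facts in hand I would carry out the differentiation term by term in \eqref{eq:Bellman reduced}. The time derivative becomes $-\dpd{U}{t}(x,m,t)$. The diffusive integral $\int A_\xi U(\xi,m,t)\dif m(\xi)$, being of the form $m \mapsto \int g(\xi,m)\dif m(\xi)$, produces by the product rule for functional derivatives the terms $A_x U(x,m,t) + \int A_\xi \dod{U}{m}(\xi,m,t)(x)\dif m(\xi)$, up to a normalization constant that cancels on the two sides. The quadratic term $\tfrac{1}{2\lambda}\int \abs{DU(\xi,m,t)}^2\dif m(\xi)$ similarly yields $\tfrac{1}{2\lambda}\abs{DU(x,m,t)}^2 + \tfrac{1}{\lambda}\int D_\xi U(\xi,m,t)\cdot D_\xi \dod{U}{m}(\xi,m,t)(x)\dif m(\xi)$, and $F(m)$ differentiates to $\dod{F}{m}(m)(x)$. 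The interchange of $\dod{}{m}$ with the $m$-integrals is justified by dominated convergence using the uniform bounds from the previous paragraph, and the terminal condition is inherited from $V(m,T) = F_T(m)$.

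The principal obstacle is item (a), namely constructing $\dod{U}{m} = \dod[2]{}{m}V(m,t)(x,y)$ and its spatial gradient with adequate continuity. This amounts to a \emph{second-order} functional differentiation of a value function through a coupled FBSDE system, and one must control the driver of the linearized backward equation---which involves $D^{2}\dod{F}{m}$, $D_{2}D_{1}\dod[2]{F}{m}$ together with the $F_T$ counterparts, all bounded thanks to \eqref{eq:6-15}---in order to close the contraction argument. The threshold $\lambda \geq \lambda_T$ is precisely what makes the contraction available uniformly in $x, m, t$; this is the only place the largeness of $\lambda$ enters essentially. Once these estimates are established, the remaining steps, including the interchange of limits and the verification of the terminal condition, are routine.
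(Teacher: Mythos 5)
Your proposal follows essentially the same route as the paper: differentiate the reduced Bellman equation \eqref{eq:Bellman reduced} in $m$, and justify this by establishing that the FBSDE solutions $Y_{\xi mt}, Z_{\xi mt}$ and their $x$-derivatives $\mathcal{Y}_{\xi mt}, \mathcal{Z}_{\xi mt}$ admit functional derivatives in $m$, obtained as solutions of linearized nonlocal systems that are well-posed for $\lambda \geq \lambda_T$ thanks to the $\s{S}_c$, $\s{S}_{c_T}$ bounds (this is exactly Proposition \ref{prop6-1} and the subsequent difference-quotient verification). The only cosmetic deviation is that you propose differentiating the FBSDE twice in $x$, whereas one differentiation suffices since $D^2U(x,m,t) = \mathcal{Z}_{xmt}(t) = DZ_{xmt}(t)$ already.
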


{\color{black}
\begin{thm} \label{thm:continuity}
Under the assumptions specified in Theorem \ref{thm:master equation}, we further assume the following:\\
$(i)$ The functions $D \dod{F}{m}(m)(x)$, $D \dod{F_T}{m}(m)(x)$, $D^{2}\dod{F}{m}(m)(x)$, $D^{2}\dod{F_T}{m}(m)(x)$, $D^{3}\dod{F}{m}(m)(x)$ and $D^{3}\dod{F_T}{m}(m)(x)$ are jointly Lipschitz continuous in $(m,x)\in \mathcal{P}_2(\mathbb{R}^n)\times\mathbb{R}^n$. \footnote{For instance, there exists a positive constant $C$ such that 
\begin{align*}
\left|D \dod{F}{m}(m')(x')-D \dod{F}{m}(m)(x)\right|\leq& C\left(|x-x'|+W_2(m',m)\right).
\end{align*}}\\
$(ii)$ The functions $D_1\dod[2]{F}{m}(m)(x,\widetilde{x})$, $D_1\dod[2]{F_T}{m}(m)(x,\widetilde{x})$, $D_2D_1\dod[2]{}{m}F(m)(x,\widetilde{x})$, $D_2D_1\dod[2]{}{m}F_T(m)(x,\widetilde{x})$, $D_1^2\dod[2]{F}{m}(m)(x,\widetilde{x})$, $D_1^2\dod[2]{F_T}{m}(m)(x,\widetilde{x})$, $D_2D_1^2\dod[2]{}{m}F(m)(x,\widetilde{x})$ and $D_2D_1^2\dod[2]{}{m}F_T(m)(x,\widetilde{x})$ are jointly Lipschitz continuous in $(m,x,\widetilde{x})\in \mathcal{P}_2(\mathbb{R}^n)\times\mathbb{R}^n\times\mathbb{R}^n$. \footnote{For instance, there exists a positive constant $C$ such that
\begin{align*}
\left|D_2D_1\dod[2]{}{m}F(m')(x',\widetilde{x}')-D_2D_1\dod[2]{}{m}F(m)(x,\widetilde{x})\right|\leq& C\left(|x-x'|+|\tilde{x}'-\tilde{x}|+W_2(m',m)\right).
\end{align*}}
Then there exists a $\lambda'_T$ large enough, depending on $c,\,c_T,\,T$ and the Lipschitz constants in the above assumptions $(i)$ and $(ii)$, such that if $\lambda \geq \lambda'_T$, the solution $U$ to Equation \eqref{eq:master equation}  given in Theorem \ref{thm:master equation} possesses the following regularity:\\
(a) for $t\in[0,T]$, $D_x U(x,m,t)$ and $D_x^2 U(x,m,t)$ are jointly Lipschitz continuous in $(m,x)\in \mathcal{P}_2(\mathbb{R}^n)\times\mathbb{R}^n$, and  this joint continuity is uniformly held in $t$;\\
(b) for $t\in[0,T]$ and each $\xi\in\bb{R}^n$, $D_x \dfrac{d}{dm}U(x,m,t)(\xi)$ and $D_x^2 \dfrac{d}{dm}U(x,m,t)(\xi)$ are jointly Lipschitz continuous in $(m,x)\in \mathcal{P}_2(\mathbb{R}^n)\times\mathbb{R}^n$, and  this joint continuity is uniformly held in $t$;\\
(c) for $t\in[0,T]$ and $(m,x)\in \mathcal{P}_2(\mathbb{R}^n)\times\mathbb{R}^n$, $D_x \dfrac{d}{dm}U(x,m,t)(\xi)$ and $D_x^2 \dfrac{d}{dm}U(x,m,t)(\xi)$ are Lipschitz continuous in $\xi\in  \mathbb{R}^n$, and  this joint continuity is uniformly held in $t$, $m$ and $x$;\\ 
(d) for any compact set $K\subset \bb{R}^n$ and $t\in[0,T]$, $D_x \dfrac{d}{dm}U(x,m,t)(\xi)$ and $D_x^2 \dfrac{d}{dm}U(x,m,t)(\xi)$ are jointly Lipschitz continuous in $(m,x,\xi)\in \mathcal{P}_2(\mathbb{R}^n)\times\mathbb{R}^n\times K$, and  this joint continuity is uniformly held in $t$.
\end{thm}

\begin{rem}
In \cite{CDLL}, Mean Field Game problem was studied in $n$-dimensional torus $\bb{T}^n$ which is always compact. They obtained the regularity of their solutions to that Master equations under their Mean Field Game setting as follows: for $t\in[0,T]$ and $m\in\s{P}(\bb{T}^n)$, $U(\cdot,m,t)$ and $\dfrac{d}{dm}U(\cdot,m,t)(\cdot)$ are in the H\"{o}lder spaces $C^{d+2}(\bb{T}^n)$ and $C^{d+2}(\bb{T}^n)\times C^{d+1}(\bb{T}^n)$ respectively, independently of $(t,m)$, and $m\in\s{P}(\bb{T}^n)\mapsto U(\cdot,m,t)\in C^{d+2}$ and $m\in\s{P}(\bb{T}^n)\mapsto\dfrac{d}{dm}U(\cdot,m,t)(\cdot)\in C^{d+2}\times C^{d+1}$ are Lipschitz continuous functions uniformly in time. They used $W_1$-norm for $m\in\s{P}(\bb{T}^n)$ and studied H\"{o}lder continuity for the state variables $x\in\bb{T}^n$ and $\xi\in\bb{T}^n$, while here in our article we use $W_2$-norm for $m\in\s{P}_2(\bb{R}^n)$ and study Lipschitz continuity for the state variables $x\in\bb{R}^n$ and $\xi\in\bb{R}^n$.
\end{rem}

\begin{rem}\label{rem6-4}
Assume further that:\\
There exists a positive constant $\lambda_F$ such that, for any $m\in\mathcal{P}_2(\mathbb{R}^n)$ and any $X,Y\in\mathcal{H}_m$,
\begin{multline}
\bb{E}\int_{\bb{R}^n}D^{2}\dod{F}{m}(X \otimes  m)(X(x))Y(x)\cdot Y(x)\dif m(x)\\
+\bb{E}\tilde{\bb{E}}\int_{\bb{R}^n}\int_{\bb{R}^n}D_2D_1\dod[2]{F}{m}(X \otimes  m)(\tilde{X}(\tilde{x}),X(x))\tilde{Y}(\tilde{x})\cdot Y(x)\dif m(\tilde{x})\dif m(x)\geq
\lambda_F\bb{E}\int_{\bb{R}^n}|Y(x)|^{2}\dif m(x), \label{convexF}
\end{multline}
\begin{multline}
\bb{E}\int_{\bb{R}^n}D^{2}\dod{F_T}{m}(X \otimes  m)(X(x))Y(x)\cdot Y(x)\dif m(x)\\
+\bb{E}\tilde{\bb{E}}\int_{\bb{R}^n}\int_{\bb{R}^n}D_2D_1\dod[2]{F_T}{m}(X \otimes  m)(\tilde{X}(\tilde{x}),X(x))\tilde{Y}(\tilde{x})\cdot Y(x)\dif m(\tilde{x})\dif m(x)\geq \lambda_F\bb{E}\int_{\bb{R}^n}|Y(x)|^{2}\dif m(x),\label{convexFT}
\end{multline}
The proof can be found in Appendix \ref{ap:D}. Remark \ref{rem6-4} are together a little stronger than those of \eqref{eq:3-33}-\eqref{eq:5-34}, where $\lambda_F$ can be allowed to be negative.
Then we have an alternative approach of deriving the individual continuity either (but not both) in $x$ or in $\xi$ but not in $m$ yet with a smaller value of $\lambda'_T$; see Step $1'$ and $3'$ fo the following proof for more details. 
\end{rem}}
{\color{black}
\begin{rem}\label{rem6-5}
As a representative example, $F(m)=\int_{\mathbb{R}^n}f(x,m)dm(x)$ with a function $f:\bb{R}^n\times\s{P}_2(\mathbb{R}^n)\to\bb{R}$ possessing derivatives $D_xf(x,m)$ and $D^2_xf(x,m)$, and linear functional derivative $\dod{f}{m}(x,m)(\tilde{x})$ such that its derivatives $D_x \dod{f}{m}(x,m)(\tilde{x})$, $D_{\tilde{x}} \dod{f}{m}(x,m)(\tilde{x})$ are all continuous in $x$ for each fixed $m\in\s{P}_2(\mathbb{R}^n)$. 
Define $\phi(x,m):=\dod{F}{m}(m)(x)=f(x,m)+\int_{\mathbb{R}^n}\dod{f}{m}(\widetilde{x},m)(x)dm(\widetilde{x})$. Simple calculus gives
\begin{align*}
\left(D^{2}\dod{F}{m}(X \otimes  m)(X(x))Y(x)\right)\cdot Y(x)=Y(x)^\top D^2_x\phi(X(x),X \otimes  m)Y(x),
\end{align*}
and 
\begin{align*}
\dod[2]{F}{m}(X \otimes  m)(\tilde{X}(\tilde{x}),X(x))=\dod{\phi}{m}(X(x),X \otimes  m)(\tilde{X}(\tilde{x})).
\end{align*} 
Therefore, \eqref{convexF} is equivalent to
\begin{align}\label{convexFnew}
&\bb{E}\int_{\bb{R}^n}\h{-3pt}Y(x)^\top \h{-1.5pt} D^2_x\phi(X(x),X \otimes  m)Y(x)dm(x)
\h{-1.5pt}+\h{-1.5pt}\bb{E}\tilde{\bb{E}}\int_{\bb{R}^n}
\h{-1.5pt}\int_{\bb{R}^n} \h{-3pt} Y(x)^\top
\h{-1.5pt}D_{\tilde{x}}D_x\dod{\phi}{m}(X(x),X \otimes  m)(\tilde{X}(\tilde{x}))\tilde{Y}(\tilde{x})dm(\tilde{x})dm(x)\\\nonumber
&\geq\lambda_F\bb{E}\int_{\bb{R}^n}|Y(x)|^{2}\dif m(x).
\end{align}

\noindent In particular, as long as $\phi(x,m)$ is convex in $x$ for each $m\in\s{P}_2(\mathbb{R}^n)$, i.e., $D^2_x\phi(x,m)$ is positive definite, so that there exists a positive constant $\lambda_\phi$ such that $\bb{E}\int_{\bb{R}^n}Y(x)^\top D^2_x\phi(X(x),X \otimes  m)Y(x)dm(x)\geq \lambda_\phi\bb{E}\int_{\bb{R}^n}|Y(x)|^{2}\dif m(x)$. Therefore, one can take $\lambda_F=\lambda_\phi$ if 
\begin{align*}
\bb{E}\tilde{\bb{E}}\int_{\bb{R}^n}\int_{\bb{R}^n} Y(x)^\top D_{\tilde{x}}D_x\dod{\phi}{m}(X(x),X \otimes  m)(\tilde{X}(\tilde{x}))\tilde{Y}(\tilde{x})dm(\tilde{x})dm(x)\geq 0,
\end{align*} 
which is equivalent to Lasry-Lions monotonicity assumption (2.5) in \cite{CDLL} now applying to $\phi(x,m)$.
As a particular case of this formulation, we further consider the well-known linear-quadratic one, also recall \cite{BSY},
\begin{align*}
f(x,m)=\frac{1}{2}x^\top Q x+\frac{1}{2}y^\top Q y,
\end{align*} 
where $y:=\int_{\mathbb{R}^n}S\widetilde{x}\,dm(\widetilde{x})$ and $Q$, $\bar{Q}$ and $S$ are constant $n\times n$ matrices.
Then $\phi(x,m):=\frac{1}{2}x^\top Q x+\frac{1}{2}y^\top \bar{Q} y+x^\top S^\top\bar{Q}y$. 
The \eqref{convexFnew} can now be further rewritten as:
\begin{align}
&\bb{E}\int_{\bb{R}^n}Y(x)^\top Q Y(x)dm(x)+\left(\bb{E}\int_{\bb{R}^n} SY(x) dm(x)\right)^\top \bar{Q}  \left(\tilde{\bb{E}}\int_{\bb{R}^n}S\tilde{Y}(\tilde{x})dm(\tilde{x})\right)\geq\lambda_F\bb{E}\int_{\bb{R}^n}|Y(x)|^{2}\dif m(x).
\end{align}
As long as $Q$ is positive definite and $\bar{Q}$ is non-negative definite, both \eqref{convexF} and \eqref{convexFnew} can be satisfied.
\end{rem}
}

 \begin{rem}
The Lipschitz continuity of the derivatives in the assumptions of this Theorem aims to ensure that the classical solution to the master equation has Lipschitz continuous derivatives. The assumptions can be weaker in order to obtain the classical existence of the solution to the master equation, however, we do not expect the solution to have Lipschitz continuous derivatives under such weak assumptions. We refer the reader to Proposition 5.4 of \cite{BTY} with the analysis under weaker assumptions, namely, only the continuity of the derivatives, not Lipschitzness, can ensure the continuity of various derivatives of the solution to the master equation.
 \end{rem}

The master equation \eqref{eq:master equation} is interpreted in mean field game theory as the limiting equation for a Nash system \cite{CDLL}.
Here we cannot interpret it in terms of Nash equilibrium, unless the corresponding Nash game is \emph{potential} \cite{RCD}.
Instead we focus on the importance of $U$ defined in \eqref{eq:U def} as a decoupling field for the system of necessary and sufficient conditions (\ref{eq:Yxmt})-(\ref{eq:Zxmt}).
To see this, let $\del{Y_{xmt}(s), Z_{xmt}(s)}$ be the solution of System \eqref{eq:Yxmt}-\eqref{eq:Zxmt}.
Combine Propositions \ref{prop4-3} and \ref{prop4-4} to see that $Y_{xmt}(s)$ is the solution of a stochastic
differential equation
\begin{equation}
Y_{xmt}(s)=x-\dfrac{1}{\lambda}\int_{t}^{s}DU(Y_{xmt}(\tau),Y_{\cdot mt}(\tau) \otimes  m,\tau)\dif \tau+\eta(w(s)-w(t)).\label{eq:6-2}
\end{equation}
In other words, with the function $U = \od{V}{m}$ in hand, we can find the optimal trajectory by solving \eqref{eq:6-2} for $Y_{xmt}(s)$, after which the adjoint state $Z_{xmt}(s)$ is given by Equation \eqref{eq:4-17} from Proposition \ref{prop4-4}.

Before proving Theorem \ref{thm:master equation}, we make a remark about uniqueness.
Under certain assumptions, one can show that $U$ is unique using the Lasry-Lions monotonicity argument \cite{LasLio07}, which goes as follows.
Let $U_1,U_2$ be two solutions, and for $i = 1,2$ let $m_i(t)$ be the weak solution of the Fokker-Planck equation
\begin{equation}
\label{eq:fp}
\dpd{m_i}{t} + A_x m_i - \frac{1}{\lambda}\mathrm{div}\del{DU_i(x,m_i(t),t)m_i} = 0, \ m_i(\tau) = m \in \s{P}_2(\bb{R}^n)
\end{equation}
for some $\tau \in [0,T]$.
One checks that
\begin{multline}
\label{eq:energy}
\int_{{\mathbb R}^n} \del{\dod{F_{T}}{m}(m_1(T))(x)-\dod{F_{T}}{m}(m_2(T))(x)}\dif \h{1.5pt} (m_1(T)-m_2(T))(x)\\
 + \int_\tau^T \int_{{\mathbb R}^n} \del{\dod{F}{m}(m_1(t))(x)-\dod{F_{T}}{m}(m_2(t))(x)}\dif \h{1.5pt} (m_1(t)-m_2(t))(x)\dif t\\
 +  \frac{1}{2\lambda}\int_\tau^T \int_{{\mathbb R}^n} \abs{DU_1(x,m_1(t),t) - DU_2(x,m_2(t),t)}^2\dif \h{1.5pt} (m_1(t)+m_2(t))\dif t = 0.
\end{multline}
Formally, \eqref{eq:energy} is derived by using $DU_j(x,m_j(t),t)$, $j=1,2$ as a test function in \eqref{eq:fp}, $i=1,2$ and then subtracting.
A typical assumption would be that $F$ and $F_T$ are monotone in the sense that
\begin{equation}
\int_{{\mathbb R}^n} \del{\dod{F}{m}(m_1)(x)-\dod{F_{T}}{m}(m_2)(x)}\dif \h{1.5pt} (m_1-m_2)(x) \geq 0 \quad \forall m_1,m_2 \in \s{P}_2(\bb{R}^n).
\end{equation}
In this case, \eqref{eq:energy} immediately implies $DU_1(x,m,\tau) = DU_2(x,m,\tau)$ (at least on the support of $m$).
One can exploit this to deduce that $U_1 = U_2$, provided both solutions are sufficiently regular.

In our framework, sufficient regularity to prove uniqueness in this way is a delicate issue.
Although uniqueness for the Bellman equation \eqref{eq:Bellman} is obtained using convexity of the underlying control problem, we do not find such a straightforward path to uniqueness for the Master equation \eqref{eq:master equation}.
We leave this issue for future study.

\subsection{EXISTENCE OF A SOLUTION TO THE MASTER EQUATION}
\label{sec:existence}

In this subsection we prove 
Theorem \ref{thm:master equation}.
The whole argument consists in differentiating the Bellman equation \eqref{eq:Bellman reduced} with respect to $m$.
We just need to show that this step is justified. 
Recall that $\del{Y_{xmt}(s), Z_{xmt}(s)}$ is the solution of System \eqref{eq:Yxmt}-\eqref{eq:Zxmt}.
Consider also the solution $\del{\mathcal{Y}_{xmt}(s), \mathcal{Z}_{xmt}(s)}$ of System \eqref{eq:calYxmt}-\eqref{eq:calZxmt}, which is in fact the derivative of $\del{Y_{xmt}(s), Z_{xmt}(s)}$ with respect to $x$ (see Equation \eqref{eq:5-17}).
By Proposition \ref{prop4-4} we have 
\begin{equation}
Z_{xmt}(t)=D U(x,m,t),\;\mathcal{Z}_{xmt}(t)=D^{2}U(x,m,t).\label{eq:6-4}
\end{equation}
By (\ref{eq:6-4}), we can write Bellman
equation \eqref{eq:Bellman reduced} as
\begin{equation}
\begin{aligned}
-\dfrac{\partial V}{\partial t}(m,t)-\dfrac{1}{2}\int_{\bb{R}^n}\text{tr}\left(\eta{\color{black}\eta^*}\mathcal{Z}_{\xi mt}(t)\right)\dif m(\xi)+\dfrac{1}{2\lambda}\int_{\bb{R}^n}|Z_{\xi mt}(t)|^{2}\dif m(\xi)=F(m),\\
V(m,T)=F_{T}(m).
\end{aligned}\label{eq:6-9}
\end{equation}
The key step now is to check that
$Z_{\xi mt}(t)$ and $\mathcal{Z}_{\xi mt}(t)$ have functional derivatives
with respect to $m$. In fact we shall show the differentiability with respect to
$m$ of $Y_{\xi mt}(s),Z_{\xi mt}(s),\mathcal{Y}_{\xi mt}(s),$ and $\mathcal{Z}_{\xi mt}(s)$, for any $s>t$. We shall label their derivatives $\bar{Y}_{mt}(s,\xi,x),\:\bar{Z}_{mt}(s,\xi,x),\:\bar{\mathcal{Y}}_{mt}(s,\xi,x),$ and $\bar{\mathcal{Z}}_{mt}(s,\xi,x)$, respectively.
We obtain them by taking the derivative in $m$
of the systems \eqref{eq:Yxmt}-\eqref{eq:Zxmt} and \eqref{eq:calYxmt}-\eqref{eq:calZxmt}, being careful to replace $x$ with $\xi$ in these expressions. 
See Section \ref{sec:linear systems} below.

Given the existence of these derivatives, it is straightforward to differentiate \eqref{eq:6-9} with respect to $m$, from which we obtain
\begin{multline}\label{eq:6-10}
-\dfrac{\partial U}{\partial t}(x,m,t)-\dfrac{1}{2}\text{tr}\left(\eta{\color{black}\eta^*}\mathcal{Z}_{xmt}(t)\right)-\dfrac{1}{2}\int_{\bb{R}^n}\text{tr}\left(\eta{\color{black}\eta^*}\bar{\mathcal{Z}}_{mt}(t,\xi,x)\right)\dif m(\xi)\\
+\dfrac{1}{2\lambda}\abs{Z_{xmt}(t)}^{2}+\dfrac{1}{\lambda}\int_{\bb{R}^n}Z_{\xi mt}(t)\cdot \bar{Z}_{mt}(t,\xi,x)\dif m(\xi)=\dod{}{m}F(m)(x).
\end{multline}
Substituting \eqref{eq:6-4} into \eqref{eq:6-10} and using the definition of $\bar{Z}_{mt}$ and $\bar{\s{Z}}_{mt}$, we see that the Master Equation \eqref{eq:master equation} holds.

\subsubsection{LINEAR SYSTEMS FOR THE DERIVATIVES} \label{sec:linear systems}

We define the pair $\del{\bar{Y}_{mt}(s,\xi,x),\:\bar{Z}_{mt}(s,\xi,x)}$ to be the solution of the linear system 
\begingroup
\allowdisplaybreaks
\begin{align}
\bar{Y}_{mt}(s,\xi,x) &=-\dfrac{1}{\lambda}\int_{t}^{s}\bar{Z}_{mt}(\tau,\xi,x)\dif \tau,\label{eq:6-11}\\
\label{eq:6-12}
\bar{Z}_{mt}(s,\xi,x) &=\bb{E}\left[ \int_{s}^{T} D^{2}\dod{F}{m}(Y_{\cdot mt}(\tau) \otimes  m)(Y_{\xi mt}(\tau))\bar{Y}_{mt}(\tau,\xi,x)\dif \tau \right.\\
\nonumber
&\hspace{-1.5cm}
+ D^{2}\dod{}{m}F_{T}(Y_{\cdot mt}(T) \otimes  m)(Y_{\xi mt}(T))\bar{Y}_{mt}(T,\xi,x)\\
\nonumber
&\hspace{-1.5cm}+\widetilde{\bb{E}}\int_{s}^{T}\int_{\bb{R}^n}D_2D_1\dod[2]{F}{m}(Y_{\cdot mt}(\tau) \otimes  m)(Y_{\xi mt}(\tau),\widetilde{Y}_{\zeta mt}(\tau))\widetilde{\bar{Y}}_{mt}(\tau,\zeta,x)\dif m(\zeta) \dif \tau\\
\nonumber
&\hspace{-1.5cm} +\widetilde{\bb{E}}\int_{\bb{R}^n}D_2D_1\dod[2]{F_{T}}{m}(Y_{\cdot mt}(T) \otimes  m)(Y_{\xi mt}(T),\widetilde{Y}_{\zeta mt}(T))\widetilde{\bar{Y}}_{mt}(T,\zeta,x)\dif m(\zeta)\\
\nonumber
&\hspace{-1.5cm}+ \left.\left.\int_{s}^{T}\widetilde{\bb{E}}D_1\dod[2]{F}{m}(Y_{\cdot mt}(\tau) \otimes  m)(Y_{\xi mt}(\tau),\widetilde{Y}_{xmt}(\tau))\dif \tau+\widetilde{\bb{E}}D_1\dod[2]{F_T}{m}(Y_{\cdot mt}(T) \otimes  m)(Y_{\xi mt}(T),\widetilde{Y}_{xmt}(T))\right|\mathcal{W}_{t}^{s}\right],
\end{align}
\endgroup
and the pair $\del{\bar{\mathcal{Y}}_{mt}(s,\xi,x), \bar{\mathcal{Z}}_{mt}(s,\xi,x)}$
is the solution of 
\begin{align}
\bar{\mathcal{Y}}_{mt}(s,\xi,x) &=-\dfrac{1}{\lambda}\int_{t}^{s}\bar{\mathcal{Z}}_{mt}(\tau,\xi,x)\dif \tau,\label{eq:6-13}\\
\label{eq:6-14}
\bar{\mathcal{Z}}_{mt}(s,\xi,x) &
 =\bb{E}\left[\int_{s}^{T}D^{2}\dod{}{m}F(Y_{\cdot mt}(\tau) \otimes  m)(Y_{\xi mt}(\tau))\bar{\mathcal{Y}}_{mt}(\tau,\xi,x)\dif \tau \right.\\
 \nonumber 
 &\hspace{1cm} \left. +\left.D^{2}\dod{}{m}F_{T}(Y_{\cdot mt}(T) \otimes  m)(Y_{\xi mt}(T))\bar{\mathcal{Y}}_{mt}(T,\xi,x)\right|\mathcal{W}_{t}^{s}\right]\\
\nonumber
& \hspace{-1cm}+\bb{E}\left[ \int_{s}^{T}\left(D^{3}\dod{}{m}F(Y_{\cdot mt}(\tau) \otimes  m)(Y_{\xi mt}(\tau))\bar{Y}_{mt}(\tau,\xi,x)+\widetilde{\bb{E}}D_1^{2}\dod[2]{}{m}F(Y_{\cdot mt}(\tau) \otimes  m)(Y_{\xi mt}(\tau),\widetilde{Y}_{xmt}(\tau))\right.\right.\\
\nonumber
& \hspace{-1cm}+\left.\widetilde{\bb{E}}\int_{\bb{R}^n}D_1^{2}D_2\dod[2]{}{m}F(Y_{\cdot mt}(\tau) \otimes  m)(Y_{\xi mt}(\tau),\widetilde{Y}_{\zeta mt}(\tau))\widetilde{\bar{Y}}_{mt}(\tau,\zeta,x)\dif m(\zeta)\right)\mathcal{Y}_{\xi mt}(\tau)\dif \tau\\
\nonumber
& \hspace{-1cm}
+ \left(D^{3}\dod{}{m}F_{T}(Y_{\cdot mt}(T) \otimes  m)(Y_{\xi mt}(T))\bar{Y}_{mt}(\tau,\xi,x)+\widetilde{\bb{E}}D_1^{2}\dod[2]{}{m}F_{T}(Y_{\cdot mt}(T) \otimes  m)(Y_{\xi mt}(T),\widetilde{Y}_{xmt}(T))\right.\\
\nonumber
& \hspace{-1cm}
\left.\left.+\left.\widetilde{\bb{E}}\int_{\bb{R}^n}D_1^{2}D_2\dod[2]{}{m}F_{T}(Y_{\cdot mt}(T) \otimes  m)(Y_{\xi mt}(T),\widetilde{Y}_{\zeta mt}(T))\widetilde{\bar{Y}}_{mt}(T,\zeta,x)\dif m(\zeta)\right)\mathcal{Y}_{\xi mt}(T)\right|\mathcal{W}_{t}^{s}\right].
\end{align}
We now provide a result on the well-posedness of these systems.
\begin{prop}
\label{prop6-1} Assume that $F$ is of class $\s{S}_c$ and $F_T$ is of class $\s{S}_{c_T}$ for some constants $c,c_T > 0$.
Then there exists a $\lambda_T$ large enough, depending on $c,c_T,$ and $T$, such that if $\lambda \geq \lambda_T$, each of the systems
(\ref{eq:6-11})-(\ref{eq:6-12}) and (\ref{eq:6-13})-(\ref{eq:6-14})
has a unique solution, satisfying the estimates
\begin{gather}
\bb{E}\int_{\bb{R}^n}|\bar{Y}_{mt}(s,\xi,x)|^{2}\dif m(\xi)\leq C_{T}(1+|x|^{2}),\:\bb{E}\int_{\bb{R}^n}|\bar{Z}_{mt}(s,\xi,x)|^{2}\dif m(\xi)\leq C_{T}(1+|x|^{2}),\label{eq:6-16}\\
\bb{E}\int_{\bb{R}^n}|\bar{\mathcal{Y}}_{mt}(s,\xi,x)|^{2}\dif m(\xi)\leq C_{T}(1+|x|^{2}),\:\bb{E}\int_{\bb{R}^n}|\bar{\mathcal{Z}}_{mt}(s,\xi,x)|^{2}\dif m(\xi)\leq C_{T}(1+|x|^{2}).\label{eq:6-17}
\end{gather}
\end{prop}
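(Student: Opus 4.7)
The plan is to treat both systems (\ref{eq:6-11})--(\ref{eq:6-12}) and (\ref{eq:6-13})--(\ref{eq:6-14}) as coupled \emph{linear} forward-backward systems in the unknowns, with bounded ``homogeneous'' coefficients and $L^2$ forcing, and to apply the Banach fixed-point theorem on the Hilbert space $\s{E}:= L^{2}_{\s{W}_t}\del{t,T;\s{H}_m}$ with its standard norm. The parameter $x\in\bb{R}^n$ is frozen throughout, as are $m\in\s{P}_2(\bb{R}^n)$ and $t$.

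First I would decompose (\ref{eq:6-12}) as $\bar{Z}_{mt}(s,\cdot,x)=\s{L}\del{\bar{Y}_{mt}(\cdot,\cdot,x)}(s)+G(s,\cdot,x)$, where $\s{L}$ is the bounded linear operator collecting the four terms involving $\bar{Y}_{mt}$ (including the cross term $\widetilde{\bb{E}}\int D_2D_1\od[2]{F}{m}(\ldots)(Y_{\xi mt}(\tau),\widetilde{Y}_{\eta mt}(\tau))\widetilde{\bar{Y}}_{mt}(\tau,\eta,x)\dif m(\eta)$ and its terminal analogue), while $G$ gathers the last two ``forcing'' terms depending only on $Y_{\xi mt}$ and $\widetilde{Y}_{xmt}$. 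The uniform bounds $\abs{D^{2}\od{F}{m}},\abs{D_2D_1\od[2]{F}{m}}\leq c$ together with Cauchy--Schwarz on the $\eta$-integral yield
\[
\enVert{\s{L}(\bar{Y})(s)}_{\s{H}_m}\leq C\int_s^T \enVert{\bar{Y}(\tau)}_{\s{H}_m}\dif\tau + C\enVert{\bar{Y}(T)}_{\s{H}_m},
\]
with $C=C(c,c_T)$. Similarly, $\abs{D_1\od[2]{F}{m}(m)(\xi,\widetilde{x})}\leq c(1+\abs{\widetilde{x}})$ combined with the a priori bound $\bb{E}\abs{Y_{xmt}(s)}^2\leq C_T(1+\abs{x}^2)$ from Proposition \ref{prop4-1} yields $\enVert{G}_{\s{E}}\leq C_T(1+\abs{x})$.

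Next, I would define $\Phi:\s{E}\to\s{E}$ by
\[
\Phi(\bar{Y})(s):=-\frac{1}{\lambda}\int_t^s \del{\s{L}(\bar{Y})(\tau)+G(\tau)}\dif\tau;
\]
this output has the correct initial value $\Phi(\bar{Y})(t)=0$ and, via the outer conditional expectation in (\ref{eq:6-12}), automatic $\s{W}_t^s$-measurability. A direct estimate gives
\[
\enVert{\Phi(\bar{Y}^1)-\Phi(\bar{Y}^2)}_{\s{E}} \leq \frac{C(c,c_T)\,T}{\lambda}\enVert{\bar{Y}^1-\bar{Y}^2}_{\s{E}},
\]
so choosing $\lambda_T$ large enough that $C(c,c_T)T/\lambda_T<1$ produces a unique fixed point $\bar{Y}_{mt}$. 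Plugging back recovers $\bar{Z}_{mt}$, and applying the same bound to the fixed point itself together with $\enVert{G}_{\s{E}}\leq C_T(1+\abs{x})$ yields the estimate (\ref{eq:6-16}).

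The second system (\ref{eq:6-13})--(\ref{eq:6-14}) is handled by exactly the same scheme. Its homogeneous part in $\bar{\s{Y}}_{mt}$ is again governed by the bounded derivatives $D^{2}\od{F}{m},D^{2}\od{F_T}{m}$, and the forcing is now built from $\s{Y}_{\xi mt}$, the already-constructed $\bar{Y}_{mt}$ (both with $L^2$-norms $\leq C_T(1+\abs{x})$), and the third-order derivatives $D^{3}\od{F}{m}$, $D_1^{2}\od[2]{F}{m}$, $D_1^{2}D_2\od[2]{F}{m}$, which are uniformly bounded by the class $\s{S}_c$ assumption (\ref{eq:6-15}). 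Hence the new forcing also lies in $\s{E}$ with norm $\leq C_T(1+\abs{x})$, and the contraction argument (possibly enlarging $\lambda_T$) produces a unique solution satisfying (\ref{eq:6-17}).

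I expect the main obstacle to be the cross term $\widetilde{\bb{E}}\int\widetilde{\bar{Y}}_{mt}(\tau,\eta,x)\dif m(\eta)$ appearing in (\ref{eq:6-12}), which couples $\bar{Y}$ to an independent copy over the $m$-variable and forces one to set up the norm on $\s{E}$ using the joint expectation $\bb{E}\int_{\bb{R}^n}\abs{\cdot}^2\dif m(\xi)$ so that Cauchy--Schwarz and Fubini give the clean bound $\bb{E}\abs{\widetilde{\bb{E}}\int\cdot\,\dif m(\eta)}^2\leq\bb{E}\,\widetilde{\bb{E}}\int\abs{\cdot}^2\dif m(\eta)$. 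Once this measurability and norm bookkeeping is in place, the contraction constants become explicit in $c,c_T,T$, the threshold $\lambda_T$ is concrete, and the existence together with the quantitative estimates follows by a routine application of Banach's theorem.
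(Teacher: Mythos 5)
Your proposal is correct and is essentially the paper's argument: the paper likewise derives the a priori bounds \eqref{eq:6-16}--\eqref{eq:6-17} from the uniform bounds in \eqref{eq:6-15}, the estimate \eqref{eq:4-90} on the forcing terms, and the smallness factor $T/\lambda$, and then invokes the same "standard fixed point argument" for the linear systems that you make explicit as a contraction. The only bookkeeping point worth fixing is that the terminal evaluation $\bar Y_{mt}(T,\cdot,x)$ appearing in $\s{L}$ is not controlled by the $L^2(t,T;\s{H}_m)$ norm, so the contraction should be run in $\s{C}([t,T];\s{H}_m)$ with the sup norm (which your map $\Phi$ naturally lands in).
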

The proof is given in Appendix D.

\begin{rem}
	\label{rem:a priori estimates without m}
	In fact, the a priori estimates \eqref{eq:6-16} and \eqref{eq:6-17} can be improved.
	We get, for $\lambda$ large enough,
	\begin{gather}
	\bb{E}|\bar{Y}_{mt}(s,\xi,x)|^{2}\leq C_{T}(1+|x|^{2}),\:\bb{E}|\bar{Z}_{mt}(s,\xi,x)|^{2}\leq C_{T}(1+|x|^{2}),\label{eq:a priori1}\\
	\bb{E}|\bar{\mathcal{Y}}_{mt}(s,\xi,x)|^{2}\leq C_{T}(1+|x|^{2}),\:\bb{E}|\bar{\mathcal{Z}}_{mt}(s,\xi,x)|^{2}\leq C_{T}(1+|x|^{2}).\label{eq:a priori2}
	\end{gather}
	To see this, we again use \eqref{eq:4-90} and the estimates \eqref{eq:6-15} applied to both $F$ and $F_T$ in (\ref{eq:6-12}), but this time also appealing to \eqref{eq:6-16}, already proved.
	By using the same argument as in the proof of Proposition \ref{prop6-1}, we derive \eqref{eq:a priori1}.
	The proof of \eqref{eq:a priori2} is similar.
\end{rem}

\subsubsection{Differentiating $Y_{\xi mt}(s)$, $Z_{\xi mt}(s)$, $\mathcal{Y}_{\xi mt}(s),$ and $\mathcal{Z}_{\xi mt}(s)$ with respect to $m$}
Using Proposition \ref{prop6-1}, it is now possible to verify that $\bar{Y}_{mt}(s,\xi,x)$,  $\bar{Z}_{mt}(s,\xi,x)$,  $\bar{\mathcal{Y}}_{mt}(s,\xi,x)$, and $\bar{\mathcal{Z}}_{mt}(s,\xi,x)$
are indeed the functional derivatives of $Y_{\xi mt}(s)$, $Z_{\xi mt}(s)$, $\mathcal{Y}_{\xi mt}(s),$ and $\mathcal{Z}_{\xi mt}(s)$, respectively.
First, we observe that these functionals are continuous with respect to $(m,x)$.
Indeed, if we take $(m_n,x_n) \to (m,x)$ and consider differences, e.g.~$\bar Y_{m_n t}(s,\xi,x_n) - \bar Y_{mt}(s,\xi,x)$, and consider the resulting system of equations satisfied by these differences, it is straightforward (but tedious) to show that these differences converge to zero.
(We also use Remark \ref{rem:a priori estimates without m}, which mean that our estimates will not depend on $m$.)

{\color{black}
\begin{proof}[Proof of Theorem \ref{thm:continuity}]
In the following lemma, we first prove the jointly Lipschitz continuity of $D_x U(x,m,t)$ in $(m,x)\in \mathcal{P}_2(\mathbb{R}^n)\times\mathbb{R}^n$ for each $t\in[0,T]$.
\begin{lem}
Suppose that, $D\dod{F}{m}(m)(x)$ and $D\dod{F_T}{m}(m)(x)$ are jointly Lipschitz continuous in $(m,x)\in \mathcal{P}_2(\mathbb{R}^n)\times\mathbb{R}^n$, and $F$ is of class $\s{S}_c$ and $F_T$ is of class $\s{S}_{c_T}$ for some constants $c,c_T > 0$, respectively. Then $(Y_{x mt}(s),Z_{x mt}(s))$ defined by \eqref{eq:Yxmt}-\eqref{eq:Zxmt} is\\
(i) Lipschitz continuous in $x\in \mathbb{R}^n$ for each $m\in\s{P}_2(\bb{R}^n)$ in the following sense: 
\begin{align}\label{LipsY}
\sup_{s\in[t,T]} \left|  Y_{x'mt}(s)-Y_{xmt}(s)\right| +\sup_{s\in[t,T]} \left|  Z_{x'mt}(s)-Z_{xmt}(s)\right|\leq \frac{\lambda (1+c_T+cT)}{\lambda-T(c_T+cT)}|x'-x|,
\end{align}
if $\lambda>T(c_T+cT)$ (a bit stronger than \eqref{eq:5-120} of Theorem \ref{theo5-1}); 
and it is also\\
(ii) Lipschitz continuous in $m\in \s{P}_2(\bb{R}^n)$ in the following sense: 
\begin{align}\label{iieq3}
\sup_{s\in[t,T]}\bb{E}\left|Y_{xm't}(s)-Y_{xmt}(s)\right|^2 \leq & \dfrac{2T^2C_{DdF}(1+T^2)}{\lambda^2-3T^2C_{DdF}(1+T^2)}  \left(\frac{\lambda}{\lambda-T(c_T+cT)}\right)^2 \cdot W_2^2(m,m'),\\\label{iieq4}
\sup_{s\in[t,T]}\bb{E}\left|Z_{xm't}(s)-Z_{xmt}(s)\right|^2\leq &\dfrac{2\lambda^2C_{DdF}(1+T^2)}{\lambda^2-3T^2C_{DdF}(1+T^2)}  \left(\frac{\lambda}{\lambda-T(c_T+cT)}\right)^2 \cdot W_2^2(m,m'),
\end{align}  
if $\lambda>T\sqrt{3C_{DdF}(1+T^2)}$ where the positive constant $C_{DdF}$ was defined in \eqref{CDdF}.
In particular, since $Z_{xmt}(t)=D_x U(x,m,t)$ is deterministic, $D_x U(x,m,t)$ is also jointly Lipschitz continuous in $(m,x)\in \mathcal{P}_2(\mathbb{R}^n)\times\mathbb{R}^n$ by \eqref{LipsY} and \eqref{iieq4}.
\label{lem 6.9}
\end{lem}
The proof is given in Appendix D.
\begin{rem}\label{rem6-8}
If we assume further that \eqref{convexF} and \eqref{convexFT} are valid, then we can show that, without invoking $\lambda>T\sqrt{3C_{DdF}(1+T^2)}$ but just only $\lambda>T(c_T+cT)$, 
\begin{align}\label{Lipsm}
\int_t^T \bb{E} \int_{\bb{R}^n}\left|Z_{xm't}(s)-Z_{xmt}(s)\right|^2 dm(x) ds+\int_t^T \bb{E}\int_{\bb{R}^n} \left|Y_{xm't}(s)-Y_{xmt}(s)\right|^2 dm(x) ds&\\\nonumber
+\bb{E}\int_{\bb{R}^n}\left|Y_{xm't}(T)-Y_{xmt}(T)\right|^2dm(x)\leq C(c,c_T,\lambda,\lambda_F,T)W_2^2(m,m'),&
\end{align}
\begin{align}
\label{LipsYm}
\int_t^T \bb{E}W_2^2(Z_{\cdot m't}(s) \otimes  m',Z_{\cdot mt}(s) \otimes  m)ds+\int_t^T \bb{E}W_2^2(Y_{\cdot m't}(s) \otimes  m',Y_{\cdot mt}(s) \otimes  m)ds&\\\nonumber
+\bb{E}W_2^2(Y_{\cdot m't}(T) \otimes  m',Y_{\cdot mt}(T) \otimes  m)
\leq  C(c,c_T,\lambda,\lambda_F,T)W_2^2(m,m').&
\end{align}
Note that, in all the above cases, $C(c,c_T,\lambda,\lambda_F,T)$ is a positive constant depending only on $c$, $c_T$, $\lambda$, $\lambda_F$, $T$ but independent of $m$ and $m'$.
\end{rem}
The proof of the claim in Remark \ref{rem6-8} is given in Appendix D.

\normalsize
The overall proof of the Lipschitz continuity of $(\bar{Y}_{mt}(s,\xi,x),\bar{Z}_{mt}(s,\xi,x))$ in $x$, $\xi$ and $m$ are divided into three steps, respectively, in the following under the assumption $\lambda>\max\big\{12T(c_T+cT),T\sqrt{30(c_T^2+c^2T^2)}\big\}$.

Step $1$. For any fixed $p\geq 1$, recall the condition \eqref{eq:3-6}, with the additional condition of larger value of $\lambda>6T \left(2(c_T^p+c^pT^p)\right)^{1/p}$; for instant, when $p=1$, just take $\lambda>6T \left(2(c_T^p+c^pT^p)\right)^{1/p}=12T(c_T+cT)$; we prove the Lipschitz continuity of $(\bar{Y}_{mt}(s,\xi,x),\bar{Z}_{mt}(s,\xi,x))$ in $x\in\bb{R}^n$ for each $(s,\xi,m)\in[t,T]\times\bb{R}^n\times\s{P}_2(\bb{R}^n)$ in the following.
By \eqref{eq:6-11}-\eqref{eq:6-12}, one obtains:
\small\begin{align}
\bar{Y}_{mt}(s,\xi,x')-\bar{Y}_{mt}(s,\xi,x) 
=-\dfrac{1}{\lambda}\int_{t}^{s}\left(\bar{Z}_{mt}(\tau,\xi,x')-\bar{Z}_{mt}(\tau,\xi,x)\right)\dif \tau,\label{eq6-41}
\end{align}
and \\
\begin{align}\nonumber
&\bar{Z}_{mt}(s,\xi,x')-\bar{Z}_{mt}(s,\xi,x) \\\nonumber
=&\bb{E}\left[ \int_{s}^{T} D^{2}\dod{F}{m}(Y_{\cdot m t}(\tau) \otimes  m)(Y_{\xi mt}(\tau))\left(\bar{Y}_{mt}(\tau,\xi,x')-\bar{Y}_{mt}(\tau,\xi,x)\right)\dif \tau \right.\\
\nonumber
&+ D^{2}\dod{}{m}F_{T}(Y_{\cdot mt}(T) \otimes  m)(Y_{\xi mt}(T))\left(\bar{Y}_{mt}(T,\xi,x')-\bar{Y}_{mt}(T,\xi,x)\right)\\
\nonumber
&+\widetilde{\bb{E}}\int_{s}^{T}\int_{\bb{R}^n}D_2D_1\dod[2]{F}{m}(Y_{\cdot mt}(\tau) \otimes  m)(Y_{\xi mt}(\tau),\widetilde{Y}_{\zeta mt}(\tau))\left(\widetilde{\bar{Y}}_{mt}(\tau,\zeta,x')-\widetilde{\bar{Y}}_{mt}(\tau,\zeta,x)\right)\dif m(\zeta) \dif \tau\\
\nonumber
&+\widetilde{\bb{E}}\int_{\bb{R}^n}D_2D_1\dod[2]{F_{T}}{m}(Y_{\cdot mt}(T) \otimes  m)(Y_{\xi mt}(T),\widetilde{Y}_{\zeta mt}(T))\left(\widetilde{\bar{Y}}_{mt}(T,\zeta,x')-\widetilde{\bar{Y}}_{mt}(T,\zeta,x)\right)\dif m(\zeta)\\\nonumber
&+\int_{s}^{T}\widetilde{\bb{E}}D_1\dod[2]{F}{m}(Y_{\cdot mt}(\tau) \otimes  m)(Y_{\xi mt}(\tau),\widetilde{Y}_{x'mt}(\tau))\dif \tau+\widetilde{\bb{E}}D_1\dod[2]{F_T}{m}(Y_{\cdot mt}(T) \otimes  m)(Y_{\xi mt}(T),\widetilde{Y}_{x'mt}(T))\\\label{eq6-42}
&- \left.\left.\int_{s}^{T}\widetilde{\bb{E}}D_1\dod[2]{F}{m}(Y_{\cdot mt}(\tau) \otimes  m)(Y_{\xi mt}(\tau),\widetilde{Y}_{xmt}(\tau))\dif \tau-\widetilde{\bb{E}}D_1\dod[2]{F_T}{m}(Y_{\cdot mt}(T) \otimes  m)(Y_{\xi mt}(T),\widetilde{Y}_{xmt}(T))\right|\mathcal{W}_{t}^{s}\right].
\end{align}
Note that, by \eqref{eq:6-15} and \eqref{xLipCon}, we see that,
\begin{align}\nonumber
&\left| \widetilde{\bb{E}}D_1\dod[2]{F}{m}(Y_{\cdot mt}(s) \otimes  m)(Y_{\xi mt}(s),\widetilde{Y}_{x'mt}(s))-\widetilde{\bb{E}}D_1\dod[2]{F}{m}(Y_{\cdot mt}(s) \otimes  m)(Y_{\xi mt}(s),\widetilde{Y}_{xmt}(s)) \right|\\\nonumber
=&\left|\int_0^1 \widetilde{\bb{E}}D_2 D_1\dod[2]{F}{m}(Y_{\cdot mt}(s) \otimes  m)\left(Y_{\xi mt}(s),\theta \widetilde{Y}_{x'mt}(s)+(1-\theta)\widetilde{Y}_{xmt}(s)\right) \cdot \left(\widetilde{Y}_{x'mt}(s)-\widetilde{Y}_{xmt}(s)\right) d\theta \right|\\\label{eq6-43}
\leq & \frac{c\lambda}{\lambda-T(c_T+cT)}\cdot \left|x'-x\right|,
\end{align}
and, 
\begin{align}\nonumber
&\left| \widetilde{\bb{E}}D_1\dod[2]{F_T}{m}(Y_{\cdot mt}(T) \otimes  m)(Y_{\xi mt}(T),\widetilde{Y}_{x'mt}(T))-\widetilde{\bb{E}}D_1\dod[2]{F_T}{m}(Y_{\cdot mt}(T) \otimes  m)(Y_{\xi mt}(T),\widetilde{Y}_{xmt}(T)) \right|\\\nonumber
=&\left|\int_0^1 \widetilde{\bb{E}}D_2 D_1\dod[2]{F_T}{m}(Y_{\cdot mt}(T) \otimes  m)\left(Y_{\xi mt}(T),\theta \widetilde{Y}_{x'mt}(T)+(1-\theta)\widetilde{Y}_{xmt}(T)\right) \cdot \left(\widetilde{Y}_{x'mt}(T)-\widetilde{Y}_{xmt}(T)\right) d\theta \right|\\\label{eq6-44}
\leq &  \frac{c_T\lambda}{\lambda-T(c_T+cT)}\cdot \left|x'-x\right|.
\end{align}
First, by the simple algebra, we have, 
\begin{align*}
\sup_{s\in[t,T]}\sup_{\xi\in\bb{R}^n}\bb{E} \left|\bar{Y}_{mt}(s,\xi,x')-\bar{Y}_{mt}(s,\xi,x)\right|^p \leq \dfrac{|T-t|^p}{\lambda^p}\sup_{s\in[t,T]}\sup_{\xi\in\bb{R}^n}\bb{E} \left|\bar{Z}_{mt}(s,\xi,x')-\bar{Z}_{mt}(s,\xi,x)\right|^p ,
\end{align*}
and then, by \eqref{eq:6-15}, \eqref{eq6-43} for the intertemporal term and \eqref{eq6-44} for the terminal term, we can also obtain:
\begingroup
\allowdisplaybreaks
\begin{align*}\nonumber
&\bb{E}\left[\left|\bar{Z}_{mt}(s,\xi,x')-\bar{Z}_{mt}(s,\xi,x)\right|^p\right] \\\nonumber
\leq &3^p\bb{E}\left[ \left|\int_{s}^{T} D^{2}\dod{F}{m}(Y_{\cdot m t}(\tau) \otimes  m)(Y_{\xi mt}(\tau))\left(\bar{Y}_{mt}(\tau,\xi,x')-\bar{Y}_{mt}(\tau,\xi,x)\right)\dif \tau \right.\right.\\
\nonumber
&\left.\ \ \ \ \ + D^{2}\dod{}{m}F_{T}(Y_{\cdot mt}(T) \otimes  m)(Y_{\xi mt}(T))\left(\bar{Y}_{mt}(T,\xi,x')-\bar{Y}_{mt}(T,\xi,x)\right)\right|^p\\
\nonumber
&\ \ \ \ \ +\left|\widetilde{\bb{E}}\int_{s}^{T}\int_{\bb{R}^n}D_2D_1\dod[2]{F}{m}(Y_{\cdot mt}(\tau) \otimes  m)(Y_{\xi mt}(\tau),\widetilde{Y}_{\zeta mt}(\tau))\left(\widetilde{\bar{Y}}_{mt}(\tau,\zeta,x')-\widetilde{\bar{Y}}_{mt}(\tau,\zeta,x)\right)\dif m(\zeta) \dif \tau\right.\\
&\ \ \ \ \ \left.+\widetilde{\bb{E}}\int_{\bb{R}^n}D_2D_1\dod[2]{F_{T}}{m}(Y_{\cdot mt}(T) \otimes  m)(Y_{\xi mt}(T),\widetilde{Y}_{\zeta mt}(T))\left(\widetilde{\bar{Y}}_{mt}(T,\zeta,x')-\widetilde{\bar{Y}}_{mt}(T,\zeta,x)\right)\dif m(\zeta)\right|^p\\\nonumber
&\ \ \ \ \ +\left|\int_{s}^{T}\widetilde{\bb{E}}D_1\dod[2]{F}{m}(Y_{\cdot mt}(\tau) \otimes  m)(Y_{\xi mt}(\tau),\widetilde{Y}_{x'mt}(\tau))\dif \tau+\widetilde{\bb{E}}D_1\dod[2]{F_T}{m}(Y_{\cdot mt}(T) \otimes  m)(Y_{\xi mt}(T),\widetilde{Y}_{x'mt}(T))\right.\\
&\ \ \ \ \ - \left.\left.\int_{s}^{T}\widetilde{\bb{E}}D_1\dod[2]{F}{m}(Y_{\cdot mt}(\tau) \otimes  m)(Y_{\xi mt}(\tau),\widetilde{Y}_{xmt}(\tau))\dif \tau-\widetilde{\bb{E}}D_1\dod[2]{F_T}{m}(Y_{\cdot mt}(T) \otimes  m)(Y_{\xi mt}(T),\widetilde{Y}_{xmt}(T))\right|^p\right]\\
\leq&3^p\bb{E}\bigg[\left|\int_{s}^{T} c\left|\bar{Y}_{mt}(\tau,\xi,x')-\bar{Y}_{mt}(\tau,\xi,x)\right|\dif \tau+ c_T\left|\bar{Y}_{mt}(T,\xi,x')-\bar{Y}_{mt}(T,\xi,x)\right|\right|^p\\
\nonumber
&\ \ \ \ \ +\left|\widetilde{\bb{E}}\int_{s}^{T}\int_{\bb{R}^n}c\left|\widetilde{\bar{Y}}_{mt}(\tau,\zeta,x')-\widetilde{\bar{Y}}_{mt}(\tau,\zeta,x)\right|\dif m(\zeta) \dif \tau+\widetilde{\bb{E}}\int_{\bb{R}^n}c_T\left|\widetilde{\bar{Y}}_{mt}(T,\zeta,x')-\widetilde{\bar{Y}}_{mt}(T,\zeta,x)\right|\dif m(\zeta)\right|^p\\\nonumber
&\ \ \ \ \  +\frac{(c_T+c|T-t|)^p\lambda^p}{\left(\lambda-T(c_T+cT)\right)^p}\cdot \left|x'-x\right|^p\bigg]\\
\leq&3^p\bb{E}\bigg[\left|\int_{s}^{T} c\left|\bar{Y}_{mt}(\tau,\xi,x')-\bar{Y}_{mt}(\tau,\xi,x)\right|\dif \tau+ c_T\left|\bar{Y}_{mt}(T,\xi,x')-\bar{Y}_{mt}(T,\xi,x)\right|\right|^p\\
&\ \ \ \ \  +\left|(c_T+c|T-t|)\cdot\left(\sup_{s\in[t,T]}\widetilde{\bb{E}}\int_{\bb{R}^n}\left|\widetilde{\bar{Y}}_{mt}(s,\zeta,x')-\widetilde{\bar{Y}}_{mt}(s,\zeta,x)\right|\dif m(\zeta)\right) \right|^p
+\frac{(c_T+c|T-t|)^p\lambda^p}{\left(\lambda-T(c_T+cT)\right)^p}\cdot \left|x'-x\right|^p\bigg]\\
\leq&3^p\bb{E}\bigg[\left|\int_{s}^{T} c\left|\bar{Y}_{mt}(\tau,\xi,x')-\bar{Y}_{mt}(\tau,\xi,x)\right|\dif \tau+ c_T\left|\bar{Y}_{mt}(T,\xi,x')-\bar{Y}_{mt}(T,\xi,x)\right|\right|^p\\
\nonumber
&\ \ \ \ \  + (c_T+c|T-t|)^p\cdot\left(\sup_{s\in[t,T]} \bb{E} \int_{\bb{R}^n}\left| \bar{Y} _{mt}(s,\zeta,x')- \bar{Y} _{mt}(s,\zeta,x)\right|^p\dif m(\zeta) \right)
+\frac{(c_T+c|T-t|)^p\lambda^p}{\left(\lambda-T(c_T+cT)\right)^p}\cdot \left|x'-x\right|^p\bigg]\\
\leq&3^p\bb{E}\bigg[\left|\int_{s}^{T} c\left|\bar{Y}_{mt}(\tau,\xi,x')-\bar{Y}_{mt}(\tau,\xi,x)\right|\dif \tau+ c_T\left|\bar{Y}_{mt}(T,\xi,x')-\bar{Y}_{mt}(T,\xi,x)\right|\right|^p\\
\nonumber
&\ \ \ \ \  + (c_T+c|T-t|)^p\cdot\left(\sup_{s\in[t,T]} \sup_{\zeta\in\bb{R}^n}\bb{E} \left| \bar{Y} _{mt}(s,\zeta,x')- \bar{Y} _{mt}(s,\zeta,x)\right|^p\right)
+\frac{(c_T+c|T-t|)^p\lambda^p}{\left(\lambda-T(c_T+cT)\right)^p}\cdot \left|x'-x\right|^p\bigg].
\end{align*}\endgroup
Thus,
\begingroup
\allowdisplaybreaks
\small
\begin{align*}\nonumber
&\sup_{\xi\in\bb{R}^n}\bb{E}\left|\bar{Z}_{mt}(s,\xi,x')-\bar{Z}_{mt}(s,\xi,x)\right|^p  \\\nonumber
\leq &3^p\bigg(\sup_{\xi\in\bb{R}^n}\bb{E} \left|\int_{s}^{T} c\left|\bar{Y}_{mt}(\tau,\xi,x')-\bar{Y}_{mt}(\tau,\xi,x)\right|\dif \tau+ c_T\left|\bar{Y}_{mt}(T,\xi,x')-\bar{Y}_{mt}(T,\xi,x)\right|\right|^p \\
\nonumber
&\ \ \ \ \ \ + (c_T+c|T-t|)^p\cdot\left(\sup_{s\in[t,T]} \sup_{\zeta\in\bb{R}^n}\bb{E} \left| \bar{Y} _{mt}(s,\zeta,x')- \bar{Y} _{mt}(s,\zeta,x)\right|^p \right) +\frac{(c_T+c|T-t|)^p\lambda^p}{\left(\lambda-T(c_T+cT)\right)^p}\cdot \left|x'-x\right|^p\bigg)\\
\leq &3^p\bigg(2^p c^p|T-s|^{p-1}\sup_{\xi\in\bb{R}^n} \bb{E} \int_{s}^{T} \left|\bar{Y}_{mt}(\tau,\xi,x')-\bar{Y}_{mt}(\tau,\xi,x)\right|^p\dif \tau  + 2^pc_T^p\sup_{\xi\in\bb{R}^n}\bb{E} \left|\bar{Y}_{mt}(T,\xi,x')-\bar{Y}_{mt}(T,\xi,x)\right|^p \\
\nonumber
&\ \ \ \ \ \ + (c_T+c|T-t|)^p\cdot\left(\sup_{s\in[t,T]} \sup_{\zeta\in\bb{R}^n} \bb{E}  \left| \bar{Y} _{mt}(s,\zeta,x')- \bar{Y} _{mt}(s,\zeta,x)\right|^p  \right)+\frac{(c_T+c|T-t|)^p\lambda^p}{\left(\lambda-T(c_T+cT)\right)^p}\cdot \left|x'-x\right|^p\bigg)\end{align*}
\begin{align*}
\leq &3^p\bigg(2^p(c_T^p+c^p|T-t|^p)\sup_{s\in[t,T]}\sup_{\xi\in\bb{R}^n}\bb{E}  \left|\bar{Y}_{mt}(s,\xi,x')-\bar{Y}_{mt}(s,\xi,x)\right|^p \ \ \ \ \ \ \ \ \ \ \ \ \ \ \ \ \ \ \ \ \ \ \ \ \ \ \ \ \ \ \ \ \ \ \ \ \ \ \ \ \\
\nonumber
&\ \ \ \ \ \ + (c_T+c|T-t|)^p\cdot\left(\sup_{s\in[t,T]} \sup_{\zeta\in\bb{R}^n} \bb{E}  \left| \bar{Y} _{mt}(s,\zeta,x')- \bar{Y} _{mt}(s,\zeta,x)\right|^p \right)+\frac{(c_T+c|T-t|)^p\lambda^p}{\left(\lambda-T(c_T+cT)\right)^p}\cdot \left|x'-x\right|^p\bigg),
\end{align*}\normalsize
\endgroup
and hence
\begingroup
\allowdisplaybreaks
\begin{align*}
&\sup_{s\in[t,T]}\sup_{\xi\in\bb{R}^n}\bb{E} \left|\bar{Z}_{mt}(s,\xi,x')-\bar{Z}_{mt}(s,\xi,x)\right|^p\\
\leq &3^p\bigg(2^{p+1}(c_T^p+c^p|T-t|^p)\cdot\left(\sup_{s\in[t,T]}\sup_{\xi\in\bb{R}^n}\bb{E} \left|\bar{Y}_{mt}(s,\xi,x')-\bar{Y}_{mt}(s,\xi,x)\right|^p \right)\ \ \ \ \ \ \ \ \ \ \ \ \ \ \ \ \ \ \\
&\ \ \ \ \ \ +(c_T+c|T-t|)^p\left(\frac{\lambda}{\lambda-T(c_T+cT)}\right)^p\cdot \left|x'-x\right|^p\bigg).
\end{align*}\endgroup
Therefore, if $\lambda>6T \left(2(c_T^p+c^pT^p)\right)^{1/p}$,
\begin{align}\label{LipsYx}
\sup_{s\in[t,T]} \sup_{\xi\in\bb{R}^n} \bb{E}\left|\bar{Y}_{mt}(s,\xi,x')-\bar{Y}_{mt}(s,\xi,x)\right|^p  \leq& \frac{3^p(c_T+cT)^pT^p}{ \lambda^p-2\cdot 6^p\cdot(c_T^p+c^pT^p)\cdot T^p }\left(\frac{\lambda}{\lambda-T(c_T+cT)}\right)^p  |x'-x|^p,
\end{align}
\begin{align}\label{LipsZx}
\sup_{s\in[t,T]}\sup_{\xi\in\bb{R}^n}\bb{E} \left|\bar{Z}_{mt}(s,\xi,x')-\bar{Z}_{mt}(s,\xi,x)\right|^p \leq& \frac{3^p\lambda^p(c_T+cT)^p}{ \lambda^p-2\cdot 6^p\cdot(c_T^p+c^pT^p)\cdot T^p }\left(\frac{\lambda}{\lambda-T(c_T+cT)}\right)^p |x'-x|^p.
\end{align}

Particularly, for the choice of $p=1$, since $\bar{Z}_{mt}(t,\xi,x)=D_\xi \frac{d}{dm}U(\xi,m,t)(x)$ is deterministic, therefore $D_\xi \frac{d}{dm}U(\xi,m,t)(x)$ is Lipschitz continuous in $x\in\bb{R}^n$ uniformly in $\xi$, $m$ and $t$ since the Lipschitz constant $\frac{3\lambda(c_T+cT)}{ \lambda-12\cdot(c_T+cT)\cdot T }\cdot\frac{\lambda}{\lambda-T(c_T+cT)}$ is independent of $\xi$, $m$ and $t$, and we have the following estimate:
\begin{align}\label{LipsDdUx}
\left|D_\xi \frac{d}{dm}U(\xi,m,t)(x')-D_\xi \frac{d}{dm}U(\xi,m,t)(x)\right| \leq& \frac{3\lambda(c_T+cT)}{ \lambda-12\cdot(c_T+cT)\cdot T }\cdot\frac{\lambda}{\lambda-T(c_T+cT)}\cdot|x'-x|.
\end{align}

Step $2$. We next show the continuity of $(\bar{Y}_{mt}(s,\xi,x),\bar{Z}_{mt}(s,\xi,x))$ in $\xi\in\bb{R}^n$ for each $m\in\s{P}_2(\bb{R}^n)$ and $x\in\bb{R}^n$ under the additional condition of $\lambda>T\sqrt{8(c_T^2+c^2T^2)}$.
Note that, by \eqref{eq:4-90}, \eqref{eq:6-15} and \eqref{xLipCon}, we have:
\begin{align}\nonumber
&\left|\widetilde{\bb{E}}D_1\dod[2]{F}{m}(Y_{\cdot mt}(\tau) \otimes  m)(Y_{\xi' mt}(\tau),\widetilde{Y}_{xmt}(\tau))-\widetilde{\bb{E}}D_1\dod[2]{F}{m}(Y_{\cdot mt}(\tau) \otimes  m)(Y_{\xi mt}(\tau),\widetilde{Y}_{xmt}(\tau))\right|\\\label{6-48}
\leq &\sqrt{C_T(1+|x|^2)}\cdot\frac{c\lambda}{\lambda-T(c_T+cT)}|\xi'-\xi|,
\end{align}
and
\begingroup
\allowdisplaybreaks
\begin{align}\nonumber
&\left|\widetilde{\bb{E}}D_1\dod[2]{F_T}{m}(Y_{\cdot mt}(T) \otimes  m)(Y_{\xi' mt}(T),\widetilde{Y}_{xmt}(T))-\widetilde{\bb{E}}D_1\dod[2]{F_T}{m}(Y_{\cdot mt}(T) \otimes  m)(Y_{\xi mt}(T),\widetilde{Y}_{xmt}(T))\right|\\\label{eq6-47}
\leq &\sqrt{C_T(1+|x|^2)}\cdot\frac{c_T\lambda}{\lambda-T(c_T+cT)}|\xi'-\xi|.
\end{align}\endgroup
Further, by \eqref{eq:6-11}-\eqref{eq:6-12}, one has\small
\begingroup
\allowdisplaybreaks\begin{align}\nonumber
&\bar{Y}_{mt}(s,\xi',x)-\bar{Y}_{mt}(s,\xi,x) =-\dfrac{1}{\lambda}\int_{t}^{s}\left(\bar{Z}_{mt}(\tau,\xi',x)-\bar{Z}_{mt}(\tau,\xi,x)\right)\dif \tau,\\
\nonumber
&\bar{Z}_{mt}(s,\xi',x)-\bar{Z}_{mt}(s,\xi,x) \\\nonumber
=&\bb{E}\left[ \int_{s}^{T} D^{2}\dod{F}{m}(Y_{\cdot m t}(\tau) \otimes  m)(Y_{\xi' mt}(\tau))\left(\bar{Y}_{mt}(\tau,\xi',x)-\bar{Y}_{mt}(\tau,\xi,x)\right)\dif \tau \right.\\\nonumber
&\ \ \ \ +\int_{s}^{T} \left(D^{2}\dod{F}{m}(Y_{\cdot m t}(\tau) \otimes  m)(Y_{\xi' mt}(\tau))-D^{2}\dod{F}{m}(Y_{\cdot m t}(\tau) \otimes  m)(Y_{\xi mt}(\tau))\right)\bar{Y}_{mt}(\tau,\xi,x)\dif \tau
\\
\nonumber
&\ \ \ \ + D^{2}\dod{}{m}F_{T}(Y_{\cdot mt}(T) \otimes  m)(Y_{\xi' mt}(T))\left(\bar{Y}_{mt}(T,\xi',x)-\bar{Y}_{mt}(T,\xi,x)\right)\\
\nonumber
&\ \ \ \ + \left(D^{2}\dod{}{m}F_{T}(Y_{\cdot mt}(T) \otimes  m)(Y_{\xi' mt}(T))-D^{2}\dod{}{m}F_{T}(Y_{\cdot mt}(T) \otimes  m)(Y_{\xi mt}(T))\right)\bar{Y}_{mt}(T,\xi,x)\\
\nonumber
&\ \ \ \ +\widetilde{\bb{E}}\int_{s}^{T}\int_{\bb{R}^n}\left(D_2D_1\dod[2]{F}{m}(Y_{\cdot mt}(\tau) \otimes  m)(Y_{\xi' mt}(\tau),\widetilde{Y}_{\zeta mt}(\tau))-D_2D_1\dod[2]{F}{m}(Y_{\cdot mt}(\tau) \otimes  m)(Y_{\xi mt}(\tau),\widetilde{Y}_{\zeta mt}(\tau))\right)\\\nonumber
&\ \ \ \ \ \ \ \ \ \ \ \ \ \ \ \ \ \ \ \ \ \cdot \widetilde{\bar{Y}}_{mt}(\tau,\zeta,x) \dif m(\zeta) \dif \tau\\
\nonumber
&\ \ \ \ +\widetilde{\bb{E}}\int_{\bb{R}^n}\left(D_2D_1\dod[2]{F_{T}}{m}(Y_{\cdot mt}(T) \otimes  m)(Y_{\xi' mt}(T),\widetilde{Y}_{\zeta mt}(T)) -D_2D_1\dod[2]{F_{T}}{m}(Y_{\cdot mt}(T) \otimes  m)(Y_{\xi mt}(T),\widetilde{Y}_{\zeta mt}(T))\right)\\\nonumber
&\ \ \ \ \ \ \ \ \ \ \ \ \ \ \ \ \ \ \ \ \left.\left.\cdot \widetilde{\bar{Y}}_{mt}(T,\zeta,x) \dif m(\zeta)\right|\mathcal{W}_{t}^{s}\right]\\\nonumber
&+\bb{E}\left[\int_{s}^{T}\widetilde{\bb{E}}D_1\dod[2]{F}{m}(Y_{\cdot mt}(\tau) \otimes  m)(Y_{\xi' mt}(\tau),\widetilde{Y}_{xmt}(\tau))\dif \tau+\widetilde{\bb{E}}D_1\dod[2]{F_T}{m}(Y_{\cdot mt}(T) \otimes  m)(Y_{\xi' mt}(T),\widetilde{Y}_{xmt}(T))\right.\\
&\ \ \ \ \ \ \ \ - \left.\left.\int_{s}^{T}\widetilde{\bb{E}}D_1\dod[2]{F}{m}(Y_{\cdot mt}(\tau) \otimes  m)(Y_{\xi mt}(\tau),\widetilde{Y}_{xmt}(\tau))\dif \tau-\widetilde{\bb{E}}D_1\dod[2]{F_T}{m}(Y_{\cdot mt}(T) \otimes  m)(Y_{\xi mt}(T),\widetilde{Y}_{xmt}(T))\right|\mathcal{W}_{t}^{s}\right].
\end{align}\endgroup\normalsize
First, by the simple algebra, we also have, 
\begin{align}\nonumber
&\sup_{s\in[t,T]} \bb{E} \left|\bar{Y}_{mt}(s,\xi',x)-\bar{Y}_{mt}(s,\xi,x)\right|^2 \leq \dfrac{|T-t|^2}{\lambda^2} \sup_{s\in[t,T]}\bb{E} \left|\bar{Z}_{mt}(s,\xi',x)-\bar{Z}_{mt}(s,\xi,x)\right|^2,
\end{align}
and then, by \eqref{eq:6-15} and \eqref{6-48} for the intertemporal term, and \eqref{eq6-47} for the terminal term, we can also obtain:
\begingroup
\allowdisplaybreaks
\begin{align}\nonumber
&\h{-10pt}\bb{E}\left[\left|\bar{Z}_{mt}(s,\xi',x)-\bar{Z}_{mt}(s,\xi,x)\right|^2\right] \\\nonumber
\leq&4\bb{E}\Bigg[ \bigg|\int_{s}^{T} D^{2}\dod{F}{m}(Y_{\cdot m t}(\tau) \otimes  m)(Y_{\xi' mt}(\tau))\left(\bar{Y}_{mt}(\tau,\xi',x)-\bar{Y}_{mt}(\tau,\xi,x)\right)\dif \tau \\\nonumber
&\ \ \ \ \ \ + D^{2}\dod{}{m}F_{T}(Y_{\cdot mt}(T) \otimes  m)(Y_{\xi' mt}(T))\left(\bar{Y}_{mt}(T,\xi',x)-\bar{Y}_{mt}(T,\xi,x)\right)\bigg|^2\\
\nonumber
&\ \ \ \ \ \ + \bigg|\int_{s}^{T} \left(D^{2}\dod{F}{m}(Y_{\cdot m t}(\tau) \otimes  m)(Y_{\xi' mt}(\tau))-D^{2}\dod{F}{m}(Y_{\cdot m t}(\tau) \otimes  m)(Y_{\xi mt}(\tau))\right)\bar{Y}_{mt}(\tau,\xi,x)\dif \tau
\\
\nonumber
&\ \ \ \ \ \ + \left(D^{2}\dod{}{m}F_{T}(Y_{\cdot mt}(T) \otimes  m)(Y_{\xi' mt}(T))-D^{2}\dod{}{m}F_{T}(Y_{\cdot mt}(T) \otimes  m)(Y_{\xi mt}(T))\right)\bar{Y}_{mt}(T,\xi,x)\bigg|^2
\end{align}\endgroup\small
\begingroup
\allowdisplaybreaks
\begin{align}\nonumber
&+\bigg|\widetilde{\bb{E}}\int_{s}^{T}\int_{\bb{R}^n}\left(D_2D_1\dod[2]{F}{m}(Y_{\cdot mt}(\tau) \otimes  m)(Y_{\xi' mt}(\tau),\widetilde{Y}_{\zeta mt}(\tau))-D_2D_1\dod[2]{F}{m}(Y_{\cdot mt}(\tau) \otimes  m)(Y_{\xi mt}(\tau),\widetilde{Y}_{\zeta mt}(\tau))\right)\\\nonumber
&\ \ \ \ \ \ \ \ \ \ \ \ \ \ \ \ \ \ \cdot \widetilde{\bar{Y}}_{mt}(\tau,\zeta,x) \dif m(\zeta) \dif \tau\\
\nonumber
&\ \ \ \ +\widetilde{\bb{E}}\int_{\bb{R}^n}\left(D_2D_1\dod[2]{F_{T}}{m}(Y_{\cdot mt}(T) \otimes  m)(Y_{\xi' mt}(T),\widetilde{Y}_{\zeta mt}(T)) -D_2D_1\dod[2]{F_{T}}{m}(Y_{\cdot mt}(T) \otimes  m)(Y_{\xi mt}(T),\widetilde{Y}_{\zeta mt}(T))\right)\\\nonumber
&\ \ \ \ \ \ \ \ \ \ \ \ \ \ \ \ \ \ \cdot \widetilde{\bar{Y}}_{mt}(T,\zeta,x) \dif m(\zeta)\bigg|^2\\\nonumber
&+\bigg|\int_{s}^{T}\widetilde{\bb{E}}D_1\dod[2]{F}{m}(Y_{\cdot mt}(\tau) \otimes  m)(Y_{\xi' mt}(\tau),\widetilde{Y}_{xmt}(\tau))\dif \tau+\widetilde{\bb{E}}D_1\dod[2]{F_T}{m}(Y_{\cdot mt}(T) \otimes  m)(Y_{\xi' mt}(T),\widetilde{Y}_{xmt}(T))\\\nonumber
&\ \ \ \ -\int_{s}^{T}\widetilde{\bb{E}}D_1\dod[2]{F}{m}(Y_{\cdot mt}(\tau) \otimes  m)(Y_{\xi mt}(\tau),\widetilde{Y}_{xmt}(\tau))\dif \tau-\widetilde{\bb{E}}D_1\dod[2]{F_T}{m}(Y_{\cdot mt}(T) \otimes  m)(Y_{\xi mt}(T),\widetilde{Y}_{xmt}(T))\bigg|^2\Bigg]\\\nonumber
\leq&4 \bb{E}\Bigg[ \bigg|\int_{s}^{T} c\left|\bar{Y}_{mt}(\tau,\xi',x)-\bar{Y}_{mt}(\tau,\xi,x)\right|\dif \tau + c_T\left|\bar{Y}_{mt}(T,\xi',x)-\bar{Y}_{mt}(T,\xi,x)\right|\bigg|^2\\
\nonumber
&\ \ \ \ +\bigg|\int_{s}^{T} c\left|Y_{\xi' mt}(\tau)-Y_{\xi mt}(\tau)\right|\cdot\left|\bar{Y}_{mt}(\tau,\xi,x)\right|\dif \tau+ c_T \left| Y_{\xi' mt}(T) -Y_{\xi mt}(T)\right|\cdot\left|\bar{Y}_{mt}(T,\xi,x)\right|\bigg|^2\\\nonumber
&\ \ \ \ +\bigg|\widetilde{\bb{E}}\int_{s}^{T}\int_{\bb{R}^n}c\left| Y_{\xi' mt}(\tau)-Y_{\xi mt}(\tau)\right|\cdot\left| \widetilde{\bar{Y}}_{mt}(\tau,\zeta,x)\right| \dif m(\zeta) \dif \tau+\widetilde{\bb{E}}\int_{\bb{R}^n}c_T\left|Y_{\xi' mt}(T)-Y_{\xi mt}(T)\right|\cdot\left| \widetilde{\bar{Y}}_{mt}(T,\zeta,x)\right| \dif m(\zeta)\bigg|^2\\\nonumber
&\ \ \ \ + C_T(1+|x|^2) \frac{(c_T+c|T-t|)^2\lambda^2}{\left(\lambda-T(c_T+cT)\right)^2}|\xi'-\xi|^2\Bigg]\\
\leq&4\bb{E} \Bigg[ 2|T-t|c^2\cdot\int_{s}^{T} \left|\bar{Y}_{mt}(\tau,\xi',x)-\bar{Y}_{mt}(\tau,\xi,x)\right|^2\dif \tau + 2c_T^2\left|\bar{Y}_{mt}(T,\xi',x)-\bar{Y}_{mt}(T,\xi,x)\right|^2\\
\nonumber
&\ \ \ \ +2|T-t|c^2\cdot\int_{s}^{T} \left|Y_{\xi' mt}(\tau)-Y_{\xi mt}(\tau)\right|^2\left|\bar{Y}_{mt}(\tau,\xi,x)\right|^2\dif \tau+ 2c_T^2 \left| Y_{\xi' mt}(T) -Y_{\xi mt}(T)\right|^2\cdot\left|\bar{Y}_{mt}(T,\xi,x)\right|^2\\\nonumber
&\ \ \ \ +2|T-t|c^2\int_{s}^{T}\bigg(\left| Y_{\xi' mt}(\tau)-Y_{\xi mt}(\tau)\right|^2\cdot\widetilde{\bb{E}}\int_{\bb{R}^n}\left| \widetilde{\bar{Y}}_{mt}(\tau,\zeta,x)\right|^2 \dif m(\zeta)\bigg) \dif \tau\\\nonumber
&\ \ \ \ +2c_T^2\left|Y_{\xi' mt}(T)-Y_{\xi mt}(T)\right|^2\cdot\widetilde{\bb{E}}\int_{\bb{R}^n}\left| \widetilde{\bar{Y}}_{mt}(T,\zeta,x)\right|^2 \dif m(\zeta)
+ C_T(1+|x|^2)\cdot \frac{(c_T+c|T-t|)^2\lambda^2}{\left(\lambda-T(c_T+cT)\right)^2}\cdot|\xi'-\xi|^2\Bigg]\\
\leq&4 \bb{E}\Bigg[ 2|T-t|c^2\cdot\int_{s}^{T} \left|\bar{Y}_{mt}(\tau,\xi',x)-\bar{Y}_{mt}(\tau,\xi,x)\right|^2\dif \tau + 2c_T^2\left|\bar{Y}_{mt}(T,\xi',x)-\bar{Y}_{mt}(T,\xi,x)\right|^2\\
\nonumber
&\ \ \ \ +2|T-t|\frac{c^2\lambda^2}{\left(\lambda-T(c_T+cT)\right)^2}|\xi'-\xi|^2\cdot\int_{s}^{T} \left|\bar{Y}_{mt}(\tau,\xi,x)\right|^2\dif \tau+ 2 \frac{c_T^2\lambda^2}{\left(\lambda-T(c_T+cT)\right)^2}|\xi'-\xi|^2\left|\bar{Y}_{mt}(T,\xi,x)\right|^2\\\nonumber
&\ \ \ \ +2|T-t|\frac{c^2\lambda^2}{\left(\lambda-T(c_T+cT)\right)^2}|\xi'-\xi|^2\cdot \int_{s}^{T}\bigg(\widetilde{\bb{E}}\int_{\bb{R}^n}\left| \widetilde{\bar{Y}}_{mt}(\tau,\zeta,x)\right|^2 \dif m(\zeta)\bigg) \dif \tau\\\nonumber
&\ \ \ \ +2\frac{c_T^2\lambda^2}{\left(\lambda-T(c_T+cT)\right)^2}|\xi'-\xi|^2\widetilde{\bb{E}}\int_{\bb{R}^n}\left| \widetilde{\bar{Y}}_{mt}(T,\zeta,x)\right|^2 \dif m(\zeta)+ C_T(1+|x|^2) \frac{(c_T+c|T-t|)^2\lambda^2}{\left(\lambda-T(c_T+cT)\right)^2}|\xi'-\xi|^2\Bigg]\\
\leq&4 \bb{E}\Bigg[ 2|T-t|c^2\cdot\int_{s}^{T} \left|\bar{Y}_{mt}(\tau,\xi',x)-\bar{Y}_{mt}(\tau,\xi,x)\right|^2\dif \tau + 2c_T^2\left|\bar{Y}_{mt}(T,\xi',x)-\bar{Y}_{mt}(T,\xi,x)\right|^2\\
\nonumber
&\ \ \ \ +2\cdot|T-t|\frac{c^2\lambda^2}{\left(\lambda-T(c_T+cT)\right)^2}|\xi'-\xi|^2\cdot\int_{s}^{T} \left|\bar{Y}_{mt}(\tau,\xi,x)\right|^2\dif \tau+ \frac{2c_T^2\lambda^2}{\left(\lambda-T(c_T+cT)\right)^2}|\xi'-\xi|^2\cdot\left|\bar{Y}_{mt}(T,\xi,x)\right|^2\\\nonumber
&\ \ \ \ +2\frac{\left(c_T^2+c^2|T-t|^2\right)\lambda^2}{\left(\lambda-T(c_T+cT)\right)^2}\cdot|\xi'-\xi|^2\cdot C_T(1+|x|^2)+ C_T(1+|x|^2) \cdot\frac{(c_T+c|T-t|)^2\lambda^2}{\left(\lambda-T(c_T+cT)\right)^2}\cdot|\xi'-\xi|^2\Bigg],
\end{align}\endgroup\normalsize
where we use \eqref{xLipCon} in the second last inequality, and use \eqref{eq:6-16} in the very last inequality.
Thus,

\begingroup
\allowdisplaybreaks
\small
\begin{align}\nonumber
&\sup_{s\in[t,T]}\bb{E}\left|\bar{Z}_{mt}(s,\xi',x)-\bar{Z}_{mt}(s,\xi,x)\right|^2\\\nonumber
\leq&4 \Bigg( 2\left(c_T^2+|T-t|^2c^2\right)\cdot\sup_{s\in[t,T]}\bb{E} \left|\bar{Y}_{mt}(s,\xi',x)-\bar{Y}_{mt}(s,\xi,x)\right|^2+2\frac{\left(c_T^2+|T-t|^2c^2\right)\lambda^2}{\left(\lambda-T(c_T+cT)\right)^2}|\xi'-\xi|^2\cdot\sup_{s\in[t,T]}\bb{E}\left|\bar{Y}_{mt}(s,\xi,x)\right|^2\\\nonumber
&\ \ \ \ +4\frac{\left(c_T^2+c^2|T-t|^2\right)\lambda^2}{\left(\lambda-T(c_T+cT)\right)^2}\cdot|\xi'-\xi|^2\cdot C_T(1+|x|^2)\Bigg)\\
\leq&4 \Bigg( 2\left(c_T^2+|T-t|^2c^2\right)\cdot\sup_{s\in[t,T]}\bb{E} \left|\bar{Y}_{mt}(s,\xi',x)-\bar{Y}_{mt}(s,\xi,x)\right|^2+6\frac{\left(c_T^2+c^2|T-t|^2\right)\lambda^2}{\left(\lambda-T(c_T+cT)\right)^2}\cdot|\xi'-\xi|^2\cdot C_T(1+|x|^2)\Bigg),
\end{align}\normalsize
\endgroup
where we use \eqref{eq:a priori1} in the last inequality.
Therefore, under the additional condition of $\lambda>T\sqrt{8(c_T^2+c^2T^2)}$,
\begin{align}\label{LipsYxi}
\sup_{s\in[t,T]}\bb{E}\left|\bar{Y}_{mt}(s,\xi',x)-\bar{Y}_{mt}(s,\xi,x)\right|^2 \leq& \frac{24(c_T^2+c^2T^2)T^2}{ \lambda^2-8(c_T^2+c^2T^2)T^2 }\left(\frac{\lambda}{\lambda-T(c_T+cT)}\right)^2\cdot C_T(1+|x|^2) |\xi'-\xi|^2,
\end{align}
\begin{align}\label{LipsZxi}
\sup_{s\in[t,T]}\bb{E}\left|\bar{Z}_{mt}(s,\xi',x)-\bar{Z}_{mt}(s,\xi,x)\right|^2 \leq& \frac{24\lambda^2(c_T^2+c^2T^2)}{ \lambda^2-8(c_T^2+c^2T^2)T^2 }\left(\frac{\lambda}{\lambda-T(c_T+cT)}\right)^2\cdot C_T(1+|x|^2) |\xi'-\xi|^2.
\end{align}
Since $\bar{Z}_{mt}(t,\xi,x)=D_\xi \frac{d}{dm}U(\xi,m,t)(x)$ is deterministic, therefore, for each $x\in\bb{R}^n$, $D_\xi \frac{d}{dm}U(\xi,m,t)(x)$ is Lipschitz continuous in $\xi\in\bb{R}^n$ uniformly in $m$ and $t$ since the Lipschitz constant\\ $\left(\frac{24\lambda^2(c_T^2+c^2T^2)}{ \lambda^2-8(c_T^2+c^2T^2)T^2 }\left(\frac{\lambda}{\lambda-T(c_T+cT)}\right)^2\cdot C_T(1+|x|^2)\right)^{1/2}$ is independent of $m$ and $t$, and we have the following estimate:
\begin{align}\nonumber
&\left|D_\xi \frac{d}{dm}U(\xi',m,t)(x)-D_\xi \frac{d}{dm}U(\xi,m,t)(x)\right| \\\label{LipsDdUxi}
\leq& \left(\frac{24\lambda^2(c_T^2+c^2T^2)}{ \lambda^2-8(c_T^2+c^2T^2)T^2 }\left(\frac{\lambda}{\lambda-T(c_T+cT)}\right)^2\cdot C_T(1+|x|^2)\right)^{1/2}\cdot|\xi'-\xi|.
\end{align}

Step $3$. We finally show the continuity of $(\bar{Y}_{mt}(s,\xi,x),\bar{Z}_{mt}(s,\xi,x))$ in $m\in\s{P}_2(\bb{R}^n)$ for each $(s,\xi,x)\in[t,T]\times\bb{R}^n\times\bb{R}^n$ under the additional condition of $\lambda>T\sqrt{30(c_T^2+c^2T^2)}>T\sqrt{8(c_T^2+c^2T^2)}$, which is the lower bound we specified at the very beginning of Step 2. First, we have
\begingroup
\allowdisplaybreaks
\begin{align*}\nonumber
&\bar{Y}_{m't}(s,\xi,x)-\bar{Y}_{mt}(s,\xi,x) =-\dfrac{1}{\lambda}\int_{t}^{s}\left(\bar{Z}_{m't}(\tau,\xi,x)-\bar{Z}_{mt}(\tau,\xi,x)\right)\dif \tau,\\
\nonumber
&\bar{Z}_{m' t}(s,\xi,x)-\bar{Z}_{mt}(s,\xi,x) \\\nonumber
=&\bb{E}\left[ \int_{s}^{T} \left(D^{2}\dod{F}{m}(Y_{\cdot m' t}(\tau) \otimes  m')(Y_{\xi m't}(\tau))\bar{Y}_{m't}(\tau,\xi,x)-D^{2}\dod{F}{m}(Y_{\cdot mt}(\tau) \otimes  m)(Y_{\xi mt}(\tau))\bar{Y}_{mt}(\tau,\xi,x)\right)\dif \tau \right.\\
\nonumber
&\ \ \ \ + D^{2}\dod{}{m}F_{T}(Y_{\cdot m't}(T) \otimes  m')(Y_{\xi m't}(T))\bar{Y}_{m't}(T,\xi,x)-D^{2}\dod{}{m}F_{T}(Y_{\cdot mt}(T) \otimes  m)(Y_{\xi mt}(T))\bar{Y}_{mt}(T,\xi,x)\\
\nonumber
&\ \ \ \ +\widetilde{\bb{E}}\int_{s}^{T}\int_{\bb{R}^n}D_2D_1\dod[2]{F}{m}(Y_{\cdot m't}(\tau) \otimes  m')(Y_{\xi m't}(\tau),\widetilde{Y}_{\zeta m't}(\tau))\widetilde{\bar{Y}}_{m't}(\tau,\zeta,x)\dif m'(\zeta) \dif \tau
\end{align*}
\begin{align*}\nonumber
&\ \ \ \ -\widetilde{\bb{E}}\int_{s}^{T}\int_{\bb{R}^n}D_2D_1\dod[2]{F}{m}(Y_{\cdot mt}(\tau) \otimes  m)(Y_{\xi mt}(\tau),\widetilde{Y}_{\zeta mt}(\tau))\widetilde{\bar{Y}}_{mt}(\tau,\zeta,x)\dif m(\zeta) \dif \tau\\
\nonumber
&\ \ \ \ +\widetilde{\bb{E}}\int_{\bb{R}^n}D_2D_1\dod[2]{F_{T}}{m}(Y_{\cdot m't}(T) \otimes  m')(Y_{\xi m't}(T),\widetilde{Y}_{\zeta m't}(T))\widetilde{\bar{Y}}_{m't}(T,\eta,x)\dif m'(\zeta)\\
\nonumber
&\ \ \ \ -\widetilde{\bb{E}}\int_{\bb{R}^n}D_2D_1\dod[2]{F_{T}}{m}(Y_{\cdot mt}(T) \otimes  m)(Y_{\xi mt}(T),\widetilde{Y}_{\zeta mt}(T))\widetilde{\bar{Y}}_{mt}(T,\zeta,x)\dif m(\zeta)\\\nonumber
&\ \ \ \ +\int_{s}^{T}\widetilde{\bb{E}}D_1\dod[2]{F}{m}(Y_{\cdot m't}(\tau) \otimes  m')(Y_{\xi m't}(\tau),\widetilde{Y}_{xm't}(\tau))\dif \tau+\widetilde{\bb{E}}D_1\dod[2]{F_T}{m}(Y_{\cdot m't}(T) \otimes  m')(Y_{\xi m't}(T),\widetilde{Y}_{xm't}(T))\\
&\ \ \ \ - \left.\left.\int_{s}^{T}\widetilde{\bb{E}}D_1\dod[2]{F}{m}(Y_{\cdot mt}(\tau) \otimes  m)(Y_{\xi mt}(\tau),\widetilde{Y}_{xmt}(\tau))\dif \tau-\widetilde{\bb{E}}D_1\dod[2]{F_T}{m}(Y_{\cdot mt}(T) \otimes  m)(Y_{\xi mt}(T),\widetilde{Y}_{xmt}(T))\right|\mathcal{W}_{t}^{s}\right],
\end{align*}\endgroup
and
\begin{align}\nonumber
&D^{2}\dod{F}{m}(Y_{\cdot m' t}(\tau) \otimes  m')(Y_{\xi m't}(\tau))\bar{Y}_{m't}(\tau,\xi,x)-D^{2}\dod{F}{m}(Y_{\cdot mt}(\tau) \otimes  m)(Y_{\xi mt}(\tau))\bar{Y}_{mt}(\tau,\xi,x)\\\nonumber
=&D^{2}\dod{F}{m}(Y_{\cdot m' t}(\tau) \otimes  m')(Y_{\xi m't}(\tau))\bar{Y}_{m't}(\tau,\xi,x)-D^{2}\dod{F}{m}(Y_{\cdot m't}(\tau) \otimes  m')(Y_{\xi m't}(\tau))\bar{Y}_{mt}(\tau,\xi,x)\\\label{3primeeq1}
&+D^{2}\dod{F}{m}(Y_{\cdot m' t}(\tau) \otimes  m')(Y_{\xi m't}(\tau))\bar{Y}_{mt}(\tau,\xi,x)-D^{2}\dod{F}{m}(Y_{\cdot mt}(\tau) \otimes  m)(Y_{\xi mt}(\tau))\bar{Y}_{mt}(\tau,\xi,x)
\end{align}
and
\begingroup
\allowdisplaybreaks\begin{align}\nonumber
&\int_{\bb{R}^n}D_2D_1\dod[2]{F}{m}(Y_{\cdot m't}(\tau) \otimes  m')(Y_{\xi m't}(\tau),\widetilde{Y}_{\zeta m't}(\tau))\widetilde{\bar{Y}}_{m't}(\tau,\zeta,x)\dif m'(\zeta)\\\nonumber
&-\int_{\bb{R}^n}D_2D_1\dod[2]{F}{m}(Y_{\cdot mt}(\tau) \otimes  m)(Y_{\xi mt}(\tau),\widetilde{Y}_{\zeta mt}(\tau))\widetilde{\bar{Y}}_{mt}(\tau,\zeta,x)\dif m(\zeta)\\\nonumber
=&\int_{\bb{R}^n}D_2D_1\dod[2]{F}{m}(Y_{\cdot m't}(\tau) \otimes  m')(Y_{\xi m't}(\tau),\widetilde{Y}_{\zeta m't}(\tau))\widetilde{\bar{Y}}_{m't}(\tau,\zeta,x)\dif m'(\zeta)\\\nonumber
&-\int_{\bb{R}^n}D_2D_1\dod[2]{F}{m}(Y_{\cdot m't}(\tau) \otimes  m')(Y_{\xi m't}(\tau),\widetilde{Y}_{\zeta m't}(\tau))\widetilde{\bar{Y}}_{m't}(\tau,\zeta,x)\dif m(\zeta)\\\nonumber
&+\int_{\bb{R}^n}D_2D_1\dod[2]{F}{m}(Y_{\cdot m't}(\tau) \otimes  m')(Y_{\xi m't}(\tau),\widetilde{Y}_{\zeta m't}(\tau))\widetilde{\bar{Y}}_{m't}(\tau,\zeta,x)\dif m(\zeta)\\\nonumber
&-\int_{\bb{R}^n}D_2D_1\dod[2]{F}{m}(Y_{\cdot m't}(\tau) \otimes  m')(Y_{\xi m't}(\tau),\widetilde{Y}_{\zeta m't}(\tau))\widetilde{\bar{Y}}_{mt}(\tau,\zeta,x)\dif m(\zeta)\\\nonumber
&+\int_{\bb{R}^n}D_2D_1\dod[2]{F}{m}(Y_{\cdot m't}(\tau) \otimes  m')(Y_{\xi m't}(\tau),\widetilde{Y}_{\zeta m't}(\tau))\widetilde{\bar{Y}}_{mt}(\tau,\zeta,x)\dif m(\zeta)\\\label{3primeeq2}
&-\int_{\bb{R}^n}D_2D_1\dod[2]{F}{m}(Y_{\cdot mt}(\tau) \otimes  m)(Y_{\xi mt}(\tau),\widetilde{Y}_{\zeta mt}(\tau))\widetilde{\bar{Y}}_{mt}(\tau,\zeta,x)\dif m(\zeta).
\end{align}\endgroup
Note also that, by \eqref{eq:6-15}, \eqref{eq:a priori1}, \eqref{xLipCon} and \eqref{LipsYxi}, \small
\begin{align}\nonumber
&\left|\widetilde{\bb{E}} \int_{\bb{R}^n}D_2D_1\dod[2]{F}{m}(Y_{\cdot m't}(s) \otimes  m')(Y_{\xi m't}(s),\widetilde{Y}_{\zeta m't}(s))\widetilde{\bar{Y}}_{m't}(s,\zeta,x)\dif\, (m'-m)(\zeta)\right|^2\\\nonumber
=&\left|\widetilde{\bb{E}} \int_{\widehat{\Omega}}D_2D_1\dod[2]{F}{m}(Y_{\cdot m't}(s) \otimes  m')(Y_{\xi m't}(s),\widetilde{Y}_{\hat{X}_{m'}(\widehat{\omega}) m't}(s))\widetilde{\bar{Y}}_{m't}(s,\hat{X}_{m'}(\widehat{\omega}),x)\dif \widehat{\bb{P}}(\widehat{\omega})\right.\\\nonumber
&\left.-\widetilde{\bb{E}} \int_{\widehat{\Omega}}D_2D_1\dod[2]{F}{m}(Y_{\cdot m't}(s) \otimes  m')(Y_{\xi m't}(s),\widetilde{Y}_{\hat{X}_{m}(\widehat{\omega}) m't}(s))\widetilde{\bar{Y}}_{m't}(s,\hat{X}_{m}(\widehat{\omega}),x)\dif \widehat{\bb{P}}(\widehat{\omega})\right|^2
\end{align}
\begin{align}\nonumber
=&\left|\widetilde{\bb{E}} \int_{\widehat{\Omega}}D_2D_1\dod[2]{F}{m}(Y_{\cdot m't}(s) \otimes  m')(Y_{\xi m't}(s),\widetilde{Y}_{\hat{X}_{m'}(\widehat{\omega}) m't}(s))\widetilde{\bar{Y}}_{m't}(s,\hat{X}_{m'}(\widehat{\omega}),x)\dif \widehat{\bb{P}}(\widehat{\omega})\right.\\\nonumber
& -\widetilde{\bb{E}} \int_{\widehat{\Omega}}D_2D_1\dod[2]{F}{m}(Y_{\cdot m't}(s) \otimes  m')(Y_{\xi m't}(s),\widetilde{Y}_{\hat{X}_{m'}(\widehat{\omega}) m't}(s))\widetilde{\bar{Y}}_{m't}(s,\hat{X}_{m}(\widehat{\omega}),x)\dif \widehat{\bb{P}}(\widehat{\omega})\\\nonumber
&+\widetilde{\bb{E}} \int_{\widehat{\Omega}}D_2D_1\dod[2]{F}{m}(Y_{\cdot m't}(s) \otimes  m')(Y_{\xi m't}(s),\widetilde{Y}_{\hat{X}_{m'}(\widehat{\omega}) m't}(s))\widetilde{\bar{Y}}_{m't}(s,\hat{X}_{m}(\widehat{\omega}),x)\dif \widehat{\bb{P}}(\widehat{\omega})\\\nonumber
&\left.-\widetilde{\bb{E}} \int_{\widehat{\Omega}}D_2D_1\dod[2]{F}{m}(Y_{\cdot m't}(s) \otimes  m')(Y_{\xi m't}(s),\widetilde{Y}_{\hat{X}_{m}(\widehat{\omega}) m't}(s))\widetilde{\bar{Y}}_{m't}(s,\hat{X}_{m}(\widehat{\omega}),x)\dif \widehat{\bb{P}}(\widehat{\omega})\right|^2\\\nonumber
\leq& 2c^2\widetilde{\bb{E}}\int_{\widehat{\Omega}}\left|\widetilde{\bar{Y}}_{m't}(s,\hat{X}_{m'}(\widehat{\omega}),x)-\widetilde{\bar{Y}}_{m't}(s,\hat{X}_{m}(\widehat{\omega}),x)\right|^2d\widehat{\bb{P}}(\widehat{\omega})\\\nonumber
&+2c^2 C_T(1+|x|^2) \cdot\widetilde{\bb{E}}\int_{\widehat{\Omega}}\left|\widetilde{Y}_{\hat{X}_{m'}(\widehat{\omega}) m't}(s)-\widetilde{Y}_{\hat{X}_{m}(\widehat{\omega}) m't}(s)\right|^2d\widehat{\bb{P}}(\widehat{\omega})\\\nonumber
\leq& \left(\frac{24(c_T^2+c^2T^2)T^2}{ \lambda^2-8(c_T^2+c^2T^2)T^2 }\left(\frac{\lambda}{\lambda-T(c_T+cT)}\right)^2+\frac{\lambda}{\lambda-T(c_T+cT)}\right)\cdot   2c^2C_T(1+|x|^2) \cdot\int_{\widehat{\Omega}}\left|\hat{X}_{m'}(\widehat{\omega})-\hat{X}_{m}(\widehat{\omega})\right|^2d\widehat{\bb{P}}(\widehat{\omega})\\\label{3primeeq3}
=& C_6(c,c_T,\lambda,T) \cdot C_T(1+|x|^2) \cdot W_2^2(m,m'),
\end{align}\normalsize
where $\hat{X}_{m}$ and $\hat{X}_{m'}$ are random variables in $L^{2}(\widehat{\Omega},\widehat{\mathcal{A}},\widehat{\bb{P}};\bb{R}^{n})$ such that 
\begin{equation}
W_{2}^{2}(m,m')=\bb{E}[|\hat{X}_{m}-\hat{X}_{m'}|^{2}],
\end{equation}
and $C_6(c,c_T,\lambda,T):=\left(\frac{24(c_T^2+c^2T^2)T^2}{ \lambda^2-8(c_T^2+c^2T^2)T^2 }\left(\frac{\lambda}{\lambda-T(c_T+cT)}\right)^2+\frac{\lambda}{\lambda-T(c_T+cT)}\right)\cdot   2c^2$.

Since $D^2\dod{F}{m}(m)(x)$ and $D^2\dod{F_T}{m}(m)(x)$ are assumed to be jointly Lipschitz continuous in $(m,x)\in \mathcal{P}_2(\mathbb{R}^n)\times\mathbb{R}^n$, and $D_1\dod[2]{F}{m}(m)(x,\widetilde{x})$, $D_1\dod[2]{F_T}{m}(m)(x,\widetilde{x})$, $D_2D_1\dod[2]{F}{m}(m)(x,\tilde{x})$ and $D_2D_1\dod[2]{F_T}{m}(m)(x,\tilde{x})$ are assumed to be jointly Lipschitz continuous in $(m,x,\tilde{x})\in \mathcal{P}_2(\mathbb{R}^n)\times\mathbb{R}^n\times\mathbb{R}^n$, there exist positive constants $C_{D2dF}$, $C_{Dd2F}$ and $C_{D2d2F}$ such that
\begin{align*}
&\bb{E}\left|D^{2}\dod{F}{m}(Y_{\cdot m' t}(\tau) \otimes  m')(Y_{\xi m't}(\tau)) -D^{2}\dod{F}{m}(Y_{\cdot mt}(\tau) \otimes  m)(Y_{\xi mt}(\tau)) \right|^2\\
\leq& C_{D2dF}\left(\bb{E}W_2^2(Y_{\cdot m' t}(\tau) \otimes  m',Y_{\cdot m' t}(\tau) \otimes  m')+\bb{E}\left|Y_{\xi m't}(\tau)-Y_{\xi mt}(\tau)\right|^2\right), \\
&\bb{E}\left|D^{2}\dod{F_T}{m}(Y_{\cdot m' t}(\tau) \otimes  m')(Y_{\xi m't}(\tau)) -D^{2}\dod{F_T}{m}(Y_{\cdot mt}(\tau) \otimes  m)(Y_{\xi mt}(\tau)) \right|^2\\
\leq& C_{D2dF}\left(\bb{E}W_2^2(Y_{\cdot m' t}(\tau) \otimes  m',Y_{\cdot m' t}(\tau) \otimes  m')+\bb{E}\left|Y_{\xi m't}(\tau)-Y_{\xi mt}(\tau)\right|^2\right), 
\end{align*}
\begin{align*}
&\bb{E}\left|D_1\dod[2]{F}{m}(Y_{\cdot m't}(\tau) \otimes  m')(Y_{\xi m't}(\tau),\widetilde{Y}_{x m't}(\tau))-D_1\dod[2]{F}{m}(Y_{\cdot mt}(\tau) \otimes  m)(Y_{\xi mt}(\tau),\widetilde{Y}_{x mt}(\tau))\right|^2\\
\leq& C_{Dd2F}\left(\bb{E}W_2^2(Y_{\cdot m' t}(\tau) \otimes  m',Y_{\cdot m' t}(\tau) \otimes  m')+\bb{E}\left|Y_{\xi m't}(\tau)-Y_{\xi mt}(\tau)\right|^2+\bb{E}\left|\widetilde{Y}_{x m't}(\tau)-\widetilde{Y}_{x mt}(\tau)\right|^2\right),
\end{align*}
\begin{align*}
&\bb{E}\left|D_1\dod[2]{F_T}{m}(Y_{\cdot m't}(\tau) \otimes  m')(Y_{\xi m't}(\tau),\widetilde{Y}_{x m't}(\tau))-D_1\dod[2]{F_T}{m}(Y_{\cdot mt}(\tau) \otimes  m)(Y_{\xi mt}(\tau),\widetilde{Y}_{x mt}(\tau))\right|^2\\
\leq& C_{Dd2F}\left(\bb{E}W_2^2(Y_{\cdot m' t}(\tau) \otimes  m',Y_{\cdot m' t}(\tau) \otimes  m')+\bb{E}\left|Y_{\xi m't}(\tau)-Y_{\xi mt}(\tau)\right|^2+\bb{E}\left|\widetilde{Y}_{x m't}(\tau)-\widetilde{Y}_{x mt}(\tau)\right|^2\right),
\end{align*}
and
\begin{align*}
&\bb{E}\left|D_2D_1\dod[2]{F}{m}(Y_{\cdot m't}(\tau) \otimes  m')(Y_{\xi m't}(\tau),\widetilde{Y}_{\zeta m't}(\tau))-D_2D_1\dod[2]{F}{m}(Y_{\cdot mt}(\tau) \otimes  m)(Y_{\xi mt}(\tau),\widetilde{Y}_{\zeta mt}(\tau))\right|^2\\
\leq& C_{D2d2F}\left(\bb{E}W_2^2(Y_{\cdot m' t}(\tau) \otimes  m',Y_{\cdot m' t}(\tau) \otimes  m')+\bb{E}\left|Y_{\xi m't}(\tau)-Y_{\xi mt}(\tau)\right|^2+\bb{E}\left|\widetilde{Y}_{\zeta m't}(\tau)-\widetilde{Y}_{\zeta mt}(\tau)\right|^2\right),
\end{align*}
\begin{align*}
&\bb{E}\left|D_2D_1\dod[2]{F_T}{m}(Y_{\cdot m't}(\tau) \otimes  m')(Y_{\xi m't}(\tau),\widetilde{Y}_{\zeta m't}(\tau))-D_2D_1\dod[2]{F_T}{m}(Y_{\cdot mt}(\tau) \otimes  m)(Y_{\xi mt}(\tau),\widetilde{Y}_{\zeta mt}(\tau))\right|^2\\
\leq& C_{D2d2F}\left(\bb{E}W_2^2(Y_{\cdot m' t}(\tau) \otimes  m',Y_{\cdot m' t}(\tau) \otimes  m')+\bb{E}\left|Y_{\xi m't}(\tau)-Y_{\xi mt}(\tau)\right|^2+\bb{E}\left|\widetilde{Y}_{\zeta m't}(\tau)-\widetilde{Y}_{\zeta mt}(\tau)\right|^2\right),
\end{align*}
based on which we can combine with \eqref{eq:6-15}, \eqref{eq:a priori1}, \eqref{iieq3}-\eqref{iieq4}, \eqref{W2Y}, \eqref{LipsYxi}, \eqref{3primeeq1}-\eqref{3primeeq3} to get

\small
\begingroup
\allowdisplaybreaks
\begin{align*}
&\sup_{s\in[t,T]}\bb{E}\left|\bar{Y}_{m't}(s,\xi,x)-\bar{Y}_{mt}(s,\xi,x)\right|^2 
\leq \dfrac{|T-t|^2}{\lambda^2} \sup_{s\in[t,T]}\bb{E}\left|\bar{Z}_{m't}(s,\xi,x)-\bar{Z}_{mt}(s,\xi,x)\right|^2,\\
\nonumber
&\bb{E}\left|\bar{Z}_{m' t}(s,\xi,x)-\bar{Z}_{mt}(s,\xi,x)\right|^2 \\\nonumber
\leq&6\bb{E}\Bigg[ \bigg|\int_{s}^{T} \left(D^{2}\dod{F}{m}(Y_{\cdot m' t}(\tau) \otimes  m')(Y_{\xi m't}(\tau))\bar{Y}_{m't}(\tau,\xi,x)-D^{2}\dod{F}{m}(Y_{\cdot mt}(\tau) \otimes  m)(Y_{\xi mt}(\tau))\bar{Y}_{mt}(\tau,\xi,x)\right)\dif \tau\bigg|^2  .\\
\nonumber
&\ \ \ \ + \bigg|D^{2}\dod{}{m}F_{T}(Y_{\cdot m't}(T) \otimes  m')(Y_{\xi m't}(T))\bar{Y}_{m't}(T,\xi,x)-D^{2}\dod{}{m}F_{T}(Y_{\cdot mt}(T) \otimes  m)(Y_{\xi mt}(T))\bar{Y}_{mt}(T,\xi,x)\bigg|^2\\
\nonumber
&\ \ \ \ +\bigg|\widetilde{\bb{E}}\int_{s}^{T}\int_{\bb{R}^n}D_2D_1\dod[2]{F}{m}(Y_{\cdot m't}(\tau) \otimes  m')(Y_{\xi m't}(\tau),\widetilde{Y}_{\zeta m't}(\tau))\widetilde{\bar{Y}}_{m't}(\tau,\zeta,x)\dif m'(\zeta) \dif \tau\\
\nonumber
&\ \ \ \ \ \ \ \ -\widetilde{\bb{E}}\int_{s}^{T}\int_{\bb{R}^n}D_2D_1\dod[2]{F}{m}(Y_{\cdot mt}(\tau) \otimes  m)(Y_{\xi mt}(\tau),\widetilde{Y}_{\zeta mt}(\tau))\widetilde{\bar{Y}}_{mt}(\tau,\zeta,x)\dif m(\zeta) \dif \tau\bigg|^2\\
\nonumber
&\ \ \ \ +\bigg|\widetilde{\bb{E}}\int_{\bb{R}^n}D_2D_1\dod[2]{F_{T}}{m}(Y_{\cdot m't}(T) \otimes  m')(Y_{\xi m't}(T),\widetilde{Y}_{\zeta m't}(T))\widetilde{\bar{Y}}_{m't}(T,\eta,x)\dif m'(\zeta)\\
\nonumber
&\ \ \ \ \ \ \ \ -\widetilde{\bb{E}}\int_{\bb{R}^n}D_2D_1\dod[2]{F_{T}}{m}(Y_{\cdot mt}(T) \otimes  m)(Y_{\xi mt}(T),\widetilde{Y}_{\zeta mt}(T))\widetilde{\bar{Y}}_{mt}(T,\zeta,x)\dif m(\zeta)\bigg|^2\\
&\ \ \ \ +\bigg|\int_{s}^{T}\widetilde{\bb{E}}D_1\dod[2]{F}{m}(Y_{\cdot m't}(\tau) \otimes  m')(Y_{\xi m't}(\tau),\widetilde{Y}_{xm't}(\tau))\dif \tau- \int_{s}^{T}\widetilde{\bb{E}}D_1\dod[2]{F}{m}(Y_{\cdot mt}(\tau) \otimes  m)(Y_{\xi mt}(\tau),\widetilde{Y}_{xmt}(\tau))\dif \tau\bigg|^2\\
&\ \ \ \ +\bigg|\widetilde{\bb{E}}D_1\dod[2]{F_T}{m}(Y_{\cdot m't}(T) \otimes  m')(Y_{\xi m't}(T),\widetilde{Y}_{xm't}(T))-\widetilde{\bb{E}}D_1\dod[2]{F_T}{m}(Y_{\cdot mt}(T) \otimes  m)(Y_{\xi mt}(T),\widetilde{Y}_{xmt}(T))\bigg|^2\Bigg]\\
\leq&12\left(c_T^2+c^2|T-t|^2\right)\cdot\sup_{s\in[t,T]} \bb{E}\left|\bar{Y}_{m't}(s,\xi,x)-\bar{Y}_{mt}(s,\xi,x)\right|^2\\
&+12C_T(1+|x|^2)(1+|T-t|^2)\cdot C_{D2dF}\cdot\sup_{s\in[t,T]}\left(\bb{E}W_2^2(Y_{\cdot m' t}(s) \otimes  m',Y_{\cdot m' t}(s) \otimes  m')+\bb{E}\left|Y_{\xi m't}(s)-Y_{\xi mt}(s)\right|^2\right)\\
&+18C_6(c,c_T,\lambda,T) C_T(1+|x|^2)(1+|T-t|^2)\cdot W_2^2(m,m')\\
&+18\left(c_T^2+c^2|T-t|^2\right)\cdot\sup_{s\in[t,T]} \bb{E}\int_{\bb{R}^n}\left|\bar{Y}_{m't}(s,\zeta,x)-\bar{Y}_{mt}(s,\zeta,x)\right|^2dm(\zeta)\\
&+18C_T(1+|x|^2)(1+|T-t|^2)\cdot C_{D2d2F}\cdot\sup_{s\in[t,T]}\bigg(\bb{E}W_2^2(Y_{\cdot m' t}(s) \otimes  m',Y_{\cdot m' t}(s) \otimes  m')+\bb{E}\left|Y_{\xi m't}(s)-Y_{\xi mt}(s)\right|^2\\
&\ \ \ \ \ \ \ \ +\bb{E}\int_{\bb{R}^n}\left| Y_{\zeta m't}(s)- Y_{\zeta mt}(s)\right|^2dm(\zeta)\bigg)\\
&+ 6(1+|T-t|^2)\cdot C_{Dd2F}\cdot\sup_{s\in[t,T]}\bigg(\bb{E}W_2^2(Y_{\cdot m' t}(s) \otimes  m',Y_{\cdot m' t}(s) \otimes  m')+\bb{E}\left|Y_{\xi m't}(s)-Y_{\xi mt}(s)\right|^2+\bb{E}\left| Y_{x m't}(s)- Y_{x mt}(s)\right|^2\bigg)\\
\leq&12\left(c_T^2+c^2|T-t|^2\right)\cdot\sup_{s\in[t,T]} \bb{E}\left|\bar{Y}_{m't}(s,\xi,x)-\bar{Y}_{mt}(s,\xi,x)\right|^2\\
&+12C_T(1+|x|^2)(1+|T-t|^2)\cdot C_{D2dF}\cdot \left(2 \left(\frac{\lambda}{\lambda-T(c_T+cT)}\right)^2\cdot\dfrac{\lambda^2-T^2C_{DdF}(1+T^2)}{\lambda^2-3T^2C_{DdF}(1+T^2)}\cdot W_2^2(m,m')\right.\\
&\ \ \ \ \ \ \ \ \ \ \ \ \ \ \ \ \left.+\dfrac{2T^2C_{DdF}(1+T^2)}{\lambda^2-3T^2C_{DdF}(1+T^2)}  \left(\frac{\lambda}{\lambda-T(c_T+cT)}\right)^2 \cdot W_2^2(m,m')\right)\\
&+18C_6(c,c_T,\lambda,T)\cdot C_T(1+|x|^2)\cdot(1+|T-t|^2)\cdot W_2^2(m,m')\\
&+18\left(c_T^2+c^2|T-t|^2\right)\sup_{s\in[t,T]} \bb{E}\int_{\bb{R}^n}\left|\bar{Y}_{m't}(s,\zeta,x)-\bar{Y}_{mt}(s,\zeta,x)\right|^2dm(\zeta)\\
&+\left(18C_T(1+|x|^2)\cdot(1+|T-t|^2)\cdot C_{D2d2F}+6(1+|T-t|^2)\cdot C_{Dd2F}\right)\cdot\\
&\ \ \ \ \ \ \ \ \left(2 \left(\frac{\lambda}{\lambda-T(c_T+cT)}\right)^2\cdot\dfrac{\lambda^2-T^2C_{DdF}(1+T^2)}{\lambda^2-3T^2C_{DdF}(1+T^2)}+\dfrac{4T^2C_{DdF}(1+T^2)}{\lambda^2-3T^2C_{DdF}(1+T^2)}  \left(\frac{\lambda}{\lambda-T(c_T+cT)}\right)^2\right)W_2^2(m,m').
\end{align*}\endgroup\normalsize
Therefore, we arrive with
\begin{align}\label{EY2}
&\sup_{s\in[t,T]}\bb{E}\left|\bar{Y}_{m't}(s,\xi,x)-\bar{Y}_{mt}(s,\xi,x)\right|^2 
\leq \dfrac{|T-t|^2}{\lambda^2} \sup_{s\in[t,T]}\bb{E}\left|\bar{Z}_{m't}(s,\xi,x)-\bar{Z}_{mt}(s,\xi,x)\right|^2,\\
\nonumber
&\bb{E}\left|\bar{Z}_{m' t}(s,\xi,x)-\bar{Z}_{mt}(s,\xi,x)\right|^2 \\\nonumber
\leq&12\left(c_T^2+c^2|T-t|^2\right)\sup_{s\in[t,T]} \bb{E}\left|\bar{Y}_{m't}(s,\xi,x)-\bar{Y}_{mt}(s,\xi,x)\right|^2\\\label{EZ2}
&+18\left(c_T^2+c^2|T-t|^2\right)\sup_{s\in[t,T]} \bb{E}\int_{\bb{R}^n}\left|\bar{Y}_{m't}(s,\zeta,x)-\bar{Y}_{mt}(s,\zeta,x)\right|^2dm(\zeta)+C_7\cdot W_2^2(m,m'),
\end{align}
where 
\begingroup
\allowdisplaybreaks
\begin{align*}
C_7:=&12C_T(1+|x|^2)\cdot(1+|T-t|^2)\cdot C_{D2dF}\cdot \left(2 \left(\frac{\lambda}{\lambda-T(c_T+cT)}\right)^2\cdot\dfrac{\lambda^2-T^2C_{DdF}(1+T^2)}{\lambda^2-3T^2C_{DdF}(1+T^2)} \right.\\
&\ \ \ \ \ \ \ \ \ \ \ \ \ \ \ \ \left.+\dfrac{2T^2C_{DdF}(1+T^2)}{\lambda^2-3T^2C_{DdF}(1+T^2)}  \left(\frac{\lambda}{\lambda-T(c_T+cT)}\right)^2  \right)\\
&+18C_6(c,c_T,\lambda,T) C_T(1+|x|^2)(1+|T-t|^2) \\
&+\left(18C_T(1+|x|^2)(1+|T-t|^2)\cdot C_{D2d2F}+6(1+|T-t|^2)\cdot C_{Dd2F}\right)\cdot\\
&\ \ \ \ \ \ \ \ \left(2 \left(\frac{\lambda}{\lambda-T(c_T+cT)}\right)^2\cdot\dfrac{\lambda^2-T^2C_{DdF}(1+T^2)}{\lambda^2-3T^2C_{DdF}(1+T^2)}+\dfrac{4T^2C_{DdF}(1+T^2)}{\lambda^2-3T^2C_{DdF}(1+T^2)}  \left(\frac{\lambda}{\lambda-T(c_T+cT)}\right)^2  \right).
\end{align*}\endgroup
Then,
\begin{align*}
&\sup_{s\in[t,T]}\bb{E}\int_{\bb{R}^n}\left|\bar{Y}_{m't}(s,\xi,x)-\bar{Y}_{mt}(s,\xi,x)\right|^2 dm(\xi)
\leq \dfrac{|T-t|^2}{\lambda^2} \sup_{s\in[t,T]}\bb{E}\int_{\bb{R}^n}\left|\bar{Z}_{m't}(s,\xi,x)-\bar{Z}_{mt}(s,\xi,x)\right|^2dm(\xi),\\
\nonumber
&\sup_{s\in[t,T]}\bb{E}\int_{\bb{R}^n}\left|\bar{Z}_{m' t}(s,\xi,x)-\bar{Z}_{mt}(s,\xi,x)\right|^2dm(\xi) \\\nonumber
\leq&30\left(c_T^2+c^2|T-t|^2\right)\sup_{s\in[t,T]} \bb{E}\int_{\bb{R}^n}\left|\bar{Y}_{m't}(s,\xi,x)-\bar{Y}_{mt}(s,\xi,x)\right|^2dm(\xi)+C_7\cdot W_2^2(m,m'),
\end{align*}
which implies that, whenever $\lambda>T\sqrt{30(c_T^2+c^2T^2)}$,
\begin{align*}
&\sup_{s\in[t,T]}\bb{E}\int_{\bb{R}^n}\left|\bar{Y}_{m't}(s,\xi,x)-\bar{Y}_{mt}(s,\xi,x)\right|^2 dm(\xi)
\leq \dfrac{C_7 T^2}{\lambda^2-30(c_T^2+c^2T^2)T^2}\cdot W_2^2(m,m'),\\
\nonumber
&\sup_{s\in[t,T]}\bb{E}\int_{\bb{R}^n}\left|\bar{Z}_{m' t}(s,\xi,x)-\bar{Z}_{mt}(s,\xi,x)\right|^2dm(\xi)\leq \dfrac{C_7 \lambda^2}{\lambda^2-30(c_T^2+c^2T^2)T^2}\cdot W_2^2(m,m').
\end{align*}
Put this back to \eqref{EZ2}, we have
\begin{align*}
&\sup_{s\in[t,T]}\bb{E}\left|\bar{Y}_{m't}(s,\xi,x)-\bar{Y}_{mt}(s,\xi,x)\right|^2 
\leq \dfrac{|T-t|^2}{\lambda^2} \sup_{s\in[t,T]}\bb{E}\left|\bar{Z}_{m't}(s,\xi,x)-\bar{Z}_{mt}(s,\xi,x)\right|^2,\\
\nonumber
&\sup_{s\in[t,T]}\bb{E}\left|\bar{Z}_{m' t}(s,\xi,x)-\bar{Z}_{mt}(s,\xi,x)\right|^2 \\\nonumber
\leq&12\left(c_T^2+c^2|T-t|^2\right)\sup_{s\in[t,T]} \bb{E}\left|\bar{Y}_{m't}(s,\xi,x)-\bar{Y}_{mt}(s,\xi,x)\right|^2+C_8\cdot W_2^2(m,m'),
\end{align*}
where $C_8:=\dfrac{  \lambda^2-12(c_T^2+c^2T^2)T^2}{\lambda^2-30(c_T^2+c^2T^2)T^2}\cdot C_7$.
Therefore, we finally arrive with:
\begin{align*}
&\sup_{s\in[t,T]}\bb{E}\left|\bar{Y}_{m't}(s,\xi,x)-\bar{Y}_{mt}(s,\xi,x)\right|^2 
\leq \dfrac{C_8 T^2}{\lambda^2-12(c_T^2+c^2T^2)T^2}\cdot W_2^2(m,m') ,\\
\nonumber
&\sup_{s\in[t,T]}\bb{E}\left|\bar{Z}_{m' t}(s,\xi,x)-\bar{Z}_{mt}(s,\xi,x)\right|^2 \leq \dfrac{C_8 \lambda^2}{\lambda^2-12(c_T^2+c^2T^2)T^2}\cdot W_2^2(m,m').
\end{align*}

Since $\bar{Z}_{mt}(t,\xi,x)=D_\xi \frac{d}{dm}U(\xi,m,t)(x)$ is deterministic, therefore, for each $x\in\bb{R}^n$, $D_\xi \frac{d}{dm}U(\xi,m,t)(x)$ is Lipschitz continuous in $m\in\s{P}_2(\bb{R}^n)$ uniformly in $\xi$ and $t$ since the Lipschitz constant $\left(\dfrac{C_8 \lambda^2}{\lambda^2-12(c_T^2+c^2T^2)T^2}\right)^{1/2}$ is independent of $\xi$ and $t$,
and we have the following estimate:
\begin{align}\label{LipsDdUm}
\left|D_\xi \frac{d}{dm}U(\xi,m',t)(x)-D_\xi \frac{d}{dm}U(\xi,m,t)(x)\right| \leq& \left(\dfrac{C_8 \lambda^2}{\lambda^2-12(c_T^2+c^2T^2)T^2}\right)^{1/2}\cdot W_2(m',m).
\end{align}
By \eqref{LipsDdUx}, \eqref{LipsDdUxi} and \eqref{LipsDdUm}, we have
\begin{align*}
&\left|D_\xi \frac{d}{dm}U(\xi',m',t)(x')-D_\xi \frac{d}{dm}U(\xi,m,t)(x)\right|\\
\leq& \left|D_\xi \frac{d}{dm}U(\xi',m',t)(x')-D_\xi \frac{d}{dm}U(\xi',m',t)(x)\right|+\left|D_\xi \frac{d}{dm}U(\xi',m',t)(x)-D_\xi \frac{d}{dm}U(\xi,m',t)(x)\right|\\
&+\left|D_\xi \frac{d}{dm}U(\xi,m',t)(x)-D_\xi \frac{d}{dm}U(\xi,m,t)(x)\right|\\
\leq& \frac{3\lambda(c_T+cT)}{ \lambda-12\cdot(c_T+cT)\cdot T }\cdot\frac{\lambda}{\lambda-T(c_T+cT)}\cdot|x'-x|\\
&+ \left(\frac{24\lambda^2(c_T^2+c^2T^2)}{ \lambda^2-8(c_T^2+c^2T^2)T^2 }\left(\frac{\lambda}{\lambda-T(c_T+cT)}\right)^2\cdot C_T(1+|x|^2)\right)^{1/2}\cdot|\xi'-\xi|\\
&+\left(\dfrac{C_8 \lambda^2}{\lambda^2-12(c_T^2+c^2T^2)T^2}\right)^{1/2}\cdot W_2(m',m),
\end{align*}
which means that, for any compact set $K\subset \bb{R}^n$ and $t\in[0,T]$, $D_x \dfrac{d}{dm}U(\xi,m,t)(x)$ is jointly Lipschitz continuous in $(m,\xi,x)\in \mathcal{P}_2(\mathbb{R}^n)\times\mathbb{R}^n\times K$, and  this joint continuity is uniformly held in $t$.

Using the same arguments of Steps $1$ to $3$, as above, one can further establish the joint continuity of $D^2_\xi \frac{d}{dm}U(\xi,m,t)(x)$ in $(m,x,\xi)\in\s{P}_2(\bb{R}^n)\times\bb{R}^{n}\times\bb{R}^{n}$ which is stated in Theorem \ref{thm:continuity} under the following additional assumptions: (i) $D^2 \frac{dF}{dm}(m)(x)$, $D^2 \frac{dF_T}{dm}(m)(x)$, $D^3\frac{d}{dm}F(m)(x)$ and $D^3\frac{d}{dm}F_T(m)(x)$ are jointly Lipschitz continuous in $(m,x)\in\s{P}_2(\bb{R}^n)\times\bb{R}^n$; and (ii) $D_1^2D_2\frac{d^2}{dm^2}F(m)(x,\widetilde{x})$, $D_1^2D_2\frac{d^2}{dm^2}F_T(m)(x,\widetilde{x})$, $D_1^2\frac{d^2}{dm^2}F(m)(x,\widetilde{x})$ and $D_1^2\frac{d^2}{dm^2}F_T(m)(x,\widetilde{x})$ are jointly Lipschitz continuous in $(m,x,\widetilde{x})\in\s{P}_2(\bb{R}^n)\times\bb{R}^n\times\bb{R}^n$.\\
\end{proof}

}

Now we take $\tilde m,m \in \s{P}_2(\bb{R}^n)$ and $\epsilon > 0$ arbitrary, then define
\begin{align*}
m_\epsilon &:= m + \epsilon(\tilde m - m),\\
Y_\xi^\epsilon(s) &= \frac{1}{\epsilon}\del{Y_{\xi m_\epsilon t}(s) - Y_{\xi mt}(s)} - \int_{{\mathbb R}^n} \bar{Y}_{mt}(s,\xi,x)\dif \h{1pt}(\tilde m - m)(x),\\
Z_\xi^\epsilon(s) &= \frac{1}{\epsilon}\del{Z_{\xi m_\epsilon t}(s) - Z_{\xi mt}(s)} - \int_{{\mathbb R}^n} \bar{Z}_{mt}(s,\xi,x)\dif \h{1pt}(\tilde m - m)(x),\\
\s{Y}_\xi^\epsilon(s) &= \frac{1}{\epsilon}\del{\s{Y}_{\xi m_\epsilon t}(s) - \s{Y}_{\xi mt}(s)} - \int_{{\mathbb R}^n} \bar{\s{Y}}_{mt}(s,\xi,x)\dif \h{1pt}(\tilde m - m)(x),\\
\s{Z}_\xi^\epsilon(s) &= \frac{1}{\epsilon}\del{\s{Z}_{\xi m_\epsilon t}(s) - \s{Z}_{\xi mt}(s)} - \int_{{\mathbb R}^n} \bar{\s{Z}}_{mt}(s,\xi,x)\dif \h{1pt}(\tilde m - m)(x).
\end{align*}
Our goal is to show that $Y^\epsilon_{\cdot}$,$Z^\epsilon_{\cdot}$, $\s{Y}^\epsilon_{\cdot}$, and $\s{Z}^\epsilon_{\cdot}$ all converge to zero in $\s{H}_m$ as $\epsilon \to 0$.
we shall focus on $Y^\epsilon$ and $Z^\epsilon$, the proof for $\s{Y}^\epsilon$, and $\s{Z}^\epsilon$ being very similar.
First, observe that $Y_\xi^\epsilon(s) = -\frac{1}{\lambda}\int_t^s Z_\xi^\epsilon(\tau)\dif \tau$.
Next, we shall further divide $Z^\epsilon$ into two parts, using the following definitions:
\begin{align*}
Z_{\xi m t}^1(s) &:= \bb{E}\left[\left.\int_{s}^{T} D \dod{F}{m}(Y_{\cdot mt}(\tau) \otimes  m)(Y_{\xi mt}(\tau))\dif \tau\right|\mathcal{W}_{t}^{s}\right],\\
Z_{\xi m t}^2(s) &:= \bb{E}\left[\left.D \dod{F_T}{m}(Y_{\cdot mt}(T) \otimes  m)(Y_{\xi mt}(T))\right|\mathcal{W}_{t}^{s}\right],
\end{align*}
\begingroup
\allowdisplaybreaks
\begin{multline*}
\bar{Z}_{mt}^1(s,\xi,x) := 
\bb{E}\left[ \int_{s}^{T} D^{2}\dod{F}{m}(Y_{\cdot mt}(\tau) \otimes  m)(Y_{\xi mt}(\tau))\bar{Y}_{mt}(\tau,\xi,x)\dif \tau \right.\\
+\widetilde{\bb{E}}\int_{s}^{T}\int_{\bb{R}^n}D_2D_1\dod[2]{F}{m}(Y_{\cdot mt}(\tau) \otimes  m)(Y_{\xi mt}(\tau),\widetilde{Y}_{\zeta mt}(\tau))\widetilde{\bar{Y}}_{mt}(\tau,\zeta,x)\dif m(\zeta) \dif \tau\\
+ \left.\left.\int_{s}^{T}\widetilde{\bb{E}}D_1\dod[2]{F}{m}(Y_{\cdot mt}(\tau) \otimes  m)(Y_{\xi mt}(\tau),\widetilde{Y}_{xmt}(\tau))\dif \tau
\right|\mathcal{W}_{t}^{s}\right],
\end{multline*}\endgroup
\begin{multline*}
\bar{Z}_{mt}^2(s,\xi,x) := 
\bb{E}\left[ 
D^{2}\dod{}{m}F_{T}(Y_{\cdot mt}(T) \otimes  m)(Y_{\xi mt}(T))\bar{Y}_{mt}(T,\xi,x) \right.\\ +\widetilde{\bb{E}}\int_{\bb{R}^n}D_2D_1\dod[2]{F_{T}}{m}(Y_{\cdot mt}(T) \otimes  m)(Y_{\xi mt}(T),\widetilde{Y}_{\zeta mt}(T))\widetilde{\bar{Y}}_{mt}(T,\zeta,x)\dif m(\zeta)\\
+ \left.\left.\widetilde{\bb{E}}D_1\dod[2]{F_T}{m}(Y_{\cdot mt}(T) \otimes  m)(Y_{\xi mt}(T),\widetilde{Y}_{xmt}(T))\right|\mathcal{W}_{t}^{s}\right].
\end{multline*}
Then we can write $Z_\xi^\epsilon = Z_\xi^{\epsilon,1} + Z_\xi^{\epsilon,2}$ where for $i = 1,2$ we define
\begin{equation}
Z^{\epsilon,i}(s) := \frac{1}{\epsilon}\del{Z^i_{\xi m_\epsilon t}(s) - Z^i_{\xi mt}(s)} - \int_{{\mathbb R}^n} \bar{Z}^i_{mt}(s,\xi,x)\dif \h{1pt}(\tilde m - m)(x).
\end{equation}
Using the Fundamental Theorem of Calculus and rules for differentiation from Section \ref{sec:FORMALISM}, we can rewrite
\begin{multline*}
Z^1_{\xi m_\epsilon t}(s) - Z^1_{\xi mt}(s)
= \bb{E}\left[ \int_{s}^{T} \int_0^1 D^{2}\dod{F}{m}(\mu^\theta_\epsilon)(\chi_{\xi}^{\epsilon,\theta}(\tau))\del{Y_{\xi m_\epsilon t}(\tau) - Y_{\xi m_\epsilon t}(\tau)}\dif \theta \dif \tau \right.\\
+\widetilde{\bb{E}}\int_{s}^{T}\int_{\bb{R}^n} \int_0^1 \int_0^1 D_2D_1\dod[2]{F}{m}(\mu^\theta_\epsilon)(\chi_{\xi}^{\epsilon,\theta}(\tau),\widetilde{\chi}^{\epsilon,\theta'}_{\eta}(\tau))\del{\tilde{Y}_{\eta m_\epsilon t}(\tau) - \tilde{Y}_{\eta m t}(\tau)}\dif \theta'\dif \theta \dif m(\zeta) \dif \tau\\
+ \left.\left.\epsilon\int_{s}^{T} \widetilde{\bb{E}}\int_{{\mathbb R}^n}\int_0^1 D_1\dod[2]{F}{m}(\mu^\theta_\epsilon)(\chi_{\xi}^{\epsilon,\theta}(\tau),\widetilde{Y}_{xm_\epsilon t}(\tau))\dif \theta \dif \h{1pt}(\tilde m - m)(x)\dif \tau
\right|\mathcal{W}_{t}^{s}\right],
\end{multline*}
where
$\chi_{\xi}^{\epsilon,\theta}(\tau) := \theta Y_{\xi m_\epsilon t}(\tau) + (1-\theta)Y_{\xi m t}(\tau)$ and $
\mu_\epsilon^\theta := \theta Y_{\cdot m_\epsilon t}(\tau)  \otimes  m_\epsilon + (1-\theta)Y_{\cdot m t}(\tau)  \otimes  m$.
We now define
{\small
\begin{align*}
\Phi_1^\epsilon(\xi,\tau) &:= \int_0^1 \del{D^{2}\dod{F}{m}(\mu^\theta_\epsilon)(\chi_{\xi}^{\epsilon,\theta}(\tau)) - D^{2}\dod{F}{m}(Y_{\cdot mt}(\tau) \otimes  m)(Y_{\xi mt}(\tau))}\dif \theta,\\
\Phi_2^\epsilon(\xi,\eta,\tau) &:= \int_0^1 \int_0^1 \del{D_2D_1\dod[2]{F}{m}(\mu^\theta_\epsilon)(\chi_{\xi}^{\epsilon,\theta}(\tau),\widetilde{\chi}^{\epsilon,\theta'}_{\eta}(\tau)) - D_2D_1\dod[2]{F}{m}(Y_{\cdot mt}(\tau) \otimes  m)(Y_{\xi mt}(\tau),\widetilde{Y}_{\zeta mt}(\tau))}\dif \theta'\dif \theta,\\
\Phi_3^\epsilon(\xi,\tau) &:= \widetilde{\bb{E}}\int_{{\mathbb R}^n}\int_0^1 \del{D_1\dod[2]{F}{m}(\mu^\theta_\epsilon)(\chi_{\xi}^{\epsilon,\theta}(\tau),\widetilde{Y}_{xm_\epsilon t}(\tau)) - D_1\dod[2]{F}{m}(Y_{\cdot mt}(\tau) \otimes  m)(Y_{\xi mt}(\tau),\widetilde{Y}_{xmt}(\tau))}\dif \theta \dif \h{1pt}(\tilde m - m)(x).
\end{align*}}
By the a priori estimates from Proposition \ref{prop6-1}, arguing as in Section \ref{sec:further regularity}, we see that as $\epsilon \to 0$, $\Phi_1^\epsilon(\cdot,\tau),\Phi_3^\epsilon(\cdot,\tau) \to 0$ in $\s{H}_m$ uniformly in $\tau$, and similarly $\Phi_2^\epsilon(\cdot,\cdot,\tau) \to 0$ in $\s{H}_{m \times m}$, uniformly in $\tau$.

Now $Z_\xi^{\epsilon,1}$ can be written in the form
\begin{multline*}
Z_\xi^{\epsilon,1}(s) =  \bb{E}\left[ \int_{s}^{T} \int_0^1 D^{2}\dod{F}{m}(\mu^\theta_\epsilon)(\chi_{\xi}^{\epsilon,\theta}(\tau))Y^\epsilon(\tau)\dif \theta \dif \tau \right.\\
+\widetilde{\bb{E}}\int_{s}^{T}\int_{\bb{R}^n} \int_0^1 \int_0^1 D_2D_1\dod[2]{F}{m}(\mu^\theta_\epsilon)(\chi_{\xi}^{\epsilon,\theta}(\tau),\widetilde{\chi}^{\epsilon,\theta'}_{\eta}(\tau))Y^\epsilon(\tau)\dif \theta'\dif \theta \dif m(\zeta) \dif \tau\\
+ \int_{s}^{T} \int_{{\mathbb R}^n}\Phi_1^\epsilon(\xi,\tau)\bar{Y}_{mt}(\tau,\xi,x)\dif \h{1.5pt} (\tilde m - m)(x)\dif \tau \\
+\widetilde{\bb{E}}\int_{s}^{T}\int_{\bb{R}^n}\int_{\bb{R}^n}\Phi_2^\epsilon(\xi,\eta,\tau)\widetilde{\bar{Y}}_{mt}(\tau,\zeta,x)\dif m(\zeta)\dif \h{1.5pt} (\tilde m - m)(x) \dif \tau
+ \left.\left.\int_{s}^{T}\Phi_3^\epsilon(\xi,\tau)\dif \tau
\right|\mathcal{W}_{t}^{s}\right].
\end{multline*}
By the same argument, $Z^{\epsilon,2}$ can be written in an analogous way, with $F$ replaced by $F_T$ and integrals in time replaced by evaluation at $T$.
Indeed, we can write
\begin{multline*}
Z_\xi^{\epsilon}(s) =  \bb{E}\left[ \int_{s}^{T} \int_0^1 D^{2}\dod{F}{m}(\mu^\theta_\epsilon)(\chi_{\xi}^{\epsilon,\theta}(\tau))Y^\epsilon(\tau)\dif \theta \dif \tau
+ \int_0^1 D^{2}\dod{F_T}{m}(\mu^\theta_\epsilon)(\chi_{\xi}^{\epsilon,\theta}(T))Y^\epsilon(T)\dif \theta \right.\\
+\widetilde{\bb{E}}\int_{s}^{T}\int_{\bb{R}^n} \int_0^1 \int_0^1 D_2D_1\dod[2]{F}{m}(\mu^\theta_\epsilon)(\chi_{\xi}^{\epsilon,\theta}(\tau),\widetilde{\chi}^{\epsilon,\theta'}_{\eta}(\tau))Y^\epsilon(\tau)\dif \theta'\dif \theta \dif m(\zeta) \dif \tau\\
\left.\left.\widetilde{\bb{E}}\int_{\bb{R}^n} \int_0^1 \int_0^1 D_2D_1\dod[2]{F_T}{m}(\mu^\theta_\epsilon)(\chi_{\xi}^{\epsilon,\theta}(T),\widetilde{\chi}^{\epsilon,\theta'}_{\eta}(T))Y^\epsilon(T)\dif \theta'\dif \theta \dif m(\zeta)
+ \Phi^\epsilon(\xi)
\right|\mathcal{W}_{t}^{s}\right],
\end{multline*}
where $\Phi^\epsilon \to 0$ in $\s{H}_m$.
We use the fact that $Y_\xi^\epsilon(s) = -\frac{1}{\lambda}\int_t^s Z_\xi^\epsilon(\tau)\dif \tau$ and $\lambda$ is sufficiently large, as in the proof of Proposition \ref{prop6-1}, to deduce that $Z^\epsilon_{\cdot}(s) \to 0$ in $\s{H}_m$, uniformly in $s$.
Therefore the same holds for $Y^\epsilon$.
This concludes the proof.

{\color{black}

}

\appendix
\section{PROOFS FROM SECTION \ref{sec:MEAN-FIELD-TYPE}} \label{ap:A}

\subsection{PROOF OF LEMMA \ref{lem:isometry}}

	Without loss of generality we shall assume $(\Omega,\s{A},\bb{P}) = (\Omega_0 \times \Omega_1,\s{A}_0 \otimes \s{A}_1,\bb{P}_0 \times \bb{P}_1)$ as above.
	Let $v \in L^2_{\s{W}_{Xt}}(t,T;\s{H}_m)$.
	we shall show there exists a unique $\tilde v \in L^2_{\s{W}_{t}}(t,T;\s{H}_{X  \otimes  m})$ such that for any $\omega = (\omega_0,\omega_1) \in \Omega$ and any $s \in [t,T]$, we have $v(\omega,s,x) = \tilde v(\omega_0,s,X(\omega_1,x))$.
	First, note that because $(\omega_0,\omega_1,x) \mapsto v(\omega_0,\omega_1,s,x)$ is $(\s{W}_t^s \otimes \sigma(X) \otimes \s{B},\s{B})$ measurable, it follows that there exists a $(\s{W}_t^s \otimes \s{B},\s{B})$-measurable function $(\omega_0,x) \mapsto \tilde v (\omega_0,s,x)$ such that $v(\omega,s,x) = \tilde v(\omega_0,s,X(\omega_1,x))$.
	Thus $\tilde v$ is adapted to $\s{W}_t$.	
	Now observe that
	\begin{equation} \label{eq:isometry}
	\begin{aligned}
	\enVert{v}_{L^2_{\s{W}_{Xt}}(t,T;\s{H}_m)}^2 &=
	\int_t^T \int_{\Omega} \int_{\bb{R}^n} \abs{v(\omega,s,x)}^2 \dif m(x) \dif \bb{P}(\omega)\dif s\\
	&=
	\int_t^T \int_{\Omega_0}\int_{\Omega_1} \int_{\bb{R}^n} \abs{\tilde v(\omega_0,s,X(\omega_1,x))}^2 \dif m(x) \dif \bb{P}_1(\omega_1)\dif \bb{P}_0(\omega_0)\dif s\\
	&=
	\int_t^T \int_{\Omega_0} \int_{\bb{R}^n} \abs{\tilde v(\omega_0,s,x)}^2 \dif \h{1pt}(X  \otimes  m)(x) \dif \bb{P}_0(\omega_0)\dif s\\
	&= \enVert{\tilde v}_{L^2_{\s{W}_{t}}(t,T;\s{H}_{X  \otimes  m})}^2,
	\end{aligned}
	\end{equation}
	which proves $\tilde v \in L^2_{\s{W}_t}(t,T;\s{H}_{X  \otimes  m})$ and also that $v \mapsto \tilde v$ is an isometry.
	To see that $\tilde v$ is unique, observe that if $\tilde v'$ is another element of $L^2_{\s{W}_t}(t,T;\s{H}_{X  \otimes  m})$ such that $v(\omega,s,x) = \tilde v'(\omega_0,s,X(\omega_1,x))$, then for any other $u \in L^2_{\s{W}_t}(t,T;\s{H}_{X  \otimes  m})$, we have
	\begin{equation} 
	\begin{aligned}
	\int_t^T \int_{\Omega_0} \int_{\bb{R}^n} &u(\omega_0,s,x)\cdot \tilde v'(\omega_0,s,x) \dif \h{1pt}(X  \otimes  m)(x) \dif \bb{P}_0(\omega_0)\dif s\\
	&=\int_t^T \int_{\Omega_0}\int_{\Omega_1} \int_{\bb{R}^n} u(\omega_0,s,X(\omega_1,x)) \cdot \tilde v'(\omega_0,s,X(\omega_1,x)) \dif m(x) \dif \bb{P}_1(\omega_1)\dif \bb{P}_0(\omega_0)\dif s
	\\
	&=\int_t^T \int_{\Omega_0} \int_{\bb{R}^n} u(\omega_0,s,x)\cdot \tilde v(\omega_0,s,x) \dif \h{1pt}(X  \otimes  m)(x) \dif \bb{P}_0(\omega_0)\dif s
	\end{aligned}
	\end{equation}
	where we have used the fact that $f \mapsto f \circ X$ is an isometry from $L^2_{X  \otimes  m}$ to $L^2_{\bb{P} \times m}$ to see that $u(\omega,s,X(\omega_1{\color{black},x}))$ defines an element in $L^2_{W_{Xt}}(t,T;\s{H}_m)$.
	It follows that $\tilde v' = \tilde v$, so $v \mapsto \tilde v$ is a well-defined isometry.
	Linearity is easily checked. $\blacksquare$

\subsection{PROOF OF LEMMA \ref{lem:law of X_Xt}}
	First, notice from \eqref{eq:ordinary SDE} that
	\begin{equation}
	\bb{E}\int_{\bb{R}^n} \abs{x(s;x,\tilde v(\cdot,x))}^2 \dif m(x) \leq 3\int_{\bb{R}^n} \abs{x}^2 \dif m(x) + 3(s-t)\enVert{\tilde v}^2_{L^2_{\s{W}_t}(t,T;\s{H}_{X  \otimes  m})} + 3\abs{\eta}^2 (s-t),
	\end{equation}
	and thus, $\bb{P}_0$-a.s., $x(s;\cdot,\tilde v(\cdot,\cdot)) \in L^2_m(\bb{R}^n;\bb{R}^n)$.
	Now let $\phi:\bb{R}^n \to \bb{R}^n$ be a continuous function such that $x \mapsto \frac{\phi(x)}{1+\abs{x}^2}$ is bounded.
	Then, using Fubini's Theorem, we have
	\begin{equation}
	\begin{aligned}
	\bb{E}\int_{\bb{R}^n} \phi\del{X_{Xt}(s;v)(x)}\dif m(x)
	&= \int_\Omega \int_{\bb{R}^n} \phi\del{x(\omega_0,s;X(\omega_1,x),\tilde v(\cdot,X(\omega_1,x)))}\dif m(x)\dif \bb{P}_0(\omega_0,\omega_1)\\
	&= \int_{\Omega_0} \int_{\bb{R}^n} \phi\del{x(\omega_0,s;x,\tilde v(\cdot,x))}\dif \h{1pt}(X  \otimes  m)(x)\dif \bb{P}_0(\omega_0).
	\end{aligned}
	\end{equation}
	In particular, using the fact that the left-hand side is finite by \eqref{eq:EX(s)^2 bound}, we deduce that $x(s;\cdot,\tilde v(\cdot,\cdot)) \in L^2_{X  \otimes  m}(\bb{R}^n;\bb{R}^n)$.
	Therefore, we deduce
	\begin{equation}
	\bb{E}\int_{\bb{R}^n} \phi\del{X_{Xt}(s;v)(x)}\dif m(x)
	= \int_{\bb{R}^n} \phi(x)\dif \del{x(s;\cdot,\tilde v(\cdot,\cdot))  \otimes  (X  \otimes  m)}(x),
	\end{equation}
	which completes the proof. $\blacksquare$

\subsection{PROOF OF LEMMA \ref{lem3-1}}

Consider a control $v_{Xt}(s)+\epsilon\tilde{v}_{Xt}(s)$.
The corresponding
state is $X_{Xt}(s;v_{Xt}(\cdot))+\epsilon\int_{t}^{s}\tilde{v}_{Xt}(\tau)\dif \tau.$
Therefore 
\begin{multline*}
J_{Xt}(v_{Xt}(\cdot)+\epsilon\tilde{v}_{Xt}(\cdot))-J_{Xt}(v_{Xt}(\cdot))=\lambda\epsilon\int_{t}^{T}\ip{v_{Xt}(s)}{\tilde{v}_{Xt}(s)}\dif s+\epsilon^{2}\dfrac{\lambda}{2}\int_{t}^{T}||\tilde{v}_{Xt}(s)||^{2}\dif s\\
+ \int_{t}^{T}\del{F\del{\del{X_{Xt}(s;v_{Xt}(\cdot))+\epsilon\int_{t}^{s}\tilde{v}_{Xt}(\tau)\dif \tau} \otimes  m)}-F(X_{Xt}(s;v_{Xt}(\cdot)) \otimes  m)}\dif s\\
+ F_{T}\del{\del{X_{Xt}(T;v_{Xt}(\cdot))+\epsilon\int_{t}^{T}\tilde{v}_{Xt}(\tau)\dif \tau} \otimes  m}-F_T(X_{Xt}(T;v_{Xt}(\cdot)) \otimes  m)\\
=\lambda\epsilon\int_{t}^{T}\ip{v_{Xt}(s)}{\tilde{v}_{Xt}(s)}\dif s
+ \epsilon\int_{t}^{T}\int_{0}^{1}\ip{D_{X}F\del{\del{X_{Xt}(s;v_{Xt}(\cdot))+\theta\epsilon\int_{t}^{s}\tilde{v}_{Xt}(\tau)\dif \tau} \otimes  m}}{\int_{t}^{s}\tilde{v}_{Xt}(\tau)\dif \tau}\dif s\dif \theta\\
+ \epsilon\int_{0}^{1}\ip{D_{X}F_{T}\del{\del{X_{Xt}(T;v_{Xt}(\cdot))+\theta\epsilon\int_{t}^{T}\tilde{v}_{Xt}(\tau)\dif \tau} \otimes  m}}{\int_{t}^{T}\tilde{v}_{Xt}(\tau)\dif \tau}\dif \theta+o(\epsilon).
\end{multline*}
From the continuity assumptions \eqref{eq:3-3} and the membership $v_{Xt},\tilde v_{Xt} \in L^2_{\s{W}_{Xt}}(t,T;\s{H}_m)$, we obtain
\begin{multline*}
\dfrac{J_{Xt}(v_{Xt}(\cdot)+\epsilon\tilde{v}_{Xt}(\cdot))-J_{Xt}(v_{Xt}(\cdot))}{\epsilon}\rightarrow\lambda\int_{t}^{T}\ip{v_{Xt}(s)}{\tilde{v}_{Xt}(s)}\dif s\\
 +  \int_{t}^{T}\ip{D_{X}F(X_{Xt}(s;v_{Xt}(\cdot)) \otimes  m)}{\int_{t}^{s}\tilde{v}_{Xt}(\tau)\dif \tau}\dif s +  \ip{D_{X}F_{T}(X_{Xt}(T;v_{Xt}(\cdot)) \otimes  m)}{\int_{t}^{T}\tilde{v}_{Xt}(\tau)\dif \tau}\\
=\lambda\int_{t}^{T}\ip{v_{Xt}(s)}{\tilde{v}_{Xt}(s)}\dif s \\
+ \int_{t}^{T}\ip{\int_{s}^{T}D_{X}F(X_{Xt}(\tau;v_{Xt}(\cdot)) \otimes  m)\dif \tau + D_{X}F_{T}(X_{Xt}(T;v_{Xt}(\cdot)) \otimes  m)}{\tilde{v}_{Xt}(s)}\dif s.
\end{multline*}
 Using the fact that $\tilde{v}_{Xt}(s)$ is arbitrary and $\mathcal{W}_{Xt}^{s}$
measurable, we immediately obtain  formula \ref{eq:DvJ}. $\blacksquare$ 

\subsection{PROOF OF PROPOSITION \ref{prop:convexity}}

We take two controls $v_{Xt}^{1}$ and $v_{Xt}^{2}$. We are going
to check that 
\begin{multline}
\int_{t}^{T}\ip{D_{v}J_{Xt}(v_{Xt}^{1}(\cdot))(s)-D_{v}J_{Xt}(v_{Xt}^{2}(\cdot))(s)}{v_{Xt}^{1}(s)-v_{Xt}^{2}(s)}\dif s\\
\geq \del{\lambda-T\del{c'_{T} + \dfrac{c'T}{2}}}\int_{t}^{T}||v_{Xt}^{1}(s)-v_{Xt}^{2}(s)||^{2}\dif s.\label{eq:Ap1}
\end{multline}
Then from the assumption (\ref{eq:3-6}) the result will follow immediately.
To simplify notation, we set $v^{1}(s)=v_{Xt}^{1}(s),\:v^{2}(s)=v_{Xt}^{2}(s)$
and 
\[
X^{1}(s)=X_{Xt}(s;v_{Xt}^{1}(\cdot)),\:X^{2}(s)=X_{Xt}(s;v_{Xt}^{2}(\cdot))
\]
From formula (\ref{eq:DvJ}) we have 
\begin{multline*}
\int_{t}^{T}\ip{D_{v}J_{Xt}(v_{Xt}^{1}(\cdot))(s)-D_{v}J_{Xt}(v_{Xt}^{2}(\cdot))(s)}{v_{Xt}^{1}(s)-v_{Xt}^{2}(s)}\dif s=\lambda\int_{t}^{T}||v^{1}(s)-v^{2}(s)||^{2}\dif s \\
 + \int_{t}^{T}\ip{\int_{s}^{T}(D_{X}F(X^{1}(\tau) \otimes  m)-D_{X}F(X^{2}(\tau) \otimes  m))\dif \tau}{v^{1}(s)-v^{2}(s)}\dif s\\
 + \int_{t}^{T}\ip{D_{X}F_T(X^{1}(T) \otimes  m)-D_{X}F_T(X^{2}(T) \otimes  m)}{v^{1}(s)-v^{2}(s)}\dif s
\end{multline*}
using the fact that $v^{1}(s)-v^{2}(s)$ is $\mathcal{W}_{Xt}^{s}$
measurable. Next since $v^{1}(s)-v^{2}(s)=\dod{}{s}(X^{1}(s)-X^{2}(s)$
and $X^{1}(t)-X^{2}(t)=0,$ we have 
\begin{multline*}
\int_{t}^{T}\ip{\int_{s}^{T}(D_{X}F(X^{1}(\tau) \otimes  m)-D_{X}F(X^{2}(\tau) \otimes  m))\dif \tau}{v^{1}(s)-v^{2}(s)}\dif s\\
+ \int_{t}^{T}\ip{D_{X}F_T(X^{1}(T) \otimes  m)-D_{X}F_T(X^{2}(T) \otimes  m)}{v^{1}(s)-v^{2}(s)}\dif s\\
= \int_{t}^{T}\ip{D_{X}F(X^{1}(s) \otimes  m)-D_{X}F(X^{2}(s) \otimes  m)}{X^{1}(s)-X^{2}(s)}\dif s\\
 + \ip{D_{X}F_T(X^{1}(T) \otimes  m)-D_{X}F_T(X^{2}(T) \otimes  m)}{X^{1}(T)-X^{2}(T)} \\
 \geq
 -c'\int_{t}^{T}||X^{1}(s)-X^{2}(s)||^{2}\dif s-c'_{T}||X^{1}(T)-X^{2}(T)||^{2}
\end{multline*}
by the monotonicity conditions \eqref{eq:3-4} and \eqref{eq:3-5}.
 We next use 
\[
X^{1}(s)-X^{2}(s)=\int_{t}^{s}(v^{1}(\tau)-v^{2}(\tau))\dif \tau
\]
 to deduce
\[
||X^{1}(T)-X^{2}(T)||^{2}\leq T\int_{t}^{T}||v^{1}(s)-v^{2}(s)||^{2}\dif s
\]
and
\[
\int_{t}^{T}||X^{1}(s)-X^{2}(s)||^{2}\dif s\leq\dfrac{T^{2}}{2}\int_{t}^{T}||v^{1}(s)-v^{2}(s)||^{2}\dif s.
\]
 Collecting results, we obtain (\ref{eq:Ap1}), as desired.
 Equation \ref{eq:Ap1} also implies
\begin{equation}
\int_{t}^{T}\ip{D_{v}J_{Xt}(v_{Xt}(\cdot))(s)-D_{v}J_{Xt}(0)(s)}{v_{Xt}(s)}\dif s \geq c_{0}\int_{t}^{T}||v_{Xt}(s)||^{2}\dif s\label{eq:Ap2}
\end{equation}
 But 
\[
J_{Xt}(v_{Xt}(\cdot))-J_{Xt}(0)=\int_{0}^{1}\ip{D_{v}J_{Xt}(\theta v_{Xt}(\cdot))(s)}{v_{Xt}(s)}\dif s\dif \theta,
\]
which, when combined with (\ref{eq:Ap2}), implies
\[
J_{Xt}(v_{Xt}(\cdot))-J_{Xt}(0)\geq\int_{t}^{T}{D_{v}J_{Xt}(0)(s)}{v_{Xt}(s)}\dif s + \dfrac{c_{0}}{2}\int_{t}^{T}||v_{Xt}(s)||^{2}\dif s.
\]
This implies that $J_{Xt}$ is both strictly convex and coercive, from which we deduce the existence and uniqueness of
a minimizer of $J_{Xt}(v_{Xt}(\cdot))$ by using Theorem 7.2.12. of \cite{DM07}. This completes the proof. $\blacksquare$

\section{PROOFS FROM SECTION \ref{sec:value fxn}} \label{ap:B}

\subsection{PROOF OF PROPOSITION \ref{prop4-1}}

To simplify notation, we omit the indices $Xt$ in $Y_{Xt}(s),Z_{Xt}(s).$
Recall that the inner product $\ip{Y(s)}{Z(s)}$ is defined as an expected value.
Using the tower property of iterated expectation, \eqref{eq:3-7}-\eqref{eq:3-8} together imply
\begin{multline*}
\ip{Y(s + \epsilon)}{Z(s + \epsilon)}-\ip{Y(s)}{Z(s)} = \ip{Y(s + \epsilon)}{Z(s + \epsilon)-Z(s)} \\
 + \ip{Y(s + \epsilon)-Y(s)}{Z(s)} = -\ip{Y(s + \epsilon)}{\int_{s}^{s + \epsilon}D_{X}F(Y_{Xt}(\tau) \otimes  m)\dif \tau}-\dfrac{1}{\lambda}\ip{\int_{s}^{s + \epsilon}Z(\tau)\dif \tau}{Z(s)}.
 \end{multline*}
Divide by $\epsilon$ and let $\epsilon$ tend to 0.
As the necessary continuity to pass to the limit is easily checked, we obtain
\[
\dod{}{s}\ip{Y(s)}{Z(s)} = -\dfrac{1}{\lambda}\enVert{Z(s)}^{2} - \ip{Y(s)}{D_{X}F(Y(s) \otimes  m})
\]
Integrating between $t$ and $T,$ we obtain 
\begin{equation}
\ip{X}{Z(t)} = \dfrac{1}{\lambda}\int_{t}^{T}\enVert{Z(s)}^{2}\dif s + \int_{t}^{T}\ip{D_{X}F(Y(s) \otimes  m)}{Y(s)}\dif s + \ip{D_{X}F_{T}(Y(T) \otimes  m)}{Y(T)}.\label{eq:ApB2}
\end{equation}
We also have 
\begin{equation}
\ip{X}{Z(t)} = \ip{X}{\int_{t}^{T}D_{X}F(Y(s) \otimes  m)\dif s + D_{X}F_{T}(Y(T) \otimes  m)}\label{eq:ApB3}
\end{equation}
simply by taking the inner product of $X$ with \eqref{eq:3-8}.
Combining \eqref{eq:ApB2} and \eqref{eq:ApB3}, we get
\begin{multline} \label{eq:ApB7}
\dfrac{1}{\lambda}\int_{t}^{T}\enVert{Z(s)}^{2}\dif s + \int_{t}^{T}\ip{D_{X}F(Y(s) \otimes  m) - D_{X}F(\delta)}{Y(s)}\dif s + \ip{D_{X}F_{T}(Y(T) \otimes  m) - D_{X}F_{T}(\delta)}{Y(T)} \\
= \ip{X}{\int_{t}^{T}(D_{X}F(Y(s) \otimes  m) - D_{X}F(\delta))\dif s + D_{X}F_{T}(Y(T) \otimes  m) - D_{X}F_{T}(\delta)} \\
+ \ip{X - Y(T)}{D_{X}F_{T}(\delta)} + \int_{t}^{T}\ip{X - Y(s)}{D_{X}F(\delta)}\dif s,
\end{multline}
where $\delta$ is the Dirac measure concentrated at the origin.
We proceed to estimate the right-hand side of \eqref{eq:ApB7}.
Using the fact that $D_{X}F(\delta)$ and $D_{X}F_{T}(\delta)$ are deterministic, we use \eqref{eq:3-7} to obtain
\begin{multline*}
\ip{X - Y(T)}{D_{X}F_{T}(\delta)} + \int_{t}^{T}\ip{X - Y(s)}{D_{X}F(\delta)} \\
= \dfrac{1}{\lambda}\ip{D_{X}F_{T}(\delta)}{\int_{t}^{T}Z(\tau)\dif \tau} + \dfrac{1}{\lambda}\int_{t}^{T}\ip{D_{X}F(\delta)}{\int_{t}^{s}Z(\tau)\dif \tau}\dif s,
\end{multline*}
and then by applying Cauchy-Schwartz we get 
\begin{multline}
\abs{\ip{X - Y(T)}{D_{X}F_{T}(\delta)} + \int_{t}^{T}\ip{X - Y(s)}{D_{X}F(\delta)}}\\
 \leq \dfrac{1}{\lambda}\sqrt{T}\del{||D_{X}F_{T}(\delta)|| + \dfrac{2}{3}T||D_{X}F(\delta)||} \sqrt{\int_{t}^{T}\enVert{Z(s)}^{2}\dif s}. \label{eq:ApB4}
\end{multline}
On the other hand, using the Lipschitz property \eqref{eq:3-3} and writing $\delta = 0  \otimes  m$ (Example \ref{ex:two push-forwards}), we have 
\begin{multline}
\abs{\ip{X}{\int_{t}^{T}(D_{X}F(Y(s) \otimes  m) - D_{X}F(\delta))\dif s + D_{X}F_{T}(Y(T) \otimes  m) - D_{X}F_{T}(\delta)}}\\
 \leq \enVert{X}\del{c_{T}||Y(T)|| + c\int_{t}^{T}\enVert{Y(s)}\dif s}. \label{eq:ApB5}
\end{multline}
Applying Cauchy-Schwartz 
 directly to Equation (\ref{eq:3-7}) we have 
\begin{equation}
\enVert{Y(s)} \leq \enVert{X} + \dfrac{1}{\lambda}\sqrt{s - t}\sqrt{\int_{t}^{T}\enVert{Z(\tau)}^{2}\dif \tau} + \enVert{\eta}\sqrt{s - t}, \label{eq:ApB5-2}
\end{equation}
which is plugged into \eqref{eq:ApB5} to get, after some simple estimates,
\begin{multline}
\abs{\ip{X}{\int_{t}^{T}(D_{X}F(Y(s) \otimes  m) - D_{X}F(\delta))\dif s + D_{X}F_{T}(Y(T) \otimes  m) - D_{X}F_{T}(\delta)}} \\
\leq (c_{T} + cT)\enVert{X}^{2} 
 + \sqrt{T}\del{c_{T} + \dfrac{2}{3}cT}\del{\enVert{\eta} + \dfrac{1}{\lambda}\sqrt{\int_{t}^{T}\enVert{Z(s)}^{2}\dif s}}\enVert{X}. \label{eq:ApB6}
\end{multline}
Combining \eqref{eq:ApB3} with inequalities (\ref{eq:ApB4}) and (\ref{eq:ApB6}),
then using assumption \eqref{eq:3-4}, we obtain 
\begin{multline}
\dfrac{1}{\lambda}\int_{t}^{T}\enVert{Z(s)}^{2}\dif s - c'_{T}||Y(T)||^{2} - c'\int_{t}^{T}\enVert{Y(s)}^{2}\dif s \leq (c_{T} + cT)\enVert{X}^{2} + \enVert{\eta}\sqrt{T}\del{c_{T} + \dfrac{2}{3}cT} \\
 + \frac{1}{\lambda}\sqrt{T}\del{\del{c_{T} + \dfrac{2}{3}cT}\enVert{X} + ||D_{X}F_{T}(\delta)|| + \dfrac{2}{3}T||D_{X}F(\delta)||}\sqrt{\int_{t}^{T}\enVert{Z(s)}^{2}\dif s}\label{eq:ApB8}
\end{multline}
Now using a weighted Young's inequality in (\ref{eq:ApB5-2}) we derive, for arbitrary $\epsilon > 0$,
\[
\enVert{Y(s)}^{2} \leq \dfrac{1}{\lambda^{2}}(s - t)\del{1 + \dfrac{1}{\epsilon}}\int_{t}^{T}\enVert{Z(s)}^{2}\dif s + (\enVert{X} + \enVert{\eta}\sqrt{s - t})^{2}\del{1 + \dfrac{1}{\epsilon}}
\]
 and therefore, (\ref{eq:ApB8}) implies 
\begin{multline*}
\dfrac{1}{\lambda}\int_{t}^{T}\enVert{Z(s)}^{2}\dif s
\leq   (c_{T} + cT)\enVert{X}^{2} + \enVert{\eta}\sqrt{T}\del{c_{T} + \dfrac{2}{3}cT} + (\enVert{X} + \enVert{\eta}\sqrt{s - t})^{2}\del{1 + \dfrac{1}{\epsilon}}(c'_{T} + c'T)\\
+ \frac{1}{\lambda}\sqrt{T}\del{\del{c_{T} + \dfrac{2}{3}cT}\enVert{X} + ||D_{X}F_{T}(\delta)|| + \dfrac{2}{3}T||D_{X}F(\delta)||}\sqrt{\int_{t}^{T}\enVert{Z(s)}^{2}\dif s}\\
+ \dfrac{1}{\lambda^{2}}T(1 + \epsilon)\del{c'_{T} + c'\dfrac{T}{2}}\int_{t}^{T}\enVert{Z(s)}^{2}\dif s.
\end{multline*}
Then from the assumption (\ref{eq:3-6}) taking $\epsilon$ small enough, it follows that
\begin{equation}
\dfrac{1}{\lambda}\int_{t}^{T}\enVert{Z(s)}^{2}\dif s \leq  C_{T}(1 + \enVert{X}^{2})\label{eq:ApB9}
\end{equation}
Plugging \eqref{eq:ApB9} back into (\ref{eq:ApB5}) we get $\enVert{Y(s)} \leq  C_{T}(1 + \enVert{X}).$
From formula (\ref{eq:3-8}) and assumption (\ref{eq:3-2}) we get
also $\enVert{Z(s)} \leq  C_{T}(1 + \enVert{X}).$ From (\ref{eq:V(X,t) def}) and assumption
(\ref{eq:3-1}) we get immediately (\ref{eq:4-2}). The proof is complete.
$\blacksquare$ 

\subsection{PROOF OF PROPOSITION \ref{prop4-2} }

Let $X^{1},X^{2}\in\mathcal{H}_{m}$, independent of $\mathcal{W}_{t}.$
We consider the corresponding systems 
\begin{align}
Y_{X^{1}t}(s) &= X^{1} - \dfrac{1}{\lambda}\int_{t}^{s}Z_{X^{1}t}(\tau)\dif \tau + \eta(w(s) - w(t)), \label{eq:ApB10}\\
Z_{X^{1}t}(s) &= \bb{E}\left[\left.\int_{s}^{T}D_{X}F(Y_{X^{1}t}(\tau) \otimes  m)\dif \tau + D_{X}F(Y_{X^{1}t}(T) \otimes  m)\right|\mathcal{W}_{X^{1}t}^{s}\right], \label{eq:ApB11}\\
Y_{X^{2}t}(s) &= X^{2} - \dfrac{1}{\lambda}\int_{t}^{s}Z_{X^{2}t}(\tau)\dif \tau + \eta(w(s) - w(t)), \label{eq:ApB12}\\
Z_{X^{2}t}(s) &= \bb{E}\left[\left.\int_{s}^{T}D_{X}F(Y_{X^{2}t}(\tau) \otimes  m)\dif \tau + D_{X}F(Y_{X^{2}t}(T) \otimes  m)\right|\mathcal{W}_{X^{2}t}^{s}\right]. \label{eq:ApB13}
\end{align}
To simplify notation we write $Y^{1}(s) = Y_{X^{1}t}(s),\:Z^{1}(s) = Z_{X^{1}t}(s)$
and $Y^{2}(s) = Y_{X^{2}t}(s),\:Z^{2}(s) = Z_{X^{2}t}(s).$ According
to Remark \ref{rem3-1}, we can replace both $\mathcal{W}_{X^{1}t}^{s}$ in \eqref{eq:ApB11}
and $\mathcal{W}_{X^{2}t}^{s}$ in \eqref{eq:ApB13} by $\mathcal{W}_{X^{1}X^{2}t}^{s} := \sigma(X^{1},X^{2})\vee\mathcal{W}_{t}^{s}$.
Similarly, we denote by $\mathcal{W}_{X^{1}X^{2}t}$ the filtration generated
by the family of $\sigma - $algebras $\mathcal{W}_{X^{1}X^{2}t}^{s}.$
Then for $i = 1,2$, $Y^{i}(s)$ can be interpreted
as the optimal trajectory and $Z^{i}(s)$ the corresponding adjoint state for the following optimal control problem:
\begin{equation}
\begin{aligned}
X_{X^{1}X^{2}t}^{i}(s) &= X^{i} + \int_{t}^{s}v_{X^{1}X^{2}t}(\tau)\dif \tau + \eta(w(s) - w(t)),\:s>t,\\
J^i_{X^{1}X^{2}t}(v_{X^{1}X^{2}t}(\cdot)) &= \dfrac{\lambda}{2}\int_{t}^{T}||v_{X^{1}X^{2}t}(s)||^{2}\dif s + \int_{t}^{T}F(X_{X^{1}X^{2}t}^{i}(s;v_{X^{1}X^{2}t}(\cdot)) \otimes  m)\dif s \\
&\quad \quad + F_{T}(X_{X^{1}X^{2}t}^{i}(T;v_{X^{1}X^{2}t}(\cdot)) \otimes  m), \quad \forall v_{X^{1}X^{2}t}(\cdot) \in L_{\mathcal{W}_{X^{1}X^{2}t}}^{2}(t,T;\mathcal{H}_{m}),
\end{aligned}
\end{equation}
where as usual $J_{X^1 X^2 t}$ denotes the cost to be minimized.
Note that $X_{X^{1}X^{2}t}^{1}(s)$ and $X_{X^{1}X^{2}t}^{2}(s)$ belong to $\mathcal{H}_{m}$, so the probability measures
$X_{X^{1}X^{2}t}^{1}(s;v_{X^{1}X^{2}t}(\cdot)) \otimes  m$ and $X_{X^{1}X^{2}t}^{2}(s;v_{X^{1}X^{2}t}(\cdot)) \otimes  m$
are well defined. 
The optimal controls are respectively $ - \dfrac{1}{\lambda}Z^{1}(s)$ and
$ - \dfrac{1}{\lambda}Z^{2}(s)$.
Thanks to the optimality in the control
space $L_{\mathcal{W}_{X^{1}X^{2}t}}^{2}(t,T;\mathcal{H}_{m}),$
we have that $ - \dfrac{1}{\lambda}Z^{2}(s)$ is an admissible control
for the problem starting with initial condition $X^{1}$ and thus
sub-optimal. The trajectory corresponding to this control is 
\[
X^{1} - \dfrac{1}{\lambda}\int_{t}^{s}Z^{2}(\tau)\dif \tau + \eta(w(s) - w(t)) = Y^{2}(s) + X^{1} - X^{2}
\]
and the sub-optimality allows to write the inequality 
\begin{multline}\label{eq:ApB14}
V(X^{1} \otimes  m,t) - V(X^{2} \otimes  m,t) \leq \int_{t}^{T}\del{F((Y^{2}(s) + X^{1} - X^{2}) \otimes  m) - F(Y^{2}(s) \otimes  m)}\dif s \\
  + F_{T}((Y^{2}(T) + X^{1} - X^{2}) \otimes  m) - F(Y^{2}(T) \otimes  m) \\
 = \int_{0}^{1}\int_{t}^{T}\ip{D_{X}F((Y^{2}(s) + \theta(X^{1} - X^{2})) \otimes  m)}{X^{1} - X^{2}}\dif s\dif \theta \\
 + \int_{0}^{1}\ip{D_{X}F_{T}((Y^{2}(T) + \theta(X^{1} - X^{2})) \otimes  m)}{X^{1} - X^{2}}\dif \theta \\
 \leq \ip{\int_{t}^{T}D_{X}F(Y^{2}(s) \otimes  m)\dif s + D_{X}F(Y^{2}(T) \otimes  m)}{X^{1} - X^{2}} + \dfrac{1}{2}(c_{T} + cT)\enVert{X^{1} - X^{2}}^{2},
\end{multline}
where the last inequality comes from the assumption (\ref{eq:3-3}).
Combining \eqref{eq:ApB13} with \eqref{eq:ApB14}, we obtain
\begin{equation}
V(X^{1} \otimes  m,t) - V(X^{2} \otimes  m,t) \leq \ip{Z^{2}(t)}{X^{1} - X^{2}} + \dfrac{1}{2}(c_{T} + cT)\enVert{X^{1} - X^{2}}^{2}\label{eq:ApB15}
\end{equation}

To obtain a lower bound, we exchange the roles of $X^{1}$ and $X^{2}$ to get 
\begin{equation}
V(X^{1} \otimes  m,t) - V(X^{2} \otimes  m,t)\geq \ip{Z^{2}(t)}{X^{1} - X^{2}} - \dfrac{1}{2}(c_{T} + cT)\enVert{X^{1} - X^{2}}^{2} + \ip{Z^{1}(t) - Z^{2}(t)}{X^{1} - X^{2}}.\label{eq:ApB16}
\end{equation}
We now need to bound $\ip{Z^{1}(t) - Z^{2}(t)}{X^{1} - X^{2}}$ from below.
Arguing as in the proof of Proposition \ref{prop4-1}, we have
\begin{multline} \label{eq:Z1-Z2Y1-Y2}
\ip{Z^{1}(s) - Z^{2}(s)}{Y^{1}(s) - Y^{2}(s)}
= \ip{\int_{s}^{T}(D_{X}F(Y^{1}(\tau) \otimes  m) - D_{X}F(Y^{2}(\tau) \otimes  m))\dif \tau}{Y^{1}(s) - Y^{2}(s)} \\
 + \ip{D_{X}F_{T}(Y^{1}(T) \otimes  m) - D_{X}F_{T}(Y^{2}(T) \otimes  m)}{Y^{1}(s) - Y^{2}(s)}
\end{multline}
Differentiate this formula in $s$, substitute the identity $\od{}{s}\del{Y^1(s)-Y^2(s)} = -\frac{1}{\lambda}\del{Z^1(s)-Z^2(s)}$, and then reintegrate the resulting equation from $t$ to $T$.
Using \eqref{eq:ApB11} and \eqref{eq:ApB13}, we obtain
\begin{multline} \label{eq:ApB17}
\ip{Z^{1}(t) - Z^{2}(t)}{X^{1} - X^{2}} = \dfrac{1}{\lambda}\int_{t}^{T}\enVert{Z^{1}(s) - Z^{2}(s)}^{2}\dif s \\
 + \int_{t}^{T}\ip{D_{X}F(Y^{1}(s) \otimes  m) - D_{X}F(Y^{2}(s) \otimes  m)}{Y^{1}(s) - Y^{2}(s)}\dif s \\
  + \ip{D_{X}F_{T}(Y^{1}(T) \otimes  m) - D_{X}F(Y^{2}(T) \otimes  m)}{Y^{1}(T) - Y^{2}(T)}
  \\
  \geq\dfrac{1}{\lambda}\int_{t}^{T}\enVert{Z^{1}(s) - Z^{2}(s)}^{2}\dif s - c'\int_{t}^{T}\enVert{Y^{1}(s) - Y^{2}(s)}^{2}\dif s - c'_{T}\enVert{Y^{1}(T) - Y^{2}(T)}^{2}.
\end{multline}
Using estimates similar to those in the proof of Proposition \ref{prop4-1},
we get 
\begin{equation*}
\begin{aligned}
\int_{t}^{T}\enVert{Y^{1}(s) - Y^{2}(s)}^{2}\dif s &\leq \dfrac{1}{\lambda^{2}}(1 + \epsilon)\dfrac{T^{2}}{2}\int_{t}^{T}\enVert{Z^{1}(s) - Z^{2}(s)}^{2}\dif s + T\del{1 + \dfrac{1}{\epsilon}}\enVert{X^{1} - X^{2}}^{2},\\
\enVert{Y^{1}(T) - Y^{2}(T)}^{2} &\leq \dfrac{1}{\lambda^{2}}(1 + \epsilon)T\int_{t}^{T}\enVert{Z^{1}(s) - Z^{2}(s)}^{2}\dif s + \del{1 + \dfrac{1}{\epsilon}}\enVert{X^{1} - X^{2}}^{2},
\end{aligned}
\end{equation*}
which we plug into \eqref{eq:ApB17} to obtain
\begin{multline}\label{eq:ApB18}
\ip{Z^{1}(t) - Z^{2}(t)}{X^{1} - X^{2}}  \geq  \dfrac{1}{\lambda}\del{1 - \dfrac{1}{\lambda}T(1 + \epsilon)(c'\dfrac{T}{2} + c'_{T})}\int_{t}^{T}\enVert{Z^{1}(s) - Z^{2}(s)}^{2}\dif s\\
- \del{1 + \dfrac{1}{\epsilon}}(c'T + c'_{T})\enVert{X^{1} - X^{2}}^{2}
\end{multline}
From the assumption (\ref{eq:3-6}) and choosing $\epsilon$ sufficiently
small, the first term in the right hand side is positive. 

Combining \eqref{eq:ApB18}
with (\ref{eq:ApB16}) and taking (\ref{eq:ApB15}) into account, it
follows that 
\begin{equation}
|V(X^{1} \otimes  m,t) - V(X^{2} \otimes  m,t) - \ip{Z^{2}(t)}{X^{1} - X^{2}}| \leq  C_{T}\enVert{X^{1} - X^{2}}^{2},\label{eq:ApB19}
\end{equation}
which implies that $X \mapsto V(X  \otimes  m,t)$ is G\^ateaux differentiable and that Equation (\ref{eq:4-3}) holds. Next, we use \eqref{eq:Z1-Z2Y1-Y2} with $s = t$ and the Lipschitz estimates \eqref{eq:3-3} to deduce
\[
\ip{Z^{1}(t) - Z^{2}(t)}{X^{1} - X^{2}}  \leq \enVert{X^{1} - X^{2}}\del{c\int_{t}^{T}\enVert{Y^{1}(s) - Y^{2}(s)}\dif s + c_{T}\enVert{Y^{1}(T) - Y^{2}(T)}}.
\]
Using estimates as before, cf.~\eqref{eq:ApB5-2} and \eqref{eq:ApB6}, we derive 
\[
\ip{Z^{1}(t) - Z^{2}(t)}{X^{1} - X^{2}} \leq (c_{T} + cT)\enVert{X^{1} - X^{2}}^{2} + \dfrac{\sqrt{T}}{\lambda}(c_{T} + \dfrac{2}{3}cT)\enVert{X^{1} - X^{2}}\sqrt{\int_{t}^{T}\enVert{Z^{1}(s) - Z^{2}(s)}^{2}\dif s},
\]
which we plug into (\ref{eq:ApB18}) to obtain 
\begin{multline}\label{eq:ApB20}
\dfrac{1}{\lambda}\del{1 - \dfrac{1}{\lambda}T(1 + \epsilon)(c'\dfrac{T}{2} + c'_{T})}\int_{t}^{T}\enVert{Z^{1}(s) - Z^{2}(s)}^{2}\dif s \leq (c_{T} + cT + \del{1 + \dfrac{1}{\epsilon}}(c'T + c'_{T}))\enVert{X^{1} - X^{2}}^{2} \\
 + \dfrac{\sqrt{T}}{\lambda}(c_{T} + \dfrac{2}{3}cT)\enVert{X^{1} - X^{2}}\sqrt{\int_{t}^{T}\enVert{Z^{1}(s) - Z^{2}(s)}^{2}\dif s}.
\end{multline}
From \eqref{eq:ApB20} we deduce
\begin{equation}
\int_{t}^{T}\enVert{Z^{1}(s) - Z^{2}(s)}^{2}\dif s \leq  C_{T}\enVert{X^{1} - X^{2}}^{2}.\label{eq:ApB21}
\end{equation}
Returning to Equations \eqref{eq:ApB10}-\eqref{eq:ApB13}, applying \eqref{eq:ApB21} and using the Lipschitz estimates \eqref{eq:3-3}, we obtain
\[
\enVert{Y^{1}(s) - Y^{2}(s)} \leq  C_{T}\enVert{X^{1} - X^{2}},\;\enVert{Z^{1}(s) - Z^{2}(s)} \leq  C_{T}\enVert{X^{1} - X^{2}}
\]
and thus (\ref{eq:4-4}) has been proven, which completes the proof.
$\blacksquare$

\subsection{PROOF OF PROPOSITION \ref{prop4-3}}

Consider $J_{m,t}(v_{\cdot t}(\cdot ))$, defined in (\ref{eq:4-8}), for an arbitrary $v_{\cdot t}(\cdot ) \in \sr{V}$.
It is straightforward to see that $m \mapsto J_{m,t}(v_{\cdot t}(\cdot ))$ is continuously differentiable and has a functional derivative given by the expression 
\begin{multline}
\dod{}{m}J_{m,t}(v_{\cdot t}(\cdot ))(x)=\dfrac{\lambda}{2}\int_{t}^{T}\bb{E}|v_{xt}(s)|^{2}\dif s+\int_{t}^{T}\bb{E}\dod{F}{m}(X_{\cdot t}(s;v_{\cdot t}(\cdot )) \otimes  m)(X_{xt}(s;v_{xt}(\cdot )))\dif s\\
+\bb{E}\dod{F_{T}}{m}(X_{\cdot t}(T;v_{\cdot t}(\cdot )) \otimes  m)(X_{xt}(s;v_{xt}(\cdot ))).\label{eq:4-12}
\end{multline}
Thus (\ref{eq:4-10}) follows by a simple application of the Envelope
theorem.
%
%
{\color{black}More precisely, let $\widehat{v}_{xmt}(\cdot)$ be the optimal control, which can be solved explicitly, $\widehat{v}_{xmt}(\cdot)=-\frac{1}{\lambda}Z_{xmt}(\cdot)$, by the first order condition in our settings, then $V(m,t)=J_{m,t}(\widehat{v}_{xmt}(\cdot ))$; by Lemma 3.5, $D_v J_{m,t}(v_{\cdot}(\cdot ))$ exists and by Proposition 3.7, $D_v J_{m,t}(\widehat{v}_{xmt}(\cdot ))=0$; since $F(m)$ and $F_T(m)$ have a functional derivative, $m \mapsto J_{m,t}(v_{\cdot t}(\cdot ))$ also has a functional derivative given by (4.15); therefore, $\dod{}{m}V(m,t)(x)=\dod{}{m}J_{m,t}(\widehat{v}_{xmt}(\cdot ))(x)=\dod{}{m}J_{m,t}(-\frac{1}{\lambda}Z_{xmt}(\cdot))(x)$, which does not involve the functional differentiability of $Z_{xmt}(\cdot)$ (to be proved later in Section 6.2.1) with respect to $m$, is indeed a functional derivative of $V(m,t)$; the idea behind this proof essentially follows that for the Envelope theorem found in, for example, Appendix Section 5 in \cite{OYEM} or on page 160 in \cite{TAAT}; see also \cite{FWHRRW}}
Next we consider the following ordinary stochastic control problem depending parametrically
on the function $s\mapsto Y_{\cdot mt}(s) \otimes  m$ with values in
$\mathcal{P}_{2}(\bb{R}^n).$
We take
controls in $L_{\mathcal{W}_{t}}^{2}(t,T;\bb{R}^n)$ and the state is
defined by
\begin{equation}
x_{xt}(s)=x+\int_{t}^{s}v(\tau)\dif \tau+\eta(w(s)-w(t))\label{eq:4-13}
\end{equation}
The cost to minimize is given by 
\begin{equation}
K_{xt}(v(\cdot ))=\dfrac{\lambda}{2}\bb{E}\int_{t}^{T}|v(s)|^{2}\dif s+\int_{t}^{T}\bb{E}\dod{F}{m}(Y_{\cdot mt}(s) \otimes  m)(x_{xt}(s))\dif s+\bb{E}\dod{F_T}{m}(Y_{\cdot mt}(T) \otimes  m)(x_{xt}(T)).\label{eq:4-14}
\end{equation}
It is straightforward to
check that the optimal control coincides with $-\dfrac{1}{\lambda}Z_{xmt}(s)$
and the optimal state is $Y_{xmt}(s)$;
indeed, one writes the necessary conditions of optimality and observes that they coincide with system \eqref{eq:Yxmt}-\eqref{eq:Zxmt}, whose solution is unique.
Therefore, \eqref{eq:4-10} implies
\begin{equation}
\dod{}{m}V(m,t)(x)=\inf_{v(\cdot )}K_{xt}(v(\cdot )).\label{eq:4-15}
\end{equation}
But $x\mapsto K_{xt}(v(\cdot ))$ is differentiable, with 
\begin{equation}
D_1K_{xt}(v(\cdot ))=\int_{t}^{T}\bb{E}\,D\dod{F}{m}(Y_{\cdot mt}(s) \otimes  m)(x_{xt}(s))\dif s+\bb{E}\,D\dod{F_T}{m}(Y_{\cdot mt}(T) \otimes  m)(x_{xt}(T))\label{eq:4-16}
\end{equation}
Applying the Envelope theorem again, we deduce 
\[
D\dod{}{m}V(m,t)(x)=\int_{t}^{T}\bb{E}\,D\dod{F}{m}(Y_{\cdot mt}(s) \otimes  m)(Y_{xmt}(s))\dif s+\bb{E}\,D\dod{F_T}{m}(Y_{\cdot mt}(T) \otimes  m)(Y_{xmt}(T)),
\]
which proves the first part of (\ref{eq:4-11}). The second part follows immediately from Proposition \ref{prop:D_XF}. This completes the proof. $\blacksquare$ 

\subsection{PROOF OF PROPOSITION \ref{prop4-4}}

We consider first the processes $Y_{\xi,Y_{\cdot mt}(s) \otimes  m,s}(\tau),Z_{\xi,Y_{\cdot mt}(s) \otimes  m,s}(\tau)$, defined as the solution
of the system
\begin{align}
Y_{\xi,Y_{\cdot mt}(s) \otimes  m,s}(\tau) &=\xi-\dfrac{1}{\lambda}\int_{s}^{\tau}Z_{\xi,Y_{\cdot mt}(s) \otimes  m,s}(\theta)\dif \theta+\eta(w(\tau)-w(s)),\label{eq:4-18}\\
Z_{\xi,Y_{\cdot mt}(s) \otimes  m,s}(\tau) &=\bb{E}\left[\int_{\tau}^{T}D \dod{F}{m}(Y_{\cdot ,Y_{\cdot mt}(s) \otimes  m,s}(\theta) \otimes (Y_{\cdot mt}(s) \otimes  m))(Y_{\xi,Y_{\cdot mt}(s) \otimes  m,s}(\theta))\dif \theta\right.\\
&\quad \left.+\left. D \dod{F_T}{m}(Y_{\cdot ,Y_{\cdot mt}(s) \otimes  m,s}(T) \otimes (Y_{\cdot mt}(s) \otimes  m))(Y_{\xi,Y_{\cdot mt}(s) \otimes  m,s}(T))\right|\mathcal{W}_{s}^{\tau}\right],
\nonumber
\end{align}
cf.~(\ref{eq:Yxmt})-(\ref{eq:Zxmt})
Then, applying the first part of (\ref{eq:4-11}) we have 
\begin{equation}
Z_{\xi,Y_{\cdot mt}(s) \otimes  m,s}(s)=D\dod{V}{m}(Y_{\cdot mt}(s) \otimes  m,s)(\xi)\label{eq:4-20}
\end{equation}
Next, define 

\begin{equation}
\tilde{Y}_{xmt}(s,\tau)=Y_{Y_{xmt}(s),Y_{\cdot mt}(s) \otimes  m,s}(\tau),\;\tilde{Z}_{xmt}(s,\tau)=Z_{Y_{xmt}(s),Y_{\cdot mt}(s) \otimes  m,s}(\tau)\label{eq:4-21}
\end{equation}
then, from (\ref{eq:4-20}) we can write

\begin{equation}
\tilde{Z}_{xmt}(s,s)=D\dod{V}{m}(Y_{\cdot mt}(s) \otimes  m,s)(\tilde{Y}_{xmt}(s,s))\label{eq:4-22}
\end{equation}
We are going to show that 
\begin{equation}
\tilde{Y}_{xmt}(s,\tau)=Y_{xmt}(\tau),\;\tilde{Z}_{xmt}(s,\tau)=Z_{xmt}(\tau) \quad \forall t \leq s \leq \tau \leq T. \label{eq:4-23}
\end{equation}
Equation \eqref{eq:4-23} implies $\tilde{Y}_{xmt}(s,s)=Y_{xmt}(s),\:\tilde{Z}_{xmt}(s,s)=Z_{xmt}(s)$,
and so the result (\ref{eq:4-17}) will follow immediately from (\ref{eq:4-22}).

It remains to check (\ref{eq:4-23}).
We notice that $Y_{\cdot ,Y_{\cdot mt}(s) \otimes  m,s}(\theta) \otimes (Y_{\cdot mt}(s) \otimes  m)=\tilde{Y}_{\cdot mt}(s,\theta) \otimes  m$ (see Equation \eqref{eq:X_Xt otimes m v2})
and 
\begin{multline*}
\bb{E}\left[\left.\int_{\tau}^{T}D \dod{F}{m}(\tilde{Y}_{\cdot mt}(s,\theta) \otimes  m)(\tilde{Y}_{xmt}(s,\theta))\dif \theta+D \dod{F_T}{m}(\tilde{Y}_{\cdot mt}(s,T) \otimes  m)(\tilde{Y}_{xmt}(s,T))\right|\mathcal{W}_{t}^{\tau}\right]\\
=
\bb{E}\left[\int_{\tau}^{T}D \dod{F}{m}(\tilde{Y}_{\cdot mt}(s,\theta) \otimes  m)(Y_{\xi,Y_{\cdot mt}(s) \otimes  m,s}(\theta))\dif \theta+\right.\\
+\left.\left.D \dod{F_T}{m}(\tilde{Y}_{\cdot mt}(s,T) \otimes  m)(Y_{\xi,Y_{\cdot mt}(s) \otimes  m,s}(T))\right|\mathcal{W}_{s}^{\tau}\right]_{|\xi=Y_{xmt}(s)}.
\end{multline*}
Therefore, the pair $\del{\tilde{Y}_{xmt}(s,\tau),\tilde{Z}_{xmt}(s,\tau)}$
is the solution to the system 
\begin{align*}
\tilde{Y}_{xmt}(s,\tau) &=x-\dfrac{1}{\lambda}\int_{t}^{s}Z_{xmt}(\theta)\dif \theta-\dfrac{1}{\lambda}\int_{s}^{\tau}\tilde{Z}_{xmt}(s,\theta)\dif \theta+\eta(w(\tau)-w(t))\\
\tilde{Z}_{xmt}(s,\tau) &=\bb{E}\left[\left.\int_{\tau}^{T}D \dod{F}{m}(\tilde{Y}_{\cdot mt}(s,\theta) \otimes  m)(\tilde{Y}_{xmt}(s,\theta))\dif \theta+D \dod{F_T}{m}(\tilde{Y}_{\cdot mt}(s,T) \otimes  m)(\tilde{Y}_{xmt}(s,T))\right|\mathcal{W}_{t}^{\tau}\right].
\end{align*}
By taking $s = \tau$ in (\ref{eq:Yxmt})- (\ref{eq:Zxmt}), we see that $\del{Y_{xmt}(\tau),Z_{zmt}(\tau)}$ is another solution, and so by uniqueness we deduce (\ref{eq:4-23}). The proof is complete. $\blacksquare$ 

\subsection{PROOF OF PROPOSITION \ref{prop4-5}}

We begin with (\ref{eq:4-26}). From the optimality principle
(\ref{eq:3-16}) we have
\begin{multline} \label{eq:ApB22}
V(X \otimes  m,t) - V(X \otimes  m,t + h) = \dfrac{1}{2\lambda}\int_{t}^{t + h}||Z_{Xt}(s)||^{2}\dif s + \int_{t}^{t + h}F(Y_{Xt}(s) \otimes  m)\dif s \\
+ V(Y_{Xt}(t + h) \otimes  m,t + h) - V(X \otimes  m,t + h),
\end{multline}
and from (\ref{eq:4-4}) we obtain 
\begin{equation}
|V(Y_{Xt}(t + h) \otimes  m,t + h) - V(X \otimes  m,t + h) - \ip{D_{X}V(X \otimes  m,t + h)}{Y_{Xt}(t + h) - X}| \leq  C_{T}||Y_{Xt}(t + h) - X||^{2}.\label{eq:ApB23}
\end{equation}
Since $D_{X}V(X \otimes  m,t + h)$ is $\sigma(X)$ measurable, while $X$ is
independent of $\mathcal{W}_{t},$ we can multiply \eqref{eq:3-7} by $D_{X}V(X \otimes  m,t + h)$ and integrate to get
\[
	\ip{D_{X}V(X \otimes  m,t + h)}{Y_{Xt}(t + h) - X} =  - \dfrac{1}{\lambda}\ip{D_{X}V(X \otimes  m,t + h)}{\int_{t}^{t + h}Z_{Xt}(s)\dif s)},
\]
which implies, using Propositions \ref{prop4-1} and \ref{prop4-2},
\begin{equation} \label{eq:ipDXVYXt}
\abs{\ip{D_{X}V(X \otimes  m,t + h)}{Y_{Xt}(t + h) - X}} \leq C_Th\del{1+\enVert{X}^2}.
\end{equation}
Finally, using Equation (\ref{eq:3-7}) as when we derived \eqref{eq:ApB5-2}, we get
\begin{equation}
\enVert{Y_{Xt}(t+h)-X} \leq \dfrac{1}{\lambda}\sqrt{h}\sqrt{\int_{t}^{T}\enVert{Z(\tau)}^{2}\dif \tau} + \enVert{\eta}\sqrt{h}
\leq C_T \sqrt{h}\del{1+\enVert{X}}, \label{eq:YXt-X}
\end{equation}
where the second inequality follows from Proposition \ref{prop4-1}.
Combine inequalities \eqref{eq:YXt-X}, \eqref{eq:ipDXVYXt}, and (\ref{eq:ApB23}) with (\ref{eq:ApB22}) to
conclude (\ref{eq:4-26}). 

We turn to (\ref{eq:4-27}),
which by \eqref{eq:4-3} is equivalent to 
\begin{equation}
\enVert{Z_{X,t + h}(t + h) - Z_{Xt}(t)} \leq  C_{T}(h^{\frac{1}{2}} + h)(1 + \enVert{X}).\label{eq:ApB24}
\end{equation}
From (\ref{eq:3-8}) we have
\begin{equation} \label{eq:ZXt conditioned}
Z_{Xt}(t) = \bb{E}\intcc{\left.\int_{t}^{t + h}D_{X}F(Y_{Xt}(s) \otimes  m)\dif s\right|\sigma(X)} + \bb{E}\intcc{Z_{Xt}(t + h)|\sigma(X)}.
\end{equation}
We use the assumption \eqref{eq:3-2}, the estimate \eqref{eq:4-1} from Proposition \ref{prop4-1}, and the tower property to get
\begin{equation} \label{eq:int DXFYXt estimate}
\enVert{\bb{E}\intcc{\left.\int_{t}^{t + h}D_{X}F(Y_{Xt}(s) \otimes  m)\dif s\right|\sigma(X)}} \leq C_{T}h(1 + \enVert{X}).
\end{equation}
Subtract \eqref{eq:ZXt conditioned} from $Z_{X,t + h}(t + h)$ and use \eqref{eq:int DXFYXt estimate} to get
\[
\enVert{Z_{X,t + h}(t + h) - Z_{Xt}(t)} \leq  C_{T}h(1 + \enVert{X}) + \enVert{Z_{X,t + h}(t + h) - \bb{E}[Z_{Xt}(t + h)|\sigma(X)]},
\]
and since $Z_{X,t + h}(t + h)$ is $\sigma(X)$ measurable, the tower property implies
\begin{equation}
\enVert{Z_{X,t + h}(t + h) - Z_{Xt}(t)} \leq  C_{T}h(1 + \enVert{X}) + \enVert{Z_{X,t + h}(t + h) - Z_{Xt}(t + h)}\label{eq:ApB25}
\end{equation}
Note that $Z_{Xt}(t + h) = Z_{Y_{Xt}(t + h),t + h}(t + h)$, so by the
proof of Proposition \ref{prop4-2} combined with \eqref{eq:YXt-X},
\begin{equation} \label{eq:ZXt(t+h)-ZXt(t)}
||Z_{Xt}(t + h)(s) - Z_{X,t + h}(s)|| \leq  C_{T}||Y_{Xt}(t + h) - X|| \leq C_T \sqrt{h}(1+\enVert{X}), \ \forall s\in[t + h,T].
\end{equation}
Combining \eqref{eq:ZXt(t+h)-ZXt(t)} and (\ref{eq:ApB25})
 we  obtain \eqref{eq:ApB24}, which implies (\ref{eq:4-27}). $\blacksquare$

\section{PROOFS FROM SECTION \ref{sec:bellman}} \label{ap:C}

\subsection{PROOF OF PROPOSITION \ref{prop5-1}. }

We connect the system (\ref{eq:5-11}) to a control problem. The space
of controls is $L_{\mathcal{W}_{X\mathcal{X}t}}^{2}(t,T;\mathcal{H}_{m})$
where $\mathcal{W}_{X\mathcal{X}t}$ is the filtration generated by
the $\sigma$-algebras $\mathcal{W}_{X\mathcal{X}t}^{s}$. If $\mathcal{V}_{X\mathcal{X}t}(s)$
is a control, the state is defined by 
\begin{equation}
\mathcal{X}_{X\mathcal{X}t}(s) = \mathcal{X} + \int_{t}^{s}\mathcal{V}_{X\mathcal{X}t}(\tau)\dif \tau\label{eq:8-1}
\end{equation}
and the payoff is 
\begin{multline} \label{eq:8-2}
\mathcal{J}_{X\mathcal{X}t}(\mathcal{V}_{X\mathcal{X}t}(\cdot)) = \dfrac{\lambda}{2}\int_{t}^{T}||\mathcal{V}_{X\mathcal{X}t}(s)||^{2}\dif s + \dfrac{1}{2}\int_{t}^{T}\ip{D^{2}F(Y_{Xt}(s) \otimes  m)(\mathcal{X}_{X\mathcal{X}t}(s))}{\mathcal{X}_{X\mathcal{X}t}(s)}\dif s \\
 + \dfrac{1}{2}\ip{D^{2}F(Y_{Xt}(T) \otimes  m)(\mathcal{X}_{X\mathcal{X}t}(T))}{\mathcal{X}_{X\mathcal{X}t}(T)}.
\end{multline}
Thanks to the assumption (\ref{eq:3-6}) this is a strictly convex linear quadratic problem, which has a unique optimal control. The system
(\ref{eq:5-11}) has a unique solution and the optimal control is
$\mathcal{\hat{V}}_{X\mathcal{X}t}(s) =  - \dfrac{1}{\lambda}\mathcal{Z}_{X\mathcal{X}t}(s).$
The
optimal state is $\mathcal{\mathcal{Y}}_{X\mathcal{X}t}(s)$. Moreover,
we have
\begin{multline} \label{eq:8-3}
\inf_{\mathcal{V}_{X\mathcal{X}t}(\cdot)}\dfrac{1}{2}\ip{\mathcal{Z}_{X\mathcal{X}t}(t)}{\s{X}} = \dfrac{1}{2\lambda}\int_{t}^{T}||\mathcal{Z}_{X\mathcal{X}t}(s)||^{2}\dif s + \dfrac{1}{2}\int_{t}^{T}\ip{D^{2}F(Y_{Xt}(s) \otimes  m)(\mathcal{Y}_{X\mathcal{X}t}(s))}{\mathcal{Y}_{X\mathcal{X}t}(s)}\dif s \\
 + \dfrac{1}{2}\ip{D^{2}F(Y_{Xt}(T) \otimes  m)(\mathcal{V}_{X\mathcal{X}t}(T))}{\mathcal{V}_{X\mathcal{X}t}(T)}
\end{multline}
 where in (\ref{eq:8-3}) $\del{\mathcal{Y}_{X\mathcal{X}t}(s), \mathcal{Z}_{X\mathcal{X}t}(s)}$
is the solution of (\ref{eq:5-11}). Thanks to (\ref{eq:3-6}) we
check easily that 
\begin{equation}
||\mathcal{Y}_{X\mathcal{X}t}(s)||,\:||\mathcal{Z}_{X\mathcal{X}t}(s)|| \leq  C_{T}||\mathcal{X}||.\label{eq:8-4}
\end{equation}
 Now $\mathcal{X}\mapsto\mathcal{Y}_{X\mathcal{X}t}(s)$ and $\mathcal{X}\mapsto\mathcal{Z}_{X\mathcal{X}t}(s)$
are linear. Indeed, the conditional expectation in the definition
of $\mathcal{Z}_{X\mathcal{X}t}(s)$ does not introduce nonlinearities,
since taking an initial condition $\alpha\mathcal{X}_{1} + \beta\mathcal{X}_{2}$,
one can extend the conditioning $\sigma$-algebra to contain both
$\mathcal{X}_{1},\mathcal{X}_{2}$ and the linearity follows easily.
Therefore the maps $\mathcal{X}\mapsto\mathcal{Y}_{X\mathcal{X}t}(s)$
and $\mathcal{X}\mapsto\mathcal{Z}_{X\mathcal{X}t}(s)$ belong
to $\mathcal{L}(\mathcal{H}_{\color{black}m,t},\mathcal{H}_{\color{black}m,t}).$

The next important
step is to check the convergence 
\begin{equation}
\dfrac{Y_{X + \epsilon\mathcal{X},t}(s) - Y_{Xt}(s)}{\epsilon}\rightarrow\mathcal{Y}_{X\mathcal{X}t}(s),
\ \text{and} \
\dfrac{Z_{X + \epsilon\mathcal{X},t}(s) - Z_{Xt}(s)}{\epsilon}\rightarrow\mathcal{Z}_{X\mathcal{X}t}(s) 
\ \text{as} \ 
\epsilon \to 0, 
\ \forall s\in[t,T].\label{eq:8-5}
\end{equation}
Define 
\[
Y_{X\mathcal{X}t}^{\epsilon}(s) = \dfrac{Y_{X + \epsilon\mathcal{X},t}(s) - Y_{Xt}(s)}{\epsilon},\:Z_{X\mathcal{X}t}^{\epsilon}(s) = \dfrac{Z_{X + \epsilon\mathcal{X},t}(s) - Z_{Xt}(s)}{\epsilon}.
\]
Then the following relations follow from \eqref{eq:3-7} and \eqref{eq:3-8}:
\begin{equation}\label{eq:8-6}
\begin{aligned}
Y_{X\mathcal{X}t}^{\epsilon}(s) &= \mathcal{X} - \dfrac{1}{\lambda}\int_{t}^{s}Z_{X\mathcal{X}t}^{\epsilon}(\tau)\dif \tau,\\
Z_{X\mathcal{X}t}^{\epsilon}(s) &= \bb{E}\left[\int_{s}^{T}\dfrac{D_{X}F(Y_{X + \epsilon\mathcal{X},t}(\tau) \otimes  m) - D_{X}F(Y_{Xt}(\tau) \otimes  m)}{\epsilon}\dif \tau \right. \\
&\quad\quad \quad \left.\left. + \dfrac{D_{X}F_{T}(Y_{X + \epsilon\mathcal{X},t}(T) \otimes  m) - D_{X}F_{T}(Y_{Xt}(T) \otimes  m)}{\epsilon}\right|\mathcal{W}_{X,\mathcal{X},t}^{s}\right].
\end{aligned}
\end{equation}
The second relation in (\ref{eq:8-6}) can be written 
\begin{multline}\label{eq:8-7}
Z_{X\mathcal{X}t}^{\epsilon}(s) = \bb{E}\left[\int_{s}^{T}\int_{0}^{1}D_{X}^{2}F((Y_{Xt}(\tau) + \theta\epsilon Y_{X\mathcal{X}t}^{\epsilon}(\tau)) \otimes  m)(Y_{X\mathcal{X}t}^{\epsilon}(\tau))\dif \theta \dif \tau \right. \\
\left.\left. + \int_{0}^{1}D_{X}^{2}F_{T}((Y_{Xt}(T) + \theta\epsilon Y_{X\mathcal{X}t}^{\epsilon}(T)) \otimes  m)(Y_{X\mathcal{X}t}^{\epsilon}(T))\dif \theta\right|\mathcal{W}_{X,\mathcal{X},t}^{s}\right].
\end{multline}
Using an argument analogous to the proof of Proposition \ref{prop4-1}, we conclude that $Y_{X\mathcal{X}t}^{\epsilon}(s),Z_{X\mathcal{X}t}^{\epsilon}(s)$
are bounded in $L_{\mathcal{W}_{X\mathcal{X}t}}^{\infty}(t,T;\mathcal{H}_{m}).$
Passing to a subsequence, we can take $Z_{X\mathcal{X}t}^{\epsilon}(s)$ 
to converge weakly in $L_{\mathcal{W}_{X\mathcal{X}t}}^{2}(t,T;\mathcal{H}_{m})$
to $\mathcal{Z}_{X\mathcal{X}t}(s).$
It follows that 
\begin{equation} \label{eq:Yeps weakly converges}
Y_{X\mathcal{X}t}^{\epsilon}(s) \rightharpoonup \mathcal{Y}_{X\mathcal{X}t}(s)\mathcal{ = X} - \dfrac{1}{\lambda}\int_{t}^{s}\mathcal{Z}_{X\mathcal{X}t}(\tau)\dif \tau,\text{weakly in }L^{2}(\Omega,\mathcal{W}_{X,\mathcal{X},t}^{s},\bb{P};L_{m}^{2}(\bb{R}^n;\bb{R}^n)),\forall s.
\end{equation}

Define 
\begin{multline*}
J_{xt}^{\epsilon}(s) = \bb{E}\left[\int_{s}^{T}\del{\int_{0}^{1}D_{X}^{2}F((Y_{Xt}(\tau) + \theta\epsilon Y_{X\mathcal{X}t}^{\epsilon}(\tau)) \otimes  m)(Y_{X\mathcal{X}t}^{\epsilon}(\tau))\dif \theta - D_{X}^{2}F(Y_{Xt}(\tau) \otimes  m)(\mathcal{Y}_{X\mathcal{X}t}(\tau))}\dif \tau\right.\\
\left.\left. + \int_{0}^{1}D_{X}^{2}F_{T}((Y_{Xt}(T) + \theta\epsilon Y_{X\mathcal{X}t}^{\epsilon}(T)) \otimes  m)(Y_{X\mathcal{X}t}^{\epsilon}(T))\dif \theta - D_{X}^{2}F_{T}(Y_{Xt}(T) \otimes  m)(\mathcal{Y}_{X\mathcal{X}t}(T))\right|\mathcal{W}_{X,\mathcal{X},t}^{s}\right],
\end{multline*}
which is an element of $L^{2}(\Omega,\mathcal{W}_{X,\mathcal{X},t}^{s},\bb{P};L_{m}^{2}(\bb{R}^n;\bb{R}^n)).$
We are going to show that it converges weakly to $0.$ We write $J_{xt}^{\epsilon}(s) = I_{xt}^{\epsilon}(s) + II_{xt}^{\epsilon}(s)$
with 
\begin{multline*}
I_{xt}^{\epsilon}(s) := \bb{E}\left[\int_{s}^{T}\del{\int_{0}^{1}D_{X}^{2}F((Y_{Xt}(\tau) + \theta\epsilon Y_{X\mathcal{X}t}^{\epsilon}(\tau)) \otimes  m)(Y_{X\mathcal{X}t}^{\epsilon}(\tau))\dif \theta - D_{X}^{2}F(Y_{Xt}(\tau) \otimes  m)(Y_{X\mathcal{X}t}^{\epsilon}(\tau))}\dif \tau\right.\\
\left.\left. + \int_{0}^{1}D_{X}^{2}F_{T}((Y_{Xt}(T) + \theta\epsilon Y_{X\mathcal{X}t}^{\epsilon}(T)) \otimes  m)(Y_{X\mathcal{X}t}^{\epsilon}(T))\dif \theta - D_{X}^{2}F_{T}(Y_{Xt}(T) \otimes  m)(Y_{X\mathcal{X}t}^{\epsilon}(T))\right|\mathcal{W}_{X,\mathcal{X},t}^{s}\right]
\end{multline*}
and 
\begin{multline*}
II_{xt}^{\epsilon}(s) := \bb{E}\left[\int_{s}^{T}\del{D_{X}^{2}F(Y(\tau) \otimes  m)(Y_{X\mathcal{X}t}^{\epsilon}(\tau)) - D_{X}^{2}F(Y_{Xt}(\tau) \otimes  m)(\mathcal{Y}_{X\mathcal{X}t}(\tau))}\dif \tau\right.
\\
\left.\left. + D_{X}^{2}F_{T}(Y_{Xt}(T) \otimes  m)(Y_{X\mathcal{X}t}^{\epsilon}(T)) - D_{X}^{2}F_{T}(Y_{Xt}(T) \otimes  m)(\mathcal{Y}_{X\mathcal{X}t}(T))\right|\mathcal{W}_{X,\mathcal{X},t}^{s}\right].
\end{multline*}
Then by \eqref{eq:Yeps weakly converges} it follows that $II_{xt}^{\epsilon}(s)$ converges weakly to $0$ in $L^{2}(\Omega,\mathcal{W}_{X,\mathcal{X},t}^{s},\bb{P};L_{m}^{2}(\bb{R}^n;\bb{R}^n))$, since $D_{X}^{2}F(Y(\tau) \otimes  m)$ and $D_{X}^{2}F_T(Y(T) \otimes  m)$ are in $\s{L}(\s{H}_m,\s{H}_m)$. 
We turn our attention to $I_{xt}^{\epsilon}(s).$ We claim that 
\begin{equation}
\bb{E}\int_{\bb{R}^n}|I_{xt}^{\epsilon}(s)|\dif m(x)\rightarrow0,\:\forall s>t\label{eq:8-8}
\end{equation}
Indeed, 
\begin{multline*}
\bb{E}\int_{\bb{R}^n}|I_{xt}^{\epsilon}(s)|\dif m(x)\\
 \leq 
 \bb{E}\int_{\bb{R}^n}\left|\int_{s}^{T}\del{\int_{0}^{1}D_{X}^{2}F((Y_{Xt}(\tau) + \theta\epsilon Y_{X\mathcal{X}t}^{\epsilon}(\tau)) \otimes  m)(Y_{X\mathcal{X}t}^{\epsilon}(\tau))\dif \theta - D_{X}^{2}F(Y_{Xt}(\tau) \otimes  m)(Y_{X\mathcal{X}t}^{\epsilon}(\tau))}\dif \tau\right.
 \\
\left. + \int_{0}^{1}D_{X}^{2}F_{T}((Y_{Xt}(T) + \theta\epsilon Y_{X\mathcal{X}t}^{\epsilon}(T)) \otimes  m)(Y_{X\mathcal{X}t}^{\epsilon}(T))\dif \theta - D_{X}^{2}F_{T}(Y_{Xt}(T) \otimes  m)(Y_{X\mathcal{X}t}^{\epsilon}(T))\right|\dif m(x)
\\
 \leq \int_{s}^{T}\int_{0}^{1}\bb{E}\int_{\bb{R}^n}\left|D_{X}^{2}F((Y_{Xt}(\tau) + \theta\epsilon Y_{X\mathcal{X}t}^{\epsilon}(\tau)) \otimes  m)(Y_{X\mathcal{X}t}^{\epsilon}(\tau)) - D_{X}^{2}F(Y_{Xt}(\tau) \otimes  m)(Y_{X\mathcal{X}t}^{\epsilon}(\tau))\right|\dif m(x)\dif \theta \dif \tau + 
\\
\int_{0}^{1}\bb{E}\int_{\bb{R}^n}\left|D_{X}^{2}F_{T}((Y_{Xt}(T) + \theta\epsilon Y_{X\mathcal{X}t}^{\epsilon}(T)) \otimes  m)(Y_{X\mathcal{X}t}^{\epsilon}(T)) - D_{X}^{2}F_{T}(Y_{Xt}(T) \otimes  m)(Y_{X\mathcal{X}t}^{\epsilon}(T))\right|\dif m(x)\dif \theta .
\end{multline*}
From the assumption (\ref{eq:5-5}) we can assert that 
\[
\bb{E}\int_{\bb{R}^n}\left|D_{X}^{2}F((Y_{Xt}(\tau) + \theta\epsilon Y_{X\mathcal{X}t}^{\epsilon}(\tau)) \otimes  m)(Y_{X\mathcal{X}t}^{\epsilon}(\tau)) - D_{X}^{2}F(Y_{Xt}(\tau) \otimes  m)(Y_{X\mathcal{X}t}^{\epsilon}(\tau))\right|\dif m(x)\rightarrow0,\forall\theta,\tau
\]
 and this function of $\theta,\tau,\epsilon$ is bounded by a constant,
thanks to (\ref{eq:5-2}) and the uniform bound on $Y_{X\mathcal{X}t}^{\epsilon}(\tau)$
in $\mathcal{H}_{m}.$
Similar assertions apply to the term involving $F_{T}.$ 
Thus (\ref{eq:8-8}) follows from the bounded convergence theorem.
On the other hand, $I^{\epsilon}$ is bounded
in $L^{2}(\Omega,\mathcal{W}_{X,\mathcal{X},t}^{s},\bb{P};L_{m}^{2}(\bb{R}^n;\bb{R}^n)).$
From (\ref{eq:8-8}), $0$ is the unique weak limit point. Hence $I_{xt}^{\epsilon}(s)$
converges weakly to $0$ in $L^{2}(\Omega,\mathcal{W}_{X,\mathcal{X},t}^{s},\bb{P};L_{m}^{2}(\bb{R}^n;\bb{R}^n)).$
Therefore $J_{xt}^{\epsilon}(s)$ converges weakly to $0$ in $L^{2}(\Omega,\mathcal{W}_{X,\mathcal{X},t}^{s},\bb{P};L_{m}^{2}(\bb{R}^n;\bb{R}^n)).$
We deduce that 
\begin{multline} \label{eq:8-9}
Z_{X\mathcal{X}t}^{\epsilon}(s) = \bb{E}\left[\int_{0}^{1}\left(\int_{s}^{T}D_{X}^{2}F((Y_{Xt}(\tau) + \theta\epsilon Y_{X\mathcal{X}t}^{\epsilon}(\tau)) \otimes  m)(Y_{X\mathcal{X}t}^{\epsilon}(\tau))\dif \tau \right.\right.\\
\left.\left. + \left.D_{X}^{2}F_{T}((Y_{Xt}(T) + \theta\epsilon Y_{X\mathcal{X}t}^{\epsilon}(T)) \otimes  m)(Y_{X\mathcal{X}t}^{\epsilon}(T))\right)\dif \theta \right|\mathcal{W}_{X,\mathcal{X},t}^{s}\right]\\
\rightarrow
\left.\bb{E}\left[\int_{s}^{T}D_{X}^{2}F(Y_{Xt}(\tau) \otimes  m)(\mathcal{Y}_{X\mathcal{X}t}(\tau))\dif \tau + D_{X}^{2}F_{T}(Y_{Xt}(T) \otimes  m)(\mathcal{Y}_{X\mathcal{X}t}(T))\right|\mathcal{W}_{X,\mathcal{X},t}^{s}\right]
\\
\text{ weakly in }L^{2}(\Omega,\mathcal{W}_{X,\mathcal{X},t}^{s},\bb{P};L_{m}^{2}(\bb{R}^n;\bb{R}^n)), \ \forall s.
\end{multline}

Necessarily the weak limit $\mathcal{Z}_{X\mathcal{X}t}(s)$ of $Z_{X\mathcal{X}t}^{\epsilon}(s)$
in the Hilbert space $L_{\mathcal{W}_{X\mathcal{X}t}}^{2}(t,T;\mathcal{H}_{m})$
coincides with the right hand side of (\ref{eq:8-9}). But then the
weak limits $\mathcal{Y}_{X\mathcal{X}t}(s),\:\mathcal{Z}_{X\mathcal{X}t}(s)$
coincide with the solution of the system (\ref{eq:5-11}), which is
unique. Therefore the whole sequence converges weakly. We also obtain weak convergence for any $s$, by the same argument as in \eqref{eq:Yeps weakly converges}. Let us check that the convergence is strong. We first argue as in the proof of Proposition \ref{prop4-1} to derive the identities
\begin{multline} \label{eq:weak to strong0}
\frac{1}{2}\ip{Z_{X\mathcal{X}t}^{\epsilon}(t)}{\s{X}} = \dfrac{1}{2\lambda}\int_{t}^{T}||Z_{X\mathcal{X}t}^{\epsilon}(s)||^{2}\dif s \\
+ \dfrac{1}{2}\int_{t}^{T}\ip{\int_{0}^{1}D_{X}^{2}F((Y_{Xt}(s) + \theta\epsilon Y_{X\mathcal{X}t}^{\epsilon}(s)) \otimes  m)(Y_{X\mathcal{X}t}^{\epsilon}(s))\dif \theta}{Y_{X\mathcal{X}t}^{\epsilon}(s)}\dif s 
\\
 + \dfrac{1}{2}\ip{\int_{0}^{1}D_{X}^{2}F_{T}((Y_{Xt}(T) + \theta\epsilon Y_{X\mathcal{X}t}^{\epsilon}(T)) \otimes  m)(Y_{X\mathcal{X}t}^{\epsilon}(T))\dif \theta}{Y_{X\mathcal{X}t}^{\epsilon}(T)}.
\end{multline}
and
\begin{multline} \label{eq:weak to strong1}
\frac{1}{2}\ip{\mathcal{Z}_{X\mathcal{X}t}(t)}{\s{X}} = \dfrac{1}{2\lambda}\int_{t}^{T}||\mathcal{Z}_{X\mathcal{X}t}(s)||^{2}\dif s
\\ 
+ \dfrac{1}{2}\int_{t}^{T}\ip{D^{2}F(Y_{Xt}(s) \otimes  m)(\mathcal{Y}_{X\mathcal{X}t}(s))}{\mathcal{Y}_{X\mathcal{X}t}(s)}\dif s 
+ \dfrac{1}{2}\ip{D^{2}F_{T}(Y_{Xt}(T) \otimes  m)(\mathcal{V}_{X\mathcal{X}t}(T))}{\mathcal{V}_{X\mathcal{X}t}(T)}.
\end{multline}
By continuity of $D_X^2 F$ and $D_X^2 F_T$, we have
\begin{multline} \label{eq:weak to strong2}
\int_{t}^{T}\left\{ \ip{\int_{0}^{1}D_{X}^{2}F((Y_{Xt}(s) + \theta\epsilon Y_{X\mathcal{X}t}^{\epsilon}(s)) \otimes  m)(\mathcal{Y}_{X\mathcal{X}t}(s))\dif \theta}{\mathcal{Y}_{X\mathcal{X}t}(s)}\right.\\
\left. - \ip{D_{X}^{2}F(Y_{Xt}(s) \otimes  m)(\mathcal{Y}_{X\mathcal{X}t}(s))}{\mathcal{Y}_{X\mathcal{X}t}(s)} \right\} \dif s \\
+  \ip{\int_{0}^{1}D_{X}^{2}F_{T}((Y_{Xt}(T) + \theta\epsilon Y_{X\mathcal{X}t}^{\epsilon}(T)) \otimes  m)(\mathcal{Y}_{X\mathcal{X}t}(T))\dif \theta}{\mathcal{Y}_{X\mathcal{X}t}(T)}\\
- \ip{D_{X}^{2}F_{T}(Y_{Xt}(T) \otimes  m)(\mathcal{Y}_{X\mathcal{X}t}(T))}{\mathcal{Y}_{X\mathcal{X}t}(T)} \to 0
\end{multline}
and 
\begin{multline} \label{eq:weak to strong3}
\int_{t}^{T}\left\{ \ip{\int_{0}^{1}D_{X}^{2}F((Y_{Xt}(s) + \theta\epsilon Y_{X\mathcal{X}t}^{\epsilon}(s)) \otimes  m)(Y_{X\mathcal{X}t}^{\epsilon}(s))\dif \theta}{\mathcal{Y}_{X\mathcal{X}t}(s)}\right.\\
\left. - \ip{D_{X}^{2}F(Y_{Xt}(s) \otimes  m)(\mathcal{Y}_{X\mathcal{X}t}(s))}{\mathcal{Y}_{X\mathcal{X}t}(s)} \right\} \dif s \\
+  \ip{\int_{0}^{1}D_{X}^{2}F_{T}((Y_{Xt}(T) + \theta\epsilon Y_{X\mathcal{X}t}^{\epsilon}(T)) \otimes  m)(Y_{X\mathcal{X}t}^{\epsilon}(T))\dif \theta}{\mathcal{Y}_{X\mathcal{X}t}(T)}\\
- \ip{D_{X}^{2}F_{T}(Y_{Xt}(T) \otimes  m)(\mathcal{Y}_{X\mathcal{X}t}(T))}{\mathcal{Y}_{X\mathcal{X}t}(T)} \to 0
\end{multline}
Putting together \eqref{eq:weak to strong0}, \eqref{eq:weak to strong1}, \eqref{eq:weak to strong2}, and \eqref{eq:weak to strong3}, and using the fact that $Z_{X\s{X}t}(s) \rightharpoonup \s{Z}_{X\s{X}t}(s)$ weakly, we deduce
\begin{multline}
\dfrac{1}{2\lambda}\int_{t}^{T}\enVert{Z_{X\mathcal{X}t}^{\epsilon}(s) - \mathcal{Z}_{X\mathcal{X}t}(s)}^{2}\dif s \\
+ \frac{1}{2}\int_{t}^{T} \ip{\int_{0}^{1}D_{X}^{2}F((Y_{Xt}(s) + \theta\epsilon Y_{X\mathcal{X}t}^{\epsilon}(s)) \otimes  m)(Y_{X\s{X}t}^\epsilon(s) - \mathcal{Y}_{X\mathcal{X}t}(s))\dif \theta}{Y_{X\s{X}t}^\epsilon(s) - \mathcal{Y}_{X\mathcal{X}t}(s)}\dif s\\
+ \frac{1}{2}\ip{\int_{0}^{1}D_{X}^{2}F_T((Y_{Xt}(T) + \theta\epsilon Y_{X\mathcal{X}t}^{\epsilon}(T)) \otimes  m)(Y_{X\s{X}t}^\epsilon(T) - \mathcal{Y}_{X\mathcal{X}t}(T))\dif \theta}{Y_{X\s{X}t}^\epsilon(T) - \mathcal{Y}_{X\mathcal{X}t}(T)}\dif s \to 0
\end{multline}
From the assumption (\ref{eq:5-3}), it follows that
\[
\limsup_{\epsilon\to 0}\left(\dfrac{1}{\lambda}\int_{t}^{T}\enVert{Z_{X\mathcal{X}t}^{\epsilon}(s) - \mathcal{Z}_{X\mathcal{X}t}(s)}^{2}\dif s - c'\int_{t}^{T}\enVert{Y_{X\mathcal{X}t}^{\epsilon}(s) - \mathcal{Y}_{X\mathcal{X}t}(s)}^{2}\dif s - c'_{T}\enVert{Y_{X\mathcal{X}t}^{\epsilon}(T) - \mathcal{Y}_{X\mathcal{X}t}(T)}^{2}\right) \leq 0.
\]
Using $Y_{X\mathcal{X}t}^{\epsilon}(s) - \mathcal{Y}_{X\mathcal{X}t}(s) =  - \dfrac{1}{\lambda}\int_{t}^{s}(Z_{X\mathcal{X}t}^{\epsilon}(\tau) - \mathcal{Z}_{X\mathcal{X}t}(\tau))\dif \tau$
and the condition (\ref{eq:3-6}) on $\lambda,$ we get $$\int_{t}^{T}\enVert{Z_{X\mathcal{X}t}^{\epsilon}(s) - \mathcal{Z}_{X\mathcal{X}t}(s)}^{2}\dif s\rightarrow 0.$$
This implies immediately $Y_{X\mathcal{X}t}^{\epsilon}(s)\rightarrow\mathcal{Y}_{X\mathcal{X}t}(s)$
in $L^{2}(\Omega,\mathcal{W}_{X,\mathcal{X},t}^{s},\bb{P};L_{m}^{2}(\bb{R}^n;\bb{R}^n)),\:\forall s$,
and by arguments already used $Z_{X\mathcal{X}t}^{\epsilon}(s)\to \mathcal{Z}_{X\mathcal{X}t}(s)$
in $L^{2}(\Omega,\mathcal{W}_{X,\mathcal{X},t}^{s},\bb{P};L_{m}^{2}(\bb{R}^n;\bb{R}^n)),\:\forall s.$ 

Since we have $Z_{Xt}(t) = D_X V(X  \otimes  m,t)$ by Proposition \ref{prop4-2}, $Z_{X\mathcal{X}t}^{\epsilon}(t)\to \mathcal{Z}_{X\mathcal{X}t}(t)$ implies that $X \mapsto V(X  \otimes  m,t)$ is twice G\^ateaux differentiable and that the relation (\ref{eq:5-12}) holds.
From (\ref{eq:8-4}), we derive property (\ref{eq:5-1000}).
It remains to prove the continuity property (\ref{eq:5-10}).
By \eqref{eq:5-12}, this is equivalent to showing that if $t_k \downarrow t$ and $X_k, \s{X}_k \in \s{H}_{m,t_k}$ converge in $\s{H}_m$ to $X,\s{X}$ respectively, then $\s{Z}_{X_k\s{X}_kt_k}(t_k) \to \s{Z}_{X\s{X}t}(t)$ in $\s{H}_m$.
From (\ref{eq:8-4}) and the linearity of $\s{Y}_{\color{black}Xt}$ and $\s{Z}_{\color{black}Xt}$ with respect to $\mathcal{X}$, we have
$||\mathcal{Y}_{X_{k}\mathcal{X}_{k}t_{k}}(s) - \mathcal{Y}_{X_{k}\mathcal{X},t_{k}}(s)|| \leq  C_{T}||\mathcal{X}_{k} - \mathcal{X}||,\;$$||\mathcal{Z}_{X_{k}\mathcal{X}_{k}t_{k}}(s) - \mathcal{Z}_{X_{k}\mathcal{X},t_{k}}(s)|| \leq  C_{T}||\mathcal{X}_{k} - \mathcal{X}||$.
So we can assume, without loss of generality, that $\mathcal{X}_{k} = \mathcal{X}$, i.e.~it is enough to show that $\s{Z}_{X_k\s{X} t_k}(t_k) \to \s{Z}_{X\s{X}t}(t)$ in $\s{H}_m$.

By
definition,
\begin{equation}\label{eq:8-12}
\begin{aligned}
\mathcal{Y}_{X_{k}\mathcal{X},t_{k}}(s) &= \mathcal{X} - \dfrac{1}{\lambda}\int_{t_{k}}^{s}\mathcal{Z}_{X_{k}\mathcal{X},t_{k}}(\tau)\dif \tau,\:s>t_{k},\\
\mathcal{Z}_{X_{k}\mathcal{X},t_{k}}(s) &= \left.\bb{E}\left[\int_{s}^{T}D_{X}^{2}F(Y_{X_{k}t_{k}}(\tau) \otimes  m)(\mathcal{Y}_{X_{k}\mathcal{X},t_{k}}(\tau))\dif \tau + D_{X}^{2}F_{T}(Y_{X_{k}t_{k}}(T) \otimes  m)(\mathcal{Y}_{X_{k}\mathcal{X},t_{k}}(T))\right|\mathcal{W}_{X_{k}\mathcal{X},t_{k}}^{s}\right].
\end{aligned}
\end{equation}
 with 
\begin{equation}\label{eq:8-11}
\begin{aligned}
Y_{X_{k}t_{k}}(s) &= X_{k} - \dfrac{1}{\lambda}\int_{t_{k}}^{s}Z_{X_{k}t_{k}}(\tau)\dif \tau + \eta(w(s) - w(t_{k})),\\
Z_{X_{k}t_{k}}(s) &= \left.\bb{E}\left[\int_{s}^{T}D_{X}F(Y_{X_{k}t_{k}}(\tau) \otimes  m)\dif \tau + D_{X}F(Y_{X_{k}t_{k}}(T) \otimes  m)\right|\mathcal{W}_{X_{k}t_{k}}^{s}\right].
\end{aligned}
\end{equation}
We fix $s>t.$ We can assume that $s>t_{k}.$ From the proof of Proposition
\ref{prop4-5}, we obtain 
\begin{multline}
\enVert{Y_{X_{k}t_{k}}(s) - Y_{Xt}(s)} \leq \enVert{Y_{X_{k}t_{k}}(s) - Y_{Xt_{k}}(s)} + \enVert{Y_{Xt_{k}}(s) - Y_{Xt}(s)} \\
 \leq  C_{T}\enVert{X_{k} - X} + C_{T}\del{(t_k-t)^{\frac{1}{2}} + (t_k-t)}(1 + \enVert{X})\label{eq:8-110}
\end{multline}
 and a similar estimate for $||Z_{X_{k}t_{k}}(s) - Z_{Xt}(s)||$.  
We fix $s>t,$ and define $k(s) = \min\{k|\:t_{k}<s\}$.
The function
$k(s)$ is monotone decreasing. We define the $\sigma$-algebra 
\begin{equation}
\widetilde{\mathcal{W}}_{X\mathcal{X}t}^{s} = \vee_{j \geq  k(s)}(\mathcal{W}_{X_{j}\mathcal{X},t_{j}}^{s}\vee\mathcal{W}_{t}^{t_{j}})\label{eq:8-13}
\end{equation}
which is increasing in $s.$ Note that $X$ is $\widetilde{\mathcal{W}}_{X\mathcal{X}t}^{s}$
measurable. We call $\widetilde{\mathcal{W}}_{X\mathcal{X}t}$ the
filtration generated by the sequence $\widetilde{\mathcal{W}}_{X\mathcal{X}t}^{s}.$
For $k \geq  k(s),$ $\widetilde{\mathcal{W}}_{X\mathcal{X}t}^{s}$
is an extension of $\mathcal{W}_{X_{k}\mathcal{X},t_{k}}^{s},$ independent
of $\mathcal{W}_{s}.$ Therefore, according to Remark \ref{rem3-1}
, we can change the conditioning $\sigma$ algebra $\mathcal{W}_{X_{k}\mathcal{X},t_{k}}^{s}$
to $\widetilde{\mathcal{W}}_{X\mathcal{X}t}^{s}.$ 
Then \eqref{eq:8-12} becomes
\begin{equation}\label{eq:8-14}
\begin{aligned}
\mathcal{Y}_{X_{k}\mathcal{X},t_{k}}(s) &= \mathcal{X} - \dfrac{1}{\lambda}\int_{t_{k}}^{s}\mathcal{Z}_{X_{k}\mathcal{X},t_{k}}(\tau)\dif \tau,\:s>t_{k},\\
\mathcal{Z}_{X_{k}\mathcal{X},t_{k}}(s) &= \left.\bb{E}\left[\int_{s}^{T}D_{X}^{2}F(Y_{X_{k}t_{k}}(\tau) \otimes  m)(\mathcal{Y}_{X_{k}\mathcal{X},t_{k}}(\tau))\dif \tau + D_{X}^{2}F_{T}(Y_{X_{k}t_{k}}(T) \otimes  m)(\mathcal{Y}_{X_{k}\mathcal{X},t_{k}}(T))\right|\widetilde{\mathcal{W}}_{X\mathcal{X}t}^{s}\right].
\end{aligned}
\end{equation}
Note that $s>t_{k}$ implies $k \geq  k(s).$ To define the processes
for $t<s<t_{k}$ we set 
\begin{equation}\label{eq:8-15}
\begin{cases}
\mathcal{Y}_{X_{k}\mathcal{X},t_{k}}(s) = \mathcal{X},\\
\mathcal{Z}_{X_{k}\mathcal{X},t_{k}}(s) = \bb{E}[\mathcal{Z}_{X_{k}\mathcal{X},t_{k}}(t_{k})|\widetilde{\mathcal{W}}_{X\mathcal{X}t}^{s}], 
\end{cases}
\ \text{whenever} \ t<s<t_{k}.
\end{equation}
To simplify notation, we shall denote $\mathcal{Y}^{k}(s) = \mathcal{Y}_{X_{k}\mathcal{X},t_{k}}(s)$
and $\mathcal{Z}^{k}(s) = \mathcal{Z}_{X_{k}\mathcal{X},t_{k}}(s)$.
We use a similar argument as in the first part of the proof to show that these sequences converge strongly in $\s{H}_m$.

We first see that $\mathcal{Y}^{k}(s)$ and $\mathcal{Z}^{k}(s)$
remain bounded in $L_{\widetilde{\mathcal{W}}_{X\mathcal{X}t}}^{\infty}(t,T;\mathcal{H}_{m}).$
We pick a subsequence of $\mathcal{Z}^{k}(\cdot)$, which converges weakly
to $\widetilde{\mathcal{Z}}(\cdot)$ in $L_{\widetilde{\mathcal{W}}_{X\mathcal{X}t}}^{2}(t,T;\mathcal{H}_{m}).$
Then 
\[
\mathcal{Y}^{k}(s)\rightharpoonup \mathcal{X} - \dfrac{1}{\lambda}\int_{t}^{s}\widetilde{\mathcal{Z}}(\tau)\dif \tau = \mathcal{\widetilde{Y}}(s)\:\text{ weakly in}\:L^{2}(\Omega,\widetilde{\mathcal{W}}_{X\mathcal{X}t}^{s},\bb{P};L_{m}^{2}(\bb{R}^n,\bb{R}^n)),\:\forall s\in(t,T].
\]
As above, we write 
\begin{multline*}
J_{xt}^{k}(s) = \bb{E}\left[\int_{s}^{T}(D_{X}^{2}F(Y_{X_{k}t_{k}}(\tau) \otimes  m)(\mathcal{Y}^{k}(\tau)) - D_{X}^{2}F(Y_{Xt}(\tau) \otimes  m)(\mathcal{\widetilde{Y}}(\tau)))\dif \tau\right.\\
\left. + D_{X}^{2}F_{T}(Y_{X_{k}t_{k}}(T)) \otimes  m)(\mathcal{Y}^{k}(T)) - D_{X}^{2}F_{T}(Y_{Xt}(T) \otimes  m)(\widetilde{\mathcal{Y}}(T))|\widetilde{\mathcal{W}}_{X\mathcal{X}t}^{s}\right],
\end{multline*}
which is an element of $L^{2}(\Omega,\widetilde{\mathcal{W}}_{X\mathcal{X}t}^{s},\bb{P};L_{m}^{2}(\bb{R}^n;\bb{R}^n)).$
We show that it converges weakly to $0.$ We write $J_{xt}^{k}(s) = I_{xt}^{k}(s) + II_{xt}^{k}(s)$
with 
\begin{multline*}
I_{xt}^{k}(s) = \bb{E}\left[\int_{s}^{T}(D_{X}^{2}F(Y_{X_{k}t_{k}}(\tau) \otimes  m)(\mathcal{Y}^{k}(\tau)) - D_{X}^{2}F(Y_{Xt}(\tau) \otimes  m)(\mathcal{Y}^{k}(\tau)))\dif \tau\right.
\\
\left. + D_{X}^{2}F_{T}(Y_{X_{k}t_{k}}(T)) \otimes  m)(\mathcal{Y}^{k}(T)) - D_{X}^{2}F_{T}(Y_{Xt}(T) \otimes  m)(\mathcal{Y}^{k}(T))|\widetilde{\mathcal{W}}_{X\mathcal{X}t}^{s}\right]
\end{multline*}
and 
\begin{multline*}
II_{xt}^{k}(s) = \bb{E}\left[\int_{s}^{T}(D_{X}^{2}F(Y_{Xt}(\tau) \otimes  m)(\mathcal{Y}^{k}(\tau)) - D_{X}^{2}F(Y_{Xt}(\tau) \otimes  m)(\mathcal{\widetilde{Y}}(\tau)))\dif \tau\right.
\\
\left.\left. + D_{X}^{2}F_{T}(Y_{Xt}(T) \otimes  m)(\mathcal{Y}^{k}(T)) - D_{X}^{2}F_{T}(Y_{Xt}(T) \otimes  m)(\mathcal{\widetilde{Y}}(T))\right|\mathcal{W}_{X,\mathcal{X},t}^{s}\right].
\end{multline*}
Once again, it is easy to see that $II_{xt}^{k}(s)\rightharpoonup 0$ weakly in $L^{2}(\Omega,\widetilde{\mathcal{W}}_{X\mathcal{X}t}^{s},\bb{P};L_{m}^{2}(\bb{R}^n;\bb{R}^n))$, for any $s.$ Using (\ref{eq:8-110}) and similar reasoning as
for $I_{xt}^{\epsilon}(s)$, we have 
\[
\bb{E}\int_{\bb{R}^n}|I_{xt}^{k}(s)|\dif m(x)\rightarrow0,\,\forall s
\]
and as above it follows that $I_{xt}^{k}(s)\rightharpoonup 0$ weakly in $L^{2}(\Omega,\widetilde{\mathcal{W}}_{X\mathcal{X}t}^{s},\bb{P};L_{m}^{2}(\bb{R}^n;\bb{R}^n))$
, for any $s>t.$ It follows that 
\[
\left.\mathcal{Z}^{k}(s)\rightharpoonup \bb{E}\left[\int_{s}^{T}D_{X}^{2}F(Y_{Xt}(\tau) \otimes  m(\mathcal{\widetilde{Y}}(\tau))\dif \tau + D_{X}^{2}F_{T}(Y_{Xt}(T) \otimes  m)(\mathcal{\widetilde{Y}}(T))\right|\mathcal{W}_{X,\mathcal{X},t}^{s}\right]
\]
 weakly in $L^{2}(\Omega,\widetilde{\mathcal{W}}_{X\mathcal{X}t}^{s},\bb{P};L_{m}^{2}(\bb{R}^n;\bb{R}^n)),$
for any $s>t.$
Then the weak limits $\mathcal{\widetilde{Y}}(s)$
and $\mathcal{\widetilde{Z}}(s)$ satisfy 
\begin{equation}\label{eq:8-16}
\begin{aligned}
\mathcal{\widetilde{Y}}(s) &= \mathcal{X} - \dfrac{1}{\lambda}\int_{t_{k}}^{s}\mathcal{\widetilde{Z}}(\tau)\dif \tau,\:s>t,\\
\mathcal{\widetilde{Z}}(s) &= \bb{E}\left[\int_{s}^{T}D_{X}^{2}F(Y_{Xt}(\tau) \otimes  m)(\mathcal{\widetilde{Y}}(\tau))\dif \tau + D_{X}^{2}F_{T}(Y_{Xt}(T) \otimes  m)(\mathcal{\widetilde{Y}}(T))|\widetilde{\mathcal{W}}_{X\mathcal{X}t}^{s}\right].
\end{aligned}
\end{equation}
By uniqueness of solutions to the forward-backward system, $\mathcal{\widetilde{Y}}(s) = \mathcal{Y}_{X\mathcal{X}t}(s),\:\mathcal{\widetilde{Z}}(s) = \mathcal{Z}_{X\mathcal{X}t}(s).$
From the uniqueness of the limit, the whole sequence converges weakly,
for any s. The convergence is strong, by a reasoning identical to
that for $Y^{\epsilon},Z^{\epsilon}$ above. The continuity (\ref{eq:5-12})
is obtained and the proof of Proposition \ref{prop5-1} is completed.
$\blacksquare$ 

\subsection{PROOF OF THEOREM \ref{theo5-1}}

We begin a lemma reducing the Brownian increments appearing in certain inner products to generic Gaussian random variables.
\begin{lem}
\label{lem8-1}
Let $V:\s{P}_2(\bb{R}^n) \times [0,T] \to \bb{R}$ be such that $D_X^2 V(X  \otimes  m,t)$ is continuous.
Let $X \in \s{H}_{m,t}$, i.e.~let $X \in \s{H}_m$ be independent of $\s{W}_t$, and
let $N$ be any standard Gaussian $N$ in $\bb{R}^n$ that is independent of both $X$ and the filtration $\mathcal{W}_{t}$.
Then we have
\begin{equation}
\ip{D_{X}^{2}V(X \otimes  m,t + h)\left(\eta\dfrac{w(t + h) - w(t)}{\sqrt{h}}\right)}{\eta\dfrac{w(t + h) - w(t)}{\sqrt{h}}} = \ip{D_{X}^{2}V(X \otimes  m,t + h)(\eta N)}{\eta N}.\label{eq:8-17}
\end{equation}
\end{lem}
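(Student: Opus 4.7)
The plan is to observe that both $\mathcal{N}_1 := \frac{w(t+h)-w(t)}{\sqrt{h}}$ and $N$ are standard Gaussian vectors in $\bb{R}^n$ that are jointly independent of $X$ and of the post-$(t+h)$ Wiener increments, and that the left-hand side of \eqref{eq:8-17} is, via the representation in Proposition \ref{prop5-1}, the expectation of a fixed Borel functional of $(X, \mathcal{N}, \{w(s)-w(t+h)\}_{s \in [t+h,T]})$. Equality in law of these triples will then force the two expectations to agree.

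First I would verify the joint-distribution claim. For $N$, the hypotheses give independence from $X$ and from $\s{W}_{t+h}$, and standard normality by assumption. For $\mathcal{N}_1$, the independent-increments property of $w$ yields independence from $\s{W}_{t+h}$, while $X \in \s{H}_{m,t}$ gives independence from $X$; standard normality is immediate from the scaling. Consequently, both triples $(X, \mathcal{N}_1, \{w(s)-w(t+h)\})$ and $(X, N, \{w(s)-w(t+h)\})$ consist of three mutually independent components with identical marginals, so they have the same joint law. In particular, both $\sigma\mathcal{N}_1$ and $\sigma N$ lie in $\s{H}_{m,t+h}$, making each side of \eqref{eq:8-17} well defined.

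Next I would apply Proposition \ref{prop5-1}(iii) to write $D_X^2 V(X\otimes m, t+h)(\sigma\mathcal{N}) = \mathcal{Z}_{X,\sigma\mathcal{N},t+h}(t+h)$, where $(\mathcal{Y}_{X,\sigma\mathcal{N},t+h}, \mathcal{Z}_{X,\sigma\mathcal{N},t+h})$ is the unique solution of the linear system \eqref{eq:5-11} starting at $t+h$, driven by the forward process $Y_{X,t+h}$ from \eqref{eq:3-7}-\eqref{eq:3-8}. Since both FBSDEs admit unique $L^2$ solutions obtained by Picard iteration, and since the conditioning $\sigma$-algebra $\sigma(X,\sigma\mathcal{N}) \vee \s{W}_{t+h}^s$ is itself generated by the triple above, each Picard iterate is a Borel function of $(X, \mathcal{N}, \{w(s)-w(t+h)\}_{s \in [t+h,T]})$, and this measurability survives the $L^2$ limit. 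Consequently there is a Borel map $\Psi_h$ with
\[
\ip{D_X^2 V(X\otimes m, t+h)(\sigma\mathcal{N})}{\sigma\mathcal{N}} = \bb{E}\,\Psi_h\del{X, \mathcal{N}, \{w(s)-w(t+h)\}_{s \in [t+h,T]}},
\]
and equality of the two joint laws from the previous step gives \eqref{eq:8-17} immediately.

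The main obstacle is the measurability assertion: one must check that the conditional expectations in \eqref{eq:3-8} and \eqref{eq:5-11} can be written, at each Picard step, as Borel functions of the inputs in a way stable under the $L^2$ limit. This is routine for linear FBSDEs, but the mild subtlety is that the conditioning filtration depends on the parameter $\mathcal{N}$; the cleanest fix is to rewrite the conditional expectation explicitly as integration against the Gaussian transition kernel of the post-$(t+h)$ Brownian increments, using the independence of $(X,\mathcal{N})$ from those increments, before passing to the limit.
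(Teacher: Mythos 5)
Your central idea---that both sides are expectations of the same functional of random inputs whose joint laws coincide---is exactly the mechanism the paper uses. But your implementation has a genuine gap: you invoke Proposition \ref{prop5-1}(iii) to represent $D_X^2V(X\otimes m,t+h)(\sigma\mathcal{N})$ as $\mathcal{Z}_{X,\sigma\mathcal{N},t+h}(t+h)$, and that representation is available only when $V$ is the value function \eqref{eq:V(Xotimes m,t) def}. Lemma \ref{lem8-1} is stated for an arbitrary $V$ with continuous $D_X^2V$, and this generality is actually needed: in the uniqueness half of Theorem \ref{theo5-1} the lemma is applied (through Lemma \ref{lem:V(X(t+h),t+h)expansion}) to an arbitrary classical solution of the Bellman equation, for which no FBSDE representation of $D_X^2V$ is known a priori. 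So your argument proves the identity only for the value function, which is strictly weaker than the statement.

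The paper's proof avoids the FBSDE entirely and needs no information about the post-$(t+h)$ increments. It writes the quadratic form as the $\epsilon\to 0$ limit of difference quotients of $\ip{D_XV((X+\epsilon\sigma\mathcal{N}_1)\otimes m,t+h)}{\sigma\mathcal{N}_1}$ with $\mathcal{N}_1=(w(t+h)-w(t))/\sqrt{h}$, uses the correspondence $D_XV(X\otimes m,t+h)=D\od{V}{m}(X\otimes m,t+h)(X)$ (Proposition \ref{prop:D_XF}, valid for any sufficiently regular $V$, not just the value function) so that every term is an expectation of a fixed Borel function of the pair $(X,\mathcal{N}_1)$ and of the measure $(X+\epsilon\sigma\mathcal{N}_1)\otimes m$, and then observes that $(X,\mathcal{N}_1)$ and $(X,N)$ have the same joint law, whence the measures and the expectations coincide term by term. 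If you want a self-contained fix for general $V$, the cleanest route is to note that by \eqref{eq:2-229} the quantity $\ip{D_X^2V(X\otimes m,t+h)(Z)}{Z}$ is the second derivative at $\theta=0$ of the scalar function $\theta\mapsto V((X+\theta Z)\otimes m,t+h)$, and this function is literally the same for $Z=\sigma\mathcal{N}_1$ and $Z=\sigma N$ because $(X+\theta\sigma\mathcal{N}_1)\otimes m=(X+\theta\sigma N)\otimes m$ for every $\theta$. Your Picard/measurability discussion, while sound for the value function, then becomes unnecessary.
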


\begin{proof}
Using the definition of the second derivative as a limit and the representation
of the first derivative, see (\ref{eq:4-11}) we have 
\begingroup
\allowdisplaybreaks
\begin{multline*}
\ip{D_{X}^{2}V(X \otimes  m,t + h)\del{\eta\dfrac{w(t + h) - w(t)}{\sqrt{h}}}}{\eta\dfrac{w(t + h) - w(t)}{\sqrt{h}}}\\
 = \lim_{\epsilon\to 0} \dfrac{1}{\epsilon}\left(\ip{D\dod{V}{m}\del{\del{X + \epsilon\eta\dfrac{w(t + h) - w(t)}{\sqrt{h}}} \otimes  m,t + h}\left(X + \epsilon\eta\dfrac{w(t + h) - w(t)}{\sqrt{h}}\right)}{\eta\dfrac{w(t + h) - w(t)}{\sqrt{h}}}\right.\\
 \left. - \ip{D\dod{V}{m}(X \otimes  m,t + h)(X)}{\eta\dfrac{w(t + h) - w(t)}{\sqrt{h}}}\right).
\end{multline*}\endgroup
Now $X$ and $\dfrac{w(t + h) - w(t)}{\sqrt{h}}$ are independent, as are $X$ and $N$, so $X + \epsilon\eta\dfrac{w(t + h) - w(t)}{\sqrt{h}}$ and $X + \epsilon\eta N$ have the same law.
In like manner, the probability measure $\del{X + \epsilon\eta\dfrac{w(t + h) - w(t)}{\sqrt{h}}} \otimes  m$
depends only on the marginals, and thus it is equal to $(X + \epsilon\eta N) \otimes  m.$
The right-hand side is therefore equal to
\begin{equation*}
\lim_{\epsilon\to 0} \dfrac{1}{\epsilon}\left(\ip{D\dod{V}{m}\del{\del{X + \epsilon\eta N} \otimes  m,t + h}(X + \epsilon\eta N)}{\eta N}  - \ip{D\dod{V}{m}(X \otimes  m,t + h)(X)}{\eta N}\right),
\end{equation*}
which is equal to the right-hand side of \eqref{eq:8-17}. $\blacksquare$ 
\end{proof}
We next obtain a variant of the continuity property (\ref{eq:5-10}) from Proposition \ref{prop5-1}.
\begin{lem}
\label{lem8-2} Let $t_{k}\downarrow t$, and {\color{black}$X_{k},\mathcal{X}_{k} \in \s{H}_{\color{black}m,t_k}$ so that $X_{k}\rightarrow X$
in $\mathcal{H}_{m}$ and $\displaystyle\sup_k\|\mathcal{X}_{k}\|_{\mathcal{H}_{m}}<\infty$.}
Then
\begin{equation}
\bb{E}\int_{\bb{R}^n}\abs{D_{X}^{2}V(X_{k} \otimes  m,t_{k})(\mathcal{X}_{k}) - D_{X}^{2}V(X \otimes  m,t_{k})(\mathcal{X}_{k})}\dif m(x)\;\rightarrow0\label{eq:8-170}
\end{equation}
\end{lem}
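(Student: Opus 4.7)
The plan is to leverage the representation $D_X^2 V(X \otimes m, t_k)(\s{X}) = \s{Z}_{X \s{X} t_k}(t_k)$ from Proposition \ref{prop5-1}(iii) and to compare the two solutions of system \eqref{eq:5-11} driven by $(Y_{X_k t_k}, Z_{X_k t_k})$ and $(Y_{X t_k}, Z_{X t_k})$ with the same terminal data $\s{X}_k$. First, invoking Remark \ref{rem3-1}, I would enlarge both conditioning $\sigma$-algebras $\s{W}_{X_k \s{X}_k t_k}^s$ and $\s{W}_{X \s{X}_k t_k}^s$ to a common extension $\s{B} \vee \s{W}_{t_k}^s$ with $\s{B} \supset \sigma(X_k, X, \s{X}_k)$ and $\s{B}$ independent of $\s{W}_{t_k}$; the two conditional expectations are unchanged under this replacement.

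Writing $\Delta \s{Y}(s) := \s{Y}_{X_k \s{X}_k t_k}(s) - \s{Y}_{X \s{X}_k t_k}(s)$ and $\Delta \s{Z}(s) := \s{Z}_{X_k \s{X}_k t_k}(s) - \s{Z}_{X \s{X}_k t_k}(s)$ and subtracting the two systems, I obtain $\Delta \s{Y}(s) = -\frac{1}{\lambda}\int_{t_k}^s \Delta \s{Z}(\tau)\dif \tau$ and a conditional-expectation formula for $\Delta \s{Z}(s)$ whose integrand splits as $H_F^{(1)}(\tau) + H_F^{(2)}(\tau)$, with
\begin{align*}
H_F^{(1)}(\tau) &= \bigl[D_X^2 F(Y_{X_k t_k}(\tau)\otimes m) - D_X^2 F(Y_{X t_k}(\tau)\otimes m)\bigr](\s{Y}_{X \s{X}_k t_k}(\tau)),\\
H_F^{(2)}(\tau) &= D_X^2 F(Y_{X_k t_k}(\tau)\otimes m)(\Delta \s{Y}(\tau)),
\end{align*}
and analogous terminal terms $H_{F_T}^{(1)}(T), H_{F_T}^{(2)}(T)$ for $F_T$.

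From Proposition \ref{prop4-2} one has $\enVert{Y_{X_k t_k}(s) - Y_{X t_k}(s)} \leq C_T \enVert{X_k - X} \to 0$, and by the a priori bound \eqref{eq:8-4} applied to the linear system \eqref{eq:5-11}, $\s{Y}_{X \s{X}_k t_k}$ is bounded in $\s{H}_m$ uniformly in $k$ (since $\s{X}_k$ is bounded). Hypothesis \eqref{eq:5-5} then yields $\bb{E}\int |H_F^{(1)}(\tau)|\dif m \to 0$ and $\bb{E}\int |H_{F_T}^{(1)}(T)|\dif m \to 0$, uniformly in $\tau$, while hypothesis \eqref{eq:5-50} gives $\bb{E}\int |H_F^{(2)}(\tau)|\dif m \leq c \bb{E}\int|\Delta \s{Y}(\tau)|\dif m$, likewise for $F_T$. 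Using the $L^1$-contractivity of conditional expectation and combining with the integral formula for $\Delta \s{Y}$, I arrive at
\[
\bb{E}\int |\Delta \s{Z}(s)|\dif m \leq \eta_k + \bigl(c(T-s) + c_T\bigr)\cdot \frac{1}{\lambda}\int_{t_k}^T \bb{E}\int |\Delta \s{Z}(\tau)|\dif m\,\dif \tau,
\]
where $\eta_k \to 0$. A straightforward Gronwall argument then forces $\bb{E}\int|\Delta \s{Z}(s)|\dif m \to 0$ uniformly in $s \in [t_k,T]$; evaluating at $s = t_k$ gives \eqref{eq:8-170}.

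The main obstacle is that, unlike in Proposition \ref{prop5-1}, the target norm is $L^1$ rather than $L^2$ and $\s{X}_k$ is only assumed bounded (not convergent); consequently the strong-convergence techniques used there are unavailable, and one must rely essentially on the quantitative $L^1$-continuity assumption \eqref{eq:5-5} together with the $L^1 \to L^1$ boundedness \eqref{eq:5-50} of $D_X^2 F, D_X^2 F_T$. Verifying that the decomposition $H_F = H_F^{(1)} + H_F^{(2)}$ cleanly isolates these two ingredients --- and that the change of conditioning $\sigma$-algebra is legitimate (so that the two $\s{Z}$-processes may actually be subtracted inside a single conditional expectation) --- is the delicate part of the argument.
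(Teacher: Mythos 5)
Your proposal is essentially the paper's own proof: the same linearized forward--backward representation from Proposition \ref{prop5-1}(iii), the same enlargement of the conditioning $\sigma$-algebras to a common one via Remark \ref{rem3-1}, the same splitting of the difference of the two $\mathcal{Z}$-equations into an ``operator-difference'' term handled by the $L^1$-continuity \eqref{eq:5-5} and a ``state-difference'' term handled by the $L^1$-boundedness \eqref{eq:5-50}, with the uniform bound \eqref{eq:8-4} supplying the required norm bound on the argument. Two points to tighten: (i) the closing step is not a Gronwall iteration --- your right-hand integral runs over all of $[t_k,T]$, so the inequality only closes by absorption under the smallness condition \eqref{eq:5-120}, which is exactly what the paper invokes; and (ii) since $\delta_X(\epsilon,M)$ in \eqref{eq:5-5} depends on the base point while your base point $Y_{Xt_k}(\tau)$ moves with $k$, you should compare both $Y_{X_kt_k}(\tau)$ and $Y_{Xt_k}(\tau)$ to the fixed limit $Y_{Xt}(\tau)$ using the time-continuity estimate of Proposition \ref{prop4-5} (cf.~\eqref{eq:8-110}), as the paper does.
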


\begin{proof}
We consider the system
\begin{equation}\label{eq:8-10}
\begin{aligned}
\mathcal{Y}_{X_{k}\mathcal{X}_{k}t_{k}}(s) &= \mathcal{X}_{k} - \dfrac{1}{\lambda}\int_{t_{k}}^{s}\mathcal{Z}_{X_{k}\mathcal{X}_{k}t_{k}}(\tau)\dif \tau,\:s>t_{k},\\
\mathcal{Z}_{X_{k}\mathcal{X}_{k}t_{k}}(s) &= \left.\bb{E}\left[\int_{s}^{T}D_{X}^{2}F(Y_{X_{k}t_{k}}(\tau) \otimes  m)(\mathcal{Y}_{X_{k}\mathcal{X}_{k}t_{k}}(\tau))\dif \tau + D_{X}^{2}F_{T}(Y_{X_{k}t_{k}}(T) \otimes  m)(\mathcal{Y}_{X_{k}\mathcal{X}_{k}t_{k}}(T))\right|\mathcal{W}_{X_{k}\mathcal{X}_{k}t_{k}}^{s}\right]
\end{aligned}
\end{equation}
and the equivalent for $\mathcal{Y}_{X\mathcal{X}_{k}t_{k}}(s),\mathcal{Z}_{X\mathcal{X}_{k}t_{k}}(s).$
By Remark \ref{rem3-1}, we can condition on the common $\sigma$-algebra $\mathcal{W}_{X_{k}X\,\mathcal{X}_{k}t_{k}}^{s} = \mathcal{W}_{X_{k}\,\mathcal{X}_{k}t_{k}}^{s} \vee \mathcal{W}_{X\,\mathcal{X}_{k}t_{k}}^{s}$. So we can write 
\begin{equation}\label{eq:8-171}
\begin{aligned}
\mathcal{Y}_{X_{k}\mathcal{X}_{k}t_{k}}(s) &= \mathcal{X}_{k} - \dfrac{1}{\lambda}\int_{t_{k}}^{s}\mathcal{Z}_{X_{k}\mathcal{X}_{k}t_{k}}(\tau)\dif \tau,\:s>t_{k}\\
\mathcal{Z}_{X_{k}\mathcal{X}_{k}t_{k}}(s) &= \left.\bb{E}\left[\int_{s}^{T}D_{X}^{2}F(Y_{X_{k}t_{k}}(\tau) \otimes  m)(\mathcal{Y}_{X_{k}\mathcal{X}_{k}t_{k}}(\tau))\dif \tau + D_{X}^{2}F_{T}(Y_{X_{k}t_{k}}(T) \otimes  m)(\mathcal{Y}_{X_{k}\mathcal{X}_{k}t_{k}}(T))\right|\mathcal{W}_{X_{k}X\mathcal{X}_{k}t_{k}}^{s}\right]
\end{aligned}
\end{equation}
and 
\begin{equation}\label{eq:8-172}
\begin{aligned}
\mathcal{Y}_{X\mathcal{X}_{k}t_{k}}(s) &= \mathcal{X}_{k} - \dfrac{1}{\lambda}\int_{t_{k}}^{s}\mathcal{Z}_{X\mathcal{X}_{k}t_{k}}(\tau)\dif \tau,\:s>t_{k}\\
\mathcal{Z}_{X\mathcal{X}_{k}t_{k}}(s) &= \left.\bb{E}\left[\int_{s}^{T}D_{X}^{2}F(Y_{Xt_{k}}(\tau) \otimes  m)(\mathcal{Y}_{X\mathcal{X}_{k}t_{k}}(\tau))\dif \tau + D_{X}^{2}F_{T}(Y_{Xt_{k}}(T) \otimes  m)(\mathcal{Y}_{X\mathcal{X}_{k}t_{k}}(T))\right|\mathcal{W}_{X_{k}X\mathcal{X}_{k}t_{k}}^{s}\right].
\end{aligned}
\end{equation}
By Proposition \ref{prop5-1}, we have 
\begin{equation}
\mathcal{Z}_{X_{k}\mathcal{X}_{k}t_{k}}(t_{k}) = D_{X}^{2}V(X_{k} \otimes  m,t_{k})(\mathcal{X}_{k}),\;\mathcal{Z}_{X\mathcal{X}_{k}t_{k}}(t_{k}) = D_{X}^{2}V(X \otimes  m,t_{k})(\mathcal{X}_{k}).\label{eq:8-173}
\end{equation}
Therefore the result $(\ref{eq:8-170})$ will be a consequence of
\begin{equation}
\sup_{s\in[t_{k},T]}\bb{E}\int_{\bb{R}^n}|\mathcal{Y}_{X_{k}\mathcal{X}_{k}t_{k}}(s) - \mathcal{Y}_{X\mathcal{X}_{k}t_{k}}(s)|\dif m(x),\;\sup_{s\in[t_{k},T]}\bb{E}\int_{\bb{R}^n}|\mathcal{Z}_{X_{k}\mathcal{X}_{k}t_{k}}(s) - \mathcal{Z}_{X\mathcal{X}_{k}t_{k}}(s)|\dif m(x)\rightarrow0,\:\text{as}\:k\rightarrow + \infty\label{eq:8-174}
\end{equation}
We define 
\[
\widetilde{\mathcal{Y}}_{X_{k}X\mathcal{X}_{k}t_{k}}(s) = \mathcal{Y}_{X_{k}\mathcal{X}_{k}t_{k}}(s) - \mathcal{Y}_{X\mathcal{X}_{k}t_{k}}(s),\;\widetilde{\mathcal{Z}}_{X_{k}X\mathcal{X}_{k}t_{k}}(s) = \mathcal{Z}_{X_{k}\mathcal{X}_{k}t_{k}}(s) - \mathcal{Z}_{X\mathcal{X}_{k}t_{k}}(s)
\]
Then the pair $\del{\widetilde{\mathcal{Y}}_{X_{k}X\mathcal{X}_{k}t_{k}}(s), \widetilde{\mathcal{Z}}_{X_{k}X\mathcal{X}_{k}t_{k}}(s)}$
is the solution of the system 
\begin{equation}\label{eq:8-175}
\begin{aligned}
\widetilde{\mathcal{Y}}_{X_{k}X\mathcal{X}_{k}t_{k}}(s) &=  - \dfrac{1}{\lambda}\int_{t_{k}}^{s}\widetilde{\mathcal{Z}}_{X_{k}X\mathcal{X}_{k}t_{k}}(\tau)\dif \tau,\\
\widetilde{\mathcal{Z}}_{X_{k}X\mathcal{X}_{k}t_{k}}(s) &= \bb{E}\left[\int_{s}^{T}D_{X}^{2}F(Y_{Xt_{k}}(\tau) \otimes  m)(\widetilde{\mathcal{Y}}_{X_{k}X\mathcal{X}_{k}t_{k}}(\tau))\dif \tau \right. \\
&\quad \quad \left.\left.+ D_{X}^{2}F_{T}(Y_{Xt_{k}}(T) \otimes  m)(\widetilde{\mathcal{Y}}_{X_{k}X\mathcal{X}_{k}t_{k}}(T))\right|\mathcal{W}_{X_{k}X\mathcal{X}_{k}t_{k}}^{s}\right] + I_{X_{k}X\mathcal{X}_{k}t_{k}}(s),
\end{aligned}
\end{equation}
where
\begin{multline}
I_{X_{k}X\mathcal{X}_{k}t_{k}}(s) := \bb{E}\left[\int_{s}^{T}(D_{X}^{2}F(Y_{X_{k}t_{k}}(\tau) \otimes  m) - D_{X}^{2}F(Y_{Xt_{k}}(\tau) \otimes  m))(\mathcal{Y}_{X_{k}\mathcal{X}_{k}t_{k}}(\tau))\dif \tau\right.\label{eq:8-177}\\
\left.\left. + (D_{X}^{2}F_{T}(Y_{X_{k}t_{k}}(T) \otimes  m) - D_{X}^{2}F_{T}(Y_{Xt_{k}}(T) \otimes  m))(\mathcal{Y}_{X_{k}\mathcal{X}_{k}t_{k}}(T))\right|\mathcal{W}_{X_{k}X\mathcal{X}_{k}t_{k}}^{s}\right].
\end{multline}
We have 
\begin{multline}
\bb{E}\int_{\bb{R}^n}|I_{X_{k}X\mathcal{X}_{k}t_{k}}(s)|\dif m(x) \leq \label{eq:8-178}\\
\int_{s}^{T}\bb{E}\int_{\bb{R}^n}\abs{(D_{X}^{2}F(Y_{X_{k}t_{k}}(\tau) \otimes  m) - D_{X}^{2}F(Y_{Xt_{k}}(\tau) \otimes  m))(\mathcal{Y}_{X_{k}\mathcal{X}_{k}t_{k}}(\tau))}\dif m(x)\dif \tau \\
 + \bb{E}\int_{\bb{R}^n}\abs{(D_{X}^{2}F_{T}(Y_{X_{k}t_{k}}(T) \otimes  m) - D_{X}^{2}F_{T}(Y_{Xt_{k}}(T) \otimes  m))(\mathcal{Y}_{X_{k}\mathcal{X}_{k}t_{k}}(T))}\dif m(x).
\end{multline}
By the inequality (\ref{eq:8-4}) and the fact that $\mathcal{X}_{k}$
is bounded, we have 
\[
\sup_{s\in[t_{k},T]}||\mathcal{Y}_{X_{k}\mathcal{X}_{k}t_{k}}(s)|| \leq  C_{T}
\]
Since both
$Y_{X_{k}t_{k}}(s)$ and $Y_{Xt_{k}}(s)$ converge to $Y_{Xt}(s)$
in $\mathcal{H}_{m}$ for any $s$, we can then use the continuity property (\ref{eq:5-5}) and the bounds to deduce that
\[
l_{k}(s) := \bb{E}\int_{\bb{R}^n}|I_{X_{k}X\mathcal{X}_{k}t_{k}}(s)|\dif m(x)\to 0 \ \forall s \in \intoc{t,T}
\]
and that $l_{k}(s)$ is bounded.

 Now, we make use of the assumption (\ref{eq:5-50}) to write
\[
\bb{E}\int_{\bb{R}^n}|\widetilde{\mathcal{Z}}_{X_{k}X\mathcal{X}_{k}t_{k}}(s)|\dif m(x) \leq  c\int_{s}^{T}\bb{E}\int_{\bb{R}^n}|\widetilde{\mathcal{Y}}_{X_{k}X\mathcal{X}_{k}t_{k}}(\tau)|\dif m(x)\dif \tau + c_{T}\bb{E}\int_{\bb{R}^n}|\widetilde{\mathcal{Y}}_{X_{k}X\mathcal{X}_{k}t_{k}}(T)|\dif m(x)
\]
and, from the definition (\ref{eq:8-175}),
\[
|\widetilde{\mathcal{Y}}_{X_{k}X\mathcal{X}_{k}t_{k}}(\tau)| \leq \dfrac{1}{\lambda}\int_{t_{k}}^{\tau}|\widetilde{\mathcal{Z}}_{X_{k}X\mathcal{X}_{k}t_{k}}(\theta)|\dif \theta,\:|\widetilde{\mathcal{Y}}_{X_{k}X\mathcal{X}_{k}t_{k}}(T)| \leq \dfrac{1}{\lambda}\int_{t_{k}}^{T}|\widetilde{\mathcal{Z}}_{X_{k}X\mathcal{X}_{k}t_{k}}(\theta)|\dif \theta
\]
Combining the two previous inequalities yields 
\[
\del{1 - \dfrac{1}{\lambda}(c\dfrac{T^{2}}{2} + c_{T}T)}\int_{t_{k}}^{T}\bb{E}\int_{\bb{R}^n}|\widetilde{\mathcal{Z}}_{X_{k}X\mathcal{X}_{k}t_{k}}(s)|\dif m(x)\dif s \leq \int_{t_{k}}^{T}l_{k}(s)\dif s\rightarrow0.
\]
Thanks to (\ref{eq:5-120}) we obtain 
\[
\sup_{s\in[t_{k},T]}\bb{E}\int_{\bb{R}^n}|\widetilde{\mathcal{Y}}_{X_{k}X\mathcal{X}_{k}t_{k}}(s)|\dif m(x),\:\bb{E}\int_{\bb{R}^n}|\widetilde{\mathcal{Z}}_{X_{k}X\mathcal{X}_{k}t_{k}}(s)|\dif m(x)\rightarrow0
\]
 and thus (\ref{eq:8-174}) is proven. The proof of the Lemma is complete.
\end{proof}

Our final lemma proves that, under the regularity properties that $V(X,t)$ satisfies, we have a formula that ``lifts'' the usual It\^o formula to our Hilbert space setting.
\begin{lem} \label{lem:V(X(t+h),t+h)expansion}
	Suppose $V:\s{P}_2(\bb{R}^n) \times [0,T] \to \bb{R}$ is any function such that
	\begin{itemize}
		\item $V$ is continuous and satisfies the estimates \eqref{eq:4-2} and \eqref{eq:4-26};
		\item $D_X V(X  \otimes  m,t)$ exists, and {\color{black} for each $m\in\mathcal{P}_2(\mathbb{R}^n)$, it is separately (i.e. marginally only but not jointly) continuous in $X$ and in $t$ in the sense of \eqref{eq:4-4} and \eqref{eq:4-27}};
		\item $D_X^2 V(X  \otimes  m,t)$ exists, and {\color{black}for each $m\in\mathcal{P}_2(\mathbb{R}^n)$, it is sequentially continuous in the order pair $(X,t)$ in the sense of (\ref{eq:5-10}), and $D_X^2 V(X  \otimes  m,t)$ also satisfies the property \eqref{eq:5-1000}};
		\item the continuity property \eqref{eq:8-170} is satisfied.
	\end{itemize}
Let $X \in \s{H}_{m,t}$, let $v \in L^2_{\s{W}_{Xt}}(t,T;\s{H}_m) \cap \s{C}([t,T];\s{H}_m)$, and let $X(s) = X_{Xt}(s)$ be given by the SDE \eqref{eq:SDE}.
Then for $h > 0$ small enough, we have
\begin{multline} \label{eq:V(X(t+h),t+h)expansion}
V(X_{Xt}(t + h) \otimes  m,t + h) = V(X \otimes  m,t + h)   + \ip{D_{X}V(X \otimes  m,t)}{\int_{t}^{t + h}v(s)\dif s}\\
+ \dfrac{h}{2}\ip{D_{X}^{2}V(X \otimes  m,t)\del{\eta N}}{\eta N}
+ {\color{black}R_h,}
\end{multline}{\color{black}
where $R_h:=R_h^1+R_h^2+R_h^3$ such that $\dfrac{1}{h} R_h\to 0$ as $h \to 0$; here
\begin{align}
R_h^1:=&\dfrac{1}{h}\ip{D_{X}V(X \otimes  m,t + h) - D_{X}V(X \otimes  m,t)}{\int_{t}^{t + h}v(s)\dif s},\\\nonumber
R_h^2:=&\dfrac{1}{h}\ipt{\int_{0}^{1}\int_{0}^{1}\theta D_{X}^{2}V\del{(X + \theta\mu\del{\int_{t}^{t + h}v(s)\dif s + \eta(w(t + h) - w(t))}} \otimes  m,t + h)\dif \theta d\mu}\\
&\del{\int_{t}^{t + h}v(s)\dif s + 2\eta(w(t + h) - w(t))}\ipbc{\int_{t}^{t + h}v(s)\dif s}
\end{align}
\small
\begin{align}\nonumber
R_h^3:=&\ipt{\int_{0}^{1}\int_{0}^{1}\theta D_{X}^{2}V\left(\left(X + \theta\mu\del{\int_{t}^{t + h}v(s)\dif s + \eta(w(t + h) - w(t))}\right) \otimes  m,t + h\right)\dif \theta d\mu}\\
&\del{\eta\dfrac{w(t + h) - w(t)}{\sqrt{h}}}\ipbc{\eta\dfrac{w(t + h) - w(t)}{\sqrt{h}}}- \dfrac{1}{2}\ip{D_{X}^{2}V(X \otimes  m,t + h)\del{\eta\dfrac{w(t + h) - w(t)}{\sqrt{h}}}}{\eta\dfrac{w(t + h) - w(t)}{\sqrt{h}}}.
\end{align}
\normalsize
}
\end{lem}

\begin{proof}	
	Since $V(X \otimes  m,t)$
	has a second derivative with respect to $X,$ we can begin with the following expansion: 
\begingroup
\allowdisplaybreaks
\begin{multline}
	V(X_{Xt}(t + h) \otimes  m,t + h) = V(X \otimes  m,t + h)  \label{eq:8-18}\\
	+ \ip{D_{X}V(X \otimes  m,t + h)}{\int_{t}^{t + h}v(s)\dif s + \eta(w(t + h) - w(t))} \\
	+ \ipt{\int_{0}^{1}\int_{0}^{1}\theta D_{X}^{2}V\del{X + \theta\mu\del{\int_{t}^{t + h}v(s)\dif s + \eta(w(t + h) - w(t))}} \otimes  m,t + h)\dif \theta d\mu}\\
	\del{\int_{t}^{t + h}v(s)\dif s + \eta(w(t + h) - w(t))}\ipbc{\int_{t}^{t + h}v(s)\dif s + \eta(w(t + h) - w(t))}.
	\end{multline}\endgroup
Since $D_X V(X  \otimes  m,t+h)$ is $\sigma(X)$-measurable and $X$ is independent of $\s{W}_t$, we have
	\begin{equation} \label{eq:8-18-1}
	\ip{D_{X}V(X \otimes  m,t + h)}{\eta(w(t + h) - w(t))} = 0.
	\end{equation}
	From the H\"older-in-time property (\ref{eq:4-27}) and the fact that $v(s)$ is bounded in $\s{H}_m$, we deduce
	\begin{equation} \label{eq:8-18-2}
	{\color{black}R_h^1=}\dfrac{1}{h}\ip{D_{X}V(X \otimes  m,t + h) - D_{X}V(X \otimes  m,t)}{\int_{t}^{t + h}v(s)\dif s} \to 0,\:\text{as}\:h\rightarrow0.
	\end{equation}
	Also, by the estimate \eqref{eq:5-1000} and again using the bound on $v(s)$, we deduce that
	\begin{multline} \label{eq:8-18-3}
	{\color{black}R_h^2=}\frac{1}{h}\ipt{\int_{0}^{1}\int_{0}^{1}\theta D_{X}^{2}V\left(\del{X + \theta\mu\del{\int_{t}^{t + h}v(s)\dif s + \eta(w(t + h) - w(t))}} \otimes  m,t + h\right)\dif \theta d\mu}\\
	\del{\int_{t}^{t + h}v(s)\dif s + {\color{black}2\eta}(w(t + h) - w(t))}\ipbc{\int_{t}^{t + h}v(s)\dif s} \to 0, \ h \to 0,
	\end{multline}
{\color{black}since $\lim_{h\rightarrow0}\frac{1}{h}\int_{t}^{t + h}v(s)\dif s<\infty$ and $\lim_{h\rightarrow0} \left(\int_{t}^{t + h}v(s)\dif s + {\color{black}2\eta}(w(t + h) - w(t))\right)\to 0$ and so a simple application of dominated convergence theorem warrants this convergence to zero.}
	We next prove that 
\small{\color{black}
	\begin{align}\nonumber
	R_h^3=&\ipt{\int_{0}^{1}\int_{0}^{1}\theta D_{X}^{2}V\left(\del{X + \theta\mu\del{\int_{t}^{t + h}v(s)\dif s + \eta(w(t + h) - w(t))}} \otimes  m,t + h\right)\dif \theta d\mu}\\\nonumber
	&\del{\eta\dfrac{w(t + h) - w(t)}{\sqrt{h}}}\ipbc{\eta\dfrac{w(t + h) - w(t)}{\sqrt{h}}}- \dfrac{1}{2}\ip{D_{X}^{2}V(X \otimes  m,t + h)\del{\eta\dfrac{w(t + h) - w(t)}{\sqrt{h}}}}{\eta\dfrac{w(t + h) - w(t)}{\sqrt{h}}}\\\label{eq:8-19}
 \to& 0, \ h \to 0.
	\end{align}}
\normalsize
	Set 
	\[
	X_{h}(\theta,\mu) = X + \theta\mu\del{\int_{t}^{t + h}v(s)\dif s + \eta(w(t + h) - w(t))},\;\mathcal{X}_{h} = \eta\dfrac{w(t + h) - w(t)}{\sqrt{h}}.
	\]
	Then the expression (\ref{eq:8-19}) is equivalent to
	\begin{equation}
	\int_{0}^{1}\int_{0}^{1}\theta L_h(\theta,\mu)\dif \theta \dif \mu\rightarrow 0.\label{eq:8-21}
	\end{equation}
	where
	\begin{equation*}
	L_h(\theta,\mu) := \bb{E}\int_{\bb{R}^n}(D_{X}^{2}V(X_{h}(\theta,\mu) \otimes  m,t + h)(\mathcal{X}_{h}) - D_{X}^{2}V(X \otimes  m,t + h)(\mathcal{X}_{h}))\cdot \mathcal{X}_{h}\dif m(x).
	\end{equation*}
	By estimate \eqref{eq:5-1000} we have
	\[
	|L_{h}(\theta,\mu)| \leq  C_{T}\enVert{\s{X}_h}^2 = C_T\text{tr }\eta\eta^{*},
	\]
	i.e.~$L_h(\theta,\mu)$ is bounded.
	So to prove \eqref{eq:8-21} it is enough to show that $L_h(\theta,\mu) \to 0$ pointwise as $h \to 0$.
	Now from the continuity estimate (\ref{eq:8-170}) we have
	\begin{equation}
	\bb{E}\int_{\bb{R}^n}|D_{X}^{2}V(X_{h}(\theta,\mu) \otimes  m,t + h)(\mathcal{X}_{h}) - D_{X}^{2}V(X \otimes  m,t + h)(\mathcal{X}_{h})|\dif m(x)\rightarrow0.\label{eq:8-20}
	\end{equation}
	Define
	\[
	L_{h\epsilon}(\theta,\mu) := \bb{E}\int_{\bb{R}^n}(D_{X}^{2}V(X_{h}(\theta,\mu) \otimes  m,t + h)(\mathcal{X}_{h}) - D_{X}^{2}V(X \otimes  m,t + h)(\mathcal{X}_{h}))\cdot \dfrac{\mathcal{X}_{h}}{1 + \epsilon|\mathcal{X}_{h}|}\dif m(x).
	\]
	Then by \eqref{eq:8-20} we have $L_{h\epsilon}(\theta,\mu) \to 0$ as $h \to 0$, for fixed $\epsilon > 0, \theta, \mu \in [0,1]$.
	Notice that
	\[
	L_{h}(\theta,\mu) - L_{h\epsilon}(\theta,\mu) := \epsilon \bb{E}\int_{\bb{R}^n}(D_{X}^{2}V(X_{h}(\theta,\mu) \otimes  m,t + h)(\mathcal{X}_{h}) - D_{X}^{2}V(X \otimes  m,t + h)(\mathcal{X}_{h}))\cdot \dfrac{\mathcal{X}_{h}|\mathcal{X}_{h}|}{1 + \epsilon|\mathcal{X}_{h}|}\dif m(x).
	\]
	By estimate \eqref{eq:5-1000} in Proposition \ref{prop5-1} we have 
	\[
	|L_{h}(\theta,\mu) - L_{h\epsilon}(\theta,\mu)| \leq \epsilon C_{T}\enVert{\s{X}_h}\enVert{\dfrac{\mathcal{X}_{h}|\mathcal{X}_{h}|}{1 + \epsilon|\mathcal{X}_{h}|}}.
	\]
	Taking the fourth moment of a Gaussian random variable, we have
	$\enVert{\dfrac{\mathcal{X}_{h}|\mathcal{X}_{h}|}{1 + \epsilon|\mathcal{X}_{h}|}} \leq \bb{E}\intcc{\abs{\s{X}_h}^4}^{1/2} \leq \sqrt{3}\enVert{\eta}^{2},$ and we conclude $|L_{h}(\theta,\mu) - L_{h\epsilon}(\theta,\mu)| \leq  C'_{T}\epsilon.$ Since $\epsilon > 0$ is arbitrary, we deduce that $L_h(\theta,\mu) \to 0$ pointwise, and property (\ref{eq:8-21}) follows. 
	
	Combining \eqref{eq:8-18-1}, \eqref{eq:8-18-2}, \eqref{eq:8-18-3}, and \eqref{eq:8-19} with \eqref{eq:8-18}, we obtain
	\begin{multline*}
	V(X_{Xt}(t + h) \otimes  m,t + h) = V(X \otimes  m,t + h)   + \ip{D_{X}V(X \otimes  m,t)}{\int_{t}^{t + h}v(s)\dif s}\\
	+ \dfrac{1}{2}\ip{D_{X}^{2}V(X \otimes  m,t + h)\del{\eta(w(t + h) - w(t))}}{\eta(w(t + h) - w(t))}
	+ o(h).
	\end{multline*}
	Using Lemma \ref{lem8-1} to rewrite the last term and applying the continuity of $D_{X}^{2}V(X \otimes  m,t)$ with respect to $t$, we deduce \eqref{eq:V(X(t+h),t+h)expansion}.
\end{proof}

We can now proceed with the proof of Theorem \ref{theo5-1}.
To prove that $V(X  \otimes  m,t)$ solves the Bellman equation \eqref{eq:Bellman}, first note that $V$ satisfies the hypotheses of Lemma \ref{lem:V(X(t+h),t+h)expansion} by Propositions \ref{prop4-1}, \ref{prop4-2}, \ref{prop4-5}, and \ref{prop5-1} as well as Lemma \ref{lem8-2}.
So we take \eqref{eq:V(X(t+h),t+h)expansion} with $X_{Xt}(s) = Y_{Xt}(s)$ (the optimal trajectory) and $v(s) = -\frac{1}{\lambda}Z_{Xt}(s)$ (the optimal control, which is continuous by \eqref{eq:ZXt(t+h)-ZXt(t)}), and we combine it with the optimality principle (\ref{eq:3-16}) to get
\begin{multline*}
V(X \otimes  m,t) - V(X \otimes  m,t + h) = \dfrac{1}{2\lambda}\int_{t}^{t + h}||Z_{Xt}(s)||^{2}\dif s + \int_{t}^{t + h}F(Y_{Xt}(s) \otimes  m)\dif s\\
 - \ip{D_{X}V(X \otimes  m,t)}{\dfrac{1}{\lambda}\int_{t}^{t + h}Z_{Xt}(s)\dif s} + \dfrac{h}{2}\ip{D_{X}^{2}V(X \otimes  m,t + h)(\eta N)}{\eta N} + o(h).
\end{multline*}
Letting $h \to 0$, we see that $V$ is right-differentiable and that the Bellman equation \eqref{eq:Bellman} is satisfied.

Conversely, suppose $V$ is any other classical solution to the Bellman equation \eqref{eq:Bellman}.
Note that it satisfies the regularity properties assumed in Lemma \ref{lem:V(X(t+h),t+h)expansion}.
we shall show that $V$ must be equal to the the value function defined by \eqref{eq:V(X,t) def}.
Take any $v \in L^2_{\s{W}_{Xt}}(t,T;\s{H}_m) \cap \s{C}([t,T];\s{H}_m)$, and let $X(s) = X_{Xt}(s)$ be given by the SDE \eqref{eq:SDE}.
Then by taking $t = s$ and $X = X_{Xt}(s)$ in \eqref{eq:V(X(t+h),t+h)expansion}, we get
\begin{multline*}
V(X_{Xt}(s + h) \otimes  m,s + h) = V(X_{Xt}(s) \otimes  m,s + h)   + \ip{D_{X}V(X_{Xt}(s) \otimes  m,t)}{\int_{s}^{s + h}v(\tau)\dif \tau}\\
+ \dfrac{h}{2}\ip{D_{X}^{2}V(X_{Xt}(s) \otimes  m,s)\del{\eta N}}{\eta N}
+ {\color{black}R_h}.
\end{multline*}
Subtract $V(X_{Xt}(s) \otimes  m,s)$ from both sides and divide by $h$, then send $h \to 0$.
Using the fact that $V(X  \otimes  m,t)$ is right-differentiable with respect to $t$, we see that $V(X_{Xt}(s) \otimes  m,s)$ is differentiable and
\begin{multline} \label{eq:verification}
\dod{}{s}\del{V(X_{Xt}(s) \otimes  m,s)} = \dpd{V}{t}(X_{Xt}(s) \otimes  m,s)
+ \ip{D_{X}V(X_{Xt}(s) \otimes  m,s)}{v(s)}\\
+ \frac{1}{2}\ip{D_{X}^{2}V(X_{Xt}(s) \otimes  m,s)\del{\eta N}}{\eta N}
= \ip{D_{X}V(X_{Xt}(s) \otimes  m,s)}{v(s)} \\
+ \frac{1}{2\lambda}\enVert{D_{X}V(X_{Xt}(s) \otimes  m,s)}^2 - F(X_{Xt}(s) \otimes  m)
\geq -\frac{\lambda}{2}\enVert{v(s)}^2 - F(X_{Xt}(s) \otimes  m).
\end{multline}
Integrating from $t$ to $T$ reveals $V(X,t) \leq J_{Xt}(v(\cdot)),$ where $J_{Xt}$ is the objective functional defined in \eqref{eq:J_X}.
In particular, we can take $v$ to be the optimal control $\hat v$, and thus $V(X,t) \leq J_{Xt}(\hat v(\cdot))$.
On the other hand, we can first solve the SDE
\begin{equation*}
X(s) = X - \frac{1}{\lambda}\int_t^s D_X(X(s)  \otimes  m,s)\dif s + \eta(w(s)-w(t))
\end{equation*}
and then take as a candidate control $v(s) = - \frac{1}{\lambda}D_X(X(s)  \otimes  m,s)$.
Then all the inequalities in \eqref{eq:verification} become equalities, and we see that $V(X,t) = J_{Xt}(v(\cdot))$.
It follows that $v(\cdot)$ must in fact be optimal and then $V$ is the value function.

This completes the proof of Theorem $\ref{theo5-1}.$ $\blacksquare$

\section{PROOFS FROM SECTION 6} \label{ap:D}

\subsection{PROOF OF THE CLAIM IN REMARK \ref{rem6-4}. }

Step $1$. We first aim to show, for each fixed $m\in\s{P}_2(\bb{R}^n)$ and a $\xi\in\bb{R}^n$, the continuity of $(\bar{Y}_{mt}(s,\xi,x),\bar{Z}_{mt}(s,\xi,x))$ in $x\in\bb{R}^n$  by making use of \eqref{convexF}-\eqref{convexFT} with $\lambda$ being only greater than $T(c_T+cT)$, a relatively smaller value. As before, we have \eqref{eq6-41}-\eqref{eq6-44}.
By using \eqref{convexF} to the second and third terms in the following third equality and \eqref{eq6-43} to forth and fifth terms in the same equality, one has
\begin{align*}
&\bb{E}\int_{\bb{R}^n} \left(\bar{Y}_{mt}(T,\xi,x')-\bar{Y}_{mt}(T,\xi,x)\right)\cdot \left(\bar{Z}_{mt}(T,\xi,x')-\bar{Z}_{mt}(T,\xi,x)\right) dm(\xi)\\
&-\bb{E}\int_{\bb{R}^n} \left(\bar{Y}_{mt}(t,\xi,x')-\bar{Y}_{mt}(t,\xi,x)\right)\cdot \left(\bar{Z}_{mt}(t,\xi,x')-\bar{Z}_{mt}(t,\xi,x)\right) dm(\xi)\\
=&\int_t^T d_s\left(\bb{E}\int_{\bb{R}^n} \left(\bar{Y}_{mt}(s,\xi,x')-\bar{Y}_{mt}(s,\xi,x)\right)\cdot \left(\bar{Z}_{mt}(s,\xi,x')-\bar{Z}_{mt}(s,\xi,x)\right) dm(\xi)\right)\\
=&\int_t^T\bb{E}\int_{\bb{R}^n} d_s\left(\bar{Y}_{mt}(s,\xi,x')-\bar{Y}_{mt}(s,\xi,x)\right)\cdot \left(\bar{Z}_{mt}(s,\xi,x')-\bar{Z}_{mt}(s,\xi,x)\right) dm(\xi)\\
&+\bb{E}\int_{\bb{R}^n} \left(\bar{Y}_{mt}(s,\xi,x')-\bar{Y}_{mt}(s,\xi,x)\right)\cdot d_s\left(\bar{Z}_{mt}(s,\xi,x')-\bar{Z}_{mt}(s,\xi,x)\right) dm(\xi)\\
=&\int_t^T\bigg[-\frac{1}{\lambda} \bb{E}\int_{\bb{R}^n} \left|\bar{Z}_{mt}(s,\xi,x')-\bar{Z}_{mt}(s,\xi,x)\right|^2 dm(\xi)\\
&-\bb{E}\int_{\bb{R}^n}\left(\bar{Y}_{mt}(s,\xi,x')-\bar{Y}_{mt}(s,\xi,x)\right)\cdot\left(D^{2}\dod{F}{m}(Y_{\cdot m t}(s) \otimes  m)(Y_{\xi mt}(s))\left(\bar{Y}_{mt}(s,\xi,x')-\bar{Y}_{mt}(s,\xi,x)\right)\right.\\
&+\left.\widetilde{\bb{E}}\int_{\bb{R}^n}D_2D_1\dod[2]{F}{m}(Y_{\cdot mt}(s) \otimes  m)(Y_{\xi mt}(s),\widetilde{Y}_{\zeta mt}(s))\left(\widetilde{\bar{Y}}_{mt}(s,\zeta,x')-\widetilde{\bar{Y}}_{mt}(s,\zeta,x)\right)\dif m(\zeta)\right)dm(\xi)\\
&+\bb{E}\int_{\bb{R}^n}\left(\bar{Y}_{mt}(s,\xi,x')-\bar{Y}_{mt}(s,\xi,x)\right)\cdot \left( \widetilde{\bb{E}}D_1\dod[2]{F}{m}(Y_{\cdot mt}(s) \otimes  m)(Y_{\xi mt}(s),\widetilde{Y}_{x'mt}(s))\right.\\
& \left.\left.-\widetilde{\bb{E}}D_1\dod[2]{F}{m}(Y_{\cdot mt}(s) \otimes  m)(Y_{\xi mt}(s),\widetilde{Y}_{xmt}(s))\right) dm(\xi)\right]ds\\
\leq &\int_t^T\left[-\frac{1}{\lambda} \bb{E}\int_{\bb{R}^n}\left|\bar{Z}_{mt}(s,\xi,x')-\bar{Z}_{mt}(s,\xi,x)\right|^2 dm(\xi)-\lambda_F\bb{E}\int_{\bb{R}^n}\left|\bar{Y}_{mt}(s,\xi,x')-\bar{Y}_{mt}(s,\xi,x)\right|^2 dm(\xi)\right.\\
&+\frac{c \lambda}{\lambda-T(c_T+cT)}\cdot \bb{E}\left(\int_{\bb{R}^n} \left|\bar{Y}_{mt}(s,\xi,x')-\bar{Y}_{mt}(s,\xi,x)\right|dm(\xi) \right)\cdot\left|x'-x\right|\bigg]ds.
\end{align*}
Thus, by using \eqref{convexFT} and \eqref{eq6-44} for the terminal cross term, also noting that, as a linear functional derivative with respect to $m$, $\bar{Y}_{mt}(t,\xi,x')=\bar{Y}_{mt}(t,\xi,x)=0$, we then obtain: \\
\begingroup
\allowdisplaybreaks\begin{align*}
&\int_t^T \bb{E}\int_{\bb{R}^n}\left|\bar{Z}_{mt}(s,\xi,x')-\bar{Z}_{mt}(s,\xi,x)\right|^2 dm(\xi)ds+\int_t^T \bb{E}\int_{\bb{R}^n}\left|\bar{Y}_{mt}(s,\xi,x')-\bar{Y}_{mt}(s,\xi,x)\right|^2 dm(\xi)ds\\
&+\bb{E}\int_{\bb{R}^n}\left|\bar{Y}_{mt}(T,\xi,x')-\bar{Y}_{mt}(T,\xi,x)\right|^2 dm(\xi)\\
\leq &C_4(c,c_T,\lambda,\lambda_F,T)\cdot\left(\int_t^T \bb{E}\int_{\bb{R}^n} \left|\bar{Y}_{mt}(s,\xi,x')-\bar{Y}_{mt}(s,\xi,x)\right| dm(\xi)ds\right.\\
&\left.+\bb{E}\int_{\bb{R}^n} \left|\bar{Y}_{mt}(T,\xi,x')-\bar{Y}_{mt}(T,\xi,x)\right| dm(\xi)\right)\cdot|x'-x|
\end{align*}\endgroup
\begin{align*}
\leq &\sqrt{2}\max\{\sqrt{T},1\}\cdot C_4(c,c_T,\lambda,\lambda_F,T)\cdot\left(\int_t^T \bb{E}\int_{\bb{R}^n} \left|\bar{Y}_{mt}(s,\xi,x')-\bar{Y}_{mt}(s,\xi,x)\right|^2 dm(\xi)ds\right.\\
&\left.+\bb{E}\int_{\bb{R}^n} \left|\bar{Y}_{mt}(T,\xi,x')-\bar{Y}_{mt}(T,\xi,x)\right|^2 dm(\xi)\right)^{1/2}\cdot|x'-x|,
\end{align*}
where $C_4(c,c_T,\lambda,\lambda_F,T):=\max\{\frac{1}{\lambda_F},\lambda\}\cdot\max\{c,c_T\}\cdot\frac{\lambda}{\lambda-T(c_T+cT)}$.
In particular, one has
\begin{align*}
&\int_t^T \bb{E}\int_{\bb{R}^n} \left|\bar{Y}_{mt}(s,\xi,x')-\bar{Y}_{mt}(s,\xi,x)\right|^2 dm(\xi)ds+\bb{E}\int_{\bb{R}^n} \left|\bar{Y}_{mt}(T,\xi,x')-\bar{Y}_{mt}(T,\xi,x)\right|^2 dm(\xi)\\
\leq& C_5(c,c_T,\lambda,\lambda_F,T)\cdot|x'-x|^{2},
\end{align*}
where $C_5(c,c_T,\lambda,\lambda_F,T):=2\max\{T,1\}\cdot C_4^2(c,c_T,\lambda,\lambda_F,T)$; besides,
\begin{align}\label{ZHolder}
&\int_t^T \bb{E}\int_{\bb{R}^n}\left|\bar{Z}_{mt}(s,\xi,x')-\bar{Z}_{mt}(s,\xi,x)\right|^2 dm(\xi)ds\leq C_5(c,c_T,\lambda,\lambda_F,T)\cdot|x'-x|^{2}.
\end{align}
Specifically, we can conclude that, for one-dimensional case $n=1$, $x\in\bb{R}$,
by the celebrated Kolmogorov continuity theorem, $\bar{Y}_{mt}(s,\xi,x)$ and $\bar{Z}_{mt}(s,\xi,x)$ possess a continuous version, denoted by $\bar{Y}_{mt}^{(1)}(s,\xi,x)$ and $\bar{Z}_{mt}^{(1)}(s,\xi,x)$ respectively, such that, for each $x\in\bb{R}$, the product measure of $\bb{P}\otimes\s{L}^1_{([t,T])}\otimes m\{(\omega,s,\xi)\in\Omega\times[t,T]\times\bb{R}:\bar{Y}_{mt}(\omega,s,\xi,x)\neq \bar{Y}_{mt}^{(1)}(\omega,s,\xi,x)\text{ or }\bar{Z}_{mt}(\omega,s,\xi,x)\neq \bar{Z}_{mt}^{(1)}(\omega,s,\xi,x)\}=0$ and, \textbf{for each} $(\omega,s,\xi)\in\Omega\times[t,T]\times\bb{R}$, $\bar{Y}_{mt}^{(1)}(\omega,s,\xi,x)$ and $\bar{Z}_{mt}^{(1)}(\omega,s,\xi,x)$ are locally $\gamma$-H\"{o}lder continuous in $x\in\bb{R}$ for every $0<\gamma<1/2$.\\
Step $2$. We now establish the continuity of $(\bar{Y}_{mt}(s,\xi,x),\bar{Z}_{mt}(s,\xi,x))$ in $m\in\s{P}_2(\bb{R}^n)$ for each fixed $\xi\in\bb{R}^n$ and $x\in\bb{R}^n$ by making use of \eqref{convexF}-\eqref{convexFT} with $\lambda$ being only greater than $T(c_T+cT)$. As before, we have 
\begin{align}\nonumber
&\bar{Y}_{m't}(s,\xi,x)-\bar{Y}_{mt}(s,\xi,x) \\
=&-\dfrac{1}{\lambda}\int_{t}^{s}\left(\bar{Z}_{m't}(\tau,\xi,x)-\bar{Z}_{mt}(\tau,\xi,x)\right)\dif \tau,\\
\nonumber
&\bar{Z}_{m' t}(s,\xi,x)-\bar{Z}_{mt}(s,\xi,x) \\\nonumber
=&\bb{E}\left[ \int_{s}^{T} \left(D^{2}\dod{F}{m}(Y_{\cdot m' t}(\tau) \otimes  m')(Y_{\xi m't}(\tau))\bar{Y}_{m't}(\tau,\xi,x)-D^{2}\dod{F}{m}(Y_{\cdot mt}(\tau) \otimes  m)(Y_{\xi mt}(\tau))\bar{Y}_{mt}(\tau,\xi,x)\right)\dif \tau \right.\\
\nonumber
&\ \ \ \ + D^{2}\dod{}{m}F_{T}(Y_{\cdot m't}(T) \otimes  m')(Y_{\xi m't}(T))\bar{Y}_{m't}(T,\xi,x)-D^{2}\dod{}{m}F_{T}(Y_{\cdot mt}(T) \otimes  m)(Y_{\xi mt}(T))\bar{Y}_{mt}(T,\xi,x)\\
\nonumber
&\ \ \ \ +\widetilde{\bb{E}}\int_{s}^{T}\int_{\bb{R}^n}D_2D_1\dod[2]{F}{m}(Y_{\cdot m't}(\tau) \otimes  m')(Y_{\xi m't}(\tau),\widetilde{Y}_{\zeta m't}(\tau))\widetilde{\bar{Y}}_{m't}(\tau,\zeta,x)\dif m'(\zeta) \dif \tau\\
\nonumber
&\ \ \ \ -\widetilde{\bb{E}}\int_{s}^{T}\int_{\bb{R}^n}D_2D_1\dod[2]{F}{m}(Y_{\cdot mt}(\tau) \otimes  m)(Y_{\xi mt}(\tau),\widetilde{Y}_{\zeta mt}(\tau))\widetilde{\bar{Y}}_{mt}(\tau,\zeta,x)\dif m(\zeta) \dif \tau\\
\nonumber
&\ \ \ \ +\widetilde{\bb{E}}\int_{\bb{R}^n}D_2D_1\dod[2]{F_{T}}{m}(Y_{\cdot m't}(T) \otimes  m')(Y_{\xi m't}(T),\widetilde{Y}_{\zeta m't}(T))\widetilde{\bar{Y}}_{m't}(T,\eta,x)\dif m'(\zeta)\\
\nonumber
&\ \ \ \ -\widetilde{\bb{E}}\int_{\bb{R}^n}D_2D_1\dod[2]{F_{T}}{m}(Y_{\cdot mt}(T) \otimes  m)(Y_{\xi mt}(T),\widetilde{Y}_{\zeta mt}(T))\widetilde{\bar{Y}}_{mt}(T,\zeta,x)\dif m(\zeta)
\end{align}
\begin{align}\nonumber
&+\int_{s}^{T}\widetilde{\bb{E}}D_1\dod[2]{F}{m}(Y_{\cdot m't}(\tau) \otimes  m')(Y_{\xi m't}(\tau),\widetilde{Y}_{xm't}(\tau))\dif \tau+\widetilde{\bb{E}}D_1\dod[2]{F_T}{m}(Y_{\cdot m't}(T) \otimes  m')(Y_{\xi m't}(T),\widetilde{Y}_{xm't}(T))\\
&- \left.\left.\int_{s}^{T}\widetilde{\bb{E}}D_1\dod[2]{F}{m}(Y_{\cdot mt}(\tau) \otimes  m)(Y_{\xi mt}(\tau),\widetilde{Y}_{xmt}(\tau))\dif \tau-\widetilde{\bb{E}}D_1\dod[2]{F_T}{m}(Y_{\cdot mt}(T) \otimes  m)(Y_{\xi mt}(T),\widetilde{Y}_{xmt}(T))\right|\mathcal{W}_{t}^{s}\right].
\end{align}
Before we proceed further, we note that, 
\begin{align*}
&D^{2}\dod{F}{m}(Y_{\cdot m' t}(\tau) \otimes  m')(Y_{\xi m't}(\tau))\bar{Y}_{m't}(\tau,\xi,x)-D^{2}\dod{F}{m}(Y_{\cdot mt}(\tau) \otimes  m)(Y_{\xi mt}(\tau))\bar{Y}_{mt}(\tau,\xi,x)\\
=&D^{2}\dod{F}{m}(Y_{\cdot m' t}(\tau) \otimes  m')(Y_{\xi m't}(\tau))\bar{Y}_{m't}(\tau,\xi,x)-D^{2}\dod{F}{m}(Y_{\cdot m't}(\tau) \otimes  m')(Y_{\xi m't}(\tau))\bar{Y}_{mt}(\tau,\xi,x)\\
&+D^{2}\dod{F}{m}(Y_{\cdot m' t}(\tau) \otimes  m')(Y_{\xi m't}(\tau))\bar{Y}_{mt}(\tau,\xi,x)-D^{2}\dod{F}{m}(Y_{\cdot mt}(\tau) \otimes  m)(Y_{\xi mt}(\tau))\bar{Y}_{mt}(\tau,\xi,x)
\end{align*}
and
\begin{align*}
&\int_{\bb{R}^n}D_2D_1\dod[2]{F}{m}(Y_{\cdot m't}(\tau) \otimes  m')(Y_{\xi m't}(\tau),\widetilde{Y}_{\zeta m't}(\tau))\widetilde{\bar{Y}}_{m't}(\tau,\zeta,x)\dif m'(\zeta)\\
&-\int_{\bb{R}^n}D_2D_1\dod[2]{F}{m}(Y_{\cdot mt}(\tau) \otimes  m)(Y_{\xi mt}(\tau),\widetilde{Y}_{\zeta mt}(\tau))\widetilde{\bar{Y}}_{mt}(\tau,\zeta,x)\dif m(\zeta)\\
=&\int_{\bb{R}^n}D_2D_1\dod[2]{F}{m}(Y_{\cdot m't}(\tau) \otimes  m')(Y_{\xi m't}(\tau),\widetilde{Y}_{\zeta m't}(\tau))\widetilde{\bar{Y}}_{m't}(\tau,\zeta,x)\dif m'(\zeta)\\
&-\int_{\bb{R}^n}D_2D_1\dod[2]{F}{m}(Y_{\cdot m't}(\tau) \otimes  m')(Y_{\xi m't}(\tau),\widetilde{Y}_{\zeta m't}(\tau))\widetilde{\bar{Y}}_{m't}(\tau,\zeta,x)\dif m(\zeta)\\
&+\int_{\bb{R}^n}D_2D_1\dod[2]{F}{m}(Y_{\cdot m't}(\tau) \otimes  m')(Y_{\xi m't}(\tau),\widetilde{Y}_{\zeta m't}(\tau))\widetilde{\bar{Y}}_{m't}(\tau,\zeta,x)\dif m(\zeta)\\
&-\int_{\bb{R}^n}D_2D_1\dod[2]{F}{m}(Y_{\cdot m't}(\tau) \otimes  m')(Y_{\xi m't}(\tau),\widetilde{Y}_{\zeta m't}(\tau))\widetilde{\bar{Y}}_{mt}(\tau,\zeta,x)\dif m(\zeta)\\
&+\int_{\bb{R}^n}D_2D_1\dod[2]{F}{m}(Y_{\cdot m't}(\tau) \otimes  m')(Y_{\xi m't}(\tau),\widetilde{Y}_{\zeta m't}(\tau))\widetilde{\bar{Y}}_{mt}(\tau,\zeta,x)\dif m(\zeta)\\
&-\int_{\bb{R}^n}D_2D_1\dod[2]{F}{m}(Y_{\cdot mt}(\tau) \otimes  m)(Y_{\xi mt}(\tau),\widetilde{Y}_{\zeta mt}(\tau))\widetilde{\bar{Y}}_{mt}(\tau,\zeta,x)\dif m(\zeta).
\end{align*}
By Assumption $(i)$ and $(ii)$,
\begin{align*}
&\left|D^{2}\dod{F}{m}(Y_{\cdot m' t}(\tau) \otimes  m')(Y_{\xi m't}(\tau)) -D^{2}\dod{F}{m}(Y_{\cdot mt}(\tau) \otimes  m)(Y_{\xi mt}(\tau)) \right|\\
\leq& C\left(W_2(Y_{\cdot m' t}(\tau) \otimes  m',Y_{\cdot m' t}(\tau) \otimes  m')+\left|Y_{\xi m't}(\tau)-Y_{\xi mt}(\tau)\right|\right), 
\end{align*}
and
\begin{align*}
&\left|D_2D_1\dod[2]{F}{m}(Y_{\cdot m't}(\tau) \otimes  m')(Y_{\xi m't}(\tau),\widetilde{Y}_{\zeta m't}(\tau))-D_2D_1\dod[2]{F}{m}(Y_{\cdot mt}(\tau) \otimes  m)(Y_{\xi mt}(\tau),\widetilde{Y}_{\zeta mt}(\tau))\right|\\
\leq& C\left(W_2(Y_{\cdot m' t}(\tau) \otimes  m',Y_{\cdot m' t}(\tau) \otimes  m')+\left|Y_{\xi m't}(\tau)-Y_{\xi mt}(\tau)\right|+\left|\widetilde{Y}_{\zeta m't}(\tau)-\widetilde{Y}_{\zeta mt}(\tau)\right|\right),
\end{align*}
where $C$ is a constant such that, without the cause of much confusion, those other $C$'s in the rest of this proof portion may differ from each other line by line in order to save more spaces.
By \eqref{eq:a priori1}, \eqref{convexF}, \eqref{Lipsm}, we also have \small
\begin{align*}
&\bb{E}\int_{\bb{R}^n}\left(\bar{Y}_{m't}(s,\xi,x)-\bar{Y}_{mt}(s,\xi,x)\right)\\
&\cdot\bigg(D^{2}\dod{F}{m}(Y_{\cdot m' t}(s) \otimes  m')(Y_{\xi m't}(s))\bar{Y}_{m't}(s,\xi,x)-D^{2}\dod{F}{m}(Y_{\cdot mt}(s) \otimes  m)(Y_{\xi mt}(s))\bar{Y}_{mt}(s,\xi,x)\\
&\ \ \ \ +\widetilde{\bb{E}} \int_{\bb{R}^n}D_2D_1\dod[2]{F}{m}(Y_{\cdot m't}(s) \otimes  m')(Y_{\xi m't}(s),\widetilde{Y}_{\zeta m't}(s))\widetilde{\bar{Y}}_{m't}(s,\zeta,x)\dif m'(\zeta)  \\
\nonumber
&\ \ \ \ -\widetilde{\bb{E}} \int_{\bb{R}^n}D_2D_1\dod[2]{F}{m}(Y_{\cdot mt}(s) \otimes  m)(Y_{\xi mt}(s),\widetilde{Y}_{\zeta mt}(s))\widetilde{\bar{Y}}_{mt}(s,\zeta,x)\dif m(\zeta) \\
&\ \ \ \ +\widetilde{\bb{E}}D_1\dod[2]{F}{m}(Y_{\cdot m't}(s) \otimes  m')(Y_{\xi m't}(s),\widetilde{Y}_{xm't}(s))-\widetilde{\bb{E}}D_1\dod[2]{F}{m}(Y_{\cdot mt}(s) \otimes  m)(Y_{\xi mt}(s),\widetilde{Y}_{xmt}(s))\bigg)dm(\xi)\\
\geq&\bb{E}\int_{\bb{R}^n}\left(\left(\bar{Y}_{m't}(s,\xi,x)-\bar{Y}_{mt}(s,\xi,x)\right)\cdot\widetilde{\bb{E}} \int_{\bb{R}^n}D_2D_1\dod[2]{F}{m}(Y_{\cdot m't}(s) \otimes  m')(Y_{\xi m't}(s),\widetilde{Y}_{\zeta m't}(s))\widetilde{\bar{Y}}_{m't}(s,\zeta,x)\dif \h{1pt}(m'-m)(\zeta)\right)dm(\xi)\\
&+\lambda_F\bb{E}\int_{\bb{R}^n}\left|\bar{Y}_{m't}(s,\xi,x)-\bar{Y}_{mt}(s,\xi,x)\right|^2dm(\xi)\\
&-C\sqrt{C_T(1+|x|^2)}\left(\bb{E}\int_{\bb{R}^n}\left|\bar{Y}_{m't}(s,\xi,x)-\bar{Y}_{mt}(s,\xi,x)\right|^2dm(\xi)\right)^{1/2}\bigg(\left(\bb{E}W_2^2(Y_{\cdot m' t}(s) \otimes  m',Y_{\cdot m' t}(s) \otimes  m')\right)^{1/2}\\
&\ \ \ \ \ \ \ \ + \left(\bb{E}\int_{\bb{R}^n}\left|Y_{\xi m't}(s)-Y_{\xi mt}(s)\right|^2dm(\xi)\right)^{1/2}\bigg). 
\end{align*}\normalsize
By \eqref{eq:6-15}, \eqref{eq:a priori1}, \eqref{xLipCon} and \eqref{LipsYxi}, \small
\begin{align*}
&\left|\widetilde{\bb{E}} \int_{\bb{R}^n}D_2D_1\dod[2]{F}{m}(Y_{\cdot m't}(s) \otimes  m')(Y_{\xi m't}(s),\widetilde{Y}_{\zeta m't}(s))\widetilde{\bar{Y}}_{m't}(s,\zeta,x)\dif \h{1pt}(m'-m)(\zeta)\right|^2\\
=&\left|\widetilde{\bb{E}} \int_{\widehat{\Omega}}D_2D_1\dod[2]{F}{m}(Y_{\cdot m't}(s) \otimes  m')(Y_{\xi m't}(s),\widetilde{Y}_{\hat{X}_{m'}(\widehat{\omega}) m't}(s))\widetilde{\bar{Y}}_{m't}(s,\hat{X}_{m'}(\widehat{\omega}),x)\dif \widehat{\bb{P}}(\widehat{\omega})\right.\\
&\left.-\widetilde{\bb{E}} \int_{\widehat{\Omega}}D_2D_1\dod[2]{F}{m}(Y_{\cdot m't}(s) \otimes  m')(Y_{\xi m't}(s),\widetilde{Y}_{\hat{X}_{m}(\widehat{\omega}) m't}(s))\widetilde{\bar{Y}}_{m't}(s,\hat{X}_{m}(\widehat{\omega}),x)\dif \widehat{\bb{P}}(\widehat{\omega})\right|^2\\
\leq& 2c^2\widetilde{\bb{E}}\int_{\widehat{\Omega}}\left|\widetilde{\bar{Y}}_{m't}(s,\hat{X}_{m'}(\widehat{\omega}),x)-\widetilde{\bar{Y}}_{m't}(s,\hat{X}_{m}(\widehat{\omega}),x)\right|^2d\widehat{\bb{P}}(\widehat{\omega})+2c^2 C_T(1+|x|^2) \cdot\widetilde{\bb{E}}\int_{\widehat{\Omega}}\left|\widetilde{Y}_{\hat{X}_{m'}(\widehat{\omega}) m't}(s)-\widetilde{Y}_{\hat{X}_{m}(\widehat{\omega}) m't}(s)\right|^2d\widehat{\bb{P}}(\widehat{\omega})\\
\leq& C(c,c_T,\lambda,T) C_T(1+|x|^2) \cdot\int_{\widehat{\Omega}}\left|\hat{X}_{m'}(\widehat{\omega})-\hat{X}_{m}(\widehat{\omega})\right|^2d\widehat{\bb{P}}(\widehat{\omega})\\
=& C(c,c_T,\lambda,T) C_T(1+|x|^2) W_2^2(m,m'),
\end{align*}\normalsize
where $\hat{X}_{m}$ and $\hat{X}_{m'}$ are random variables in $L^{2}(\widehat{\Omega},\widehat{\mathcal{A}},\widehat{\bb{P}};\bb{R}^{n})$ such that 
\begin{equation}
W_{2}^{2}(m,m')=\bb{E}[|\hat{X}_{m}-\hat{X}_{m'}|^{2}].
\end{equation}
Therefore, for the running cost term,
\small
\begin{align}\nonumber
&\bb{E}\int_{\bb{R}^n}\left(\bar{Y}_{m't}(s,\xi,x)-\bar{Y}_{mt}(s,\xi,x)\right)\\\nonumber
&\cdot\bigg(D^{2}\dod{F}{m}(Y_{\cdot m' t}(s) \otimes  m')(Y_{\xi m't}(s))\bar{Y}_{m't}(s,\xi,x)-D^{2}\dod{F}{m}(Y_{\cdot mt}(s) \otimes  m)(Y_{\xi mt}(s))\bar{Y}_{mt}(s,\xi,x)\\\nonumber
&\ \ \ \ +\widetilde{\bb{E}} \int_{\bb{R}^n}D_2D_1\dod[2]{F}{m}(Y_{\cdot m't}(s) \otimes  m')(Y_{\xi m't}(s),\widetilde{Y}_{\zeta m't}(s))\widetilde{\bar{Y}}_{m't}(s,\zeta,x)\dif m'(\zeta)  \\\nonumber
&\ \ \ \ -\widetilde{\bb{E}} \int_{\bb{R}^n}D_2D_1\dod[2]{F}{m}(Y_{\cdot mt}(s) \otimes  m)(Y_{\xi mt}(s),\widetilde{Y}_{\zeta mt}(s))\widetilde{\bar{Y}}_{mt}(s,\zeta,x)\dif m(\zeta) \\\nonumber
&\ \ \ \ +\widetilde{\bb{E}}D_1\dod[2]{F}{m}(Y_{\cdot m't}(s) \otimes  m')(Y_{\xi m't}(s),\widetilde{Y}_{xm't}(s))-\widetilde{\bb{E}}D_1\dod[2]{F}{m}(Y_{\cdot mt}(s) \otimes  m)(Y_{\xi mt}(s),\widetilde{Y}_{xmt}(s))\bigg)dm(\xi)
\end{align}
\begin{align}\nonumber
\geq&\lambda_F\bb{E}\int_{\bb{R}^n}\left|\bar{Y}_{m't}(s,\xi,x)-\bar{Y}_{mt}(s,\xi,x)\right|^2dm(\xi)\\\nonumber
&-C(c,c_T,\lambda,T)\sqrt{C_T(1+|x|^2)}\left(\bb{E}\int_{\bb{R}^n}\left|\bar{Y}_{m't}(s,\xi,x)-\bar{Y}_{mt}(s,\xi,x)\right|^2dm(\xi)\right)^{1/2}\cdot\bigg(\left(\bb{E}W_2^2(Y_{\cdot m' t}(s) \otimes  m',Y_{\cdot m' t}(s) \otimes  m')\right)^{1/2}\\\label{eq6-45}
&\ \ \ \ \ \ \ \ + \left(\bb{E}\int_{\bb{R}^n}\left|Y_{\xi m't}(s)-Y_{\xi mt}(s)\right|^2dm(\xi)\right)^{1/2}+W_2(m,m')\bigg). 
\end{align}\normalsize
By the same argument, one can also obtain for the terminal term,
\small
\begin{align}\nonumber
&\bb{E}\int_{\bb{R}^n}\left(\bar{Y}_{m't}(T,\xi,x)-\bar{Y}_{mt}(T,\xi,x)\right)\\\nonumber
&\cdot\bigg(D^{2}\dod{F_T}{m}(Y_{\cdot m' t}(T) \otimes  m')(Y_{\xi m't}(T))\bar{Y}_{m't}(T,\xi,x)-D^{2}\dod{F_T}{m}(Y_{\cdot mt}(T) \otimes  m)(Y_{\xi mt}(T))\bar{Y}_{mt}(T,\xi,x)\\\nonumber
&\ \ \ \ +\widetilde{\bb{E}} \int_{\bb{R}^n}D_2D_1\dod[2]{F_T}{m}(Y_{\cdot m't}(T) \otimes  m')(Y_{\xi m't}(T),\widetilde{Y}_{\zeta m't}(T))\widetilde{\bar{Y}}_{m't}(T,\eta,x)\dif m'(\zeta)  \\
\nonumber
&\ \ \ \ -\widetilde{\bb{E}} \int_{\bb{R}^n}D_2D_1\dod[2]{F_T}{m}(Y_{\cdot mt}(T) \otimes  m)(Y_{\xi mt}(T),\widetilde{Y}_{\zeta mt}(T))\widetilde{\bar{Y}}_{mt}(T,\zeta,x)\dif m(\zeta) \\\nonumber
&\ \ \ \ +\widetilde{\bb{E}}D_1\dod[2]{F_T}{m}(Y_{\cdot m't}(T) \otimes  m')(Y_{\xi m't}(T),\widetilde{Y}_{xm't}(T))-\widetilde{\bb{E}}D_1\dod[2]{F_T}{m}(Y_{\cdot mt}(T) \otimes  m)(Y_{\xi mt}(T),\widetilde{Y}_{xmt}(T))\bigg)dm(\xi)\\\nonumber
\geq&\lambda_F\bb{E}\int_{\bb{R}^n}\left|\bar{Y}_{m't}(T,\xi,x)-\bar{Y}_{mt}(T,\xi,x)\right|^2dm(\xi)\\\nonumber
&-C(c,c_T,\lambda,T)\sqrt{C_T(1+|x|^2)}\left(\bb{E}\int_{\bb{R}^n}\left|\bar{Y}_{m't}(T,\xi,x)-\bar{Y}_{mt}(T,\xi,x)\right|^2dm(\xi)\right)^{1/2}\cdot\bigg(\left(\bb{E}W_2^2(Y_{\cdot m' t}(T) \otimes  m',Y_{\cdot m' t}(T) \otimes  m')\right)^{1/2}\\\label{eq6-46}
&\ \ \ \ \ \ \ \ + \left(\bb{E}\int_{\bb{R}^n}\left|Y_{\xi m't}(T)-Y_{\xi mt}(T)\right|^2dm(\xi)\right)^{1/2}+W_2(m,m')\bigg). 
\end{align}\normalsize
By \eqref{eq6-45},\small
\begin{align*}
&\bb{E}\int_{\bb{R}^n}\left(\bar{Y}_{m't}(T,\xi,x)-\bar{Y}_{mt}(T,\xi,x)\right)\cdot \left(\bar{Z}_{m't}(T,\xi,x)-\bar{Z}_{mt}(T,\xi,x)\right) dm(\xi)\\
&-\bb{E}\int_{\bb{R}^n}\left(\bar{Y}_{m't}(t,\xi,x)-\bar{Y}_{mt}(t,\xi,x)\right)\cdot \left(\bar{Z}_{m't}(t,\xi,x)-\bar{Z}_{mt}(t,\xi,x)\right) dm(\xi)\\
=&\int_t^T d_s\left(\bb{E}\int_{\bb{R}^n}\left(\bar{Y}_{m't}(s,\xi,x)-\bar{Y}_{mt}(s,\xi,x)\right)\cdot \left(\bar{Z}_{m't}(s,\xi,x)-\bar{Z}_{mt}(s,\xi,x)\right) dm(\xi)\right)\\
=&\int_t^T \bb{E}\int_{\bb{R}^n}d_s\left(\bar{Y}_{m't}(s,\xi,x)-\bar{Y}_{mt}(s,\xi,x)\right)\cdot \left(\bar{Z}_{m't}(s,\xi,x)-\bar{Z}_{mt}(s,\xi,x)\right) dm(\xi)\\
&+\int_t^T \bb{E}\int_{\bb{R}^n}\left(\bar{Y}_{m't}(s,\xi,x)-\bar{Y}_{mt}(s,\xi,x)\right)\cdot d_s\left(\bar{Z}_{m't}(s,\xi,x)-\bar{Z}_{mt}(s,\xi,x)\right) dm(\xi)\\
\leq &\int_t^T \bigg[-\frac{1}{\lambda} \bb{E}\int_{\bb{R}^n} \left|\bar{Z}_{m't}(s,\xi,x)-\bar{Z}_{mt}(s,\xi,x)\right|^2 dm(\xi)-\lambda_F\bb{E} \int_{\bb{R}^n}\left|\bar{Y}_{m't}(s,\xi,x)-\bar{Y}_{mt}(s,\xi,x)\right|^2dm(\xi)\\
&\ \ +C(c,c_T,\lambda,T)\sqrt{C_T(1+|x|^2)}\cdot\left(\bb{E}\int_{\bb{R}^n}\left|\bar{Y}_{m't}(s,\xi,x)-\bar{Y}_{mt}(s,\xi,x)\right|^2dm(\xi)\right)^{1/2}\cdot\bigg(\left(\bb{E}W_2^2(Y_{\cdot m' t}(s) \otimes  m',Y_{\cdot m' t}(s) \otimes  m')\right)^{1/2}\\
&\ \ + \left(\bb{E}\int_{\bb{R}^n}\left|Y_{\xi m't}(s)-Y_{\xi mt}(s)\right|^2dm(\xi)\right)^{1/2}+W_2(m,m')\bigg)\bigg]ds. 
\end{align*}\normalsize
Thus, by using \eqref{Lipsm} and \eqref{LipsYm} for the last term on the right hand side, and then using \eqref{eq6-46} for the terminal cross term on the left hand side by also noting that $\bar{Y}_{m't}(t,\xi,x)=\bar{Y}_{mt}(t,\xi,x)=0$, we then obtain: \\
\begingroup
\allowdisplaybreaks
\small
\begin{align*}
&\frac{1}{\lambda}\int_t^T \bb{E}\int_{\bb{R}^n} \left|\bar{Z}_{m't}(s,\xi,x)-\bar{Z}_{mt}(s,\xi,x)\right|^2 dm(\xi) ds+\lambda_F\int_t^T \bb{E}\int_{\bb{R}^n} \left|\bar{Y}_{m't}(s,\xi,x)-\bar{Y}_{mt}(s,\xi,x)\right|^2 dm(\xi) ds\\
&+\lambda_F\bb{E}\int_{\bb{R}^n} \left|\bar{Y}_{m't}(T,\xi,x)-\bar{Y}_{mt}(T,\xi,x)\right|^2 dm(\xi)\\
\leq &C(c,c_T,\lambda,T)\sqrt{C_T(1+|x|^2)}\left(\int_t^T\bb{E}\int_{\bb{R}^n}\left|\bar{Y}_{m't}(s,\xi,x)-\bar{Y}_{mt}(s,\xi,x)\right|^2dm(\xi)ds\right)^{1/2}\\
&\cdot\Bigg(\left(\int_t^T \bb{E}W_2^2(Y_{\cdot m' t}(s) \otimes  m',Y_{\cdot m' t}(s) \otimes  m')ds\right)^{1/2}+ \left(\int_t^T \bb{E}\int_{\bb{R}^n}\left|Y_{\xi m't}(s)-Y_{\xi mt}(s)\right|^2dm(\xi)ds\right)^{1/2}+\sqrt{T}W_2(m,m')\Bigg)\\
&+C(c,c_T,\lambda,T)\sqrt{C_T(1+|x|^2)}\left(\bb{E}\int_{\bb{R}^n}\left|\bar{Y}_{m't}(T,\xi,x)-\bar{Y}_{mt}(T,\xi,x)\right|^2dm(\xi)\right)^{1/2}\cdot\bigg(\left(\bb{E}W_2^2(Y_{\cdot m' t}(T) \otimes  m',Y_{\cdot m' t}(T) \otimes  m')\right)^{1/2}\\
&\ \ \ \ \ \ \ \ + \left(\bb{E}\int_{\bb{R}^n}\left|Y_{\xi m't}(T)-Y_{\xi mt}(T)\right|^2dm(\xi)\right)^{1/2}+W_2(m,m')\bigg).\\
\leq&C(c,c_T,\lambda,\lambda_F,T)\sqrt{C_T(1+|x|^2)}W_2(m,m')\cdot\left(\int_t^T\bb{E}\int_{\bb{R}^n}\left|\bar{Y}_{m't}(s,\xi,x)-\bar{Y}_{mt}(s,\xi,x)\right|^2dm(\xi)ds\right.\\
&\left.+\bb{E}\int_{\bb{R}^n}\left|\bar{Y}_{m't}(T,\xi,x)-\bar{Y}_{mt}(T,\xi,x)\right|^2dm(\xi)\right)^{1/2}
\end{align*}\normalsize\endgroup
Therefore, we finally arrive with
\begin{align}\nonumber
&\int_t^T \bb{E}\int_{\bb{R}^n} \left|\bar{Z}_{m't}(s,\xi,x)-\bar{Z}_{mt}(s,\xi,x)\right|^2 dm(\xi) ds+\int_t^T \bb{E}\int_{\bb{R}^n} \left|\bar{Y}_{m't}(s,\xi,x)-\bar{Y}_{mt}(s,\xi,x)\right|^2 dm(\xi) ds\\\nonumber
&+\bb{E}\int_{\bb{R}^n} \left|\bar{Y}_{m't}(T,\xi,x)-\bar{Y}_{mt}(T,\xi,x)\right|^2 dm(\xi)\\
\leq &C(c,c_T,\lambda,\lambda_F,T)\sqrt{C_T(1+|x|^2)}W_2^2(m,m'),
\end{align}
for some positive constant $C$ depending only on $c$, $c_T$, $\lambda$, $\lambda_F$ and $T$. $\blacksquare$

\subsection{PROOF OF PROPOSITION \ref{prop6-1}. }
First we establish (\ref{eq:6-16}) and (\ref{eq:6-17}) as a priori estimates.
We begin with (\ref{eq:6-11})-(\ref{eq:6-12}).
Note that (\ref{eq:4-90}) holds for sufficiently large $\lambda$.
Using \eqref{eq:4-90} and the estimates \eqref{eq:6-15} applied to both $F$ and $F_T$ in (\ref{eq:6-12}), we deduce
\begin{multline*}
\bb{E}\int_{\bb{R}^n}|\bar{Z}_{mt}(s,\xi,x)|^{2}\dif m(\xi)\\
\leq C_{T}\left(\int_{s}^{T}\bb{E}\int_{\bb{R}^n}|\bar{Y}_{mt}(\tau,\xi,x)|^{2}\dif m(\xi)\dif \tau+\bb{E}\int_{\bb{R}^n}|\bar{Y}_{mt}(T,\xi,x)|^{2}\dif m(\xi)\right)
+C_{T}(1+|x|^{2}).
\end{multline*}
On the other hand, by (\ref{eq:6-11}) it follows from the Cauchy-Schwartz inequality that
\[
\bb{E}\int_{\bb{R}^n}\abs{\bar{Y}_{mt}(s,\xi,x)}^{2}\dif m(\xi)\leq\dfrac{s-t}{\lambda^{2}}\int_{t}^{s}\bb{E}\int_{\bb{R}^n}\abs{\bar{Z}_{mt}(\tau,\xi,x)}^{2}\dif m(\xi).
\]
Combining these two inequalities and taking $\lambda$ sufficiently
large, we obtain the estimates (\ref{eq:6-16}). For (\ref{eq:6-17}),
we use the estimates \eqref{eq:6-15} for both $F$ and $F_T$ in \eqref{eq:calYxmt}-\eqref{eq:calZxmt} to see that the solution $\del{\mathcal{Y}_{\xi mt}(s), \mathcal{Z}_{\xi mt}(s)}$ satisfies 
\begin{equation}
\bb{E}|\mathcal{Y}_{xmt}(s)|^{2}\leq C_{T},\;\bb{E}|\mathcal{Z}_{xmt}(s)|^{2}\leq C_{T}.\label{eq:6-19}
\end{equation}
Using these estimates, as well as \eqref{eq:4-90}
and (\ref{eq:6-16}), in the system (\ref{eq:6-13})- (\ref{eq:6-14}), we argue similarly
as for (\ref{eq:6-11})-(\ref{eq:6-12}) to see that (\ref{eq:6-17}) holds
for $\lambda$ sufficiently large.

We have thus obtained a priori
estimates for the solutions of (\ref{eq:6-11})-(\ref{eq:6-12})
and (\ref{eq:6-13})-(\ref{eq:6-14}). Since these systems are linear,
the existence and uniqueness of the solutions are obtained by {\color{black}the following standard Banach
fixed point argument. }

{\color{black}
First, denote $\s{D}:=C([t,T]\times\mathbb{R}^n\times\mathbb{R}^n;\mathbb{R}^n)$, the space of $\mathbb{R}^n$-valued continuous functions on $[t,T]$, and we equip $\s{D}$ with following norm:
$$\|A\|_{\s{D}}:=\sup_{s\in[t,T]}\sup_{x\in\mathbb{R}^n}\frac{\mathbb{E}\int_{\mathbb{R}^n}|A(s,\xi,x)|^2dm(\xi)}{1+|x|^2}.$$
For a fixed $t\in[0,T]$ and $m\in\mathcal{P}_2(\mathbb{R}^n)$, also define the iteration metric space,
\begin{align*}
I_{mt}:=\bigg\{&\del{\bar{Y}_{mt}(s,\xi,x),\:\bar{Z}_{mt}(s,\xi,x)}:\bar{Y},\bar{Z}\in\s{D},\ \left\|\bar{Y}_{mt}(s,\xi,x)\right\|_{\s{D}}\leq C_Y,\ \left\|\bar{Z}_{mt}(s,\xi,x)\right\|_{\s{D}}\leq C_Z\bigg\},
\end{align*}
where 
$C_Y$ and $C_Z$ are positive constants to be set later. Next, we define the iteration map $$\del{\bar{Y}_{mt}^{(i)}(s,\xi,x),\:\bar{Z}_{mt}^{(i)}(s,\xi,x)}\in I_{mt}\mapsto \del{\bar{Y}_{mt}^{(i+1)}(s,\xi,x),\:\bar{Z}_{mt}^{(i+1)}(s,\xi,x)}\in \s{D}$$ such that\small
\begin{align}\label{6-18}
\bar{Y}_{mt}^{(i+1)}(s,\xi,x) =&-\dfrac{1}{\lambda}\int_{t}^{s}\bar{Z}_{mt}^{(i)}(\tau,\xi,x)\dif \tau,\\\label{6-19}
\bar{Z}_{mt}^{(i+1)}(s,\xi,x) =&\bb{E}\left[ \int_{s}^{T} D^{2}\dod{F}{m}(Y_{\cdot mt}(\tau) \otimes  m)(Y_{\xi mt}(\tau))\bar{Y}_{mt}^{(i+1)}(\tau,\xi,x)\dif \tau \right.\\
\nonumber
&
+ D^{2}\dod{}{m}F_{T}(Y_{\cdot mt}(T) \otimes  m)(Y_{\xi mt}(T))\bar{Y}_{mt}^{(i+1)}(T,\xi,x)\\
\nonumber
&
+\widetilde{\bb{E}}\int_{s}^{T}\int_{\bb{R}^n}D_2D_1\dod[2]{F}{m}(Y_{\cdot mt}(\tau) \otimes  m)(Y_{\xi mt}(\tau),\widetilde{Y}_{\zeta mt}(\tau))\widetilde{\bar{Y}}_{mt}^{(i+1)}(\tau,\eta,x)\dif m(\zeta) \dif \tau\\
\nonumber
&
+\widetilde{\bb{E}}\int_{\bb{R}^n}D_2D_1\dod[2]{F_{T}}{m}(Y_{\cdot mt}(T) \otimes  m)(Y_{\xi mt}(T),\widetilde{Y}_{\zeta mt}(T))\widetilde{\bar{Y}}_{mt}^{(i+1)}(T,\eta,x)\dif m(\zeta)\\
\nonumber
&
+ \left.\left.\int_{s}^{T}\widetilde{\bb{E}}D_1\dod[2]{F}{m}(Y_{\cdot mt}(\tau) \otimes  m)(Y_{\xi mt}(\tau),\widetilde{Y}_{xmt}(\tau))\dif \tau+\widetilde{\bb{E}}D_1\dod[2]{F_T}{m}(Y_{\cdot mt}(T) \otimes  m)(Y_{\xi mt}(T),\widetilde{Y}_{xmt}(T))\right|\mathcal{W}_{t}^{s}\right],
\end{align}\normalsize
in accordance with the equations \eqref{eq:6-11}-\eqref{eq:6-12}. To facilitate the use the Banach Fixed Point Theorem, we shall next show that the iteration map is self-map and contractive as follows.

By \eqref{6-18}, the Cauchy-Schwartz inequality, \eqref{eq:4-90} and the estimates \eqref{eq:6-15} applied to both $F$ and $F_T$ in \eqref{6-19}, one has
\begin{align}\label{6-20}
\begin{cases}
\|\bar{Y}_{mt}^{(i+1)}\|_{\s{D}}\leq \frac{|T-t|^2}{\lambda^2} \|\bar{Z}_{mt}^{(i)}\|_{\s{D}}\leq \frac{|T|^2}{\lambda^2} C_Z,\\
\|\bar{Z}_{mt}^{(i+1)}\|_{\s{D}}
\leq C_{T}(1+\|\bar{Y}_{mt}^{(i+1)}\|_{\s{D}})\leq C_{T}\left(1+\frac{|T|^2}{\lambda^2} C_Z\right),
\end{cases}
\end{align}
where $C_T$ is a positive constant depending on $c,c_T,$ and $T$, but it does not depend on $C_Y$ and $C_Z$; indeed, by \eqref{eq:6-15} and \eqref{eq:4-90}, $\big|\widetilde{\bb{E}}D_1\dod[2]{F}{m}(Y_{\cdot mt}(\tau) \otimes  m)(Y_{\xi mt}(\tau),\widetilde{Y}_{xmt}(\tau))\big|\leq c \widetilde{\bb{E}}(1+|\widetilde{Y}_{xmt}(\tau)|)\leq c C_T(1+|x|^2)$, and so, for instance, the second last term of \eqref{6-19} can be bounded by:
\begin{align*}
\left\|\bb{E}\left[\int_{s}^{T}\widetilde{\bb{E}}D_1\dod[2]{F}{m}(Y_{\cdot mt}(\tau) \otimes  m)(Y_{\xi mt}(\tau),\widetilde{Y}_{xmt}(\tau))\dif \tau\big|\mathcal{W}_{t}^{s}\right]\right\|_{\s{D}}\leq c C_T T,
\end{align*}
where $C_T$ is the constant used in the estimate \eqref{eq:4-90}. 
If we take $C_Y\geq \frac{|T|^2}{\lambda^2} C_Z$, $C_Z\geq \frac{C_T}{1-C_T T^2/\lambda^2}$ and $\lambda>T\sqrt{C_T}$, then $\del{\bar{Y}_{mt}^{(i+1)}(s,\xi,x),\:\bar{Z}_{mt}^{(i+1)}(s,\xi,x)}\in I_{mt}$ and so the iteration map defined by \eqref{6-18}-\eqref{6-19} is a self-map. Next, we shall show that the iteration mapping is also contractive. Particularly,
\begingroup
\allowdisplaybreaks
\small
\begin{align}\nonumber
&\bar{Y}_{mt}^{(i+2)}(s,\xi,x)-\bar{Y}_{mt}^{(i+1)}(s,\xi,x) =-\dfrac{1}{\lambda}\int_{t}^{s}\left(\bar{Z}_{mt}^{(i+1)}(\tau,\xi,x)-\bar{Z}_{mt}^{(i)}(\tau,\xi,x)\right)\dif \tau,\\\nonumber
&\bar{Z}_{mt}^{(i+2)}-\bar{Z}_{mt}^{(i+1)}(s,\xi,x) =\bb{E}\left[ \int_{s}^{T} D^{2}\dod{F}{m}(Y_{\cdot mt}(\tau) \otimes  m)(Y_{\xi mt}(\tau))\left(\bar{Y}_{mt}^{(i+2)}(\tau,\xi,x)-\bar{Y}_{mt}^{(i+1)}(\tau,\xi,x)\right)\dif \tau \right.\\
\nonumber
&\hspace{3.5cm}
+ D^{2}\dod{}{m}F_{T}(Y_{\cdot mt}(T) \otimes  m)(Y_{\xi mt}(T))\left(\bar{Y}_{mt}^{(i+2)}(T,\xi,x)-\bar{Y}_{mt}^{(i+1)}(T,\xi,x)\right)\\
\nonumber
&\hspace{3.5cm}+\widetilde{\bb{E}}\int_{s}^{T}\int_{\bb{R}^n}D_2D_1\dod[2]{F}{m}(Y_{\cdot mt}(\tau) \otimes  m)(Y_{\xi mt}(\tau),\widetilde{Y}_{\zeta mt}(\tau))\left(\widetilde{\bar{Y}}_{mt}^{(i+2)}(\tau,\eta,x)-\widetilde{\bar{Y}}_{mt}^{(i+1)}(\tau,\eta,x)\right)\dif m(\zeta) \dif \tau\\
\nonumber
&\hspace{3.5cm} \left.\left.+\widetilde{\bb{E}}\int_{\bb{R}^n}D_2D_1\dod[2]{F_{T}}{m}(Y_{\cdot mt}(T) \otimes  m)(Y_{\xi mt}(T),\widetilde{Y}_{\zeta mt}(T))\left(\widetilde{\bar{Y}}_{mt}^{(i+2)}(T,\eta,x)-\widetilde{\bar{Y}}_{mt}^{(i+1)}(T,\eta,x)\right)\dif m(\zeta)\right|\mathcal{W}_{t}^{s}\right],
\end{align}
\normalsize
\endgroup
by \eqref{6-18}, the Cauchy-Schwartz inequality, \eqref{eq:4-90} and the estimates \eqref{eq:6-15} applied to both $F$ and $F_T$ in \eqref{6-19}, similar as \eqref{6-20}, one can obtain
\begin{align*}\begin{cases}
\|\bar{Y}_{mt}^{(i+2)}-\bar{Y}_{mt}^{(i+1)}\|_{\s{D}}\leq& \frac{|T-t|^2}{\lambda^2} \|\bar{Z}_{mt}^{(i+1)}-\bar{Z}_{mt}^{(i)}\|_{\s{D}}\leq \frac{|T|^2}{\lambda^2} \|\bar{Z}_{mt}^{(i+1)}-\bar{Z}_{mt}^{(i)}\|_{\s{D}},\\
\|\bar{Z}_{mt}^{(i+2)}-\bar{Z}_{mt}^{(i+1)}\|_{\s{D}}
\leq& C_{T}\|\bar{Y}_{mt}^{(i+2)}-\bar{Y}_{mt}^{(i+1)}\|_{\s{D}}\leq C_T\frac{|T|^2}{\lambda^2} \|\bar{Z}_{mt}^{(i+1)}-\bar{Z}_{mt}^{(i)}\|_{\s{D}},
\end{cases}\end{align*}
and therefore, combing these two estimates,
\begin{align*}
\|\bar{Y}_{mt}^{(i+2)}-\bar{Y}_{mt}^{(i+1)}\|_{\s{D}}\leq& \frac{|T|^2}{\lambda^2} \|\bar{Z}_{mt}^{(i+1)}-\bar{Z}_{mt}^{(i)}\|_{\s{D}}\leq C_T\frac{|T|^2}{\lambda^2} \|\bar{Y}_{mt}^{(i+1)}-\bar{Y}_{mt}^{(i)}\|_{\s{D}}.
\end{align*}
By choosing $\lambda>T\sqrt{2C_T}$, then $\|\bar{Y}_{mt}^{(i+2)}-\bar{Y}_{mt}^{(i+1)}\|_{\s{D}}\leq \frac12 \|\bar{Y}_{mt}^{(i+1)}-\bar{Y}_{mt}^{(i)}\|_{\s{D}}$ and $\|\bar{Z}_{mt}^{(i+2)}-\bar{Z}_{mt}^{(i+1)}\|_{\s{D}}\leq \frac{1}{2}\|\bar{Z}_{mt}^{(i+1)}-\bar{Z}_{mt}^{(i)}\|_{\s{D}}$. Therefore, by Banach Fixed Point Theorem, we can obtain the existence and uniqueness.
}
 $\blacksquare$

\subsection{PROOF OF LEMMA \ref{lem 6.9}. }
$(i)$ The pair $\del{Y_{xmt}(s),Z_{xmt}(s)}$ is the unique solution of
\begin{align}
Y_{xmt}(s) &=x-\dfrac{1}{\lambda}\int_{t}^{s}Z_{xmt}(\tau)\dif \tau+\eta(w(s)-w(t)),\\
Z_{xmt}(s) &=\bb{E}\left[\left.\int_{s}^{T} D \dod{F}{m}(Y_{\cdot mt}(\tau) \otimes  m)(Y_{xmt}(\tau))\dif \tau+D \dod{F_T}{m}(Y_{\cdot mt}(T) \otimes  m)(Y_{xmt}(T))\right|\mathcal{W}_{t}^{s}\right].
\end{align}
Then, for $s\in[t,T]$,
\begin{align*}
Y_{x'mt}(s)-Y_{xmt}(s) =&x'-x-\dfrac{1}{\lambda}\int_{t}^{s}Z_{x'mt}(\tau)-Z_{xmt}(\tau)\dif \tau,\\
Z_{x'mt}(s)-Z_{xmt}(s) =&\bb{E}\left[\left.\int_{s}^{T} D \dod{F}{m}(Y_{\cdot mt}(\tau) \otimes  m)(Y_{x'mt}(\tau))-D \dod{F}{m}(Y_{\cdot mt}(\tau) \otimes  m)(Y_{xmt}(\tau))\dif \tau\right.\right.\\
&\left.\left.\ \ +D \dod{F_T}{m}(Y_{\cdot mt}(T) \otimes  m)(Y_{x'mt}(T))-D \dod{F_T}{m}(Y_{\cdot mt}(T) \otimes  m)(Y_{xmt}(T))\right|\mathcal{W}_{t}^{s}\right]\\
=&\bb{E}\left[\left.\int_{s}^{T} \int_0^1 D^2 \dod{F}{m}(Y_{\cdot mt}(\tau) \otimes  m)(\theta Y_{x'mt}(\tau)+(1-\theta)Y_{xmt}(\tau))(Y_{x'mt}(\tau)-Y_{xmt}(\tau))d\theta \dif \tau\right.\right.\\
&\left.\left.\ \ +\int_0^1 D^2 \dod{F_T}{m}(Y_{\cdot mt}(T) \otimes  m)(\theta Y_{x'mt}(T)+(1-\theta)Y_{xmt}(T))(Y_{x'mt}(T)-Y_{xmt}(T))d\theta\right|\mathcal{W}_{t}^{s}\right].
\end{align*}
In fact, here $Y_{\cdot mt}(\tau) \otimes  m$ depends solely on the initial distribution $m$, but it is totally independent of the two positions $x$ and $x'$, see Lemma 3.1 of \cite{BLPR}.
By \eqref{eq:6-15}, we have
\begin{align*}
\sup_{s\in[t,T]} \left|  Y_{x'mt}(s)-Y_{xmt}(s)\right| \leq &|x'-x|+\dfrac{1}{\lambda}|T-t|\cdot\sup_{s\in[t,T]}\left|Z_{x'mt}(s)-Z_{xmt}(s)\right|,\\
\sup_{s\in[t,T]} \left|Z_{x'mt}(s)-Z_{xmt}(s)\right| \leq &(c_T+c|T-t|)\cdot\sup_{s\in[t,T]}\left|Y_{x'mt}(s)-Y_{xmt}(s)\right|.
\end{align*}
Thus, by simply combining these two estimates, whenever $\lambda>T(c_T+cT)$, we arrive with:
\begin{align}\label{xLipCon}
\begin{cases}
\sup_{s\in[t,T]} \left|  Y_{x'mt}(s)-Y_{xmt}(s)\right| \leq &\frac{\lambda}{\lambda-|T-t|(c_T+c|T-t|)}|x'-x|\leq \frac{\lambda}{\lambda-T(c_T+cT)}|x'-x|,\\
\sup_{s\in[t,T]} \left|Z_{x'mt}(s)-Z_{xmt}(s)\right| \leq &\frac{\lambda (c_T+c|T-t|)}{\lambda-|T-t|(c_T+c|T-t|)}|x'-x|\leq \frac{\lambda (c_T+cT)}{\lambda-T(c_T+cT)}|x'-x|.
\end{cases}
\end{align}
Therefore, $Y_{xmt}(s)$ and $Z_{xmt}(s)$ are both Lipschitz continuous in $x$, particularly being uniformly in $m\in\s{P}_2(\bb{R}^n)$, with the respective Lipschitz constants $\frac{\lambda}{\lambda-T(c_T+cT)}$ and $\frac{\lambda (c_T+cT)}{\lambda-T(c_T+cT)}$. 

$(ii)$ First, we have
\begin{align*}
Y_{xm't}(s)-Y_{xmt}(s) =&-\dfrac{1}{\lambda}\int_{t}^{s}Z_{xm't}(\tau)-Z_{xmt}(\tau)\dif \tau,\\
Z_{xm't}(s)-Z_{xmt}(s) =&\bb{E}\left[\left.\int_{s}^{T} D \dod{F}{m}(Y_{\cdot m't}(\tau) \otimes  m')(Y_{xm't}(\tau))-D \dod{F}{m}(Y_{\cdot mt}(\tau) \otimes  m)(Y_{xmt}(\tau))\dif \tau\right.\right.\\
&\left.\left.\ \ +D \dod{F_T}{m}(Y_{\cdot m't}(T) \otimes  m')(Y_{xm't}(T))-D \dod{F_T}{m}(Y_{\cdot mt}(T) \otimes  m)(Y_{xmt}(T))\right|\mathcal{W}_{t}^{s}\right].
\end{align*}
Since $D\dod{F}{m}(m)(x)$ and $D\dod{F_T}{m}(m)(x)$ are jointly Lipschitz continuous in $(m,x)\in \mathcal{P}_2(\mathbb{R}^n)\times\mathbb{R}^n$, there exists a positive constant $C_{DdF}$ such that
\begin{align}\nonumber
&\bb{E}\left|D \dod{F}{m}(Y_{\cdot m't}(s) \otimes  m')(Y_{xm't}(s))-D \dod{F}{m}(Y_{\cdot mt}(s) \otimes  m)(Y_{xmt}(s))\right|^2\\\label{CDdF}
\leq & C_{DdF} \left(\bb{E}W_2^2(Y_{\cdot m't}(s) \otimes  m',Y_{\cdot mt}(s) \otimes  m)+\bb{E}\left|Y_{xm't}(s)-Y_{xmt}(s)\right|^2\right)
\end{align}
and 
\begin{align}\nonumber
&\bb{E}\left|D \dod{F_T}{m}(Y_{\cdot m't}(T) \otimes  m')(Y_{xm't}(T))-D \dod{F}{m}(Y_{\cdot mt}(T) \otimes  m)(Y_{xmt}(T))\right|^2\\
\leq & C_{DdF} \left(\bb{E}W_2^2(Y_{\cdot m't}(T) \otimes  m',Y_{\cdot mt}(T) \otimes  m)+\bb{E}\left|Y_{xm't}(T)-Y_{xmt}(T)\right|^2\right),
\end{align}
based on which we have\small
\begin{align*}
\sup_{s\in[t,T]}\bb{E}\left|Y_{xm't}(s)-Y_{xmt}(s)\right|^2 \leq & \dfrac{|T-t|^2}{\lambda^2}\bb{E}\sup_{s\in[t,T]}\left|Z_{xm't}(s)-Z_{xmt}(s)\right|^2 ,\\
\sup_{s\in[t,T]}\bb{E}\left|Z_{xm't}(s)-Z_{xmt}(s)\right|^2\leq &C_{DdF}(1+|T-t|^2) \sup_{s\in[t,T]}\left(\bb{E}W_2^2(Y_{\cdot m't}(s) \otimes  m',Y_{\cdot mt}(s) \otimes  m)+\bb{E}\left|Y_{xm't}(s)-Y_{xmt}(s)\right|^2\right).
\end{align*}\normalsize
Thus, by simply combining these two estimates, whenever $\lambda>T\sqrt{C_{DdF}(1+T^2)}$, we arrive with:
\begin{align}\label{iieq1}
\sup_{s\in[t,T]}\bb{E}\left|Y_{xm't}(s)-Y_{xmt}(s)\right|^2 \leq & \dfrac{T^2C_{DdF}(1+T^2)}{\lambda^2-T^2C_{DdF}(1+T^2)}\sup_{s\in[t,T]}\bb{E}W_2^2(Y_{\cdot m't}(s) \otimes  m',Y_{\cdot mt}(s) \otimes  m),\\\label{iieq2}
\sup_{s\in[t,T]}\bb{E}\left|Z_{xm't}(s)-Z_{xmt}(s)\right|^2\leq &\dfrac{\lambda^2C_{DdF}(1+T^2)}{\lambda^2-T^2C_{DdF}(1+T^2)}\sup_{s\in[t,T]}\bb{E}W_2^2(Y_{\cdot m't}(s) \otimes  m',Y_{\cdot mt}(s) \otimes  m).
\end{align}
By the definition of Wasserstein metric, and then by \eqref{xLipCon} and \eqref{iieq1}, we derive:
\begin{align}\nonumber
&\bb{E}W_2^2(Y_{\cdot m't}(s) \otimes  m',Y_{\cdot mt}(s) \otimes  m)\\\nonumber
\leq&\bb{E}  \int_{\widehat{\Omega}} \left|Y_{\hat{X}_{m'}(\widehat{\omega})m't}(s)-Y_{\hat{X}_{m}(\widehat{\omega})mt}(s)\right|^2 d\widehat{\bb{P}}(\widehat{\omega})\\\nonumber
\leq&2 \bb{E}  \int_{\widehat{\Omega}} \left( \left|Y_{\hat{X}_{m'}(\widehat{\omega})m't}(s)-Y_{\hat{X}_{m}(\widehat{\omega})m't}(s)\right|^2+\left|Y_{\hat{X}_{m}(\widehat{\omega})m't}(s)-Y_{\hat{X}_{m}(\widehat{\omega})mt}(s)\right|^2 \right)d\widehat{\bb{P}}(\widehat{\omega})  \\\nonumber
\leq&2 \left(\frac{\lambda}{\lambda-T(c_T+cT)}\right)^2\cdot W_2^2(m,m')+ \dfrac{2T^2C_{DdF}(1+T^2)}{\lambda^2-T^2C_{DdF}(1+T^2)}\sup_{s\in[t,T]}\bb{E}W_2^2(Y_{\cdot m't}(s) \otimes  m',Y_{\cdot mt}(s) \otimes  m),
\end{align}
where, as before, $\hat{X}_{m}$, following $m$ marginally, and $\hat{X}_{m'}$, following $m'$ marginally, are a couple of random variables in another independent probability space $L^{2}(\widehat{\Omega},\widehat{\mathcal{A}},\widehat{\bb{P}};\bb{R}^{n})$ such that 
\begin{equation}
W_{2}^{2}(m,m')=\bb{E}^{\widehat{\bb{P}}}[|\hat{X}_{m}-\hat{X}_{m'}|^{2}].
\end{equation}
Thus, whenever $\lambda>T\sqrt{3C_{DdF}(1+T^2)}$,
\begin{align}\label{W2Y}
\sup_{s\in[t,T]}\bb{E}W_2^2(Y_{\cdot m't}(s) \otimes  m',Y_{\cdot mt}(s) \otimes  m)
\leq2 \left(\frac{\lambda}{\lambda-T(c_T+cT)}\right)^2\cdot\dfrac{\lambda^2-T^2C_{DdF}(1+T^2)}{\lambda^2-3T^2C_{DdF}(1+T^2)}\cdot W_2^2(m,m')
\end{align}
and then we further have
\begin{align*}
\sup_{s\in[t,T]}\bb{E}\left|Y_{xm't}(s)-Y_{xmt}(s)\right|^2 \leq & \dfrac{2T^2C_{DdF}(1+T^2)}{\lambda^2-3T^2C_{DdF}(1+T^2)}  \left(\frac{\lambda}{\lambda-T(c_T+cT)}\right)^2 \cdot W_2^2(m,m'),\\
\sup_{s\in[t,T]}\bb{E}\left|Z_{xm't}(s)-Z_{xmt}(s)\right|^2\leq &\dfrac{2\lambda^2C_{DdF}(1+T^2)}{\lambda^2-3T^2C_{DdF}(1+T^2)}  \left(\frac{\lambda}{\lambda-T(c_T+cT)}\right)^2 \cdot W_2^2(m,m').
\end{align*}
$\blacksquare$
\subsection{PROOF OF THE CLAIM IN REMARK \ref{rem6-8}. }
Under the additional assumptions \eqref{convexF}-\eqref{convexFT}, we can have an alternative approach of deducing the continuity of $(Y_{xmt}(s),Z_{xmt}(s))$ in $m\in\s{P}_2(\bb{R}^n)$ without invoking $\lambda>T\sqrt{3C_{DdF}(1+T^2)}$.
\begin{align*}
Y_{xm't}(s)-Y_{xmt}(s) =&-\dfrac{1}{\lambda}\int_{t}^{s}Z_{xm't}(\tau)-Z_{xmt}(\tau)\dif \tau,\\
Z_{xm't}(s)-Z_{xmt}(s) =&\bb{E}\left[\left.\int_{s}^{T} D \dod{F}{m}(Y_{\cdot m't}(\tau) \otimes  m')(Y_{xm't}(\tau))-D \dod{F}{m}(Y_{\cdot mt}(\tau) \otimes  m)(Y_{xmt}(\tau))\dif \tau\right.\right.\\
&\left.\left.\ \ +D \dod{F_T}{m}(Y_{\cdot m't}(T) \otimes  m')(Y_{xm't}(T))-D \dod{F_T}{m}(Y_{\cdot mt}(T) \otimes  m)(Y_{xmt}(T))\right|\mathcal{W}_{t}^{s}\right].
\end{align*}
Before we proceed, we first split the integrand: 
\begin{align*}
&D \dod{F}{m}(Y_{\cdot m't}(s) \otimes  m')(Y_{xm't}(s))-D \dod{F}{m}(Y_{\cdot mt}(s) \otimes  m)(Y_{xmt}(s))\\
=&D \dod{F}{m}(Y_{\cdot m't}(s) \otimes  m')(Y_{xm't}(s))-D \dod{F}{m}(Y_{\cdot m't}(s) \otimes  m)(Y_{xm't}(s))\\
&+D \dod{F}{m}(Y_{\cdot m't}(s) \otimes  m)(Y_{xm't}(s))-D \dod{F}{m}(Y_{\cdot mt}(s) \otimes  m)(Y_{xmt}(s)),
\end{align*}
based on which we next consider\small
\begin{align*}
&\bb{E}\int_{\bb{R}^n}\left(Y_{xm't}(s)-Y_{xmt}(s)\right)\cdot\bigg(D \dod{F}{m}(Y_{\cdot m't}(s) \otimes  m')(Y_{xm't}(s))-D \dod{F}{m}(Y_{\cdot mt}(s) \otimes  m)(Y_{xmt}(s))\bigg)dm(x)\\
=&\bb{E}\int_{\bb{R}^n}\left(Y_{xm't}(s)-Y_{xmt}(s)\right)\cdot\int_0^1 \widetilde{\bb{E}}\int_{\bb{R}^n}D_1 \dod[2]{F}{m}(Y^\theta_\otimes(s))(Y_{xm't}(s),\widetilde{Y}_{\zeta m't}(s))d(m'-m)(\zeta)d\theta dm(x)\\
&+\bb{E}\int_{\bb{R}^n}\left(Y_{xm't}(s)-Y_{xmt}(s)\right)\cdot\int_0^1 D^2 \dod{F}{m}(Y^\theta_\cdot(s) \otimes  m)(Y^\theta_x(s))d\theta\left(Y_{xm't}(s)-Y_{xmt}(s)\right)dm(x)\\
&+\bb{E}\int_{\bb{R}^n}\left(Y_{xm't}(s)-Y_{xmt}(s)\right)\cdot\int_0^1 \widetilde{\bb{E}}\int_{\bb{R}^n} D_2 D_1 \dod[2]{F}{m}(Y^\theta_\cdot(s) \otimes  m)(Y^\theta_x(s),\widetilde{Y}^\theta_\zeta(s))\left(\widetilde{Y}_{\zeta m't}(s)-\widetilde{Y}_{\zeta mt}(s)\right)dm(\zeta)d\theta dm(x)\\
\geq&\bb{E}\int_{\bb{R}^n}\left(Y_{xm't}(s)-Y_{xmt}(s)\right)\cdot\int_0^1\widetilde{\bb{E}}\int_{\bb{R}^n}D_1 \dod[2]{F}{m}(Y^\theta_\otimes(s))(Y_{xm't}(s),\widetilde{Y}_{\zeta m't}(s))d(m'-m)(\zeta)d\theta dm(x)\\
&+\lambda_F\bb{E}\int_{\bb{R}^n}\left|Y_{xm't}(s)-Y_{xmt}(s)\right|^2dm(x)
\end{align*}\normalsize
where $Y^\theta_\otimes(s) := Y_{\cdot m't}(s) \otimes  \left(\theta m'+(1-\theta)m\right)$ and $Y^\theta_x(s):=\theta Y_{xm't}(s)+(1-\theta)Y_{xmt}(s)$; and the last line of inequality follows by using \eqref{convexF} for the last two terms of the sum in the second equality.
By \eqref{eq:6-15} together with mean value theorem, and then \eqref{xLipCon}, 
\begin{align*}
&\left|\int_0^1\widetilde{\bb{E}}\int_{\bb{R}^n}D_1 \dod[2]{F}{m}(Y^\theta_\otimes(s))(Y_{xm't}(s),\widetilde{Y}_{\zeta m't}(s))d(m'-m)(\zeta)d\theta\right|\\
=&\left|\int_0^1\widetilde{\bb{E}}\int_{\widehat{\Omega}}D_1 \dod[2]{F}{m}(Y^\theta_\otimes(s))(Y_{xm't}(s),\widetilde{Y}_{\hat{X}_{m'}(\widehat{\omega}) m't}(s))d\widehat{\bb{P}}(\widehat{\omega})d\theta\right.\\
&\left.-\int_0^1\widetilde{\bb{E}}\int_{\widehat{\Omega}}D_1 \dod[2]{F}{m}(Y^\theta_\otimes(s))(Y_{xm't}(s),\widetilde{Y}_{\hat{X}_{m}(\widehat{\omega}) m't}(s))d\widehat{\bb{P}}(\widehat{\omega})d\theta\right|\\
\leq& c\,\widetilde{\bb{E}}\int_{\widehat{\Omega}}\left|\widetilde{Y}_{\hat{X}_{m'}(\widehat{\omega}) m't}(s)-\widetilde{Y}_{\hat{X}_{m}(\widehat{\omega}) m't}(s)\right|d\widehat{\bb{P}}(\widehat{\omega})\\
\leq& \frac{c \lambda}{\lambda-T(c_T+cT)}\int_{\widehat{\Omega}}\left|\hat{X}_{m'}(\widehat{\omega})-\hat{X}_{m}(\widehat{\omega})\right|d\widehat{\bb{P}}(\widehat{\omega})=\frac{c \lambda}{\lambda-T(c_T+cT)}W_1(m,m')\\
\leq& \frac{c \lambda}{\lambda-T(c_T+cT)}W_2(m,m'),
\end{align*}
where $\hat{X}_{m}$, following $m$ marginally, and $\hat{X}_{m'}$, following $m'$ marginally, are a couple of random variables in another independent probability space $L^{2}(\widehat{\Omega},\widehat{\mathcal{A}},\widehat{\bb{P}};\bb{R}^{n})$ such that 
\begin{equation}
W_{2}^{2}(m,m')=\bb{E}^{\widehat{\bb{P}}}[|\hat{X}_{m}-\hat{X}_{m'}|^{2}].
\end{equation}
Therefore, 
\begin{align}\nonumber
&\bb{E}\int_{\bb{R}^n}\left(Y_{xm't}(s)-Y_{xmt}(s)\right)\cdot\bigg(D \dod{F}{m}(Y_{\cdot m't}(s) \otimes  m')(Y_{xm't}(s))-D \dod{F}{m}(Y_{\cdot mt}(s) \otimes  m)(Y_{xmt}(s))\bigg)dm(x)\\\nonumber
\geq&-\frac{c \lambda}{\lambda-T(c_T+cT)}W_2(m,m')\cdot \bb{E}\int_{\bb{R}^n}\left|Y_{xm't}(s)-Y_{xmt}(s)\right|dm(x)\\\label{mds1}
&+\lambda_F\bb{E}\int_{\bb{R}^n}\left|Y_{xm't}(s)-Y_{xmt}(s)\right|^2dm(x).
\end{align}
By the same argument and using \eqref{convexFT}, one also has
\begin{align}\nonumber
&\bb{E}\int_{\bb{R}^n}\left(Y_{xm't}(T)-Y_{xmt}(T)\right)\cdot\bigg(D \dod{F_T}{m}(Y_{\cdot m't}(T) \otimes  m')(Y_{xm't}(T))-D \dod{F_T}{m}(Y_{\cdot mt}(T) \otimes  m)(Y_{xmt}(T))\bigg)dm(x)\\\nonumber
\geq&-\frac{c_T \lambda}{\lambda-T(c_T+cT)}W_2(m,m')\cdot \bb{E}\int_{\bb{R}^n}\left|Y_{xm't}(T)-Y_{xmt}(T)\right|dm(x)\\\label{mds2}
&+\lambda_F\bb{E}\int_{\bb{R}^n}\left|Y_{xm't}(T)-Y_{xmt}(T)\right|^2dm(x).
\end{align}
By \eqref{mds1},
\begingroup
\allowdisplaybreaks
\begin{align}\nonumber
&\bb{E}\int_{\bb{R}^n}\left(Y_{xm't}(T)-Y_{xmt}(T)\right)\cdot \left(Z_{xm't}(T)-Z_{xmt}(T)\right) dm(x)\\\nonumber
&-\bb{E}\int_{\bb{R}^n}\left(Y_{xm't}(t)-Y_{xmt}(t)\right)\cdot \left(Z_{xm't}(t)-Z_{xmt}(t)\right) dm(x)\\\nonumber
=&\int_t^T d_s\left(\bb{E}\int_{\bb{R}^n}\left(Y_{xm't}(s)-Y_{xmt}(s)\right)\cdot \left(Z_{xm't}(s)-Z_{xmt}(s)\right) dm(x)\right)\\\nonumber
=&\int_t^T\left(\bb{E}\int_{\bb{R}^n}d_s\left(Y_{xm't}(s)-Y_{xmt}(s)\right)\cdot \left(Z_{xm't}(s)-Z_{xmt}(s)\right) dm(x)\right.\\\nonumber
&\left.+\bb{E}\int_{\bb{R}^n}\left(Y_{xm't}(s)-Y_{xmt}(s)\right)\cdot d_s\left(Z_{xm't}(s)-Z_{xmt}(s)\right) dm(x)\right)\\\nonumber
=&\int_t^T \left[-\frac{1}{\lambda} \bb{E}\int_{\bb{R}^n} \left|Z_{xm't}(s)-Z_{xmt}(s)\right|^2 dm(x)\right.\\\nonumber
&\left.-\bb{E}\int_{\bb{R}^n}\left(Y_{xm't}(s)-Y_{xmt}(s)\right)\cdot\left(D \dod{F}{m}(Y_{\cdot m't}(s) \otimes  m')(Y_{xm't}(s))-D \dod{F}{m}(Y_{\cdot mt}(s) \otimes  m)(Y_{xmt}(s))\right)dm(x)\right]ds\\\nonumber
\leq &\int_t^T \left[-\frac{1}{\lambda} \bb{E}\int_{\bb{R}^n} \left|Z_{xm't}(s)-Z_{xmt}(s)\right|^2 dm(x)-\lambda_F\bb{E} \int_{\bb{R}^n}\left|Y_{xm't}(s)-Y_{xmt}(s)\right|^2dm(x)\right.\\\label{eq6-37}
&\left.+\frac{c \lambda}{\lambda-T(c_T+cT)}W_2(m,m')\cdot\bb{E}\int_{\bb{R}^n}\left|Y_{xm't}(s)-Y_{xmt}(s)\right|dm(x)\right]ds
\end{align}\endgroup
Thus, by using \eqref{mds2} for the terminal cross term, also noting that $Y_{xm't}(t)=Y_{xmt}(t)=x$, we obtain:
\begin{align*}
&\frac{1}{\lambda}\int_t^T \bb{E} \int_{\bb{R}^n}\left|Z_{xm't}(s)-Z_{xmt}(s)\right|^2 dm(x) ds+\lambda_F\int_t^T \bb{E}\int_{\bb{R}^n} \left|Y_{xm't}(s)-Y_{xmt}(s)\right|^2 dm(x) ds\\
&+\lambda_F\bb{E}\int_{\bb{R}^n}\left|Y_{xm't}(T)-Y_{xmt}(T)\right|^2dm(x)\\
\leq &\frac{c \lambda}{\lambda-T(c_T+cT)}W_2(m,m')\int_t^T \bb{E}\int_{\bb{R}^n}\left|Y_{xm't}(s)-Y_{xmt}(s)\right|dm(x)ds\\
&+\frac{c_T \lambda}{\lambda-T(c_T+cT)}W_2(m,m')\cdot \bb{E}\int_{\bb{R}^n}\left|Y_{xm't}(T)-Y_{xmt}(T)\right|dm(x).
\end{align*}
Therefore, by simple algebra and Cauchy-Schwarz inequality, we have
\begin{align}\nonumber
&\int_t^T \bb{E} \int_{\bb{R}^n}\left|Z_{xm't}(s)-Z_{xmt}(s)\right|^2 dm(x) ds+\int_t^T \bb{E}\int_{\bb{R}^n} \left|Y_{xm't}(s)-Y_{xmt}(s)\right|^2 dm(x) ds\\\nonumber
&+\bb{E}\int_{\bb{R}^n}\left|Y_{xm't}(T)-Y_{xmt}(T)\right|^2dm(x)\\\label{Lipinm}
\leq &C_1(c,c_T,\lambda,\lambda_F,T)W_2^2(m,m'),
\end{align}
where $C_1(c,c_T,\lambda,\lambda_F,T):=\frac{2}{\lambda_F}\left(\lambda+\frac{1}{\lambda_F}\right)\left(\frac{\max\{c T,c_T\}\lambda}{\lambda-T(c_T+cT)}\right)^2$, which is a constant solely depending on $c$, $c_T$, $\lambda$, $\lambda_F$ and $T$.
Recall $\hat{X}_{m}$ and $\hat{X}_{m'}$
the random variables in $L^{2}(\widehat{\Omega},\widehat{\mathcal{A}},\widehat{\bb{P}};\bb{R}^{n})$ as defined above, then, by \eqref{xLipCon},
\begin{align}\label{eq6-35}
\sup_{s\in[t,T]} \left|  Y_{\hat{X}_{m'}(\widehat{\omega})mt}(s)-Y_{\hat{X}_{m}(\widehat{\omega})mt}(s)\right|+\sup_{s\in[t,T]} \left|  Z_{\hat{X}_{m'}(\widehat{\omega})mt}(s)-Z_{\hat{X}_{m}(\widehat{\omega})mt}(s)\right|  \leq C_2(c,c_T,\lambda,T)|\hat{X}_{m'}(\widehat{\omega})-\hat{X}_{m}(\widehat{\omega})|,
\end{align}
where $C_2(c,c_T,\lambda,T):=\frac{\lambda (1+c_T+cT)}{\lambda-T(c_T+cT)}$.
By definition of Wasserstein metric, and then by \eqref{Lipinm} and \eqref{eq6-35},
\begin{align}\nonumber
&\int_t^T \bb{E}W_2^2(Z_{\cdot m't}(s) \otimes  m',Z_{\cdot mt}(s) \otimes  m)ds\\\nonumber
&+\int_t^T \bb{E}W_2^2(Y_{\cdot m't}(s) \otimes  m',Y_{\cdot mt}(s) \otimes  m)ds+\bb{E}W_2^2(Y_{\cdot m't}(T) \otimes  m',Y_{\cdot mt}(T) \otimes  m)\\\nonumber
\leq&\int_t^T \bb{E}  \int_{\widehat{\Omega}} \left|Z_{\hat{X}_{m'}(\widehat{\omega})m't}(s)-Z_{\hat{X}_{m}(\widehat{\omega})mt}(s)\right|^2 d\widehat{\bb{P}}(\widehat{\omega})  ds+\int_t^T \bb{E}  \int_{\widehat{\Omega}} \left|Y_{\hat{X}_{m'}(\widehat{\omega})m't}(s)-Y_{\hat{X}_{m}(\widehat{\omega})mt}(s)\right|^2 d\widehat{\bb{P}}(\widehat{\omega})  ds\\\nonumber
&+\bb{E} \int_{\widehat{\Omega}}\left|Y_{\hat{X}_{m'}(\widehat{\omega})m't}(T)-Y_{\hat{X}_{m}(\widehat{\omega})mt}(T)\right|^2d\widehat{\bb{P}}(\widehat{\omega}) \\\nonumber
\leq&2\int_t^T \bb{E}  \int_{\widehat{\Omega}} \left|Z_{\hat{X}_{m'}(\widehat{\omega})m't}(s)-Z_{\hat{X}_{m}(\widehat{\omega})m't}(s)\right|^2+\left|Z_{\hat{X}_{m}(\widehat{\omega})m't}(s)-Z_{\hat{X}_{m}(\widehat{\omega})mt}(s)\right|^2 d\widehat{\bb{P}}(\widehat{\omega})  ds\\\nonumber
&+2\int_t^T \bb{E}  \int_{\widehat{\Omega}} \left|Y_{\hat{X}_{m'}(\widehat{\omega})m't}(s)-Y_{\hat{X}_{m}(\widehat{\omega})m't}(s)\right|^2+\left|Y_{\hat{X}_{m}(\widehat{\omega})m't}(s)-Y_{\hat{X}_{m}(\widehat{\omega})mt}(s)\right|^2 d\widehat{\bb{P}}(\widehat{\omega})  ds\\\nonumber
&+2\bb{E} \int_{\widehat{\Omega}}\left|Y_{\hat{X}_{m'}(\widehat{\omega})m't}(T)-Y_{\hat{X}_{m}(\widehat{\omega})m't}(T)\right|^2+\left|Y_{\hat{X}_{m}(\widehat{\omega})m't}(T)-Y_{\hat{X}_{m}(\widehat{\omega})mt}(T)\right|^2d\widehat{\bb{P}}(\widehat{\omega}) \\\nonumber
\leq&2\left((1+T)C_2(c,c_T,\lambda,T)^2+C_1(c,c_T,\lambda,\lambda_F,T)\right)\cdot W_2^2(m,m')\\\label{LipinW2}
=&C_3(c,c_T,\lambda,\lambda_F,T)\cdot W_2^2(m,m'),
\end{align}
where $C_3(c,c_T,\lambda,\lambda_F,T):=\frac{2(1+T)\lambda(1+c_T+cT)}{\lambda-T(c_T+cT)}+\frac{4}{\lambda_F}\left(\lambda+\frac{1}{\lambda_F}\right)\left(\frac{\max\{c T,c_T\}\lambda}{\lambda-T(c_T+cT)}\right)^2$, and we telescoped in the second inequality. $\blacksquare$
\end{document}